\documentclass[11pt]{article}
\usepackage{amsmath}
\usepackage{amsthm}
\usepackage{amssymb,amsfonts}
\usepackage{hyperref}
\usepackage{latexsym}
\usepackage{todonotes}
\usepackage{nicefrac}
\usepackage{epsfig}
\usepackage{stmaryrd}
\usepackage{setspace}
\usepackage{tikz-cd,tikz,tikz-3dplot}
\usetikzlibrary{calc, matrix, arrows, decorations.pathmorphing}
\tikzset{%
	descr/.style={fill=white},
	baseline={([yshift=-\the\dimexpr\fontdimen22\textfont2\relax]
	                    current bounding box.center)},
}
\usepackage{enumerate}
\usepackage[all]{xypic}
\usepackage{bbm,ifpdf,tikz}
\ifpdf
\usepackage{pdfsync}
\fi

\oddsidemargin=0pt
\evensidemargin=0pt
\topmargin=0in
\headheight=0pt
\headsep=0pt
\setlength{\textheight}{9in}
\setlength{\textwidth}{6.5in}

\newtheorem{theorem}{Theorem}[section]

\newtheorem{lemma}[theorem]{Lemma}
\newtheorem{proposition}[theorem]{Proposition}
\newtheorem{corollary}[theorem]{Corollary}

\newtheorem{conjecture}[theorem]{Conjecture}

{
\theoremstyle{definition}

\newtheorem{example}[theorem]{Example}

\newtheorem{remark}[theorem]{Remark}
}

\newcommand{\becircled}{\mathaccent "7017}
\newcommand{\excise}[1]{}

\newcommand{\id}{\operatorname{id}}

\renewcommand{\dim}{\operatorname{dim}}

\newcommand{\rk}{\operatorname{rk}}

\newcommand{\gr}{\operatorname{gr}}

\renewcommand{\and}{\qquad\text{and}\qquad}
\newcommand{\Ind}{\operatorname{Ind}}
\newcommand{\Res}{\operatorname{Res}}
\newcommand{\Hom}{\operatorname{Hom}}

\renewcommand{\Vec}{\operatorname{Vec}}
\newcommand{\uKL}{\operatorname{KL}}
\newcommand{\KL}{\Sigma}
\newcommand{\cusp}{\operatorname{cusp}}
\newcommand{\nbc}{\operatorname{nbc}}
\newcommand{\I}{\operatorname{I}}

\newcommand{\bV}{\mathbb{V}}
\newcommand{\bH}{\mathbb{H}}
\newcommand{\Pol}{\operatorname{Pol}}
\newcommand{\cPol}{\mathcal{P}}
\newcommand{\cPolid}{\mathcal{P}_{\operatorname{id}}}
\DeclareMathOperator{\Ch}{Ch}

\newcommand{\Z}{\mathbb{Z}}
\newcommand{\Q}{\mathbb{Q}}
\newcommand{\N}{\mathbb{N}}
\newcommand{\R}{\mathbb{R}}

\newcommand{\IH}{\operatorname{IH}}

\newcommand{\cK}{\mathcal{K}}
\renewcommand{\cR}{\mathcal{R}}
\newcommand{\cP}{\mathcal{P}}
\newcommand{\cG}{\mathcal{G}}
\newcommand{\cC}{\mathcal{C}}

\newcommand{\cI}{\mathcal{I}}

\renewcommand{\a}{\alpha}

\newcommand{\cA}{\mathcal{A}}
\newcommand{\cB}{\mathcal{B}}

\newcommand{\bk}{\textbf{k}}
\newcommand{\bS}{\textbf{S}}
\newcommand{\cF}{\mathcal{F}}
\newcommand{\tM}{\tilde{M}}
\newcommand{\GF}{\Gamma_{\!F}}
\newcommand{\GH}{\Gamma_{\!H}}

\newcommand{\OS}{\operatorname{OS}}

\renewcommand{\cL}{\mathcal{L}}
\newcommand{\fS}{\mathfrak{S}}
\newcommand{\bigmid}{\;\Big{|}\;}

\newcommand{\nicktodo}{\todo[inline,color=green!20]}
\def\BE#1{\textcolor[rgb]{.75,0.00,0.00}{[BE: #1]}}

\newcommand{\uH}{\underline{H}}
\newcommand{\Aut}{\operatorname{Aut}}

\newcommand{\co}{\colon}

\renewcommand{\deg}{\operatorname{deg}}
\newcommand{\Val}{\operatorname{Val}}

\newcommand{\cMat}{\mathcal{M}}
\newcommand{\cMatid}{\mathcal{M}_{\operatorname{id}}}

\newcommand{\grOS}{\gr\OS}

\DeclareMathOperator{\Nul}{Null}

\newcommand{\cS}{\mathcal{S}}
\newcommand{\cV}{\mathcal{V}}
\newcommand{\cQ}{\mathcal{Q}}

\newcommand{\cM}{\mathcal{M}}
\newcommand{\cN}{\mathcal{N}}

\newcommand{\Mat}{\operatorname{Mat}}

\newcommand{\CH}{\operatorname{CH}}
\newcommand{\uCH}{\underline{\CH}}
\renewcommand{\H}{H}
\newcommand{\Det}{\operatorname{Det}}

\newcommand{\namedto}[1]{\stackrel{#1}{\rightarrow}}

\newcommand{\pa}{\partial}

\DeclareMathOperator{\Cone}{Cone}
\DeclareMathOperator{\Cyl}{Cyl}
\renewcommand{\Bbbk}{\Q}

\newcommand{\cY}{\mathcal{Y}}

\newcommand{\revise}[1]{{#1}}

\begin{document}
\spacing{1.2}

\noindent{\Large\bf Categorical valuative invariants of polyhedra and matroids}\\

\noindent{\bf Ben Elias}\footnote{Supported by NSF grants DMS-2201387 and DMS-2039316.}\\
Department of Mathematics, University of Oregon, Eugene, OR 97403
\vspace{.1in}

\noindent{\bf Dane Miyata}\footnote{Supported by NSF grants DMS-2039316 and DMS-2053243.}\\
Department of Mathematics, University of Oregon, Eugene, OR 97403
\vspace{.1in}

\noindent{\bf Nicholas Proudfoot}\footnote{Supported by NSF grants DMS-1954050, DMS-2039316, and DMS-2053243.}\\
Department of Mathematics, University of Oregon, Eugene, OR 97403
\vspace{.1in}

\noindent{\bf Lorenzo Vecchi}\footnote{Partially supported by the National Group for Algebraic and Geometric Structures, and their Applications (GNSAGA - INdAM)}\\
KTH, Matematik, Lindstedsv\"{a}gen 25, 10044 Stockholm, Sweden\\

{\small
\begin{quote}
We introduce the notion of a categorical valuative invariant of polyhedra or matroids,
in which alternating sums of numerical invariants are replaced by split exact sequences in an additive category.
We provide categorical lifts of a number of valuative invariants of matroids, including the Poincar\'e polynomial,
the Chow and augmented Chow polynomials, and certain two-variable extensions of the Kazhdan--Lusztig polynomial
and $Z$-polynomial.  These lifts allow us to perform calculations equivariantly with respect to automorphism groups of matroids.
\end{quote} }

\section{Introduction}
Let $E$ be a finite set, and let $\Mat(E)$ be the free abelian group with basis given by matroids on $E$.  
An element of $\Mat(E)$ is said to be {\bf valuatively equivalent to zero} if it lies in the kernel of the
homomorphism that takes a matroid to the indicator function of its base polytope.
The following fundamental example will be revisited throughout the paper.

\begin{example}\label{octahedron 1}
Let $E = \{1,2,3,4\}$.  Let $M$ be the uniform matroid of rank 2 on $E$,
let $N$ be the matroid whose bases are all subsets of cardinality
2 except for $\{3,4\}$, let $N'$ be the matroid whose bases are all subsets of cardinality 2 except for $\{1,2\}$, and let
$N''$ be the matroid whose bases are all subsets of cardinality 2 except for $\{1,2\}$ and $\{3,4\}$.
In Figure \ref{octahedron figure}, the base polytope of $M$ is the octahedron, the base polytopes of $N$ and $N'$
are the two pyramids, and the base polytope of $N''$ is the square.  Thus $M - N - N' + N''$ is valuatively equivalent to
zero.
\end{example}

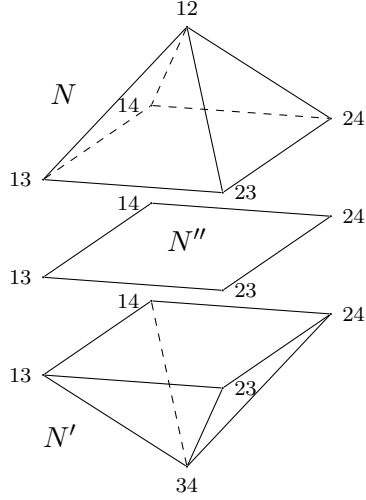
\begin{figure}[h]
\begin{center}
	\begin{tikzpicture}  
	[scale=0.65,auto=center,rotate around y=-10] 
	\tikzstyle{edges} = [thick];
    \node[style={circle,scale=0.8, fill=black, inner sep=0pt}, label=left:{\scriptsize 14}] (14) at (-2,0,-2) {};
    \node[style={circle,scale=0.8, fill=black, inner sep=0pt}, label=right:{\scriptsize 24}] (24) at (2,0,-2) {};
    \node[style={circle,scale=0.8, fill=black, inner sep=0pt}, label=right:{\scriptsize 23}] (23) at (2,0,2) {};
    \node[style={circle,scale=0.8, fill=black, inner sep=0pt}, label=left:{\scriptsize 13}] (13) at (-2,0,2) {};

    \node[style={circle,scale=0.8, fill=black, inner sep=0pt}, label=left:{\scriptsize 14}] (14N1) at (-2,2,-2) {};
    \node[style={circle,scale=0.8, fill=black, inner sep=0pt}, label=right:{\scriptsize 24}] (24N1) at (2,2,-2) {};
    \node[style={circle,scale=0.8, fill=black, inner sep=0pt}, label=right:{\scriptsize 23}] (23N1) at (2,2,2) {};
    \node[style={circle,scale=0.8, fill=black, inner sep=0pt}, label=left:{\scriptsize 13}] (13N1) at (-2,2,2) {};

    \node[style={circle,scale=0.8, fill=black, inner sep=0pt}, label=left:{\scriptsize 14}] (14N2) at (-2,-2,-2) {};
    \node[style={circle,scale=0.8, fill=black, inner sep=0pt}, label=right:{\scriptsize 24}] (24N2) at (2,-2,-2) {};
    \node[style={circle,scale=0.8, fill=black, inner sep=0pt}, label=right:{\scriptsize 23}] (23N2) at (2,-2,2) {};
    \node[style={circle,scale=0.8, fill=black, inner sep=0pt}, label=left:{\scriptsize 13}] (13N2) at (-2,-2,2) {};
    
    \node[style={circle,scale=0.8, fill=black, inner sep=0pt}, label=above:{\scriptsize 12}] (12) at (0,4.5,0) {};
    \node[style={circle,scale=0.8, fill=black, inner sep=0pt}, label=below:{\scriptsize 34}] (34) at (0,-4.5,0) {};

	 \node[label=left:$N$] at (-2,3,0) {};
	 \node[label=left:$N'$] at (-2,-4,0) {};
	 \node[label=$N''$] at (0,-0.5,0) {};

    \draw (13) -- (14);
    \draw (14) -- (24);
    \draw (24) -- (23);
    \draw (23) -- (13);

    \draw[dashed] (13N1) -- (14N1);
    \draw[dashed] (14N1) -- (24N1);
    \draw (24N1) -- (23N1);
    \draw (23N1) -- (13N1);

    \draw (13N2) -- (14N2);
    \draw (14N2) -- (24N2);
    \draw (24N2) -- (23N2);
    \draw (23N2) -- (13N2);
    
    \draw (12) -- (13N1);
    \draw[dashed] (12) -- (14N1);
    \draw (12) -- (24N1);
    \draw (12) -- (23N1);
    \draw (34) -- (13N2);
    \draw[dashed] (34) -- (14N2);
    \draw (34) -- (24N2);
    \draw (34) -- (23N2);
	
    \end{tikzpicture} 
\caption{A decomposition of the matroid $M = U_{2,4}$.
The label $ij$ refers to the point that takes the value 1 in the $i^\text{th}$ and $j^\text{th}$ coordinates,
such as $12 = (1,1,0,0)$.
}\label{octahedron figure}
\end{center}
\end{figure}

Let $A$ be an abelian group, and let $f:\Mat(E)\to A$ be any homomorphism.  This homomorphism is called {\bf valuative}
if it vanishes on elements that are valuatively equivalent to zero.  \revise{It is called an {\bf invariant}
if it is invariant under the action of the permutation group $\Aut(E)$ on $\Mat(E)$.}
Examples of valuative invariants of matroids include the following,
all of which take values in the group $A = \Z[t]$.
\begin{itemize}
\item The {\bf Poincar\'e polynomial} $\pi_M(t) = \sum t^i \dim\OS^i(M)$, 
where $\OS(M)$ is the Orlik--Solomon algebra of $M$.\footnote{The Poincar\'e polynomial
is closely related to the characteristic polynomial $\chi_M(t) = (-t)^{\rk M} \pi_M(-t^{-1})$.
We prefer the Poincar\'e polynomial because it has positive coefficients.}
This is a specialization of the Tutte polynomial $T_M(x,y)\in\Z[x,y]$, which is valuative by \cite[Lemma 6.4]{Speyer-trop}.
\item The {\bf Chow polynomial} $\uH_M(t) = \sum t^i \dim \uCH^i(M)$\revise{, which is valuative by} \cite[Theorem 8.14]{FSVal}.
Here $\uCH(M)$ is the Chow ring of $M$ \cite{FY}.
\item The {\bf augmented Chow polynomial} $\H_M(t) = \sum t^i \dim \CH^i(M)$\revise{, which is valuative by} \cite[Theorem 10]{fmsv-fpsac}. 
Here $\CH(M)$ is the augmented Chow ring of $M$ \cite{BHMPW1}.
\item The {\bf Kazhdan--Lusztig polynomial} $P_M(t)$ \revise{\cite{EPW}, which is valuative by \cite[Theorem 8.8]{ArdilaSanchez}}.
\item The {\bf \boldmath{$Z$}-polynomial} $Z_M(t)$ \revise{\cite{PXY}, which is valuative by \cite[Theorem 9.3]{FSVal}}.
\end{itemize}
\revise{We also consider one additional example, which takes values in the ring $\Z\langle x,y\rangle$ freely generated by two non-commuting variables $x$
and $y$.
\begin{itemize}
\item The {\bf \boldmath{$\cG$}-invariant} $\cG(M)$, 
introduced and shown to be valuative in \cite[Theorem 8.3]{Derksen}.
\end{itemize}}

Our goal in this paper is to promote the \revise{valuative relations among the polynomials appearing in the first five examples} to exact sequences
of graded vector spaces.  For the matroids in Example \ref{octahedron 1}, 
valuativity of the Poincar\'e polynomial tells us that we have the relation
$$\pi_M(t) - \pi_N(t) - \pi_{N'}(t) + \pi_{N''}(t) = 0.$$
We will prove that, after choosing orientations of the base polytopes of the four matroids, we obtain a canonical exact sequence (Theorem \ref{thm:OS})
\begin{equation}\label{OS oct}0\to \OS(M)\to \OS(N)\oplus \OS(N')\to \OS(N'')\to 0,\end{equation}
with similar exact sequences involving the Chow ring and augmented Chow ring
(Corollaries \ref{uCH val} and \ref{CH val}).  The story for the Kazhdan--Lusztig polynomial and $Z$-polynomial is similar but slightly more complicated:  we
introduce new bivariate polynomials $\tilde P_M(t,u)$ and $\tilde Z_M(t,u)$ with the property that $\tilde P_M(t,-1) = P_M(t)$ and $\tilde Z_M(t,-1) = Z_M(t)$,
and we interpret these polynomials as Poincar\'e polynomials of bigraded vector spaces that satisfy exact sequences analogous to that in Equation \eqref{OS oct} (Corollaries \ref{KL val} and \ref{Z val}).
\revise{Finally, we promote the corresponding valuative relation for $\cG$-invariants to a split-exact sequence in the free additive monoidal category on
two generators (Section \ref{sec:G}).}
Our results apply not only to the decomposition in Example \ref{octahedron 1}, but to arbitrary matroid decompositions \footnote{\revise{We point out that in the literature these are also called matroid subdivisions.}}, which are known to generate the group of all valuative equivalences in $\Mat(E)$ (Proposition \ref{mat-decomp}).\\

We have two motivations for this project, one philosophical and the other concrete.  The philosophical motivation
is that many of the valuativity results cited above are mysterious.  
Producing canonical exact sequences of graded vector spaces can be seen as a satisfying explanation.  
\revise{With the exception of the Poincar\'e polynomial, all of the above examples of valuative invariants
were proved to be valuative using a convolution operation of Ardila and Sanchez \cite[Theorem C]{ArdilaSanchez}.\footnote{\revise{The
reader is warned that, while the convolution of two valuative homomorphisms is again valuative,
the convolution of two invariants is typically no longer invariant.  The same warning applies at the categorical level.}}
Similarly, our categorical lifts of these examples are proved to be valuative by categorifying this convolution operation 
(Corollary \ref{convolution}), which we believe illuminates it.}
\excise{
One can prove that these various polynomials are valuative, but \revise{often the proof does not involve working on polytope decompositions directly, and therefore the proof itself does not shed light on why a function is expected to be valuative. One notable example, on which we will expand later in the paper, is \cite[Theorem C]{ArdilaSanchez}. This result allows to produce new valuative invariants by combining known valuations using a convolution product.}}

The concrete motivation is that it allows us to incorporate symmetries
of matroids into the theory of valuativity.  
\revise{While most matroids have no nontrivial symmetries, many very interesting matroids are highly symmetric,
including uniform matroids, matroids associated with finite Coxeter hyperplane arrangements, and matroids associated
with Steiner systems.}
The Orlik--Solomon algebra, the Chow ring, and the
augmented Chow ring all inherit actions of the symmetry group of $M$.  
\revise{For example, the Orlik--Solomon algebra
of the matroid associated with the type Coxeter arrangement of type $A_{n-1}$ is $S_n$-equivariantly isomorphic to the cohomology
ring of the configuration space of $n$ points in the plane.}
Similarly, the Kazhdan--Lusztig polynomial
and the $Z$-polynomial can be naturally lifted to ``equivariant'' polynomials whose coefficients are isomorphism
classes of representations of the symmetry group of $M$ \cite{GPY,PXY}.
In Example \ref{octahedron 1}, the dihedral group $D_4$ acts by symmetries of the square\footnote{The dihedral group $D_4$ is the subgroup of $\fS_4$ generated by $(12)$ and $(13)(24)$.}, preserving $M$ and $N''$
while permuting $N$ and $N'$.  With a small modification that accounts for the action of the group on the orientations of the various
polytopes involved, Equation \eqref{OS oct} can be regarded as an exact sequence in the category of graded representations
of $D_4$, and therefore allows us to relate the $D_4$-equivariant isomorphism class of $\OS(M)$ to those of the other terms
in the sequence.  The most general result along these lines, for arbitrary categorical valuative invariants, 
appears in Corollary \ref{the point}.

As a sample application, we compute the effect of relaxing a collection of stressed hyperplanes 
on the Orlik--Solomon algebra or the equivariant Kazhdan--Lusztig polynomial of a matroid (Corollaries \ref{OS relax} and
\ref{KL relax}), the latter of which recovers the main result of \cite{KNPV}.
A matroid that is related to a uniform matroid by a sequence of hyperplane relaxations is called {\bf paving}, so our corollaries provide explicit formulas for the 
Orlik--Solomon algebra and equivariant Kazhdan--Lusztig polynomial (as graded representations of the automorphism group) for any paving matroid.
The class of paving matroids is very large:  in particular, the probability that a random matroid is paving conjecturally goes to one as the size of the ground set goes to infinity \cite{MNWW}.
A matroid that is related to a uniform matroid by a sequence of arbitrary relaxations is called {\bf split} \cite{JS, FSVal}, so our corollaries in fact provide a method for performing equivariant
calculations of any of the invariants discussed above for any split matroid (Proposition \ref{virtual}), provided that one can compute it for certain special matroids of the form $\Pi_{r,k,\revise{F,E}}$ and $\Lambda_{r,k,\revise{F,E}}$ \revise{(defined in Section \ref{sec:relaxation})}.
\\

\revise{To formalize the properties shared by Orlik-Solomon algebras, Chow rings, and the other examples discussed above, we introduce the notion of a
{\bf valuative functor}.
There is a category $\cMat(E)$ whose objects are matroids on the ground set $E$ and whose morphisms are weak maps (including those represented by permutations of $E$), and a subcategory $\cMatid(E)\subset \cMat(E)$ consisting only of those weak maps represented by the identity map on $E$.
Decompositions like the one in Example \ref{octahedron 1} give rise to complexes in the additive closure of $\cMatid(E)$, and we
think of these complexes as being valuatively equivalent to zero. We call a functor $\Phi$ from $\cMatid(E)$ to an additive category $\cA$ {\bf valuative} if such complexes 
are sent to split-exact complexes in $\cA$. Taking the map on Grothendieck groups induced by $\Phi$, one obtains a valuative homomorphism from
$\Mat(E)$ to the split Grothendieck group\footnote{This is the abelian group with generators given by isomorphism classes of objects of $\cA$, and relations of the form
$[A\oplus B] - [A] - [B]$ for all objects $A$ and $B$.} of $\cA$. 
We say that the functor {\bf categorifies} the homomorphism.  If the functor extends to $\cM(E)$, then we call it a {\bf categorical valuative invariant},
and the homomorphism that it categorifies is also an invariant.
For example, there is a functor $\OS$ from $\cMat(E)$ to the category of graded vector spaces over $\Q$ that takes a matroid on $E$ to its
Orlik--Solomon algebra, and a weak map to the corresponding surjective algebra homomorphism.
This functor is valuative (Theorem \ref{thm:OS}), and it categorifies the Poincar\'e polynomial.}

\excise{
\nicktodo{I'm not convinced that this is the ideal place for the next paragraph, which seems to me to break the flow.}
\revise{We note that there can be more than one way to categorify a given valuative invariant. For example, our proof that $\OS$ is valuative runs along these lines: we prove that $\OS$ has a filtration, and that the associated graded functor is valuative. It is a straightforward spectral sequence argument to deduce that $\OS$ is valuative. This associated graded functor is a second categorification of the Poincar\'e polynomial. In many ways the associated graded functor is ``worse'' than $\OS$ itself: it is functorial for fewer maps (technically, this is a functor from a subcategory of $\cMat(E)$); there are morphisms which $\OS$ evaluates to nonzero linear maps which the associated graded functor annihilates \BE{is this true, or am I mixing it up with another similar issue}. It is a major technical asset that we can include such functors within our theory of valuativite functors. However, it also helps to be sensitive to the notion that some valuative functors are ``better'' than others, see \BE{ref to remarks}.}
}

In fact, we work in a broader setting than that of matroids.  We define a category $\cP(\bV)$ whose objects are polyhedra in a real vector space $\bV$ 
and whose morphisms are linear automorphisms of $\bV$ that induce
inclusions of polyhedra, along with a subcategory $\cPolid(\bV)$ consisting of morphisms given by the identity map on $\bV$.  
Then $\cMat(E)$ is isomorphic to the subcategory of $\cPol(\R^E)$ whose objects are base polytopes of matroids on $E$ and whose morphisms are induced by permutations of $E$, and $\cMatid(E)$ is the intersection of $\cMat(E)$ with $\cPolid(\R^E)$.
The notions of valuative equivalence, valuative homomorphisms and invariants, and valuative functors and categorical valuative invariants 
all generalize naturally from matroids to polyhedra, and much of what we do takes place in this more general framework.
\\

The most important tool developed in this paper is a method of combining simple categorical functors to obtain more complicated ones.
We begin with a brief review of the non-categorical story.  Let $\psi$ be a linear functional on the real vector space $\bV$.  For any polyhedron $P\subset\bV$,
let $P_\psi$ be the face of $\bV$ on which $\psi$ is maximized if such a face exists, and \revise{the empty polyhedron} if the restriction of $\psi$ to $P$ is unbounded. 
McMullen \cite[Theorem 4.6]{McMullen} proves that the assignment $P\mapsto P_\psi$ preserves valuative equivalence.
Now suppose that $E = E_1\sqcup E_2$, and $\psi$ is the linear functional on $\R^E$ that takes the sum of the coordinates corresponding to elements of $E_1$.
Then for any matroid $M$, $P(M)_\psi = P(M_1) \times P(M_2)$, where $M_1$ is a matroid on $E_1$ and $M_2$ is a matroid on $E_2$ (Lemma \ref{break}).
Combining this observation with McMullen's theorem, one can define an operation that takes a pair of valuative homomorphisms $f_i:\Mat(E_i)\to\Z[t]$
to a new valuative homomorphism $f_1*f_2:\Mat(E)\to\Z$, called the {\bf convolution} of $f_1$ and $f_2$.  This construction is due to Ardila and Sanchez, who give
a proof of valuativity that is independent from McMullen's result \cite[Theorem C]{ArdilaSanchez}.

In this paper, we categorify everything in the previous paragraph.  
The categorification of McMullen's theorem is Theorem \ref{everything}, and this is the most difficult result that we prove.
With some additional work, we prove Theorem \ref{hopf} and Corollary \ref{convolution}, which together categorify \cite[Theorems A and C]{ArdilaSanchez}.  The end result is a categorical convolution product that allows us to combine a categorical functor for matroids
on $E_1$ with a categorical functor for matroids on $E_2$ to obtain a categorical functor for matroids on $E_1\sqcup E_2$.
It is via this construction that we categorify the valuative invariants $\uCH_M(t)$, $\CH_M(t)$, $\tilde P_M(t,u)$, $\tilde Z_M(t,u)$, \revise{and $\cG(M)$}.
\\

We would like to highlight two additional results that we prove along the way.
First, we use our categorifications to prove that the Chow polynomial and augmented polynomial are {\bf monotonic}, meaning that their coefficients weakly decrease along rank-preserving weak maps (Corollary \ref{CH mon}).  We conjecture that an analogous statement holds for the Kazhdan--Lusztig polynomial and $Z$-polynomial (Conjecture \ref{P and Z mon}).
\revise{Second, we introduce the {\bf valuative category} $\cV(E)$, a triangulated category that is in some sense the universal source
for valuative functors (Section \ref{sec:valfun}), and we prove that the Grothendieck group of $\cV(E)$ is isomorphic
to the {\bf valuative group} $V(E)$, the universal source of valuative homomorphisms (Proposition \ref{prop:groth}).}\\

\revise{Finally, we note that there are typically
many different ways to categorify a given valuative homomorphism with a valuative functor.  For example, as a tool for proving that the Orlik--Solomon functor $\OS$
is valuative, we introduce an ``associated graded'' functor $\gr\OS$ (Section \ref{sec:degenerating}), which is valuative and also categorifies the Poincar\'e polyomial.
Though there are reasons to study both $\OS$ and $\gr\OS$, we generally regard $\OS$ as the nicer mathematical object.  (In particular,
it is a categorical invariant, while $\gr\OS$ is not:  the functor $\gr\OS$ does not allow nontrivial permutations of the ground set.)  A similar story holds for the Chow and augmented Chow functors:  there are associated graded
versions that implicitly show up in the proof of valuativity (see in particular Proposition \ref{Whitney} and Lemma \ref{g val}), but those functors are
less appealing.  (For example, they could not be used to prove monotonicity as in Corollary \ref{CH mon}, as we observe in Remark
\ref{rmk:monotonicbecauseyoudiditright}.)
Thus, our aim is not only to categorify as many invariants as possible, but also to categorify them in the nicest possible ways.
}

\vspace{\baselineskip}
\noindent
{\em Acknowledgments:}
The authors are grateful to Matt Larson for explaining the connection between \cite{McMullen} and \cite{ArdilaSanchez}
and for his contributions to Section \ref{sec:groth},
to George Nasr for his contributions to Section \ref{sec:monotonicity}, and to Kris Shaw for explaining the argument
in Remark \ref{shaw}.

\section{Additive homological algebra}
We begin with a review of the notions from homological algebra that we will need. \revise{Most of this material can be found in \cite{Weibel} or \cite[Chapter 19]{Elias-book}.}  The experienced reader can skip this
section and refer back to it as needed.

\subsection{Basics}
Let $\cA$ be a $\Q$-linear additive category.  We write $\Ch(\cA)$
to denote the additive category of chain complexes $(C_\bullet,\partial)$ in $\cA$, with the homological convention that differential $\partial$ decreases degree by one.  We will sometimes drop the differential from the notation and simply write $C_\bullet\in\Ch(\cA)$,
though the differential is always part of the data.
Morphisms in $\Ch(\cA)$ are chain maps between complexes.  Let $\cK(\cA)$ denote the {\bf homotopy category} of $\cA$, 
which is the triangulated category obtained as the quotient of $\Ch(\cA)$ by the ideal
of null-homotopic chain maps. 
We write $\Ch_b(\cA)$ and $\cK_b(\cA)$ to denote the full subcategories whose objects are bounded complexes.

We note that $\cK_b(\cA)$ can also be viewed as the quotient of $\Ch_b(\cA)$ by a class of objects. For an object $X \in \cA$ and an integer $k \in \Z$, one can consider the complex $\Nul(X,k)$
consisting only of two copies of $X$ in degrees $k$ and $k-1$, with differential the identity map. A bounded complex is called {\bf contractible} if it is isomorphic to a finite direct sum of objects of the form $\Nul(X,k)$. Then a chain map between bounded complexes is null-homotopic if and only if it factors through a contractible complex. Thus the ideal of null-homotopic maps is the same as the ideal generated by the identity maps of $\Nul(X,k)$ for various $X$ and $k$. This is an old perspective, but the first author learned it from \cite{hopfological}.

\begin{remark} The same statements cannot be made for unbounded complexes. Indeed, a null-homotopic chain map may be nonzero in infinitely many degrees, requiring an expression
using an infinite sum of chain maps which factor through various $\Nul(X,k)$. \end{remark}

\begin{remark} If $\cA$ is a semisimple abelian category, 
then a complex in $\Ch_b(\cA)$ is
contractible if and only if it is exact, i.e. its homology is trivial. 
\end{remark}


\begin{remark} A bounded complex is homotopy equivalent to the zero complex if and only if it is contractible. More generally, two bounded complexes $C_{\bullet}$ and $D_{\bullet}$ are homotopy
equivalent if and only if there are bounded contractible complexes $X_{\bullet}$ and $Y_{\bullet}$ such that $C_{\bullet} \oplus X_{\bullet} \cong D_{\bullet} \oplus Y_{\bullet}$.
\end{remark}

Given an arbitrary category $\cC$, we write $\cC^+$ for the $\Bbbk$-linear additive closure of $\cC$. Objects in $\cC^+$
are formal direct sums of objects in $\cC$. If $X$ and $Y$ are objects in $\cC$, then $\Hom_{\cC^+}(X,Y)$ is the vector space over $\Bbbk$ with basis given by the set
$\Hom_{\cC}(X,Y)$. Similarly, morphisms between formal direct sums are matrices of $\Bbbk$-linear combinations of morphisms in $\cC$.

Given two $\Bbbk$-linear additive categories $\cA_1$ and $\cA_2$, 
the {\bf Deligne tensor product} of $\cA_1$ and $\cA_2$ is defined as follows. First, we define an intermediate category
whose objects are symbols $X_1 \boxtimes X_2$ for $X_1 \in \cA_1$ and $X_2 \in \cA_2$, with morphisms from $X_1 \boxtimes X_2$
to $Y_1 \boxtimes Y_2$ given by the tensor product $$\Hom_{\cA_1}(X_1, Y_1) \otimes_{\Bbbk} \Hom_{\cA_2}(X_2, Y_2).$$
This intermediate category is not yet additive, because we cannot take direct sums of objects.  The Deligne tensor product 
$\cA_1 \boxtimes \cA_2$ is defined to be the additive closure of this intermediate category.
There is an external tensor product operation $\Ch_b(\cA_1) \boxtimes \Ch_b(\cA_2) \to \Ch_b(\cA_1 \boxtimes \cA_2)$, 
which mimics the usual tensor product of complexes (with its Koszul sign rule).
When $\cA_1 = \cC_1^+$ and $\cA_2 = \cC_2^+$, then $\cA_1 \boxtimes \cA_2 = (\cC_1 \times \cC_2)^+$. 

\subsection{Cones}\label{sec:cones}
We let $[1]$ denote the usual homological shift on complexes, so that $C[1]_i = C_{i+1}$
and differentials are negated. For an object $X \in\cA$, let $X[-i]$ denote the complex consisting of $X$ concentrated in degree $i$. There is a natural inclusion of $\cA$ into $\Ch(\cA)$ that sends $X$ to $X[0]$.

\excise{
Instead of writing out a complex in the space-consuming form
\[ C_{\bullet} = \left( \ldots \to C_1 \namedto{\pa_1} C_0 \namedto{\pa_0} C_{-1} \to \ldots \right), \]
where $C_i$ appears in homological degree $i$, it is often convenient to use more compact notation. We may write the same complex as \BE{improve style}
\[ C_{\bullet} = (\bigoplus_{i} C_i[-i], \pa), \qquad \pa = \left( \begin{array}{ccccc} \ddots & & & & \\ & 0 & 0 & 0 & \\ & \pa_1 & 0 & 0 & \\ & 0 & \pa_0 & 0 & \\ & & & & \ddots \end{array} \right). \]
We think of $\bigoplus_{i} C_i[-i]$ as the graded object of $\cA$ underlying the complex $C$ (i.e. an object in the graded closure of $\cA$), and $\pa = \sum \pa_i$ as the {\bf total differential}, written in matrix form. For example, one has
\[ \Nul(X,k) = (X[-k] \oplus X[1-k], \left( \begin{array}{cc} 0 & 0 \\ \id_X & 0 \end{array} \right) ). \]

In this context we might abusively write $C_{\bullet} = (C_{\bullet},\pa_C)$ and let the same symbol $C_{\bullet}$ refer to both the complex and the underlying graded object. The latter interpretation is only valid within the parentheses when paired with a differential, or when we explicitly state that we view $C_{\bullet}$ as a graded object. For example, if $C_{\bullet}$ is a complex, then $(C_{\bullet},0)$ is the complex with the same chain objects, but with the zero differential. For sanity, we omit the bullet in the subscript for the total differential.}

Let $f \colon C_{\bullet} \to D_{\bullet}$ be a chain map. The {\bf cone} of $f$ is the complex 
\[ \Cone(f) := \left(C_{\bullet}[-1] \oplus D_{\bullet}, \left( \begin{array}{cc} -\pa_C & 0 \\ f & \pa_D \end{array} \right) \right). \]
More explicitly, $\Cone(f)_i := C_{i-1} \oplus D_i$, and for $c\in C_{i-1}$ and $d\in D_i$, \revise{$\partial(c,d) := \left(-\partial_C(c), f(c) + \partial_D(d)\right)$.}

A {\bf termwise-split short exact sequence} of complexes in $\cA$ is a collection of complexes and chain maps
\begin{equation}\label{tsses}0 \to P_{\bullet} \to Q_{\bullet} \to R_{\bullet} \to 0\end{equation}
with the property that, in each homological degree $i$, the sequence
\[ 0 \to P_i \to Q_i \to R_i \to 0\]
is split exact in $\cA$.  For any chain map $f \colon C_{\bullet} \to D_{\bullet}$, one has a termwise-split short exact sequence
\begin{equation} \label{coneses} 0 \to D_{\bullet} \to \Cone(f) \to C_{\bullet}[-1] \to 0.\end{equation}
Conversely, for any termwise-split short exact sequence as in Equation \eqref{tsses}, 
there exists a chain map $f \colon R_{\bullet}[1] \to P_{\bullet}$ and an isomorphism $Q_{\bullet} \cong \Cone(f)$.


\excise{
\begin{remark} \label{rmk:cylinder} If $\iota\colon D_{\bullet} \to \Cone(f)$ is the canonical map in \eqref{coneses}, then the cone of $\iota$ is called the {\bf cylinder} of $f$, and denoted $\Cyl(f)$. There is a termwise-split short exact sequence of complexes
\begin{equation*} \label{cylinderses} 0 \to \Cone(f) \to \Cyl(f) \to D_{\bullet}[-1] \to 0. \end{equation*}
There is always a homotopy equivalence $\Cyl(f) \cong C_{\bullet}$, but there need not exist a termwise-split short exact sequence of the form $0 \to \Cone(f) \to C_{\bullet} \to D_{\bullet}[-1] \to 0$. \end{remark}
}

The following lemma is well-known, and admits an elementary proof.
\revise{It implies, as a special case, that the cone of a map between contractible complexes is itself contractible.}

\begin{lemma}\label{lem:conestuff} Let $f \colon C_\bullet \to D_\bullet$ be a map of bounded chain complexes.
\begin{itemize}
\item The map $f$ is a homotopy equivalence if and only if $\Cone(f)$ is contractible.
\item The map $f$ is null-homotopic if and only if the termwise-split short exact sequence \eqref{coneses} 
splits at the level of complexes.
\end{itemize} 
\end{lemma}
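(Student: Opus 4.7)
My plan is to verify both bullets by direct linear-algebra in the cone, leveraging the characterization from the preceding remarks that a bounded complex is contractible if and only if its identity morphism is null-homotopic. The second bullet is the cleaner of the two, so I would dispose of it first and then use its structure as a template for the first.

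For the second bullet, observe that a termwise section of $0 \to D_\bullet \to \Cone(f) \to C_\bullet[-1] \to 0$ (as graded objects, before imposing any chain condition) is automatically of the form $s_i(c) = (c, -h_i(c))$ for some collection of maps $h_i \colon C_{i-1} \to D_i$, i.e.\ a degree-$+1$ graded map $h\colon C_\bullet \to D_\bullet$. Using the formulas $\pa_{\Cone(f)}(c,d) = (-\pa_C c, f(c)+\pa_D d)$ and $\pa_{C[-1]} = -\pa_C$, the equation $\pa_{\Cone(f)} \circ s = s \circ \pa_{C[-1]}$ collapses, after cancellation of the $C$-component, to $f = -(\pa_D h + h\pa_C)$. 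Hence chain-level splittings of \eqref{coneses} are in bijection with null-homotopies of $f$ (up to sign).

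For the first bullet, I would write an arbitrary degree-$+1$ graded endomorphism $H$ of $\Cone(f)$ as a block matrix with respect to $\Cone(f)_i = C_{i-1}\oplus D_i$. The four blocks consist of a degree-$+1$ endomorphism $\sigma$ of $C_\bullet$, a degree-$+1$ endomorphism $\tau$ of $D_\bullet$, a degree-$0$ map $g\colon D_\bullet \to C_\bullet$, and a degree-$+2$ auxiliary map. Expanding $\pa_{\Cone(f)} H + H \pa_{\Cone(f)} = \id_{\Cone(f)}$ as a matrix equation produces four relations; the off-diagonal entries say precisely that $g$ is a chain map and that an auxiliary null-homotopy is built from the given data, while the diagonal entries say that $\sigma$ witnesses $gf \simeq \id_C$ and $\tau$ witnesses $fg \simeq \id_D$. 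This simultaneously extracts a homotopy inverse and the required homotopies from any contracting homotopy of $\Cone(f)$ (backward direction), and in the other direction assembles such a matrix $H$ from a given homotopy inverse $g$ together with homotopies $\sigma$, $\tau$ (forward direction).

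The main obstacle is purely clerical: the cone differential introduces sign flips that must be tracked carefully through the four matrix identities so that the extracted homotopies have the correct sign conventions, and in the forward direction a small correction term in the degree-$+2$ block is needed to close up the compatibility between $\sigma$, $\tau$, and $g$. None of this requires a new idea beyond the bookkeeping; once a sign convention is fixed, both bullets are mechanical. The final sentence of the lemma, about cones of maps between contractible complexes, follows immediately from the forward direction of the first bullet applied to the unique map from a contractible complex to another, since any such map is automatically a homotopy equivalence onto its image's contribution — or more simply, cones of null-homotopic maps between contractible complexes are direct sums of contractibles by the second bullet.
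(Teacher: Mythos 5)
The paper does not actually supply a proof of this lemma---it is stated as ``well-known'' with no argument given---so there is no authorial route to compare against. Your direct matrix computation is the standard elementary proof, and most of it is sound. The second bullet is handled correctly and completely: a graded section of the projection $\Cone(f)\to C_\bullet[-1]$ must have identity first component, its second component is a degree~$+1$ map $h\colon C_\bullet\to D_\bullet$, and the chain-map condition is exactly the null-homotopy equation for $f$ (up to an overall sign fixed by your convention). The backward direction of the first bullet is also airtight: the $(1,2)$ entry of $\pa H+H\pa=\id_{\Cone(f)}$ forces the degree-$0$ block $g\colon D_\bullet\to C_\bullet$ to be a chain map, and the diagonal entries exhibit $\sigma$ and $\tau$ as homotopies $gf\simeq\id_C$ and $fg\simeq\id_D$.

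The one place where your sketch is too glib is the forward direction of the first bullet. Given an arbitrary homotopy inverse $g$ with arbitrary homotopies $\sigma$ and $\tau$, the $(2,1)$ entry of $\pa H+H\pa=\id$ demands a degree-$+2$ map $u$ with $\pa_D u-u\pa_C=-(f\sigma+\tau f)$. The map $w:=f\sigma+\tau f$ is always a cycle in the degree-one part of the Hom complex $\Hom^\bullet(C_\bullet,D_\bullet)$ (this follows from the diagonal identities), but it need not be a boundary, so no choice of $u$ alone ``closes up the compatibility'': its class lives in $H^1(\Hom^\bullet(C_\bullet,D_\bullet))\cong\Hom_{\cK_b(\cA)}(C_\bullet,D_\bullet[1])$, which does not vanish just because $f$ is a homotopy equivalence. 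The correction must therefore be applied to one of the degree-$+1$ blocks: one first replaces $\tau$ by a modified homotopy $\tau'$ for $fg\simeq\id_D$ chosen so that $f\sigma+\tau' f$ becomes exact (this is the standard lemma that any homotopy equivalence can be improved to one whose homotopies satisfy the extra coherence), and only then solves for $u$. This is a routine but genuinely necessary step, not pure bookkeeping in the $u$-block. Finally, your closing remark about cones of maps between contractible complexes is fine in its second formulation (every chain map between contractibles is null-homotopic, so bullet two gives $\Cone(f)\cong D_\bullet\oplus C_\bullet[-1]$, a sum of contractibles), but the phrase ``the unique map from a contractible complex to another'' is wrong---such maps are far from unique, they are merely all homotopic to zero; in any case that sentence is a remark preceding the lemma in the paper, not part of the statement you were asked to prove.
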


	


\excise{
\begin{remark}
Lemma \ref{lem:conestuff} implies that contractible complexes are projective: whenever they appear as the third term 
in a termwise-split short exact sequence of complexes, then that short exact sequence is genuinely split. 
In particular, the cone of a map between contractible complexes is itself contractible. 
\end{remark}
}

An iterated cone is often called a {\bf convolution}, which is the additive analogue of a filtered complex. For example, if $(A_\bullet,\pa_A)$, $(B_\bullet,\pa_B)$, and $(C_\bullet,\pa_C)$ are complexes, then a
complex of the form $D_\bullet = (A_\bullet \oplus B_\bullet \oplus C_\bullet, \pa)$ is a (three-part) convolution if $\pa$ is lower triangular and agrees with $(\pa_A, \pa_B, \pa_C)$ along the diagonal. If so, then $C_\bullet$
is a termwise-split subcomplex of $D_\bullet$, $A_\bullet$ is a termwise-split quotient complex of $D_\bullet$, and $B_\bullet$ is a 
termwise-split subquotient of $D_\bullet$. One can describe $D_\bullet$ as the cone of a chain map from $A[1]_\bullet$ to $E_\bullet$,
where $E_\bullet$ is the cone of a chain map from $B[1]_\bullet$ to $C_\bullet$. We call $A_\bullet$, $B_\bullet$, and $C_\bullet$ the {\bf parts} of the convolution $D_\bullet$. 


A functor $\Phi$ between additive categories is called {\bf additive} if it preserves addition of morphisms, or
(equivalently) if it preserves direct sum decompositions of objects.  
Additive functors extend to the category of complexes (and descend to the homotopy category), where they 
preserve cones and convolutions.
If $\Phi \colon \cC \to \cA$ is any functor from an arbitrary category $\cC$ to a $\Q$-linear additive category $\cA$, then it extends naturally to
an additive functor $\cC^+ \to \cA$, which we also denote by $\Phi$.

\excise{
\begin{remark} \label{rmk:triangulated} Short exact sequences are powerful tools, which is why abelian categories are so beloved. The main problem is that functors typically do not
preserve short exact sequences; only exact functors do. Triangulated categories were introduced to bridge this gap, the main example being $\cK_b(\cA)$. Triangulated categories
have a collection of {\bf distinguished triangles}, which are triples of objects $(P_{\bullet}, Q_{\bullet}, R_{\bullet})$ with morphisms $R_{\bullet}[1] \to P_{\bullet} \to
Q_{\bullet} \to R_{\bullet}$ satisfying some axioms. Distinguished triangles are analogous to short exact sequences. In $\cK_b(\cA)$, the distinguished triangles are the ones of
the form $C_{\bullet} \to D_{\bullet} \to \Cone(f) \to C_{\bullet}[-1]$, where the first map is $f$. A functor between triangulated categories is {\bf triangulated} if it
preserves distinguished triangles; they are the analogue of exact functors. Any additive functor between additive categories induces a triangulated functor between their homotopy
categories. \end{remark}
}

\subsection{Localizing subcategories}
A nonempty full subcategory $\cI$ of $\Ch_b(\cA)$ is called {\bf localizing} if it is closed under homotopy equivalences, shifts, cones, and direct summands.
Contractible complexes form the smallest localizing subcategory.
Localizing subcategories are like ideals: they are the ``kernels'' of
triangulated functors. More precisely, consider an additive functor $\Phi \co \cA \to \cA'$, which induces a functor $\Ch_b(\cA) \to \Ch_b(\cA')$. Let $\cI \subset \Ch_b(\cA)$ be the full subcategory
consisting of complexes $C_{\bullet}$ with $\Phi(C_{\bullet})$ being contractible; then $\cI$ is a localizing subcategory. Conversely, given a localizing subcategory $\cI$ of
$\Ch_b(\cA)$, the quotient category $\Ch_b(\cA)/\cI$ is triangulated, and $\cI$ is the kernel of the quotient functor.

\begin{remark}
Because of Lemma \ref{lem:conestuff}, there is a relationship between inverting morphisms and killing objects:  formally
inverting a chain map $f$ is equivalent to killing the object $\Cone(f)$, and killing an object $C$ is equivalent to formally inverting 
the zero map $0 \to C$. Thus the quotient
category $\Ch_b(\cA)/\cI$ can also be obtained by inverting morphisms whose cones live in $\cI$.
\end{remark}

\begin{remark} Localizing subcategories in the literature are typically defined within the triangulated category $\cK_b(\cA)$,
defined in the same way.  Since all localizing subcategories of $\Ch_b(\cA)$ contain all contractible objects,
there is a natural quotient-preserving bijection between localizing subcategories of $\Ch_b(\cA)$ and localizing subcategories of $\cK_b(\cA)$.
\end{remark}

Localizing subcategories satisfy the {\bf two-out-of-three rule}: if $0 \to P_{\bullet} \to Q_{\bullet} \to R_{\bullet} \to 0$ is a termwise-split short exact sequence, and two out of three of
the complexes $P_{\bullet}$, $Q_{\bullet}$, $R_{\bullet}$ live in a localizing subcategory $\cI$, then so does the third. 

\begin{lemma} \label{lem:convoinideal} Let $\cI$ be a localizing subcategory, and let $X_{\bullet}$ a complex built as a convolution. If all the parts of $X_{\bullet}$ are in $\cI$, then $X_{\bullet}$ is also in $\cI$. If $X_{\bullet}$ is in $\cI$ and all but one part is in $\cI$, then the remaining part is also in $\cI$. \end{lemma}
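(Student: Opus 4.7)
The plan is to induct on the number $n$ of parts of the convolution $X_\bullet$. The base case $n = 1$ is immediate, since a convolution with one part is just that part. For the inductive step, I use the description of convolutions given immediately before the lemma statement: for $n \geq 2$, one can peel off the "top" part and write $X_\bullet \cong \Cone(f)$ for some chain map $f \colon A[1]_\bullet \to E_\bullet$, where $A_\bullet$ is one of the parts of $X_\bullet$ and $E_\bullet$ is a convolution whose parts are the remaining $n-1$ parts of $X_\bullet$. The standard cone sequence \eqref{coneses} then yields a termwise-split short exact sequence
\[ 0 \to E_\bullet \to X_\bullet \to A_\bullet \to 0, \]
to which the two-out-of-three rule applies.

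For the first claim, suppose every part of $X_\bullet$ lies in $\cI$. Then $A_\bullet \in \cI$, and since all parts of the smaller convolution $E_\bullet$ lie in $\cI$, the inductive hypothesis gives $E_\bullet \in \cI$. The two-out-of-three rule applied to the displayed sequence then forces $X_\bullet \in \cI$.

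For the second claim, assume $X_\bullet \in \cI$ and all parts of $X_\bullet$ except one lie in $\cI$; I split into cases based on the location of the missing part. If the missing part is $A_\bullet$ itself, then every part of $E_\bullet$ lies in $\cI$, so the first claim (already established) gives $E_\bullet \in \cI$, and two-out-of-three on the displayed sequence yields $A_\bullet \in \cI$. If instead the missing part lies inside $E_\bullet$, then $A_\bullet \in \cI$, and two-out-of-three gives $E_\bullet \in \cI$; now the inductive hypothesis applied to $E_\bullet$ (a convolution with $n-1$ parts, all but one of which are in $\cI$) yields that the remaining part is in $\cI$.

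I do not foresee any real obstacle here. The only mild point of care is the reduction to a single cone: one must verify that the formula $X_\bullet = \Cone(A[1]_\bullet \to E_\bullet)$ truly exhibits a convolution as a cone over a smaller convolution, which is just a matter of reading off the lower-triangular differential as a block matrix. Everything else is a direct application of the two closure properties of $\cI$ used here, namely closure under cones (equivalently, the two-out-of-three rule) and closure under shifts, both of which are built into the definition of a localizing subcategory.
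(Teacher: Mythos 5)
Your proof is correct and is essentially the paper's own argument spelled out in full: the paper simply says the lemma follows by ``an iterated application of the two-out-of-three rule,'' and your induction on the number of parts, peeling off one part via the cone description and applying two-out-of-three at each step, is exactly that iteration made explicit.
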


\begin{proof} This is an iterated application of the two-out-of-three rule. \end{proof}

Given a nonempty collection $\cY$ of complexes in $\Ch_b(\cA)$, there is a smallest localizing subcategory $\langle \cY \rangle$ containing those complexes. It contains precisely those
complexes homotopy equivalent to convolutions whose parts are shifts of direct summands of complexes in $\cY$.

\excise{
\subsection{Gaussian elimination}
We next describe a powerful tool which allows one to efficiently strip off contractible summands from a complex to 
construct a simpler complex that is homotopy equivalent to the original one.
\excise{The typical notion of Gaussian elimination (e.g. row and column reduction) in linear algebra will
start with a matrix with some invertible matrix entry, and produce a new matrix where that entry is replaced by the identity, and where the row and column of that entry is
otherwise zero. If the matrix represents a linear transformation $V \to W$, this is obtained by changing basis on $V$ and $W$. Let us apply this method
to a differential within a complex.} 
Suppose that $C_{\bullet}$ is a complex of the form \BE{I copy pasted this from a previous paper, but it looks like the diagrams package is outdated and no longer works? I'm perplexed. Commenting for now, fix later.}
\begin{equation} \label{eq:GEstart}
\end{equation}
\nicktodo{I'll need to come back to this....}
where $\varphi$ is an isomorphism $X \to X$, and the source of $\varphi$ lives in homological degree $k$. There is an isomorphic complex where the differential from degree $k$ is 
\[ \left( \begin{array}{cc} c - e \varphi^{-1} \partial & 0\\ 0 & \id_X \end{array} \right). \]
In order for $\partial^2 = 0$ to hold, the summands of the differentials $A \to X$ and $X \to D$ must now be zero. Thus $\Nul(X,k)$ is a summand of this isomorphic complex. Removing this contractible summand, we get the homotopy equivalent complex
\begin{equation} \label{eq:GEend}
\end{equation}
The process of replacing \eqref{eq:GEstart} with \eqref{eq:GEend} is called {\bf Gaussian elimination of complexes}.
This technique was popularized by \cite{BarNatan}.
\nicktodo{Say this better, and maybe kill the next remark.  Have not yet defined termwise-split.}

\begin{remark} 
The complex $X\overset{\varphi}{\to} X$ is neither a termwise-split subcomplex nor a termwise-split quotient complex of
the full complex \eqref{eq:GEstart}.
Nonetheless, \eqref{eq:GEstart} does have a contractible summand that is isomorphic to 
$X\overset{\varphi}{\to} X$, and the result of Gaussian elimination is a new complex that is isomorphic to a complementary summand.
\end{remark}

An additive category is said to have the {\bf Krull--Schmidt property} if every object decomposes uniquely as a direct sum
of indecomposable objects and the endomorphism ring of every indecomposable object is local.
If $\cC$ is a category for which all morphism spaces are finite and the endomorphism ring of any object contains only the identity, 
then $\cC^+$ is Krull--Schmidt.

A {\bf minimal complex} is a complex in $\Ch_b(\cA)$ without any contractible summands. 
Given an arbitrary object of $\Ch_b(\cA)$, we can repeat the process of Gaussian elimination until no summands of any differential are
isomorphisms, obtaining a minimal subcomplex that is homotopy equivalent to the original complex.  While the process involves some choices,
the following lemma asserts that the end result does not, provided that $\cA$ has the Krull--Schmidt property.

\begin{lemma}{\em \cite[Lemma 19.15]{Elias-book}} Let $\cA$ be an additive category with the Krull--Schmidt property. 
Any homotopy equivalence between bounded minimal complexes in $\cA$ is an isomorphism of complexes.
\end{lemma}
}

\subsection{Thin categories and minimal complexes}\label{thin}
A category $\cC$ is called {\bf thin} if, for all objects $X$ and $Y$, there is at most one morphism from $X$ to $Y$.
Thin categories are also called {\bf poset categories}, as there is a natural partial order on isomorphism classes of objects given by putting $X\leq Y$ if and only if there exists
a morphism from $X$ to $Y$, and this partial order determines $\cC$ up to equivalence.  In this section, we assume that $\cC$ is thin, 
and we let $\cA = \cC^+$.
For any object $X$ of $\cC$, let $\cA^{<X}$ (respectively $\cA^{\leq X}$) be the full subcategory of $\cA$ consisting of direct sums of objects that are strictly less (respectively less than or equal to) $X$.

Let $C_\bullet$ be an object in $\Ch_b(\cA)$.  For each object $X$ of $\cC$, we have a termwise-split short exact sequence
$$0\to C^{<X}_\bullet \to C^{\leq X}_\bullet \to C^{X}_\bullet \to 0,$$
where $C^{<X}_\bullet$ (respectively $C^{\leq X}_\bullet$)  is the maximal termwise-split subcomplex of $C_\bullet$ whose underlying object lies in $\cA^{<X}$ (respectively $\cA^{\leq X}$),
and $C^{X}_\bullet$ is the termwise-split quotient of $C^{\leq X}_\bullet$ by $C^{<X}_\bullet$.
Then $C_\bullet$ is a convolution with parts $\{C_\bullet^X\mid X\in\cC \text{ and } C_\bullet^X \neq 0\}$.

A complex in $\Ch_b(\cA)$ is called {\bf minimal} if, for all $X\in\cC$, the $(X,X)$ component of the differential is trivial.  In other words, the differential is required to be strictly upper triangular with respect to the partial order
on objects of $\cC$. Because $\cC$ is thin, this definition of minimality is equivalent to other definitions in the literature, for example the absence of contractible summands. The category $\cA$ satisfies the Krull--Schmidt property. Consequently, any complex is homotopy equivalent to a minimal complex, and that minimal complex is unique up to isomorphism \cite[Lemma 19.15]{Elias-book}.
If $D_\bullet^X$ is the minimal complex of $C_\bullet^X$ (which will necessarily have trivial differential), then the minimal complex of $C_\bullet$ is a convolution with parts $\{D_\bullet^X\mid X\in\cC\}$.

\section{Decompositions}\label{sec:decomp}
We next review the literature that we will need on decompositions of polyhedra and matroids.

\subsection{Decompositions of polyhedra}\label{sec:decompositions}
Let $\bV$ be a finite dimensional real vector space.
A {\bf polyhedron} in $\bV$ is a subset of $\bV$ obtained by intersecting finitely many closed half-spaces.  A bounded polyhedron is called a {\bf polytope}. 
Given a polyhedron $P$, we denote its dimension by $d(P)$.

Let $\Pol(\bV)$ be the free abelian group with basis given by polyhedra in $\bV$.
Let $\I(\bV)\subset \Pol(\bV)$ be the kernel of the homomorphism from $\Pol(\bV)$ to the group of $\Z$-valued functions
on $\bV$ taking a polyhedron $P$ to its indicator function $1_P$.
For any abelian group $A$, a homomorphism $\Pol(\bV)\to A$ is called {\bf valuative} if it vanishes on $\I(\bV)$.

The subgroup $\I(\bV)\subset\Pol(\bV)$ admits a concrete presentation, which we now describe.  Let $P$ be a polyhedron in $\bV$ of dimension $d$.
A {\bf decomposition} of $P$ is a collection $\cQ$ of polyhedra in $\bV$ with the
following properties:
\begin{itemize}
\item If $Q\in\cQ$, then every nonempty face of $Q$ is in $\cQ$.
\item If $Q, Q'\in \cQ$, then $Q\cap Q'$ is a (possibly empty) face of both $Q$ and $Q'$.
\item We have $\displaystyle P = \bigcup_{Q\in\cQ} Q$.
\end{itemize}

Elements of $\cQ$ are called {\bf faces} of the decomposition.  We say that a face $Q\in\cQ$ is {\bf internal} if $Q$ is not contained in the boundary
of $P$.  For all $k\leq d = d(P)$, we write $\cQ_k$ to denote the set of internal faces of dimension $k$.
Note that $\cQ \ne \bigcup_{k=0}^d \cQ_k$. We also write $\cQ_{d+1} := \{P\}$. 

\begin{remark}
If $\cQ$ is a decomposition of $P$, then $P$ is typically not a face of $\cQ$, so $\cQ_{d+1}$ plays a fundamentally different
role from $\cQ_k$ for $k\leq d$.  Our use of this potentially confusing notation is motivated by the expression in 
Equation \eqref{alternatingsum} below.
In the special case where $P \in \cQ$, then $\cQ$ is precisely the set of faces of $P$; this is called the {\bf trivial decomposition}. In this case $P$ is the \emph{only} internal face of $\cQ$, and we have $\cQ_d = \cQ_{d+1} = \{P\}$, and $\cQ_k = \varnothing$ otherwise.
\end{remark}

\excise{
\BE{I would like to replace this last sentence with (in a new paragraph):} In contrast to the definition of $\cQ_k$ above, we define $\cQ_{d+1}$ to be the one-element set consisting of the polyhedron $P$. This element $P \in \cQ_{d+1}$ is not a face of $\cQ$, though it will appear in expressions like \eqref{alternatingsum} below. It is possible that $P$ is itself an internal face of $\cQ$, in which case $P$ also appears as an element of $\cQ_d$, see Example \ref{triv decomp}.
}

\begin{proposition}\label{decomp}
If $\cQ$ is a decomposition of $P$, then
\begin{equation} \label{alternatingsum} \sum_k (-1)^k \sum_{Q\in \cQ_k} Q\in \I(\bV).\end{equation}
Furthermore, $\I(\bV)$ is spanned by elements of this form.
\end{proposition}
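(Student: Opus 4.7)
My plan is to prove the two parts of Proposition~\ref{decomp} separately.

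For the first part, I would verify pointwise that the alternating sum of indicator functions $\sum_k (-1)^k \sum_{Q \in \cQ_k} 1_Q$ vanishes at every $x \in \bV$. The case $x \notin P$ is immediate. For $x \in P$, let $F \in \cQ$ be the unique face whose relative interior contains $x$. A preliminary lemma I would establish is that, because $\partial P$ is cut out by supporting hyperplanes and relative interiors of faces are convex, $\operatorname{relint}(Q)$ lies entirely in $\operatorname{int}(P)$ or entirely in $\partial P$ for every $Q \in \cQ$; equivalently, $Q$ is internal if and only if $\operatorname{relint}(Q) \subseteq \operatorname{int}(P)$, so that $F$ is internal if and only if $x \in \operatorname{int}(P)$. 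The faces of $\cQ$ containing $x$ are exactly the elements of the star $\operatorname{Star}(F) := \{Q \in \cQ : F \leq Q\}$.

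When $x \in \operatorname{int}(P)$, every $Q \in \operatorname{Star}(F)$ is automatically internal, and the link of $F$ in $\cQ$ is topologically a $(d - \dim F - 1)$-sphere, since a neighborhood of $F$ in $P$ is a full ball. A standard Euler-characteristic computation then gives $\sum_{Q \in \operatorname{Star}(F)} (-1)^{\dim Q} = (-1)^d$, which cancels the $(-1)^{d+1} 1_P(x)$ contribution from $\cQ_{d+1} = \{P\}$. When $x \in \partial P$, a neighborhood of $F$ in $P$ is a half-ball: the link of $F$ in $\cQ$ is topologically a $(d - \dim F - 1)$-disk, while the link of $F$ in the induced decomposition $\cQ|_{\partial P}$ is a $(d - \dim F - 2)$-sphere. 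Subtracting the second alternating sum from the first isolates the contribution of internal faces, yielding $\sum_{Q \in \operatorname{Star}(F),\ Q \text{ internal}} (-1)^{\dim Q} = (-1)^d$, which again cancels $(-1)^{d+1}$.

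For the second part, I would appeal to the classical theorem --- going back to work of McMullen and collaborators on the polytope algebra --- that $\I(\bV)$ is generated by decomposition (``scissors'') relations. The standard proof is a common-refinement argument: given any element of $\I(\bV)$, one chooses a single decomposition simultaneously refining all of the polyhedra in its support, rewrites each polyhedron via its own decomposition relation, and reduces to a trivial identity. The main obstacle in the proposition is precisely this second part: the Euler-characteristic verification of the first part is routine once one has the bookkeeping lemma above (with a minor extension via local tangent cones if unbounded polyhedra are allowed), whereas the spanning statement rests on a substantial classical result that I would cite rather than reprove.
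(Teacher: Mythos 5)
Your proposal is correct in outline but follows a genuinely different route from the paper for the first claim. The paper does not redo the Euler-characteristic computation at all: for bounded polytopes it cites the argument of \cite[Theorem 3.5]{AFR} verbatim, and it handles unbounded polyhedra by a truncation trick --- given a point $v$, intersect everything with a large box $R$ around $v$ so that intersection with $R$ gives a dimension-preserving bijection between internal faces of $\cQ$ and internal faces of the induced decomposition of $P\cap R$, and then evaluate at $v$ using the bounded case. Your argument is instead a self-contained pointwise computation: locate the unique face $F$ with $x\in\operatorname{relint}(F)$, observe that the faces containing $x$ form $\operatorname{Star}(F)$, and compute the alternating sum via the link of $F$ being a sphere (interior case) or a disk whose boundary sphere is the link in $\partial P$ (boundary case); the arithmetic you describe does come out to $(-1)^d$ in both cases and cancels the $\cQ_{d+1}$ term. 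What the paper's truncation buys is that it never has to discuss links of faces of unbounded polyhedra; what your approach buys is independence from \cite{AFR}, at the cost of having to justify carefully that the local fan at $F$ intersected with a small sphere is a regular CW decomposition of a sphere or ball even when faces of $\operatorname{Star}(F)$ are unbounded --- your phrase ``minor extension via local tangent cones'' is where that work lives. For the spanning statement both you and the paper defer to the literature, but note that the precise assertion needed --- that $\I(\bV)$ is spanned by the specific signed sums over \emph{internal} faces in \eqref{alternatingsum} --- is exactly \cite[Theorem A.2(2)]{EHL}; extracting it from McMullen's polytope algebra via a common-refinement argument is more delicate than your sketch suggests (a common refinement of the supports of an arbitrary element of $\I(\bV)$ need not restrict to a decomposition of each polyhedron in the required face-to-face sense), so if you go that route you should cite a source that states the claim in this form rather than treating it as folklore.
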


\begin{proof}
We first show that this expression is contained in $\I(\bV)$.
For decompositions of matroid polytopes, this is proved in \cite[Theorem 3.5]{AFR},
and the same argument holds verbatim for arbitrary polytopes.  That argument does not immediately generalize to unbounded polyhedra, but 
we will show how to use the bounded case to deduce the general case.  

We need to show that
the function \begin{equation}\label{expression1}\sum_k (-1)^k \sum_{Q\in \cQ_k} 1_Q\end{equation}
evaluates to zero at an arbitrary point $v\in\bV$.  Let $v$ be given, and choose a polytope $R$ containing $v$ such that,
for all $Q\in \cQ$, $Q\cap R$ is a nonempty polytope of the same dimension as $Q$.  (For example, we can take $R$ to be a very large box centered at $v$.)
Let $\tilde\cQ$ be the decomposition of $Q\cap R$ consisting of $Q\cap F$ for all $Q\in \cQ$ and $F$ a face of $R$ (possibly equal to $R$ itself).
Then intersection with $R$ provides a dimension-preserving bijection from internal faces of $\cQ$ to internal faces of $\tilde\cQ$.
Since $P\cap R$ is a polytope, the function
\begin{equation}\label{expression2}
\sum_k (-1)^k \sum_{\tilde Q\in \tilde \cQ_k} 1_{\tilde Q}
= \sum_k (-1)^k \sum_{Q\in \cQ_k} 1_{Q\cap R}\end{equation} is identically zero.
Since $v\in R$, the functions \eqref{expression1} and \eqref{expression2} take the same value at $v$, so they are both zero.

This completes the proof that the expression in question is contained in $\I(\bV)$.
The fact that $\I(\bV)$ is generated by expressions of this form follows from \cite[Theorem A.2(2)]{EHL}.
\end{proof}

We conclude this section with two key lemmas about decompositions of polyhedra.
Let $\cQ$ be a decomposition of a polyhedron $P$ of dimension $d$ and let $1 \le k \le d$.  
Given $S \in \cQ_{k+1}$ and $R \in \cQ_{k-1}$ with $R\subset S$, let 
$$X(R,S) := \{Q\in \cQ_k \mid R \subset Q \subset S\}.$$  
Note that this set always has cardinality exactly equal to 2.
For any $R\in\cQ_{k-1}$, let $\Gamma_R$ be the simple graph with vertices $\{Q\in\cQ_k\mid R\subset Q\}$ and edges $\{X(R,S)\mid R\subset S\in\cQ_{k+1}\}$ (see Figure \ref{spokes}).

\begin{figure}[h]
    \centering
    \begin{tikzpicture}
     \node (c) at (5,0) {};

    \node at (-0.5,0) {$R$};
    \node at (72:2.25) {$Q$};
    \node at ({72*2}:2.25) {$Q'$};
    \node at ($({72*1.5}:2.5) + (c)$) {$S$};
    
    \filldraw[color=white, fill=black!10] (0,0) -- (72:2) arc (72:{72*2}:2) -- cycle;
    \filldraw[color=black!10, fill = black!10] ($(72:1.9)+(c)$) arc (72:{72*2}:1.9) -- ($({72*2}:2.1) + (c) $) arc ({72*2}:72:2.1) -- cycle;
    \filldraw[black] (0,0) circle (3pt) {};
    \node at ({72*1.5}:1.5) {$S$};
   \draw[] (c) circle (2) {};
    \foreach \a in {1,...,5}
    {
	\node (u\a) at ({\a*72}:2){};
	\draw (0,0) -- (u\a) {};
	\node (uu\a) at ($({\a*72}:2) + (5,0)$) {};
	\filldraw[black] (uu\a) circle (3pt) {};
    }
    \node at ($(72:2.5) + (c)$) {$Q$};
    \node at ($({72*2}:2.5) + (c)$) {$Q'$};
    \end{tikzpicture}\caption{A small piece of a decomposition including a vertex $R$ that is incident to five 1-dimensional polyhedra and five 2-dimensional polyhedra, along with a picture of the graph $\Gamma_R$.}\label{spokes}
\end{figure}
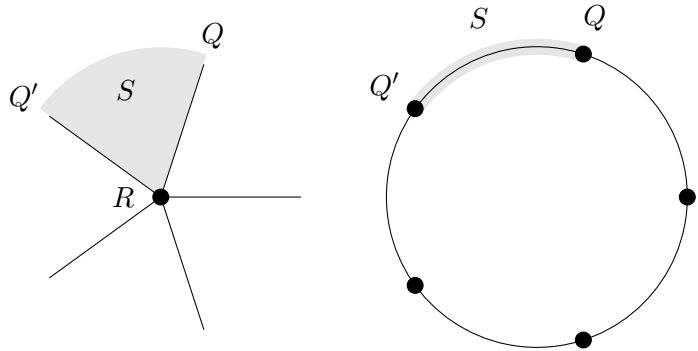

	


\begin{lemma} \label{lem:connected} The graph $\Gamma_R$ is connected.
\end{lemma}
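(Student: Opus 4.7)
The plan is to identify $\Gamma_R$ with (essentially) the $1$-skeleton of a polyhedral decomposition of a small sphere around a generic point of $R$, and then invoke the standard fact that a connected CW complex has connected $1$-skeleton.

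Choose a point $v$ in the relative interior of $R$. Because $R$ is internal to $\cQ$, the point $v$ lies in the interior of $P$ relative to $\operatorname{aff}(P)$, and moreover for any $Q \in \cQ$ one has $v \in Q$ if and only if $R \subseteq Q$ (if $v \in Q$, then $Q \cap R$ is a face of $R$ containing a relative interior point, forcing $Q \cap R = R$). Pick an affine subspace $A \subseteq \operatorname{aff}(P)$ passing through $v$ of dimension $d-k+1$, transverse to $\operatorname{aff}(R)$ in the sense that $A \cap \operatorname{aff}(R) = \{v\}$, and let $W \subset A$ be a small open ball around $v$ chosen so that $W$ meets only those faces of $\cQ$ containing $R$. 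Then $\{Q \cap W \mid R \subseteq Q \in \cQ\}$ forms a polyhedral subdivision of $W$ in which each face $Q \supseteq R$ contributes a cone at $v$ of dimension $\dim Q - k + 1$.

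Let $\Sigma \subset W$ be a sphere centered at $v$ of radius small enough that it transversely intersects every cell of this subdivision. Then $\Sigma$ is a $(d-k)$-sphere carrying an induced polyhedral (hence CW) decomposition. By dimension count, the $0$-cells are precisely the points $Q \cap \Sigma$ for $Q \in \cQ_k$ with $R \subseteq Q$, and the $1$-cells are precisely the arcs $S \cap \Sigma$ for internal $(k+1)$-faces $S$ with $R \subseteq S$, each such arc joining the two $0$-cells associated to $X(R,S)$. Since $R$ is internal, any face of $\cQ$ containing $R$ is automatically internal, so this accounts for all the vertices of $\Gamma_R$ and for all edges arising from $\cQ_{k+1}$.

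If $k < d$, then $\Sigma$ is a connected sphere, and the standard fact that $\pi_0$ of a CW complex equals $\pi_0$ of its $1$-skeleton (because the attaching map of any cell of dimension $\geq 2$ has connected domain $S^{n-1}$ and so cannot merge components) implies that the $1$-skeleton, namely $\Gamma_R$, is connected. If $k = d$, then $\Sigma = S^0$ consists of exactly two isolated points, corresponding to the two $d$-faces of $\cQ$ containing $R$ (there are exactly two because $R$ is an internal facet of $P$), and the convention $\cQ_{d+1} = \{P\}$ supplies the single edge $X(R,P)$ joining them, so $\Gamma_R$ is a single edge and thus connected. The main technical work is the clean setup of the transverse slice and the identification of $\Gamma_R$ with the $1$-skeleton on $\Sigma$; after that, the topology is elementary.
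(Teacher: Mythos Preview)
Your proof is correct and is essentially the same argument as the paper's: slice by a small transverse disk through an interior point of $R$, obtain a cellular decomposition of a $(d-k)$-sphere, and identify $\Gamma_R$ with its $1$-skeleton (handling $k=d$ separately via the convention $\cQ_{d+1}=\{P\}$). The paper's write-up is terser but the content is identical.
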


\begin{proof} 
First suppose $1 \le k < d$. Let $D\subset \bV$ be a small disk of dimension $d-k+1$ that intersects $R$ transversely at a single point of the relative interior of $R$.  Then intersection with the elements of $\cQ$ defines a cellular decomposition of the boundary of $D$, and $\Gamma_R$
is isomorphic to the 1-skeleton of this decomposition.  The lemma now follows from the fact that the 1-skeleton of any cellular decomposition of the sphere $S^{d-k}$ is connected.

When $k=d$, the graph $\Gamma_R$ consists of two vertices connected by an edge. 
The vertices correspond to the two faces $Q_1, Q_2\in \cQ_d$ that have $R$ as a facet, and the edge is $X(R,P)$.
\end{proof}

\begin{lemma}\label{bounded complex}
Let $\cQ$ be a decomposition of a polyhedron $P$, and let $B$ be the union of the bounded faces of $\cQ$.  If $B$ is nonempty, then the inclusion of the pair $(B, \partial P\cap B)$ into $(P,\partial P)$ is a homotopy equivalence.
\end{lemma}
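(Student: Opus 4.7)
The plan is to construct a strong deformation retraction of $(P, \partial P)$ onto $(B, \partial P \cap B)$ by induction on $\dim P$, using the homotopy extension property for polyhedral cofibrations. A preliminary observation is that every face of $\cQ$ is pointed. For any polyhedron $Q = \{v \in \bV : Av \leq b\}$, the lineality space is $L_Q = \{v : Av = 0\}$; for any face $Q' \subseteq Q$, obtained by forcing some rows of $A$ to equalities, we still have $L_{Q'} = L_Q$ since those rows vanish on $\{v : Av = 0\}$. Since $\cQ$ decomposes the connected set $P$, any two faces of $\cQ$ can be connected by a chain of faces with consecutive pairs sharing a common subface, forcing all faces to share a common lineality. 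The hypothesis $B \neq \emptyset$ supplies a face of lineality $0$, so every face of $\cQ$ is pointed; in particular every face $Q$ has a vertex, and so $B \cap Q \neq \emptyset$.

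The base case $\dim P = 0$ is trivial. For $\dim P = n$, assume the lemma for all polyhedra of dimension less than $n$, and build the deformation retraction $H \colon P \times [0,1] \to P$ face-by-face in order of increasing dimension (handling faces of $\partial P$ before internal faces). On each face $Q$, the retraction on $\partial Q$ is already defined with image in $B \cap \partial Q$; extend it to $Q$ in two steps: first use the homotopy extension property for the cofibration $\partial Q \hookrightarrow Q$ to extend the homotopy to all of $Q$, then post-compose with a deformation retraction of $Q$ onto $B \cap Q$ that is the identity on $B \cap \partial Q$. If $\dim Q < n$ the latter is provided by the inductive hypothesis (applied to $Q$ with its natural face structure); if $\dim Q = n$, we construct it directly from a linear functional $\lambda$ strictly positive on $\operatorname{rec}(Q) \setminus \{0\}$, which exists because $Q$ is pointed, and whose gradient-like flow carries $Q$ onto its bounded subcomplex $B \cap Q$.

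The main obstacle is the direct construction for top-dimensional faces $Q$ of $\cQ$, where the inductive hypothesis does not apply. The $\lambda$-flow sends each face of $Q$ onto its $\lambda$-minimum subface, which has trivial recession cone and is hence bounded and contained in $B \cap Q$. Combining this flow with contractibility of $B \cap Q$ (provable by the same induction applied to the face structure of $Q$) and the homotopy extension property yields the required compatible retraction. The preservation of $\partial P$ under the global homotopy follows from processing $\partial P$-faces before internal ones, so that $H$ on $\partial P$ depends only on faces lying in $\partial P$.
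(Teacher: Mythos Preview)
Your preliminary observation that all faces of $\cQ$ share a common lineality space (hence are all pointed once $B\neq\varnothing$) is correct and matches the paper's opening step. The overall plan of deformation retracting away the unbounded part is also the right one. But the execution is more elaborate than necessary, and the treatment of top-dimensional faces has a genuine gap.

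The paper avoids the induction on $\dim P$ and the homotopy extension machinery entirely. It uses only the fact that any unbounded pointed polyhedron admits a strong deformation retraction onto its boundary, and then applies this to the unbounded faces of $\cQ$ one at a time in \emph{decreasing} order of dimension. Since each such retraction fixes $\partial Q$ pointwise, it extends by the identity on the complement, and retractions of distinct faces of the same dimension agree on their intersections. After all unbounded faces have been collapsed, one is left with $B$. Preservation of $\partial P$ is automatic, because each retraction either fixes $\partial P$ (when $Q$ is internal) or stays inside $Q\subset\partial P$ (when $Q$ lies on the boundary).

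Your claim that a ``gradient-like flow'' of $\lambda$ carries a top-dimensional face $Q$ onto $B\cap Q$ and ``sends each face of $Q$ onto its $\lambda$-minimum subface'' does not hold as stated. Take $Q=\{(a,b):a\ge 0,\ b\ge 0,\ a+b\ge 1\}$ with $\lambda=a+2b$, so $\lambda>0$ on $\operatorname{rec}(Q)\setminus\{0\}$. The straight-line flow along $-\nabla\lambda=(-1,-2)$ fixes every boundary point (at each such point this direction exits $Q$ immediately), so its image is all of $\partial Q$, including the two unbounded rays, not the bounded edge $B\cap Q$. Nor can any single continuous flow send each face to its own $\lambda$-minimum: for the ray $F=\{a=0,\ b\ge 1\}$ one has $F_\lambda=\{(0,1)\}$, whereas $Q_\lambda=\{(1,0)\}$, and points of $F$ cannot be sent to both. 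To pass from $\partial Q$ down to $B\cap Q$ you must iterate the boundary retraction on the unbounded faces of $\partial Q$---which is exactly the paper's descending-dimension argument applied to the face poset of $Q$ itself. Once that argument is available for a single polyhedron, it applies equally well to the entire decomposition $\cQ$, rendering the surrounding HEP and inductive scaffolding unnecessary. (Note also that your appeal to ``contractibility of $B\cap Q$, provable by the same induction'' is problematic, since $\dim Q=\dim P$ and the inductive hypothesis does not cover it.)
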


\begin{proof}
If there is any element of $\cQ$ with a nontrivial lineality space (equivalently, with no bounded faces), then every element of $\cQ$ has this property, and therefore $B$ is empty.
We may thus assume that every element of $\cQ$ has a bounded face.

Every unbounded polyhedron with a trivial lineality space admits a deformation retraction onto its boundary.  Applying these deformation retractions one at a time to the unbounded elements of $\cQ$, starting with those of maximal dimension,
we obtain a deformation retraction of $P$ onto $B$.  This restricts to a deformation retraction of $\partial P$ onto $\partial P \cap B$, and provides a homotopy inverse to the inclusion $(B, \partial P\cap B) \to (P,\partial P)$.
\end{proof}

\subsection{Decompositions of matroids}
Let $E$ be a finite set, and let $\R^E$ be the real vector space with basis $\{v_e \mid e \in E\}$. For any subset $S \subset E$, define
$$v_S := \sum_{e \in S} v_e\in\R^E.$$ 
For each $e\in E$, let \revise{$\delta_e$} be the linear functional on $\R^E$ defined by the property that $\revise{\delta_e(v_f) = | \{e\} \cap \{f\} |}$.
For any subset $S\subset E$, let
$$\revise{\delta_S} := \sum_{e\in S}\revise{\delta_e}.$$
Thus, for example, we have $$\revise{\delta}_{T}(v_S) = |S \cap T|.$$ 

Given a matroid $M$ on the ground set $E$, we define its {\bf base polytope} $P(M)\subset\R^E$
to be the convex hull of the set $\{v_B\mid \text{$B$ a basis for $M$}\}$. 
We write $d(M) := \dim P(M)$, which is equal to $|E|$ minus the number of connected components \revise{(i.e. minimal nonempty direct summands)} 
of $M$.  The entire polytope $P(M)$ lies in the affine subspace $\{v \mid \revise{\delta}_E(v) = \rk(M)\}$.

Let $\Mat(E)$ be the free abelian group with basis given by matroids on $E$, which embeds naturally in $\Pol(\R^E)$ \revise{via the map $M \mapsto P(M)$}. 
Let $\I(E) := \Mat(E)\cap \I(\R^E)$.
An abelian group homomorphism $\Mat(E)\to A$ is called {\bf valuative} if it vanishes on $\I(E)$.
Five such examples appear in the introduction,
all of which take values in the group $A = \Z[t]$.

Given a matroid $M$ on $E$, a {\bf decomposition} of $M$ is a collection $\cN$ of matroids on $E$ with the property that
$\cQ := \{P(N)\mid N\in\cN\}$ is a decomposition of $P(M)$.  We refer to elements of $\cN$ as {\bf faces} of the decomposition, 
and we say that a face $N\in\cN$ is {\bf internal}
if its base polytope is an internal face of $\cQ$.  We write $\cN_k$ to denote the set of internal faces $N\in\cN$ with $d(N) = k$ for all $k\leq d$, and we write $\cN_{d+1} := \{M\}$.
The following result follows from Proposition \ref{decomp} and \cite[Corollary 3.9]{derksen_fink}.

\begin{proposition}\label{mat-decomp}
If $\cN$ is a decomposition of $M$, then 
$$
\sum_k (-1)^k \sum_{N\in \cN_k} N\in\I(E).$$
Furthermore, $\I(E)$ is spanned by elements of this form.
\end{proposition}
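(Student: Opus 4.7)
The first statement follows directly from Proposition \ref{decomp}. By definition, if $\cN$ is a matroid decomposition of $M$, then $\cQ := \{P(N) \mid N \in \cN\}$ is a polyhedral decomposition of $P(M)$, with a dimension-preserving bijection $\cN_k \leftrightarrow \cQ_k$ given by $N \mapsto P(N)$. Under the injection $\Mat(E) \hookrightarrow \Pol(\R^E)$, the element $\sum_k (-1)^k \sum_{N \in \cN_k} N$ maps to $\sum_k (-1)^k \sum_{Q \in \cQ_k} Q$, which lies in $\I(\R^E)$ by Proposition \ref{decomp}. Hence the original element lies in $\Mat(E) \cap \I(\R^E) = \I(E)$.

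For the spanning claim, the key point is that one cannot merely restrict Proposition \ref{decomp} to matroid polytopes: a general polyhedral decomposition of $P(M)$ need not be a matroid decomposition, because its internal faces can fail to be matroid polytopes. The plan is to invoke \cite[Corollary 3.9]{derksen_fink}, which produces an explicit presentation of $\I(E)$ whose relations come precisely from matroid decompositions (typically described via families of Schubert matroids or other matroid polytope subdivisions). Concretely, one must check that the generating set provided by Derksen and Fink coincides with, or is a special case of, the alternating-sum relations arising from matroid decompositions in our sense; this is a matter of unwinding definitions once the Derksen--Fink presentation is in hand.

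The main obstacle, and the reason an external input is needed, is this passage from arbitrary polyhedral decompositions to matroid decompositions. A self-contained approach would require showing that whenever a polyhedral decomposition $\cQ$ of $P(M)$ has non-matroidal internal faces, one can refine $\cQ$ (or modify the expression $\sum_k (-1)^k \sum_{Q \in \cQ_k} Q$) into a $\Z$-linear combination of alternating sums coming from matroid decompositions, with any error terms themselves lying in the span of matroid decomposition relations. Controlling such refinements inductively, while ensuring that matroid polytope faces are preserved at every stage, is exactly the technical content supplied by \cite[Corollary 3.9]{derksen_fink}, and it is the step we would not attempt to reprove.
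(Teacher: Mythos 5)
Your proof is correct and follows exactly the route the paper takes: the containment is immediate from Proposition \ref{decomp} via the embedding $\Mat(E)\hookrightarrow\Pol(\R^E)$ and the definition $\I(E)=\Mat(E)\cap\I(\R^E)$, and the spanning claim is delegated to \cite[Corollary 3.9]{derksen_fink}. Your discussion of why the spanning statement genuinely requires the Derksen--Fink input (polyhedral subdivisions of a matroid polytope need not be matroid subdivisions) is a correct and useful elaboration of what the paper leaves implicit.
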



\begin{example}\label{octahedron 2}
Example \ref{octahedron 1} describes a decomposition $\cN$ of the uniform matroid $M = U_{2,4}$.
The matroids $N$, $N'$, and $N''$ in that example are the three internal faces of $\cN$.
There are also many faces that are not internal, corresponding to the eight facets, twelve edges, and six vertices of $P(M)$. The generator 
of $\I(E)$
corresponding to this decomposition is depicted in Figure \ref{generator}. \revise{Note that, although $M$ appears in the expression $-M + N + N' - N''$, it is not a face of $\cN$.}
\end{example}

\begin{figure}[h]
    \centering
	\begin{tikzpicture}  
	[scale=0.5,auto=center,rotate around y=0] 
	\tikzstyle{edges} = [thick];

    \node[label=$-$] at (-3.5,-0.5,0) {};

    \node[style={circle,scale=0.8, fill=black, inner sep=0pt}] (14) at (-1.5,0,-2.398) {};
    \node[style={circle,scale=0.8, fill=black, inner sep=0pt},] (24) at (2.398,0,-1.5) {};
    \node[style={circle,scale=0.8, fill=black, inner sep=0pt},] (23) at (1.5,0,2.398) {};
    \node[style={circle,scale=0.8, fill=black, inner sep=0pt},] (13) at (-2.398,0,1.5) {};
    \node[style={circle,scale=0.8, fill=black, inner sep=0pt},] (12) at (0,2.5,0) {};
    \node[style={circle,scale=0.8, fill=black, inner sep=0pt},] (34) at (0,-2.5,0) {};

    \node[label=$+$] at (3.5,-0.5,0) {};
    
    \node[style={circle,scale=0.8, fill=black, inner sep=0pt}] (14N1) at ({-1.5+7},0,-2.398) {};
    \node[style={circle,scale=0.8, fill=black, inner sep=0pt}] (24N1) at ({2.398+7},0,-1.5) {};
    \node[style={circle,scale=0.8, fill=black, inner sep=0pt}] (23N1) at ({1.5+7},0,2.398) {};
    \node[style={circle,scale=0.8, fill=black, inner sep=0pt}] (13N1) at ({-2.398+7},0,1.5) {};
    \node[style={circle,scale=0.8, fill=black, inner sep=0pt}] (12N1) at (7,2.5,0) {};

    \node[label=$+$] at (10.5,-0.5,0) {};

    \node[style={circle,scale=0.8, fill=black, inner sep=0pt}] (14N2) at ({-1.5+14},0,-2.398) {};
    \node[style={circle,scale=0.8, fill=black, inner sep=0pt}] (24N2) at ({2.398+14},0,-1.5) {};
    \node[style={circle,scale=0.8, fill=black, inner sep=0pt}] (23N2) at ({1.5+14},0,2.398) {};
    \node[style={circle,scale=0.8, fill=black, inner sep=0pt}] (13N2) at ({-2.398+14},0,1.5) {};
    \node[style={circle,scale=0.8, fill=black, inner sep=0pt}] (34N2) at (14,-2.5,0) {};

    \node[label=$-$] at (17.5,-0.5,0) {};

    \node[style={circle,scale=0.8, fill=black, inner sep=0pt}] (14N12) at ({-1.5+21},0,-2.398) {};
    \node[style={circle,scale=0.8, fill=black, inner sep=0pt}] (24N12) at ({2.398+21},0,-1.5) {};
    \node[style={circle,scale=0.8, fill=black, inner sep=0pt}] (23N12) at ({1.5+21},0,2.398) {};
    \node[style={circle,scale=0.8, fill=black, inner sep=0pt}] (13N12) at ({-2.398+21},0,1.5) {};

    \draw[dashed] (13) -- (14);
    \draw[dashed] (14) -- (24);
    \draw (24) -- (23);
    \draw (23) -- (13); 
    \draw (12) -- (13);
    \draw[dashed] (12) -- (14);
    \draw (12) -- (24);
    \draw (12) -- (23);
    \draw (34) -- (13);
    \draw[dashed] (34) -- (14);
    \draw (34) -- (24);
    \draw (34) -- (23);

    \draw[dashed] (13N1) -- (14N1);
    \draw[dashed] (14N1) -- (24N1);
    \draw (24N1) -- (23N1);
    \draw (23N1) -- (13N1);
    \draw (12N1) -- (13N1);
    \draw[dashed] (12N1) -- (14N1);
    \draw (12N1) -- (24N1);
    \draw (12N1) -- (23N1);

    \draw (13N2) -- (14N2);
    \draw (14N2) -- (24N2);
    \draw (24N2) -- (23N2);
    \draw (23N2) -- (13N2);
    \draw (34N2) -- (13N2);
    \draw[dashed] (34N2) -- (14N2);
    \draw (34N2) -- (24N2);
    \draw (34N2) -- (23N2);

    \draw (13N12) -- (14N12);
    \draw (14N12) -- (24N12);
    \draw (24N12) -- (23N12);
    \draw (23N12) -- (13N12);

    \end{tikzpicture} \caption{The generator of $\I(E)$ from the decomposition of $M = U_{2,4}$.}\label{generator}
\end{figure}
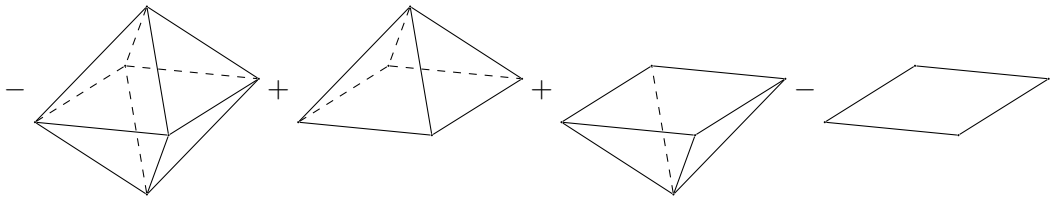

\begin{example}\label{triv decomp}
Any matroid $M$ has a {\bf trivial decomposition} consisting of $M$ itself along with all of the matroids $N$
such that $P(N)$ is a face of $P(M)$.  In this example, $M$ is the only internal face.  Note that the corresponding generator of $\I(E)$ is zero.
Moreover, the trivial decomposition is the only decomposition containing $M$ itself. 
\end{example}

\subsection{Relaxation}\label{sec:relaxation}
\revise{A large class of matroid decompositions can be constructed using relaxation, which we now review. If the reader is content without additional examples of decompositions, they are encouraged to skip this section. We return to the study of relaxations in Section \ref{sec:eq relax}.}

Let $M$ be a matroid 
on the ground set $E$.
A flat $F\subset M$ is called {\bf stressed} if the \revise{restriction} $M^F$ (obtained by deleting $E\setminus F$)
and the contraction $M_F$ (obtained by contracting a basis for $F$ and deleting the rest of $F$) 
are both uniform. Given a stressed flat $F$ of rank $r$, 
Ferroni and Schr\"oter define $\cusp(F)$ to be the collection of $k$-subsets $S\subset E$ such that $|S\cap F| = r+1$.  
If $\cB$ is the collection of bases of $M$, they prove that $\cB\cup\cusp(F)$ is the collection of bases for a new matroid $\tM$,
which they call the {\bf relaxation} of $M$ with respect to $F$ \cite[Theorem 3.12]{FSVal}.
If $F$ is a circuit-hyperplane, then $\cusp(F) = \{F\}$, and this coincides with the usual notion of relaxation. If $F$ is a hyperplane, then this coincides with the notion of relaxation of a stressed hyperplane studied in \cite{FNV}.

Let $M$, $F$ and $\tM$ be as in the previous paragraph, and let $k$ be the rank of $M$. 
Consider the matroid
$$\Pi_{r,k,\revise{F,E}} := U_{k-r,E\setminus F}\sqcup U_{r,F},$$
where $U_{d,S}$ denotes the uniform matroid of rank $d$ on the set $S$, and $\sqcup$ denotes the direct sum\footnote{We eschew the more standard notation of $\oplus$ for direct sum of matroids in order
to avoid conflict with formal direct sums in the additive closure of the category of matroids that we will introduce in the next section.} of matroids, so that a basis for $\Pi_{r,k,\revise{F,E}}$
is the disjoint union of a basis for $U_{k-r,E\setminus F}$ and a basis for $U_{r,F}$.
Let $\Lambda_{r,k,\revise{F,E}}$ denote the relaxation of $\Pi_{r,k,\revise{F,E}}$ with respect to the stressed flat $F$ of $\Pi_{r,k,\revise{F,E}}$.
The base polytope of $\Pi_{r,k,\revise{F,E}}$
is a face of the base polytopes of both $\Lambda_{r,k,\revise{F,E}}$ and $M$.
For $\Lambda_{r,k,\revise{F,E}}$, it is the facet on which the linear functional $\revise{\delta}_{E\setminus F}$ is maximized.
For $M$ it is the face on which the linear functional $\revise{\delta}_F$ is maximized, 
and it is a facet unless $M = \Pi_{r,k,\revise{F,E}}$.
Let $\cN$ be the collection of matroids consisting of $M$, $\Lambda_{r,k,\revise{F,E}}$ 
, and all of their faces.
\revise{The following theorem is proved in \cite[Theorem 3.30]{FSVal}}.

\begin{theorem}\label{relax, dude}
The collection $\cN$ is a decomposition of $\tM$.
If $M = \Pi_{r,k,\revise{F,E}}$, then $\cN$ is the trivial decomposition of $\Lambda_{r,k,\revise{F,E}}$.
If not, then the only internal faces of $\cN$ are $M$, $\Lambda_{r,k,\revise{F,E}}$, and $\Pi_{r,k,\revise{F,E}}$.
\end{theorem}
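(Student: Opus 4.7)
The plan is to exhibit the decomposition geometrically by slicing the base polytope $P(\tM)$ with the affine hyperplane $\{\delta_F = r\}$. If this slicing identifies $P(M)$ with the sublevel set $\{\delta_F \leq r\}$, identifies $P(\Lambda_{r,k,F,E})$ with the superlevel set $\{\delta_F \geq r\}$, and identifies their intersection with $P(\Pi_{r,k,F,E})$, then the decomposition axioms and the identification of internal faces will follow almost formally.

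I would begin by computing $\delta_F$ on the vertices of each polytope, using that $\delta_F(v_B) = |B \cap F|$. Since $F$ has rank $r$ in $M$, every basis of $M$ satisfies $|B \cap F| \leq r$; by definition every element of $\cusp(F)$ satisfies $|B \cap F| = r+1$; bases of $\Pi_{r,k,F,E} = U_{k-r,E\setminus F} \sqcup U_{r,F}$ satisfy $|B \cap F| = r$ exactly; and bases of $\Lambda_{r,k,F,E}$, being the union of the previous two types, take values in $\{r, r+1\}$.

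The heart of the argument is to prove the three equalities
\[
P(\tM) \cap \{\delta_F \leq r\} = P(M), \quad P(\tM) \cap \{\delta_F \geq r\} = P(\Lambda_{r,k,F,E}), \quad P(\tM) \cap \{\delta_F = r\} = P(\Pi_{r,k,F,E}).
\]
In each case the containment $\supseteq$ is immediate from the vertex computation. For the reverse containments, I would use the fact that edges of any matroid base polytope correspond to single basis exchanges, along which $\delta_F$ changes by at most one in absolute value; since the hyperplane $\{\delta_F = r\}$ sits at an integer level and all vertex values are integers, no new vertices are created by slicing. Hence the vertex set of $P(\tM) \cap \{\delta_F \leq r\}$ is exactly the set of basis vectors $v_B$ of $\tM$ with $|B \cap F| \leq r$, which by the computation above is precisely the collection of bases of $M$, and similarly for the other two slices. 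The third equality additionally requires that the bases $B$ of $\tM$ with $|B \cap F| = r$ coincide with the bases of $\Pi_{r,k,F,E}$; this uses the stressed hypothesis, since $M^F = U_{r,F}$ and $M_F = U_{k-r,E\setminus F}$ force $B \cap F$ to be an arbitrary $r$-subset of $F$ and $B \setminus F$ to be an arbitrary $(k-r)$-subset of $E \setminus F$, independently.

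With the three slicing identities in hand, $P(M)$ and $P(\Lambda_{r,k,F,E})$ cover $P(\tM)$ and intersect in the common face $P(\Pi_{r,k,F,E})$ (realized as the face on which $\delta_F$ is maximized in $P(M)$ and minimized in $P(\Lambda_{r,k,F,E})$), so $\cN$ is a decomposition. For the internal faces: if $M = \Pi_{r,k,F,E}$ then $\tM = \Lambda_{r,k,F,E}$ and $P(M)$ is already a face of $P(\Lambda_{r,k,F,E})$, so $\cN$ reduces to the trivial decomposition of $\Lambda_{r,k,F,E}$; otherwise $P(M)$ and $P(\Lambda_{r,k,F,E})$ are top-dimensional in $P(\tM)$ and their intersection $P(\Pi_{r,k,F,E})$ lies strictly between the minimum and maximum values of $\delta_F$ on $P(\tM)$, hence in the relative interior, so exactly these three polytopes give internal faces. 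The main obstacle is justifying the slicing identities cleanly; an alternative route, which may be cleaner, is to compare facet-defining inequalities directly and observe that the rank functions of $M$ and $\tM$ agree off $F$, where they differ by one, so that $P(M)$ is obtained from $P(\tM)$ by strengthening the single inequality $\delta_F \leq r+1$ to $\delta_F \leq r$.
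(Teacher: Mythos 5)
The paper does not actually prove this statement: Theorem \ref{relax, dude} is quoted from \cite[Theorem 3.30]{FSVal}, so there is no internal argument to compare against. Judged on its own, your slicing proof is correct and is essentially the natural geometric argument. The combinatorial inputs check out: because $M^F$ and $M_F$ are uniform (and the contraction does not depend on the chosen basis of $F$), every $k$-subset $B$ with $|B\cap F|=r$ is a basis of $M$, so the vertices of $P(\tM)$ lying in $\{\delta_F\le r\}$, $\{\delta_F\ge r\}$, $\{\delta_F=r\}$ are exactly the basis vectors of $M$, $\Lambda_{r,k,F,E}$, $\Pi_{r,k,F,E}$ respectively; and the key geometric point---that slicing along the integral hyperplane $\{\delta_F=r\}$ creates no new vertices because edges of a matroid base polytope are parallel to vectors $v_e-v_f$, along which $\delta_F$ changes by at most one---is exactly what makes the three identities, and hence the decomposition axioms, go through. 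What your categorical-valuativity-free argument buys is a self-contained proof of a statement the paper outsources.

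Two points to tighten. First, to conclude that the three named faces are the \emph{only} internal ones, you should also say why every proper face of $P(M)$, $P(\Lambda_{r,k,F,E})$, or $P(\Pi_{r,k,F,E})$ is non-internal: any proper face of $P(\tM)\cap\{\delta_F\le r\}$ is either of the form $G\cap\{\delta_F\le r\}$ for a proper face $G$ of $P(\tM)$, hence contained in $\partial P(\tM)$, or of the form $G\cap\{\delta_F=r\}$, hence a face of $P(\Pi_{r,k,F,E})$; and any proper face of $P(\Pi_{r,k,F,E})=P(\tM)\cap\{\delta_F=r\}$ is $G\cap\{\delta_F=r\}$ for a proper face $G$ of $P(\tM)$, so again non-internal. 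Second, the ``alternative route'' at the end is wrong as stated: the rank functions of $M$ and $\tM$ do \emph{not} agree off $F$. For every $A\subseteq F$ one has $\rk_{\tM}(A)=\min(|A|,r+1)$, so every subset of $F$ of cardinality exceeding $r$ has its rank increased, and suitable supersets of $F$ can change rank as well. The conclusion you wanted from it, namely $P(M)=P(\tM)\cap\{\delta_F\le r\}$, is true, but it is exactly what your main argument establishes, so the rank-function detour should simply be deleted. Finally, you implicitly assume $\cusp(F)\ne\varnothing$ (otherwise $\tM=M$ and $\Lambda_{r,k,F,E}=\Pi_{r,k,F,E}$ and the statement degenerates); this is harmless but worth a sentence.
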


\begin{example}\label{octahedron 3}
In Example \ref{octahedron 1}, the matroid $N''$ has two stressed flats (both circuit-hyperplanes), namely $H = \{1,2\}$ and $H' = \{3,4\}$. \revise{Notice that $N'' = \Pi_{1,2,\revise{H,E}} = \Pi_{1,2,\revise{H',E}}$.} Relaxing $H$ gives us the trivial decomposition of 
$N = \Lambda_{1,2,\revise{H,E}}$.  If we then relax $H'$, which remains a stressed hyperplane of $N$, we obtain
the decomposition of $M$ from Example \ref{octahedron 2}.
Alternatively, we could have first relaxed $H'$ to obtain the trivial decomposition of $N'$, and then relaxed
$H$ to obtain the same decomposition of $M$.
\end{example}

Example \ref{octahedron 3} suggests a slight generalization of Theorem \ref{relax, dude}
 in which we relax more than one stressed flat at once. 
Suppose that $\Gamma$ is a finite group that acts on $E$ by permutations, with the property that $\Gamma$ fixes the matroid $M$.  Let $F$ be a stressed flat of $M$, and let $\cF := \{\gamma F\mid \gamma\in\Gamma\}$
be the set of all stressed flats in the same orbit as $F$.  Now define $\tM$ to be the relaxation of $M$ with respect to {\em all} of the elements of $\cF$.  More precisely, if $\cB$ is the collection of bases for $M$, then the collection of bases for $\tM$ is
$$\cB \cup \bigcup_{G\in\cF}\cusp(G).$$
This is a matroid because we can relax one flat at a time, and at each step, each element of $\cF$ that we have not yet relaxed remains a stressed flat.
Let $\cN$ be the collection of matroids consisting of $M$, $\Lambda_{r,k,\revise{G,E}}$ for all $G\in\cF$, and all matroids whose
polytopes are faces of $P(M)$ or $P(\Lambda_{r,k,\revise{G,E}})$.  

\begin{theorem}\label{relax symmetrically}
The collection $\cN$ is a decomposition of $\tM$.
If $M = \Pi_{r,k,\revise{F,E}}$, then $\cN$ is the trivial decomposition of $\Lambda_{r,k,\revise{F,E}}$.
If not, then the only internal faces of $\cN$ are $M$, $\Lambda_{r,k,\revise{G,E}}$ for all $G\in\cF$, and $\Pi_{r,k,\revise{G,E}}$ for all $G\in\cF$.
\end{theorem}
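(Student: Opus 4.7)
The plan is to induct on $n := |\cF|$, taking Theorem \ref{relax, dude} as the base case $n = 1$. For $n \geq 2$, I would fix any $G \in \cF$, set $\cF' := \cF \setminus \{G\}$, and let $M'$ denote the matroid obtained from $M$ by relaxing every flat in $\cF'$. As noted in the paragraph preceding the theorem, $G$ remains a stressed flat of $M'$, so Theorem \ref{relax, dude} applied to $(M', G)$ yields a decomposition $\cN''$ of $\tM$ whose internal faces are $M'$, $\Lambda_{r,k,G,E}$, and $\Pi_{r,k,G,E}$; the induction hypothesis applied to $(M, \cF')$ yields a decomposition $\cN'$ of $M'$ whose internal faces are $M$ together with $\Lambda_{r,k,G',E}$ and $\Pi_{r,k,G',E}$ for each $G' \in \cF'$. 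I would then define $\cN$ by refining $\cN''$: replace the face $M' \in \cN''$ with the decomposition $\cN'$, obtaining the collection described in the theorem statement.

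To show that $\cN$ is a decomposition of $\tM$, I would verify the three axioms directly. Closure under faces is immediate by construction, and the union $\bigcup_{Q \in \cN} Q = P(M') \cup P(\Lambda_{r,k,G,E}) = P(\tM)$ by $\cN''$. For pairwise intersections: two elements coming from $\cN'$ intersect in a common face by induction; a face of $\Lambda_{r,k,G,E}$ and a face of an element of $\cN'$ intersect in a face contained in the common polytope $P(\Pi_{r,k,G,E})$ by $\cN''$. The crucial observation enabling the gluing is that $\Pi_{r,k,G,E}$ is simultaneously a face of $P(M) \in \cN'$ (since every basis of $\Pi_{r,k,G,E}$ is a basis of $M$, because $G$ is a stressed flat of $M$) and a face of $P(M')$ (by Theorem \ref{relax, dude} applied to $(M', G)$), so the cap $P(\Lambda_{r,k,G,E})$ attaches along a face that already appears in $\cN'$.

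The identification of the internal faces in the non-degenerate case is then obtained by tracking which faces lie in the interior of $P(\tM)$: a facet of $P(M)$ becomes internal exactly when it is covered by an attached cap, i.e., when it equals $P(\Pi_{r,k,G',E})$ for some $G' \in \cF$, and no other facets of the caps become internal since caps attach to $P(M)$ only along their designated faces. The degenerate case $M = \Pi_{r,k,F,E}$ reduces immediately to Theorem \ref{relax, dude}. I expect the main technical obstacle to be the combinatorial bookkeeping that ensures $\cN$ agrees with the collection defined before the theorem and that the list of internal faces is complete and non-redundant; both of these should follow cleanly from the above attachment-face observation together with the inductive hypothesis.
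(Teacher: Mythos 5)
Your construction is the same as the paper's: the decomposition $\cN$ is obtained by applying Theorem \ref{relax, dude} once for each element of $\cF$, refining at each stage, and the degenerate case is dispatched identically. The extra care you take verifying the decomposition axioms for the glued object is fine (the key fact you use, that $P(\Pi_{r,k,G,E})$ is a face of $P(M)$ and of $P(\Lambda_{r,k,G,E})$, is established in Section \ref{sec:relaxation} --- though note that ``every basis of $\Pi_{r,k,G,E}$ is a basis of $M$'' only gives a subpolytope, not a face; the face property comes from $P(\Pi_{r,k,G,E})$ being the locus where $\delta_G$ is maximized on $P(M)$).

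The gap is in your identification of the internal faces. You only argue about \emph{facets}: which facets of $P(M)$ become internal, and that ``no other facets of the caps become internal.'' But an internal face of $\cN$ can have any dimension, and your argument does not rule out, say, a lower-dimensional face lying in the intersection of two caps $P(\Lambda_{r,k,G,E})\cap P(\Lambda_{r,k,G',E})$, or a proper face of some $P(\Pi_{r,k,G,E})$, from being internal. The paper closes this with a short argument you are missing: any non-maximal internal face $N$ must be a face of the base polytopes of at least two maximal faces of $\cN$ (a relative interior point of $P(N)$ lies in the interior of $P(\tM)$, and a single maximal cell cannot cover a neighborhood of a point on its own boundary), hence in particular $P(N)$ is a face of $P(\Lambda_{r,k,G,E})$ for some $G$. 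Since the order in which the flats of $\cF$ are relaxed is immaterial, one may assume $G$ was relaxed \emph{last}; then Theorem \ref{relax, dude}, applied to that final relaxation, says the only non-maximal internal face supported on $P(\Lambda_{r,k,G,E})$ is $\Pi_{r,k,G,E}$. Without this reordering trick (or an equivalent argument in your inductive framework), the completeness of the list of internal faces is not established.
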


\begin{proof}
The case where $M = \Pi_{r,k,\revise{F,E}}$ is trivial.
Otherwise, by repeatedly applying Theorem \ref{relax, dude}, once for each element of $\cF$, we obtain a decomposition $\cN$ of $\tM$ with maximal faces consist of $M$ and $\{\Lambda_{r,k,\revise{G,E}}\mid G\in\cF\}$,
and whose internal faces include $\{\Pi_{r,k,\revise{G,E}}\mid G\in\cF\}$.  We need only prove that there are no additional internal faces.

Suppose that $N$ is a non-maximal internal face.  Then $P(N)$ is necessarily a face of the base polytope of at least two maximal faces of $\cN$.  
In particular, this implies that it is a face of $P(\Lambda_{r,k,\revise{G,E}})$ for some $G\in\cF$.  We may assume without loss of generality that $G$ was the last flat that we relaxed, 
in which case Theorem \ref{relax, dude} tells us that $N = \Pi_{r,k,\revise{G,E}}$.
\end{proof}

\begin{example}\label{octahedron 4}
Let $D_4$ act on the matroid $N''$ from Example \ref{octahedron 1} by symmetries of the square $P(N'')$.
If $F = H$, then $\cF = \{H, H'\}$.  We could also achieve this working only with the subgroup $\mathfrak{S}_2\subset D_4$ generated by the involution $\gamma = (13)(24)$.
\end{example}

\section{Valuative functors}\label{sec:categories}
Our goal in this section is to give precise definitions of categories of polyhedra and matroids, and what it means for a functor from
such a category to be valuative. 

\subsection{Categories of polyhedra and matroids}
We begin by defining a category $\cPol$ in which an object consists of a pair $(\bV,P)$, where $\bV$ is a finite dimensional real
vector space and $P$ is a nonempty polyhedron in $\bV$, and a morphism from $(\bV,P)$ to $(\bV',P')$ is a linear isomorphism $\varphi:\bV\to\bV'$ such that $P'\subset\varphi(P)$.
If $P' \subset P$, we will write $\iota_{P,P'}$ to denote the morphism from $(\bV,P)$ to $(\bV,P')$ given by the identity map $\id_{\bV}$.
For any $\bV$, we define $\cPol(\bV)$ to be the full subcategory of $\cPol$ consisting of polyhedra in $\bV$, and we define $\cPolid(\bV)$ to be the subcategory of $\cPol(\bV)$
consisting only of morphisms of the form $\iota_{P,P'}$.  Equivalently, $\cPolid(\bV)$ is the category associated with the poset of polyhedra in $\bV$, ordered by reverse inclusion.

Similarly, 
let $\cMat$ be the category in which an object consists of a pair $(E,M)$, where $E$ is a finite set and $M$ is a matroid on $E$, and a morphism from $(E,M)$ to $(E',M')$ is a bijection $\varphi:E\to E'$
such that $P(M')\subset \varphi(P(M))$.
In other words, $\cMat$ is the subcategory of $\cPol$ whose objects are base polytopes of matroids and whose morphisms come from bijections of ground sets.  Morphisms in $\cMat$ are sometimes called {\bf weak maps} of matroids.
For any finite set $E$, we define $\cMat(E)$ to be the full subcategory of $\cMat$ consisting of matroids on $E$, and we define $\cMatid(E)$
to be the subcategory of $\cMat(E)$ consisting of only morphisms $\iota_{M,M'}:M\to M'$ given by the identity map $\id_E$. 
Note that $\iota_{M,M'}:M\to M'$ is a morphism if and only if every basis for $M'$ is also a basis for $M$.

\begin{remark} The category $\cPolid(\bV)$ is thin in the sense of \S\ref{thin}, whereas the category $\cPol(\bV)$ is not. The $\Bbbk$-linear additive closure $\cPol^+(\bV)$ does not satisfy the Krull--Schmidt property, and complexes in $\Ch_b(\cPol^+(\bV))$ need not have well-defined minimal complexes. For this reason, we work with $\cPolid(\bV)$ when doing homological algebra. Similar statements apply to $\cMat(E)$ and $\cMatid(E)$. \end{remark}

Let $\cA$ be an additive category, and let $A$ be its split Grothendieck group. For an object $X$ of $\cA$, we write $[X]$ to denote its class in $A$. 
We will be interested in functors
$\Phi$ to $\cA$ from $\cPolid(\bV)$ or $\cMatid(E)$.  Such a functor induces a homomorphism 
from $\Pol(\bV)$ or $\Mat(E)$ to $A$, and we say that
the functor {\bf categorifies} the homomorphism. Often, but not always, the functors that interest us will extend naturally to the larger categories $\cPol$ or $\cMat$.

\begin{example}\label{exOS}
Let $\cA$ be the category of finite dimensional graded vector spaces over $\Q$. The {\bf Orlik--Solomon functor} $\OS:\cMat\to\cA$ takes a matroid $M$ to its Orlik--Solomon algebra $\OS(M)$,
and sends a weak map $\varphi:(E,M)\to (E',M')$ to the algebra homomorphism $\OS(\varphi):\OS(M)\to\OS(M')$ given by
sending the generator $u_e$ to the generator $u_{\varphi(e)}$ for all $e\in E$.
The split Grothendieck group of $\cA$ is isomorphic to the polynomial ring $\Z[t]$, and the functor $\OS$ categorifies the Poincar\'e polynomial.
See Section \ref{sec:OS} for a more detailed treatment of this example.
\end{example}

\begin{example} \label{ex:specificflagofflat}
Given a finite set $E$, a natural number $r$, an increasing $r$-tuple $\bk = (k_1,\ldots,k_r)$ of natural numbers, and an increasing $r$-tuple $\bS = (S_1,\ldots,S_r)$ of subsets of $E$,
we define a functor $$\Psi_{E,\bk,\bS}:\cMatid(E)\to \Vec_\Q$$ as follows.
On objects, $$\Psi_{E,\bk,\bS}(M) = \begin{cases}\Q & \text{if $S_i$ is a flat of rank $k_i$ for all $i$} \\ 0 & \text{otherwise.}\end{cases}$$
On morphisms, $\Psi_{E,\bk,\bS}(\id_E):\Psi_{E,\bk,\bS}(M)\to \Psi_{E,\bk,\bS}(M')$ is the identity map whenever each $S_i$ is a flat of rank $k_i$ for both $M$ and $M'$.
\end{example}

\begin{example} \label{ex:allflagofflat}
The functor $\Psi_{E,\bk,\bS}$ of Example \ref{ex:specificflagofflat} does not naturally extend from $\cMatid(E)$ to $\cMat(E)$. However, the direct sum
$$ \Psi_{E,\bk} := \bigoplus_{\bS} \Psi_{E,\bk,\bS} $$
sends each matroid $M$ to the vector space spanned by chains of flats with ranks given by $\bk$. This functor does extend to $\cMat(E)$, where for a bijection $\varphi:E \to E$ the summand $\Psi_{E,\bk,\bS}(M)$ is sent isomorphically to the summand $\Psi_{E,\bk,\varphi(\bS)}(M)$.
\end{example}



\excise{
\subsection{Additive homological algebra: basics and Gaussian elimination}

\BE{many edits}

In this section, $\cC$ will represent an arbitrary category. We write $\cC^+$ for the $\Bbbk$-linear and additive closure of $\cC$. Objects in $\cC^+$
are formal direct sums of objects in $\cC$. If $X$ and $Y$ are objects in $\cC$, then $\Hom_{\cC^+}(X,Y)$ is the vector space over $\Bbbk$ with basis given by the set
$\Hom_{\cC}(X,Y)$. Similarly, morphisms between formal direct sums are matrices of $\Bbbk$-linear combinations of morphisms in $\cC$.

The symbol $\cA$ will represent an additive category (i.e. a category with direct sums, where morphism spaces are abelian groups under addition), such as $\cC^+$. The symbol $\cB$ will represent an abelian category (i.e. an additive category where morphisms have kernels and cokernels). For a familiar example, one might set $\cB$ to be modules over a ring, or $\cA$ to be free modules over a ring. We expect readers to be familiar with the basics of homological algebra as it applies to abelian categories (or to modules over a ring), including concepts like short exact sequence, chain complex, chain map, homotopy between chain maps, homology of a complex, etcetera. The goal of this section is to summarize the basics of homological algebra when it applies to additive categories, for which there is no such thing as a short exact sequence or the homology of a complex.

Let $\Ch(\cA)$ denote the category of chain complexes in $\cA$, with the homological convention that differentials decrease homological degree by one. Morphisms in $\Ch(\cA)$ are
chain maps between complexes. This is an additive category. Let $\cK(\cA)$ denote the {\bf homotopy category} of $\cA$, which is defined as the quotient of $\Ch(\cA)$ by the ideal
of null-homotopic chain maps. This is a triangulated category, see Remark \ref{rmk:triangulated}.  We write $\Ch_b(\cA)$ and $\cK_b(\cA)$ to denote the full subcategories whose objects are bounded complexes.

We will be applying this technology when $\cC$ is one of the categories in the previous section, like $\cPol$, and when $\cA = \cC^+$. In this case we write $\Ch_b(\cC)$ as shorthand for $\Ch_b(\cC^+)$, etcetera.
\nicktodo{Maybe move this.}

We note that $\cK_b(\cA)$ can also be viewed as the quotient of $\Ch_b(\cA)$ by a class of objects. For an object $X \in \cA$ and an integer $k \in \Z$, one can consider the complex $\Nul(X,k)$
\[ 0 \to X \to X \to 0 \]
consisting only of two copies of $X$ in degrees $k$ and $k-1$, with differential the identity map. A bounded complex is called {\bf contractible} if it is isomorphic to a finite direct sum of objects of the form $\Nul(X,k)$. Then a chain map between bounded complexes is null-homotopic if and only if it factors through a contractible complex. Thus the ideal of null-homotopic maps is the same as the ideal generated by the identity maps of $\Nul(X,k)$ for various $X$ and $k$. This is an old perspective, but the first author learned it from \BE{cite Khovanov hopfological}.

\begin{remark} The same statements can not be made for unbounded complexes. Indeed, a null-homotopic chain map may be nonzero in infinitely many degrees, requiring an expression
using an infinite sum of chain maps which factor through various $\Nul(X,k)$. \end{remark}


\begin{remark} When $\cB$ is a semisimple abelian category, such as $\Bbbk$-vector spaces, or representations of a finite group over $\Q$, then a complex in $\Ch_b(\cB)$ is
contractible if and only if it is exact (i.e. it has no homology). In fact, this property holds if and only if $\cB$ is semisimple. For a general additive category $\cA$, there is
no notion of homology or exactness of a complex in $\Ch_b(\cA)$. \end{remark}

Note that any contractible complex evaluates to zero in the split Grothendieck group of $\cA$, under the map which sends a complex to the alternating sum of its chain objects.

A bounded complex is homotopy equivalent to the zero complex if and only if it is contractible. More generally, two bounded complexes $C_{\bullet}$ and $D_{\bullet}$ are homotopy
equivalent if and only if there are bounded contractible complexes $X_{\bullet}$ and $Y_{\bullet}$ such that $C_{\bullet} \oplus X_{\bullet} \cong D_{\bullet} \oplus Y_{\bullet}$.

There is a powerful tool which allows one to efficiently strip off contractible summands from a complex to find a simpler yet homotopy-equivalent complex. The typical notion of Gaussian elimination (e.g. row and column reduction) in linear algebra will
start with a matrix with some invertible matrix entry, and produce a new matrix where that entry is replaced by the identity, and where the row and column of that entry is
otherwise zero. If the matrix represents a linear transformation $V \to W$, this is obtained by changing basis on $V$ and $W$. Let us apply this method
to a differential within a complex. Suppose that $C_{\bullet}$ is a complex of the form \BE{I copy pasted this from a previous paper, but it looks like the diagrams package is outdated and no longer works? I'm perplexed. Commenting for now, fix later.}
\begin{equation} \label{eq:GEstart}
\end{equation}
where $\phi$ is an isomorphism $X \to X$, and the source of $\phi$ lives in homological degree $k$. There is an isomorphic complex where the differential from degree $k$ is 
\[ \left( \begin{array}{cc} c - e \phi^{-1} d & 0\\ 0 & \id_X \end{array} \right). \]
In order for $d^2 = 0$ to hold, the summands of the differentials $A \to X$ and $X \to D$ must now be zero. Thus $\Nul(X,k)$ is a summand of this isomorphic complex. Removing this contractible summand, we get the homotopy equivalent complex
\begin{equation} \label{eq:GEend}
\end{equation}
The process of replacing \eqref{eq:GEstart} with \eqref{eq:GEend} is now typically called {\bf Gaussian elimination of complexes}.
This technique was popularized by \BE{cite Bar-natan}.

\begin{remark} Within the complex \eqref{eq:GEstart}, the two terms $X \to X$ with differential $\phi$ do form neither a subcomplex nor a quotient complex\footnote{Technically
there is no notion of sub- or quotient complexes in $\Ch_b(\cA)$, but there is a notion of termwise-split sub- and quotient complexes, which is what we refer to here. See the next
section.}. Despite this, Gaussian elimination demonstrates that \eqref{eq:GEstart} does have a contractible summand isomorphic to $X \to X$, and allows us to remove these two terms
(at the cost of modifying other parts of the differential from degree $k$) while obtaining a homotopy equivalent complex. \end{remark}

A {\bf minimal complex} is a complex in $\Ch_b(\cA)$ without any contractible summands. We can repeat the process of Gaussian elimination until no summands of any differential are
isomorphisms, obtaining a minimal complex. Under some hypotheses, one can assert that there is a unique minimal complex associated to any complex.

\begin{lemma} Let $\cA$ be an additive category with the Krull-Schmidt property. Any bounded complex is homotopy equivalent to a minimal complex. Any homotopy equivalence between bounded minimal complexes is actually an isomorphism of complexes. \end{lemma}

The Krull-Schmidt property is both a uniqueness of direct sum decompositions, as well as the statement that an object is indecomposable only if it should be (e.g. it's endomorphism
ring is local). If $\cC$ is a category for which all morphism spaces are finite, and the endomorphism ring of any object contains only the identity, then $\cC^+$ is Krull-Schmidt.

For more details on everything in this section, see \BE{cite my book, chapter 19.2.2}.

\subsection{Additive homological algebra: cones and localizing subcategories}

We let $[1]$ denote the usual homological shift on complexes, so that $C[1]$ in degree $i$ agrees with $C$ in degree $i+1$ (and differentials are negated). For an object $X \in
\cA$, let $X[-i]$ denote the complex consisting of $X$ concentrated in degree $i$. There is a natural inclusion of $\cA$ into $\Ch(\cA)$ that sends $X$ to $X[0]$.

Instead of writing out a complex in the space-consuming form
\[ C_{\bullet} = \left( \ldots \to C_1 \namedto{\pa_1} C_0 \namedto{\pa_0} C_{-1} \to \ldots \right), \]
where $C_i$ appears in homological degree $i$, it is often convenient to use more compact notation. We may write the same complex as \BE{improve style}
\[ C_{\bullet} = (\bigoplus_{i} C_i[-i], \pa), \qquad \pa = \left( \begin{array}{ccccc} \ddots & & & & \\ & 0 & 0 & 0 & \\ & \pa_1 & 0 & 0 & \\ & 0 & \pa_0 & 0 & \\ & & & & \ddots \end{array} \right). \]
We think of $\bigoplus_{i} C_i[-i]$ as the graded object of $\cA$ underlying the complex $C$ (i.e. an object in the graded closure of $\cA$), and $\pa = \sum \pa_i$ as the {\bf total differential}, written in matrix form. For example, one has
\[ \Nul(X,k) = (X[-k] \oplus X[1-k], \left( \begin{array}{cc} 0 & 0 \\ \id_X & 0 \end{array} \right) ). \]

In this context we might abusively write $C_{\bullet} = (C_{\bullet},\pa_C)$ and let the same symbol $C_{\bullet}$ refer to both the complex and the underlying graded object. The latter interpretation is only valid within the parentheses when paired with a differential, or when we explicitly state that we view $C_{\bullet}$ as a graded object. For example, if $C_{\bullet}$ is a complex, then $(C_{\bullet},0)$ is the complex with the same chain objects, but with the zero differential. For sanity, we omit the bullet in the subscript for the total differential.

Let $f \colon C_{\bullet} \to D_{\bullet}$ be a chain map. The {\bf cone} of $f$ is the complex 
\[ \Cone(f) := (C_{\bullet}[-1] \oplus D_{\bullet}, \left( \begin{array}{cc} -\pa_C & 0 \\ f & \pa_D \end{array} \right) ). \]
Written out the long way, we have \BE{do it.}
Note that the sign in $-\pa_C$ arises naturally as part of the homological shift, i.e. $-\pa_C = \pa_{C[-1]}$.

In the additive context, cones are replacements for short exact sequences of complexes. One can not define a short exact sequence in $\cA$, but one {\bf can} define a split short exact sequence, arising from a direct sum decomposition. Similarly, one can define a {\bf termwise-split short exact sequence of complexes}, which is a collection of complexes and chain maps
\[ 0 \to P_{\bullet} \to Q_{\bullet} \to R_{\bullet} \to 0\]
so that, in each homological degree $i$, the sequence
\[ 0 \to P_i \to Q_i \to R_i \to 0\]
is split exact in $\cA$. For any cone one has chain maps
\begin{equation} \label{coneses} 0 \to D_{\bullet} \to \Cone(f) \to C_{\bullet}[-1] \to 0\end{equation}
forming a termwise-split short exact sequence of complexes. Conversely, for any termwise-split short exact sequence of complexes as above one can prove that $Q_{\bullet} \cong \Cone(f)$ for some chain map $f \colon R_{\bullet}[1] \to P_{\bullet}$.

\begin{remark} \label{rmk:cylinder} If $\iota$ denotes the canonical map $D_{\bullet} \to \Cone(f)$ in \eqref{coneses}, then the cone of $\iota$ is called the {\bf cylinder} of $f$, and denoted $\Cyl(f)$. There is a termwise-split short exact sequence of complexes
\begin{equation} \label{cylinderses} 0 \to \Cone(f) \to \Cyl(f) \to D_{\bullet}[-1] \to 0. \end{equation}
There is always a homotopy equivalence $\Cyl(f) \cong C_{\bullet}$, though it would be false to say that there is a termwise-split short exact sequence of the form $0 \to \Cone(f) \to C_{\bullet} \to D_{\bullet}[-1] \to 0$. \end{remark}

The following lemma is well-known \BE{ref}.

\begin{lemma} \label{lem:conestuff} For any chain map $f \colon C \to D$ of bounded chain complexes, $\Cone(f)$ is contractible if and only if $f$ is a homotopy equivalence.
Moreover, $\Cone(f) \cong C[1] \oplus D$ as complexes, and \eqref{coneses} splits on the level of complexes, if and only if $f$ is null-homotopic. \end{lemma}

	

Thus contractible complexes are projective: whenever they appear as the quotient complex in a termwise-split short exact sequence of complexes, then that short exact sequence is genuinely split. This is
because any chain map to a contractible complex is null-homotopic. As a consequence, a cone of a map between contractible complexes is itself contractible. Note also that a direct
summand of a contractible complex is contractible.

An iterated cone is often called a {\bf convolution}, which is the additive analogue of a filtered complex. For example, if $(A,\pa_A), (B,\pa_B), (C,\pa_C)$ are complexes, then a
complex of the form $D = (A \oplus B \oplus C, \pa)$ is a (three-part) convolution if $\pa$ is lower triangular, and agrees with $(\pa_A, \pa_B, \pa_C)$ along the diagonal. If so, then $C$
is a (termwise-split) subcomplex of $D$, $A$ is a quotient complex of $D$, and $B$ is a subquotient complex. One can describe $D$ as the cone of a chain map from $A[1]$ to $E$,
where $E$ is the cone of a chain map from $B[1]$ to $C$. We call $A$, $B$, and $C$ the {\bf parts} of the convolution $D$. Of course, one can allow convolutions with more than
three parts, but we only allow finitely many parts.

Cones and convolutions are preserved by additive functors. Recall that a functor $F$ between additive categories is called {\bf additive} if it preserves addition of morphisms, or
equivalently, if it preserves direct sum decompositions. Additive functors extend to the category of complexes and descend to the homotopy category. Additive functors
preserve split short exact sequences of objects, and termwise-split short exact sequence of complexes. If $F$ is additive then $F(\Cone(f)) \cong \Cone(F(f))$. Note that if $F \colon \cC \to \cA$ is any functor from $\cC$ to an additive category $\cA$, then it extends naturally to
an additive functor $\cC^+ \to \cA$, which we also denote by $F$.

\begin{remark} \label{rmk:triangulated} Short exact sequences are powerful tools, which is why abelian categories are so beloved. The main problem is that functors typically do not
preserve short exact sequences; only exact functors do. Triangulated categories were introduced to bridge this gap, the main example being $\cK_b(\cA)$. Triangulated categories
have a collection of {\bf distinguished triangles}, which are triples of objects $(P_{\bullet}, Q_{\bullet}, R_{\bullet})$ with morphisms $R_{\bullet}[1] \to P_{\bullet} \to
Q_{\bullet} \to R_{\bullet}$ satisfying some axioms. Distinguished triangles are analogous to short exact sequences. In $\cK_b(\cA)$, the distinguished triangles are the ones of
the form $C_{\bullet} \to D_{\bullet} \to \Cone(f) \to C_{\bullet}[-1]$, where the first map is $f$. A functor between triangulated categories is {\bf triangulated} if it
preserves distinguished triangles; they are the analogue of exact functors. Any additive functor between additive categories induces a triangulated functor between their homotopy
categories. \end{remark}

A full subcategory $\cI$ of $\Ch_b(\cA)$ is called {\bf localizing} if it is closed under homotopy equivalence, shifts, cones, and direct summands, and contains all contractible
complexes. The last condition is equivalent to stating that $\cI$ is nonempty: any complex contains the zero complex as a direct summand, and contractible complexes are homotopy
equivalent to the zero complex. For example, contractible complexes form a localizing subcategory. Localizing subcategories are like ideals: they form the ``kernels'' of
triangulated functors. Consider an additive functor $F \co \cA \to \cA'$, which induces a functor $\Ch_b(\cA) \to \Ch_b(\cA')$. Let $\cI \subset \Ch_b(\cA)$ be the full subcategory
consisting of complexes $C_{\bullet}$ with $F(C_{\bullet})$ being contractible. Then $\cI$ is a localizing subcategory. Conversely, given a localizing subcategory $\cI$ of
$\Ch_b(\cA)$, the quotient category $\Ch_b(\cA)/\cI$ will be triangulated.

\begin{remark} Localizing categories in the literature are typically defined within the triangulated category $\cK_b(\cA)$, where one can use the same definition. The name
``localizing'' comes from the following phenomenon. Because of Lemma \ref{lem:conestuff}, there is a relationship between inverting morphisms and killing objects. By formally
inverting a chain map $f$, one will force $\Cone(f)$ to become contractible. Conversely, to kill an object $C$, one can formally invert the zero map $0 \to C$. The quotient
category $\cK_b(\cA)/\cI$ can also be viewed as the \revise{restriction} of $\cK_b(\cA)$ where one inverts morphisms whose cones live in $\cI$. \end{remark}

Localizing subcategories satisfy the {\bf two-out-of-three rule}: if $0 \to P_{\bullet} \to Q_{\bullet} \to R_{\bullet} \to 0$ is a termwise-split short exact sequence, and two out of three of
the complexes $P_{\bullet}$, $Q_{\bullet}$, $R_{\bullet}$ live in $\cI$, then so does the third. For example, if $P_{\bullet}$ and $R_{\bullet}$ are in $\cI$, then so
is $R_{\bullet}[1]$, and therefore so is $Q_{\bullet}$, since it is the cone of some map $f \co R_{\bullet}[1] \to P_{\bullet}$. If instead $Q_{\bullet} = \Cone(f)$ and
$R_{\bullet}$ are in $\cI$, then by \eqref{cylinderses} so is $\Cyl(f)$, and therefore so is the homotopy equivalent complex $P_{\bullet}$.

\begin{lemma} \label{lem:convoinideal} Let $\cI$ be a localizing subcategory, and $X_{\bullet}$ a complex built as a convolution. If all the parts of $X_{\bullet}$ are in $\cI$, then $X_{\bullet}$ is also in $\cI$. If $X_{\bullet}$ is in $\cI$ and all but one part is in $\cI$, then the remaining part is also in $\cI$. \end{lemma}

\begin{proof} This is an iterated application of the two-out-of-three rule. \end{proof}

Given a collection $\cY$ of complexes in $\Ch_b(\cA)$, there is a smallest localizing subcategory $\langle \cY \rangle$ containing those complexes. It contains precisely those
complexes {\bf homotopy equivalent} to convolutions whose parts are either: \begin{itemize} \item shifts of direct summands of complexes in $\cY$, or \item contractible
complexes. \end{itemize} One can prove this using the fact that if $C'_{\bullet}$ is homotopy equivalent to $C_{\bullet}$, and $D'_{\bullet}$ is homotopy equivalent to
$D_{\bullet}$, then any cone of a map $C'_{\bullet} \to D'_{\bullet}$ is homotopy equivalent to the cone of some map $C_{\bullet} \to D_{\bullet}$.\BE{Do I need a reference for
this}  We reiterate that not every object in $\langle \cY \rangle$ is built as a convolution as above, but it need only be homotopy equivalent to one; Remark \ref{rmk:cylinder} is pertinent, as $C_{\bullet} \in \langle D_{\bullet}, \Cone(f) \rangle$ in that example.
}

\subsection{The complex associated with a decomposition}
Let $P$ be a polyhedron in $\bV$ of dimension $d = d(P)$. An {\bf orientation} $\Omega_P$ of $P$ is an orientation of the relative interior of $P$,
\revise{regarded as a smooth manifold.  Equivalently, it is a choice of a positive ray in the top exterior power of the $d$-dimensional real
vector space $\R\{x-y\mid x,y\in P\}$.}
An orientation of $P$ induces an orientation of any facet $Q$ of $P$ by contracting with an outward normal vector.
Given orientations $\Omega_P$ and $\Omega_Q$ of $P$ and $Q$, we say that they {\bf match} if the orientation of $Q$ induced by 
$\Omega_P$ is equal to $\Omega_Q$.

Let $\cQ$ be a decomposition of a polyhedron $P$.
We define an {\bf orientation} $\Omega$ of $\cQ$ to be an arbitrary choice of orientation of each polyhedron in $\cQ$, 
along with a choice of orientation of $P$ itself.

Given the pair $(\cQ,\Omega)$, we define a chain complex $(C_\bullet^\Omega(\cQ),\partial^\Omega)\in\Ch_b(\cPolid^+(\bV))$ 
as follows.  First, we set \[C_k^\Omega(\cQ) := \bigoplus_{Q\in\cQ_k} Q.\]
If $1 \leq k\leq d$ and $R\in \cQ_{k-1}$ is a facet of $Q\in \cQ_k$,
then the $(Q,R)$ component of the differential $\partial_k^\Omega:C_k^\Omega(\cQ)\to C_{k-1}^\Omega(\cQ)$ is given by $\pm \iota_{Q,R}$, depending on whether or not the orientation of $R$ matches the orientation of $Q$. 
Similarly, for each $Q\in\cQ_{d}$, the relative interior of $Q$ is an open submanifold of the relative interior of $P$, and
the $(P,Q)$ component of the differential $\partial_{d+1}^\Omega$ is given by $\pm \iota_{P,Q}$, depending on whether or not the orientation of $Q$ agrees with the restriction of the orientation of $P$.
As noted in Section \ref{sec:decompositions}, if $R \in \cQ_{k-1}$ and $S\in \cQ_{k+1}$ for some $1\leq k \leq d$, then the set $X(R,S) = \{Q_1, Q_2\}$ has cardinality exactly two, giving two contributions to the $(S,R)$ component of $\pa^2$. The normal vectors of the two
inclusions $R \subset Q_i$ are opposite, so these two contributions cancel each other out.  This proves that $\partial^2 = 0$.

Let $C_{\leq d}^{\Omega}(\cQ)$ be the subcomplex of $C_{\bullet}^{\Omega}(\cQ)$ consisting of everything in degree less than or equal to $d$.
There is a chain map $\alpha_{\cQ}^\Omega : P[-d] \to C_{\leq d}^{\Omega}(\cQ)$ given by the first differential in $C_{\bullet}^{\Omega}(\cQ)$, and we have an isomorphism
\begin{equation*} C_{\bullet}^{\Omega}(\cQ) \cong \Cone(\alpha_{\cQ}^\Omega). \end{equation*}

If $\cN$ is a decomposition of a matroid $M$ on $E$, we define an orientation $\Omega$ of $\cN$ to be an orientation of the induced
decomposition of base polytopes, and we define the analogous chain complexes $C_{\leq d}^\Omega(\cN)\subset C_\bullet^\Omega(\cN)\in\Ch_b(\cMatid^+(E))$.

\begin{example}\label{octahedron 5}
Consider the decomposition $\cN$ 
from Examples \ref{octahedron 1} and \ref{octahedron 2}.
The complex $C_\bullet^\Omega(\cN)$ takes the form depicted in Figure \ref{complex}. 
Choose an orientation of the 3-dimensional vector space $\{v\mid \revise{\delta}_{E}(v) = 2\}\subset \R^E$.
The relative interiors of $P(M)$, $P(N)$, and $P(N')$ are all open subsets of this vector space, so our
choice of orientation induces orientations $\Omega(M)$, $\Omega(N)$, and $\Omega(N')$.
Choose $\Omega(N'')$ to be the orientation induced by realizing $P(N'')$ as a facet of $P(N)$,
which is the opposite of the orientation induced by realizing $P(N'')$ as a facet of $P(N')$.
We have $$\Hom_{\cMatid^+(E)}(M,N\oplus N')\cong \Q^2,$$ and our first differential corresponds to the element $(1,1)$.
We also have $$\Hom_{\cMatid^+(E)}(N\oplus N',N'')\cong \Q^2,$$ and our second differential corresponds to the element $(1,-1)$.
The composition is given by dot product, and our differential squares to zero because $(1,1)$ is orthogonal to $(1,-1)$.
\end{example}

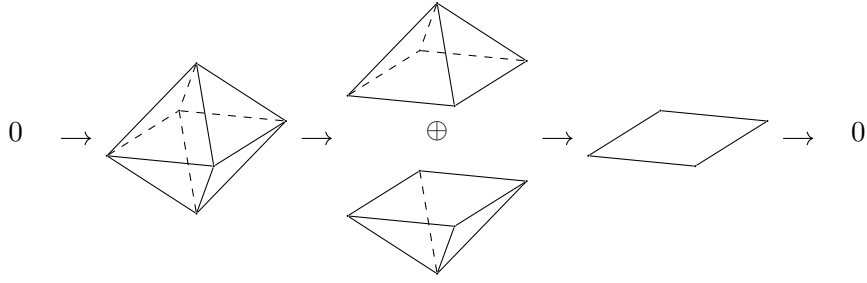
\begin{figure}[h]
    \centering
	\begin{tikzpicture}  
	[scale=0.4,auto=center] 
	\tikzstyle{edges} = [thick];

    \node[label=$0$] at (-7,-0.75,0) {};

    \draw[->] (-5.5,0,0) -- (-4.5,0,0);

    \node[style={circle,scale=0.8, fill=black, inner sep=0pt}] (14) at ({-1.5-1},0,-2.398) {};
    \node[style={circle,scale=0.8, fill=black, inner sep=0pt},] (24) at ({2.398-1},0,-1.5) {};
    \node[style={circle,scale=0.8, fill=black, inner sep=0pt},] (23) at ({1.5-1},0,2.398) {};
    \node[style={circle,scale=0.8, fill=black, inner sep=0pt},] (13) at ({-2.398-1},0,1.5) {};
    \node[style={circle,scale=0.8, fill=black, inner sep=0pt},] (12) at (-1,2.5,0) {};
    \node[style={circle,scale=0.8, fill=black, inner sep=0pt},] (34) at (-1,-2.5,0) {};

    \draw[->] (2.5,0,0) -- (3.5,0,0);
    
    \node[style={circle,scale=0.8, fill=black, inner sep=0pt}] (14N1) at ({-1.5+7},2,-2.398)  {};
    \node[style={circle,scale=0.8, fill=black, inner sep=0pt}] (24N1) at ({2.398+7},2,-1.5)  {};
    \node[style={circle,scale=0.8, fill=black, inner sep=0pt}] (23N1) at ({1.5+7},2,2.398)  {};
    \node[style={circle,scale=0.8, fill=black, inner sep=0pt}] (13N1) at ({-2.398+7},2,1.5) {};
    \node[style={circle,scale=0.8, fill=black, inner sep=0pt}] (12N1) at (7,4.5,0) {};

    \node[label=$\oplus$] at (7,-0.75,0) {};

    \node[style={circle,scale=0.8, fill=black, inner sep=0pt}] (14N2) at ({-1.5+7},-2,-2.398) {};
    \node[style={circle,scale=0.8, fill=black, inner sep=0pt}] (24N2) at ({2.398+7},-2,-1.5) {};
    \node[style={circle,scale=0.8, fill=black, inner sep=0pt}] (23N2) at ({1.5+7},-2,2.398) {};
    \node[style={circle,scale=0.8, fill=black, inner sep=0pt}] (13N2) at ({-2.398+7},-2,1.5) {};
    \node[style={circle,scale=0.8, fill=black, inner sep=0pt}] (34N2) at (7,-4.5,0) {};

    \draw[->] (10.5,0,0) -- (11.5,0,0);

    \node[style={circle,scale=0.8, fill=black, inner sep=0pt}] (14N12) at ({-1.5+15},0,-2.398) {};
    \node[style={circle,scale=0.8, fill=black, inner sep=0pt}] (24N12) at ({2.398+15},0,-1.5) {};
    \node[style={circle,scale=0.8, fill=black, inner sep=0pt}] (23N12) at ({1.5+15},0,2.398) {};
    \node[style={circle,scale=0.8, fill=black, inner sep=0pt}] (13N12) at ({-2.398+15},0,1.5) {};

    \draw[->] (18.5,0,0) -- (19.5,0,0);

    \node[label=$0$] at (21,-0.75,0) {};

    \draw[dashed] (13) -- (14);
    \draw[dashed] (14) -- (24);
    \draw (24) -- (23);
    \draw (23) -- (13); 
    \draw (12) -- (13);
    \draw[dashed] (12) -- (14);
    \draw (12) -- (24);
    \draw (12) -- (23);
    \draw (34) -- (13);
    \draw[dashed] (34) -- (14);
    \draw (34) -- (24);
    \draw (34) -- (23);

    \draw[dashed] (13N1) -- (14N1);
    \draw[dashed] (14N1) -- (24N1);
    \draw (24N1) -- (23N1);
    \draw (23N1) -- (13N1);
    \draw (12N1) -- (13N1);
    \draw[dashed] (12N1) -- (14N1);
    \draw (12N1) -- (24N1);
    \draw (12N1) -- (23N1);

    \draw (13N2) -- (14N2);
    \draw (14N2) -- (24N2);
    \draw (24N2) -- (23N2);
    \draw (23N2) -- (13N2);
    \draw (34N2) -- (13N2);
    \draw[dashed] (34N2) -- (14N2);
    \draw (34N2) -- (24N2);
    \draw (34N2) -- (23N2);

    \draw (13N12) -- (14N12);
    \draw (14N12) -- (24N12);
    \draw (24N12) -- (23N12);
    \draw (23N12) -- (13N12);

    \end{tikzpicture} \caption{The complex $C_\bullet^\Omega(\cN)$ arising from the decomposition of $M = U_{2,4}$. This complex is supported in degrees $1$, $2$, and $3$.}\label{complex}
\end{figure}

\begin{remark} \label{rmk:orientationirrelevant}
Let $\Omega$ be an orientation of a decomposition $\cQ$ of a polyhedron $P$, and let $\Omega'$ be the orientation obtained from $\Omega$ by reversing the orientation on a single face $Q \in \cQ$. The only difference between $C_\bullet^\Omega(\cN)$ and $C_\bullet^{\Omega'}(\cN)$ is that the signs of the morphisms going into and out of the summand $Q$ are reversed. There is an isomorphism $C_{\bullet}^{\Omega}(\cN) \to C_\bullet^{\Omega'}(\cN)$ given by the identity map on all faces $Q' \ne Q$, and minus the identity map on $Q$. Thus the choice of orientation does not affect the isomorphism class of the complex.
\end{remark}

The following lemma is a strengthening of Remark \ref{rmk:orientationirrelevant}.  Not only is the isomorphism class of
the complex $C_\bullet^\Omega(\cQ)$ independent of $\Omega$, but any complex 
that looks as if it could be isomorphic to $C_\bullet^\Omega(\cQ)$ is indeed isomorphic to it.  This lemma will be a key technical ingredient in Section \ref{sec:convolution}. 

\begin{lemma}\label{there's an orientation}
Fix a decomposition $\cQ$ of $P$ and an orientation $\Omega$ of $\cQ$.
Let $(C_\bullet,\partial)\in\Ch_b(\cPolid^+(\bV))$ be any complex with the following properties:
\begin{itemize}
\item For all $k$, $\displaystyle C_k= \bigoplus_{Q\in\cQ_k} Q = C_k^\Omega(\cQ)$.  
\item If $Q\in\cQ_k$, $R\in \cQ_{k-1}$, and $R\subset Q$,
then the $(Q,R)$ component of the differential $\partial_k$ is an invertible multiple of $\iota_{Q,R}$. Otherwise, the $(Q,R)$ component is zero.
\end{itemize}
Then there exists an isomorphism of complexes $(C_\bullet, \partial) \cong (C_\bullet^\Omega(\cQ), \partial^\Omega)$.
\end{lemma}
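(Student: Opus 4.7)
The plan is to construct an isomorphism $(C_\bullet,\partial)\to(C_\bullet^\Omega(\cQ),\partial^\Omega)$ as a diagonal rescaling of the common underlying graded object, given by a collection of nonzero scalars $\lambda_Q\in\Bbbk^\times$, one per summand $Q$. This is legitimate because $\cPolid(\bV)$ is thin, so in $\cPolid^+(\bV)$ the endomorphism ring of each summand is one-dimensional, spanned by $\iota_{Q,Q}$. Writing $\partial_{Q,R}=c_{Q,R}\,\iota_{Q,R}$ with $c_{Q,R}\in\Bbbk^\times$ and $\partial^\Omega_{Q,R}=\epsilon_{Q,R}\,\iota_{Q,R}$ with $\epsilon_{Q,R}\in\{\pm 1\}$ for each adjacent pair $R\subset Q$ (including the top case $Q=P\in\cQ_{d+1}$, $R\in\cQ_d$), the chain map condition reduces to the system
\[ \lambda_R\,c_{Q,R}=\lambda_Q\,\epsilon_{Q,R}\qquad\text{for every such pair.} \]

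I define $\lambda$ by induction on decreasing dimension. Set $\lambda_P:=1$; having defined $\lambda$ on every summand appearing in degree $\ge k$, for each $R\in\cQ_{k-1}$ choose some $Q\in\cQ_k$ with $R\subset Q$ and set $\lambda_R:=\lambda_Q\,\epsilon_{Q,R}/c_{Q,R}$. The main obstacle is well-definedness: the value must not depend on the auxiliary choice of $Q$. By Lemma \ref{lem:connected}, the graph $\Gamma_R$ is connected, so it suffices to compare two faces $Q_1,Q_2\in\cQ_k$ joined by an edge of $\Gamma_R$, meaning $X(R,S)=\{Q_1,Q_2\}$ for some $S\in\cQ_{k+1}$.

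For such a pair, the inductive hypothesis gives $\lambda_{Q_i}=\lambda_S\,\epsilon_{S,Q_i}/c_{S,Q_i}$, while the vanishing of the $(S,R)$-components of $(\partial^\Omega)^2$ and $\partial^2$ yield respectively
\[ \epsilon_{S,Q_1}\epsilon_{Q_1,R}=-\epsilon_{S,Q_2}\epsilon_{Q_2,R}\quad\text{and}\quad c_{S,Q_1}c_{Q_1,R}=-c_{S,Q_2}c_{Q_2,R}. \]
Taking the ratio of these two identities gives $\epsilon_{S,Q_1}\epsilon_{Q_1,R}/(c_{S,Q_1}c_{Q_1,R})=\epsilon_{S,Q_2}\epsilon_{Q_2,R}/(c_{S,Q_2}c_{Q_2,R})$, which after substituting the inductive formula for $\lambda_{Q_i}$ is precisely the desired equality $\lambda_{Q_1}\epsilon_{Q_1,R}/c_{Q_1,R}=\lambda_{Q_2}\epsilon_{Q_2,R}/c_{Q_2,R}$. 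The heart of the argument is this compatibility between Lemma \ref{lem:connected} and the two instances of $\partial^2=0$, which together force the candidate cocycle to be a coboundary. Invertibility of every $c_{Q,R}$ ensures each $\lambda_Q$ is nonzero, so the resulting diagonal map is the required isomorphism of complexes.
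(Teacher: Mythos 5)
Your proof is correct and takes essentially the same approach as the paper's: a downward induction on dimension in which the connectivity of $\Gamma_R$ (Lemma \ref{lem:connected}), combined with the $(S,R)$-components of $\partial^2=0$ and $(\partial^\Omega)^2=0$, forces the rescaling factors to be consistent across all faces containing $R$. The only cosmetic difference is that you package the scalars $\lambda_Q$ into a single diagonal chain isomorphism and phrase the key step as well-definedness, whereas the paper performs the rescalings one face at a time and phrases the same computation as the ``simpatico'' propagation argument.
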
 

\begin{proof} Choose an element $Q\in\cQ_k$ for some $k$, and let $(C_{\bullet},\partial')$ be the complex obtained from $(C_{\bullet},\partial)$ by multiplying all maps out of $Q$ by an invertible scalar
$\lambda \in \Bbbk^{\times}$, and multiplying all maps into $Q$ by $\lambda^{-1}$, an operation which we call {\bf rescaling} at $Q$ by $\lambda$. There is an isomorphism $(C_{\bullet},\partial) \to
(C_{\bullet},\partial')$ given by the identity map on all faces $Q' \ne Q$ and $\lambda^{-1}$ times the identity map on $Q$. 
We will show that $(C_\bullet,\partial)$ can be transformed into $(C_\bullet^\Omega(\cQ),\partial^\Omega)$ by a finite sequence of rescalings. One can begin by rescaling at all $Q \in \cQ_d$, so that $\pa_{d+1} = \pa_{d+1}^{\Omega}$.
We will assume inductively that $\partial_l = \partial_l^\Omega$ for all $l>k$, and we will show that $(C_\bullet,\partial)$ can be rescaled to a complex $(C_\bullet,\partial')$ with $\partial'_l = \partial_l^\Omega$ for all $l\geq k$.

Given any $Q \in \cQ_l$ and $R \in \cQ_{l-1}$ with $R\subset Q$, let $a_{Q,R}$ be the coefficient of $\iota_{Q,R}$ in the $(Q,R)$ component of $\partial_l$ and let $b_{Q,R}$ be the coefficient of $\iota_{Q,R}$ in the $(Q,R)$ component of $\partial_l^\Omega$. 
Note that $a_{Q,R}$ and $b_{Q,R}$ are both invertible.  
Now fix a specific $Q \in \cQ_k$ and $R \in \cQ_{k-1}$ with $R\subset Q$.
By rescaling $(C_\bullet,\partial)$ at $R$, we may assume that $a_{Q,R} = b_{Q,R}$.
Let us say that an element $Q'\in\cQ_k$ with $R\subset Q'$ is \revise{{\bf simpatico}} if $a_{Q'R} = b_{Q'R}$.
By assumption, $Q$ is \revise{simpatico}.  We claim that {\em every} $Q'\in\cQ_k$ that contains $R$ is \revise{simpatico}.
If we can show this, then we may complete the inductive step by rescaling once at each $R\in\cQ_{k-1}$.

Recall that we defined a graph $\Gamma_R$ with vertex set $\{Q'\in \cQ_k\mid R\subset Q'\}$ and edge set $$\{X(R,S)\mid R\subset S\in\cQ_{k+1}\},$$ and Lemma \ref{lem:connected} states that this graph is connected.  Thus it will be sufficient to prove that, if $X(R,S) = \{Q',Q''\}$ is an edge of $\Gamma_R$, then $Q'$ is \revise{simpatico} if and only if $Q''$ is \revise{simpatico}.

Examining the $(S,R)$ component of the composition $\partial_k\circ\partial_{k+1} = 0$, we see that
$$a_{SQ'}a_{QR'} + a_{SQ''}a_{Q''R} = 0.$$
Similar reasoning for the differential $\partial^\Omega$ tells us that
$$b_{SQ'}b_{QR'} + b_{SQ''}b_{Q''R} = 0.$$
Since we have assumed that $\partial_{k+1} = \partial_{k+1}^\Omega$, we have $b_{SQ'} = a_{SQ'}$ and $b_{SQ''} = a_{SQ''}$.
Taking the difference of the two equations, we find that
$$a_{SQ'}(a_{Q'R}-b_{Q'R}) + a_{SQ''}(a_{Q''R}-b_{Q''R}) = 0.$$
Thus $Q'$ is \revise{simpatico} if and only if $Q''$ is \revise{simpatico}.
\end{proof}

\subsection{Valuative functors}\label{sec:valfun}

Let $\cA$ be an $\Bbbk$-linear additive category. 
We say that a functor $\Phi:\cPolid^+(\bV)\to\cA$ is {\bf valuative} if, for any pair $(\cQ,\Omega)$, the complex $\Phi(C_\bullet^\Omega(\cQ))$ is contractible. By Lemma \ref{lem:conestuff}, this is equivalent to the condition
that $\Phi(\alpha_{\cQ}^\Omega)$ is a homotopy equivalence. 
We say that a functor from $\cPolid(\bV)$, $\cPol(\bV)$, or $\cPol$ to $\cA$ is valuative if the induced functor from
$\cPolid^+(\bV)$ to $\cA$ is valuative.

Similarly, we say that $\Phi:\cMatid^+(E)\to\cA$ is valuative if, for any pair $(\cN,\Omega)$, $\Phi(C_\bullet^\Omega(\cN))$ is
contractible.
We say that a functor from $\cMatid(E)$, $\cMat(E)$, or $\cMat$ to $\cA$ is valuative if the induced functor from
$\cMatid^+(\bV)$ to $\cA$ is valuative.
Note that any valuative functor on $\cPolid^+(\R^E)$ restricts to a valuative functor on $\cMatid^+(E)$.
By Propositions \ref{decomp} and \ref{mat-decomp}, any valuative functor categorifies a valuative homomorphism.

\begin{remark} When the target category $\cA$ is semisimple, $\Phi$ is valuative if and only if $\Phi(C_\bullet^\Omega(\cQ))$ is exact for all $(\cQ,\Omega)$.  In all of our examples, $\cA$ will be the category of (possibly graded or bigraded) finite dimensional
$\Q$-vector spaces, which is indeed semisimple.
\end{remark}

\begin{remark}
A direct sum of valuative functors is valuative, and a direct summand of a valuative functor is valuative.
These statements follow from the corresponding statements about contractible complexes.
\end{remark}

Let $\cI(\bV)$ be the localizing subcategory inside $\Ch_b(\cPol^+(\bV))$ generated by complexes of the form $C_\bullet^\Omega(\cQ)$. Let $\cV(\bV)$ be the quotient of $\Ch_b(\cPolid^+(\bV))$ by $\cI(\bV)$. A functor $\cPolid^+(\bV)\to\cA$ is valuative if and only if it descends to a triangulated functor $\cV(\bV) \to \cK_b(\cA)$.
Similarly, let $\cI(E)$ be the localizing subcategory inside $\Ch_b(\cMatid^+(E))$ generated by complexes of the form $C_\bullet^\Omega(\cN)$. Let $\cV(E)$ be the quotient of
$\Ch_b(\cMatid^+(E))$ by $\cI(E)$. A functor $\cMatid^+(E)\to\cA$ is valuative if and only if it descends to a triangulated functor $\cV(E) \to \cK_b(\cA)$.

\begin{remark}\label{rmk:groth}
The triangulated Grothendieck group of $\cV(E)$
is {\em a priori} isomorphic to a quotient of the valuative group $\Val(E) := \Mat(E)/\I(E)$; we will prove
in Section \ref{sec:groth} that it is in fact isomorphic to $\Val(E)$.
The valuative group $\Val(E)$ is canonically isomorphic to the homology of the stellahedral toric variety
\cite[Theorem 1.5]{EHL}, with the homological grading corresponding to the grading of $\Val(E)$ by rank.  It would be interesting
to find a corresponding geometric interpretation of the triangulated category $\cV(E)$ in terms of the same toric variety.
\end{remark}

As a basic example, consider the trivial functor $\tau:\cPol\to \Vec_\Q$ that takes all polyhedra to $\Q$ and all morphisms to the identity map.  This categorifies the homomorphism that evaluates to 1 on every polyhedron.

\begin{proposition}\label{triv val}
The trivial functor $\tau$ is valuative.
\end{proposition}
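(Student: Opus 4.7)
Plan: Since $\Vec_\Q$ is semisimple, a bounded complex of $\Q$-vector spaces is contractible if and only if it is exact. Applying $\tau$ to the isomorphism $C_\bullet^\Omega(\cQ) \cong \Cone(\alpha_\cQ^\Omega)$ and invoking Lemma \ref{lem:conestuff}, it suffices to show that $\tau(\alpha_\cQ^\Omega)$ is a quasi-isomorphism of complexes of $\Q$-vector spaces.

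The core idea is to recognize the complex $\tau(C_{\leq d}^\Omega(\cQ))$ as the (Borel--Moore) cellular chain complex of the relative interior $\operatorname{int}(P)$, whose cells are the relative interiors of the internal faces of $\cQ$ and whose incidence signs are determined by $\Omega$. Indeed, each morphism $\iota_{Q,R}$ is sent by $\tau$ to $\id_\Q$, so the differentials of $\tau(C_{\leq d}^\Omega(\cQ))$ are precisely the signed incidence numbers dictated by the definition of $\partial^\Omega$. Since $P$ is convex of dimension $d$, its relative interior is homeomorphic to $\R^d$, so $H^{BM}_*(\operatorname{int}(P)) \cong \Q$ is concentrated in degree $d$. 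With respect to the chosen orientation, the fundamental class in degree $d$ is represented on the chain level by $\sum_{Q \in \cQ_d} \varepsilon_Q Q$, where $\varepsilon_Q \in \{\pm 1\}$ records whether $\Omega_Q$ matches the restriction of $\Omega_P$ to $Q$.

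By construction of $\partial^\Omega_{d+1}$, the map $\tau(\alpha_\cQ^\Omega)$ sends the generator of $\tau(P[-d]) = \Q$ to precisely the cycle $\sum_{Q \in \cQ_d} \varepsilon_Q Q$. Hence it induces an isomorphism on $H_d$, and is trivially an isomorphism in all other degrees (both sides being zero). Therefore $\tau(\alpha_\cQ^\Omega)$ is a quasi-isomorphism and $\tau(C_\bullet^\Omega(\cQ))$ is exact, completing the proof.

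The main obstacle is making rigorous the identification of $\tau(C_{\leq d}^\Omega(\cQ))$ with the Borel--Moore cellular chain complex of $\operatorname{int}(P)$, particularly in the presence of unbounded internal faces. For bounded $P$ this is the classical cellular chain complex of the pair $(P, \partial P)$, and $H_*(P, \partial P) \cong \Q$ in degree $d$ by excision. For unbounded $P$ one can either set up the locally finite cellular theory directly (noting that $\cQ$ is a finite, locally finite cell structure on $\operatorname{int}(P) \cong \R^d$), or first quotient $\bV$ by the lineality space of $P$ to reduce to the case of trivial lineality and then appeal to Lemma \ref{bounded complex} to deformation retract onto the bounded part; in either approach one must check that the sign conventions in $\partial^\Omega$ agree with the cellular boundary coming from $\Omega_P$.
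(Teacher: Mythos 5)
Your proof is correct and follows essentially the same route as the paper: reduce to showing $\tau(\alpha_\cQ^\Omega)$ is a quasi-isomorphism, identify $\tau(C_{\leq d}^\Omega(\cQ))$ with a cellular chain complex whose homology is $\Q$ concentrated in degree $d$, and conclude from the long exact sequence of the cone. The paper phrases the topological input as the relative homology of the one-point compactification of $P$ modulo that of $\partial P$, which is the same computation as your Borel--Moore homology of $\operatorname{int}(P)\cong\R^d$, and it handles the unbounded-face issue exactly by the compactification device you describe.
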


\begin{proof}
We need to show that, for any decomposition $\cQ$ of a polyhedron $P$ in a vector space $\bV$
and any orientation $\Omega$ of $\cQ$, the complex $\tau(C_\bullet^\Omega(\cQ))$ is exact. Since $C_{\bullet}^{\Omega}(\cQ)$ is the cone of $$\alpha_\cQ^\Omega : P[-d] \to C_{\leq d}^{\Omega}(\cQ),$$ there is a termwise-split short exact sequence of complexes
$$0 \to C_{\leq d}^{\Omega}(\cQ) \to C_{\bullet}^\Omega(\cQ) \to P[-d-1]\to 0.$$
Additive functors preserve cones, so we also have a short exact sequence of vector spaces
$$0 \to \tau(C_{\leq d}^\Omega(\cQ)) \to \tau(C_{\bullet}^\Omega(\cQ)) \to \Q[-d-1]\to 0.$$
The boundary map in the long exact sequence in cohomology is induced by $\tau(\alpha_\cQ^\Omega)$, which is a general fact about cones.

\revise{The complex $\tau(C_{\leq d}^{\Omega}(\cQ))$ coincides with the cellular chain complex that computes the homology of the one point compactification
of $P$ relative to the one point compactification of $\partial P$. Indeed, the one point compactification of $P$ has one cell for each face of $P$ (bounded or not, internal or not) and one additional cell for the extra point. All but the internal faces are included in the one point compactification of $\partial P$. Thus the complex computing relative homology has one term for each internal face of $P$, 
and precisely matches $\tau(C_{\leq d}^{\Omega}(\cQ))$.}
This relative homology is 1-dimensional and concentrated in degree $d$. The boundary map to degree $d$ must be an isomorphism, since $\tau(\alpha_\cQ^\Omega)$ is evidently injective.
Thus the homology of $\tau(C_{\bullet}^\Omega(\cQ))$ vanishes.
\end{proof}

\begin{remark}\label{topological interpretation of chain complex}
In Section \ref{sec:algebra}, we will need a slight generalization of the observation that we used at the end of the proof of Proposition
\ref{triv val}.  Let $\cQ$ be a decomposition of a polyhedron of dimension $d$, and let $\Omega$ be an orientation of $\cQ$.  
Let $\cS\subset\cR\subset\cQ$ be subsets of $\cQ$
that are closed under taking faces.  
Let $$D_k^\Omega(\cR,\cS) := \bigoplus_{\substack{Q\in\cR\setminus\cS\\ \dim Q = k}} Q,$$
and define a differential $\partial^\Omega$ as before.
For example, if $\cR = \cQ$ and $\cS$ is the set of non-internal faces, then
$$D_\bullet^\Omega(\cR,\cS) = C_{\leq d}^\Omega(\cQ).$$
Define $S\subset R\subset \bV$ by taking $S$ to be the union of the elements of $\cS$,
and $\cR$ to be the union of the elements of $\cR$.  
If $R$ is bounded, then it admits the structure of a CW complex with closed cells $\cR$, or with open cells $\{\becircled Q\mid Q\in\cR\}$, where $\becircled Q$ denotes the relative interior of $Q$.  
In this case, $\tau(D_\bullet^\Omega(\cR,\cS),\partial^\Omega)$ may be identified with the cellular chain complex for the pair $(R, S)$.
More generally, the one point compactification
$\hat R := R\sqcup \{\star\}$ admits the structure of a CW complex with open cells
$\{\becircled Q\mid Q\in\cR\}\sqcup\{\{\star\}\}$, and $\tau(D_\bullet^\Omega(\cR,\cS),\partial^\Omega)$ may be identified with the cellular chain complex for the pair $(\hat R, \hat S)$.
\end{remark}

\section{The Orlik--Solomon functor}\label{sec:OS}
The purpose of this section is to prove that the Orlik--Solomon functor of Example \ref{exOS} is valuative.  

\subsection{The Orlik--Solomon algebra}
Let $E$ be a finite set, and let $\revise{\bigwedge}_E$ be the exterior algebra over $\Q$ with generators $\{u_e\mid e\in E\}$, \revise{graded in the usual way with $\deg u_e = 1$}.
Let $n$ be the cardinality of $E$, and fix an identification of $E$ with the \revise{ordered} set $\{1,\ldots,n\}$.
For any subset $S = \{e_1,\ldots,e_k\}\subset E$ with $e_1<e_2<\cdots<e_k$, consider 
the monomial $u_S := u_{e_1}\cdots u_{e_k}\in \revise{\bigwedge}_E$ and 
the element $$w_S := \sum_{i=1}^k (-1)^{i-1} u_{e_1}\cdots\widehat{u_{e_i}}\cdots u_{e_k}\in\revise{\bigwedge}_E.$$

A set $S$ is called {\bf independent} if it is contained in some basis and {\bf dependent} otherwise.
A minimal dependent set is called a {\bf circuit}.
The {\bf Orlik--Solomon algebra} $\OS(M)$ is defined as the quotient of $\revise{\bigwedge}_E$ by the \revise{(homogeneous)} ideal generated by $\{w_S\mid \text{$S$ a circuit}\}$.
We observe that changing the order on $E$ changes $w_S$ by a sign, therefore the Orlik--Solomon algebra does not in fact depend on the identification of $E$ with $\{1,\ldots,n\}$.
We also observe that $w_S$ divides $w_T$ whenever $S\subset T$, thus $\OS(M)$ may also be defined as the quotient of $\revise{\bigwedge}_E$ by the ideal generated by $\{w_S\mid \text{$S$ dependent}\}$.
This makes it clear that the homomorphisms in Example \ref{exOS} are well defined.
Though these are in fact algebra homomorphisms, we will only regard $\OS$ as a functor from $\cMat$ to the category $\cA$ of 
finite dimensional graded vector spaces over $\Q$.

\subsection{Degenerating}\label{sec:degenerating}
\revise{We now place a second $\N$-grading on the exterior algebra $\bigwedge_E$, distinct from the standard one used above. We set the degree of $u_e$ equal to $e \in E = \{1,\ldots,n\}$, viewed as a natural number. The elements $w_S$ (for $S$ a circuit) are not homogeneous in this grading. Consequently the quotient $\OS(M)$ does not inherit a grading but a filtration, } whose $i^\text{th}$ piece is equal to the image of classes
of degree $\leq i$ in $\revise{\bigwedge}_E$. For any circuit $S\subset E$, we define the associated {\bf broken circuit} $\bar S$
to be the set obtained from $S$ by removing the minimal element \revise{(using the usual order on $E$). Then } the associated graded ring $\grOS(M)$ is isomorphic to the quotient of $\revise{\bigwedge}_E$ by the ideal generated by $\{u_{\bar S}\mid\text{$S$ a circuit}\}$ \cite[Theorem 3.43]{OT}.
Note that this filtration is functorial with respect to morphisms in $\cMatid(E)$ (though not for morphisms in $\cMat(E)$), so we obtain a functor $\grOS\co\cMatid(E)\to\cA$.


Let us explicitly describe the functor $\grOS$ on morphisms. 
We define $$\nbc(M) := \{S\subset E\mid \text{$S$ does not contain any broken circuit}\}.$$
Then the set $\{u_S\mid S\in\nbc(M)\}$ is a basis for $\grOS(M)$, where $\deg(u_S) = |S|$. If $\iota_{M,M'}$ is a weak map, then every (broken) circuit for $M$ contains a (broken) circuit for $M'$, hence 
we have an inclusion $\nbc(M') \subset \nbc(M)$.
The map $\grOS(\iota_{M,M'}) \colon \grOS(M)\to\grOS(M')$ takes $u_S$ to $u_S$ if $S\in\nbc(M')$ and to 0 otherwise. 

Consider the functor $V(-,S)\co\cMatid(E)\to\cA$ given by putting $$V(M,S) := \begin{cases} \Bbbk &\text{if $S \in \nbc(M)$}\\ 0 &\text{otherwise,}\end{cases}$$
with the morphism $\iota_{M,M'}$ sent to the identity map whenever $S\in\nbc(M')$.
The previous paragraph can be summarized by saying that there is a natural isomorphism of functors
\begin{equation}\label{decomposegros} \grOS \cong \bigoplus_{S \subset E} V(-,S)\big( -|S| \big).\end{equation}
We use round brackets to denote grading shifts, so as not to confuse
with the square brackets that we use to denote homological shifts; thus $V(-,S)\big( -|S| \big)$ takes a matroid $M$ with $S\in\nbc(M)$ to a single copy of $\Q$ in degree $|S|$.

\begin{remark}\label{basis remark}
With Equation \eqref{decomposegros}, we are decomposing the functor $\grOS$ as a sum of functors that send every matroid to either a shift of $\Q$ or to $0$.
For any particular $M$, this corresponds to a certain basis for $\grOS(M)$, namely the nbc basis.
We employ a similar approach with the Chow ring and augmented Chow ring in Section \ref{sec:chow functors}.
\end{remark}

Let $\cN$ be a decomposition of a matroid $M$ on the ground set $E$, and let $d = d(M)$.
For any $S\in\nbc(M)$, consider the quotient complex $V^\Omega_\bullet(\cN,S)$ of $\tau(C_\bullet^\Omega(\cN))$
given by putting 
$$V^\Omega_k(\cN,S) := \bigoplus_{\substack{N\in\cN_k\\ S\in\nbc(N)}}\Q.$$
More informally, $V^\Omega_\bullet(\cN,S)$ is obtained from $\tau(C_\bullet^\Omega(\cN))$ by killing the termwise-split subcomplex consisting of all terms corresponding to internal faces $N\in\cN$ for which $S\notin\nbc(N)$.
By \eqref{decomposegros} we have an isomorphism of complexes of graded vector spaces
\begin{equation}\label{direct sum}\grOS(C_\bullet^\Omega(\cN))\;\; \cong \bigoplus_{S\in\nbc(M)} V^\Omega_\bullet (\cN,S)\big(-|S|\big).
\end{equation}
Our strategy will be to prove that $V^\Omega_\bullet(\cN,S)$ is exact, and use this to prove Theorem \ref{thm:OS}.

\subsection{Characterizing the nbc condition}
Fix a subset $S\subset E$.  For each $e\in E$, let $S_e := \{s\in S\mid s>e\}$, and consider the open half-space
$$H_{e,S}^+ := \Big\{\,v\in \R^E\;\bigmid \revise{\delta}_{S_e\cup\{e\}}(v)
> |S_e|\,\Big\}.$$

\begin{lemma}\label{nbc conditions}
If $M$ is a matroid on $E$, the following statements are equivalent:
\begin{itemize}
\item[{\em (i)}] $S\in\nbc(M)$
\item[{\em (ii)}] $S_e \cup \{e\}$ is independent for all $e\in E$
\item[{\em (iii)}] $P(M)\cap H_{e,S}^+ \neq \varnothing$ for all $e\in E$
\item[{\em (iv)}] $P(M) \cap \,\displaystyle\bigcap_{e\in E}H_{e,S}^+ \neq \varnothing$.
\end{itemize}
\end{lemma}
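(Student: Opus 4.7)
The plan is to prove (i) $\Leftrightarrow$ (ii) $\Leftrightarrow$ (iii) directly and then deduce (iv) from (ii) by induction on $|E|$, with (iv) $\Rightarrow$ (iii) being immediate. Throughout write $T_e := S_e \cup \{e\}$, so that $|T_e| = |S_e| + 1$ and $H_{e,S}^+ = \{v \mid \delta_{T_e}(v) > |S_e|\}$.

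For (i) $\Leftrightarrow$ (ii): if $T_e$ is dependent, pick a circuit $C \subset T_e$. When $e \in C$, every other element of $C$ lies in $S_e$ and hence exceeds $e$, so $\min C = e$ and the broken circuit $C \setminus \{e\}$ is contained in $S_e \subset S$. When $e \notin C$, the dependent set $C \subset S_e \subset S$ already contains its own broken circuit $C \setminus \{\min C\}$. Conversely, if $\bar C = C \setminus \{\min C\} \subset S$, then $e := \min C$ satisfies $C \subset T_e$, witnessing dependence of $T_e$. For (ii) $\Leftrightarrow$ (iii), observe that $\max_{v \in P(M)} \delta_{T_e}(v)$ is attained at some vertex $v_B$ and so equals the integer $|B \cap T_e| \leq |T_e|$; hence $P(M) \cap H_{e,S}^+ \neq \varnothing$ is equivalent to this maximum equaling $|T_e|$, equivalently to some basis of $M$ containing $T_e$, equivalently to $T_e$ being independent.

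The substance of the lemma is (ii) $\Rightarrow$ (iv), which I would prove by induction on $|E|$. Let $n := \max E$. If $n \in S$, set $S' := S \setminus \{n\}$ and $M' := M/n$ on $E' := E \setminus \{n\}$; for each $e < n$ one has $T'_e := S'_e \cup \{e\} = T_e \setminus \{n\}$, which is independent in $M'$ because $T_e \ni n$ is independent in $M$. The inductive hypothesis yields $v' \in P(M')$ lying in all the corresponding open half-spaces, and then $v := (v', 1) \in P(M)$ satisfies $\delta_{T_e}(v) = \delta_{T'_e}(v') + 1 > |S'_e| + 1 = |S_e|$ for $e < n$ and $\delta_{T_n}(v) = 1 > 0 = |S_n|$. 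If $n \notin S$ and $n$ is a coloop, the same contraction argument applies (now $M/n = M \setminus n$). If $n \notin S$ and $n$ is not a coloop, take $M' := M \setminus n$, invoke induction to obtain $v' \in P(M')$ satisfying the required strict inequalities for $e < n$, and note that $(v', 0) \in P(M)$ fails only the $e = n$ condition. To repair this, pick any basis $B \ni n$ of $M$ (which exists since $n$ is not a loop) and set $v := (1 - \epsilon)(v', 0) + \epsilon v_B$: now $v_n = \epsilon > 0$ secures $e = n$, while continuity preserves the finitely many strict inequalities at $e < n$ for small enough $\epsilon > 0$.

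The main obstacle is this final perturbation step together with the case bookkeeping; no individual step is deep, and the underlying geometric picture is that the flag $(T_e)_{e \in E}$ can be realized simultaneously by a single point of $P(M)$ precisely when each $T_e$ can be realized individually by a vertex of $P(M)$.
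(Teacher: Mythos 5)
Your proposal is correct. The equivalences (i) $\Leftrightarrow$ (ii) $\Leftrightarrow$ (iii) are handled essentially as in the paper: your circuit/broken-circuit case analysis is a fuller version of what the paper treats as immediate, and your ``maximum is attained at a vertex $v_B$ and equals the integer $|B\cap T_e|$'' argument is a cosmetic variant of the paper's inequality $\delta_{S_e\cup\{e\}}(v)\leq \rk(S_e\cup\{e\})$. The genuine divergence is in (ii) $\Rightarrow$ (iv), which is indeed the substance of the lemma. The paper constructs an explicit witness in one shot: it picks a basis $B_e\supset S_e\cup\{e\}$ for each $e$, chooses rapidly decreasing coefficients $r_0=1>r_1>\cdots>r_n=0$ with $r_e<r_{e-1}/(|S_e|+1)$, and verifies directly that $v=\sum_e(r_{e-1}-r_e)v_{B_e}$ lies in every $H^+_{e,S}$. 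You instead induct on $|E|$ by deleting or contracting $n=\max E$, using that $P(M/n)\times\{1\}$ and $P(M\setminus n)\times\{0\}$ sit inside $P(M)$, and repair the one failed strict inequality in the deletion case by perturbing toward a vertex $v_B$ with $n\in B$. I checked your three cases (including that $n\in T_e$ for all $e<n$ exactly when $n\in S$, that $T_n=\{n\}$ independent rules out $n$ being a loop, and that the bookkeeping $|S_e|=|S'_e|+1$ in the contraction case matches the added coordinate) and they all go through; the trivial base case $E=\varnothing$ should be stated but is harmless. The trade-off: the paper's proof is non-inductive and produces an explicit point, at the cost of the slightly magical choice of the $r_e$ and a fiddly chain of inequalities; yours is structurally transparent and each step is elementary, at the cost of case analysis and a non-explicit $\epsilon$. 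Both are valid and of comparable length.
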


\begin{proof}
The equivalence of (i) and (ii) is immediate from the definition of a broken circuit.  We next prove the equivalence of (ii)
and (iii).  If $S_e\cup \{e\}$ is independent, then it is contained in some basis $B$, and $v_B\in P(M)\cap H^+_{e,S}$.
Conversely, suppose that $v\in P(M)\cap H^+_{e,S}$.  Then we have
$$|S_e| < \revise{\delta}_{S_e\cup\{e\}}(v) \leq \rk\big(S_e\cup\{e\}\big),$$
where the first inequality comes from the fact that $v\in H_{e,S}^+$ and the second comes from the fact that $v\in P(M)$.
This implies that the cardinality of $S_e\cup\{e\}$ is equal to its rank, which means that it is independent.

We have now established the equivalence of (i), (ii), and (iii).
The fact that (iv) implies (iii) is obvious, thus we can finish the proof by showing that (ii) implies (iv).
Assume that (ii) holds, and for each $e\in E$, choose a basis $B_e$ containing $S_e\cup\{e\}$.
In addition, choose real numbers $\revise{r}_0,\ldots,\revise{r}_n$
with $\revise{r}_0 = 1$, $\revise{r}_n = 0$, and $\revise{r}_e < \revise{r}_{e-1}/\big(|S_e|+1\big)$ for all $e\in E$.
Let $$v := \sum_{e\in E}(\revise{r}_{e-1} - \revise{r}_e)\,v_{B_e}\in\R^E.$$
The sum of the coefficients appearing in the definition of $v$ is equal to $\revise{r}_0-\revise{r}_n = 1$,
thus $v$ is in the convex hull of $\{v_{B_e}\mid e\in E\}$, which is contained in $P(M)$.  It thus remains 
only to prove that $v\in H_{e,S}^+$ for all $e\in E$.
We have
\begin{equation}\label{first}\revise{\delta}_{S_e\cup\{e\}}(v) = \sum_{f\in S_e\cup\{e\}}\sum_{B_g\ni f} (\revise{r}_{g-1}-\revise{r}_g)
= \sum_{f\in S_e}\sum_{B_g\ni f} (\revise{r}_{g-1}-\revise{r}_g) + \sum_{B_g\ni e} (\revise{r}_{g-1}-\revise{r}_g).\end{equation}
Note that, if $g\leq f$ and $f\in S$, then $f\in S_g\cup\{g\}\subset B_g$.  This implies that
\begin{equation}\label{second}\sum_{f\in S_e}\sum_{B_g\ni f} (\revise{r}_{g-1}-\revise{r}_g)\geq 
\sum_{f\in S_e}\sum_{g\leq f} (\revise{r}_{g-1}-\revise{r}_g) = \sum_{f\in S_e} (\revise{r}_0 - \revise{r}_f)
\geq \sum_{f\in S_e} (\revise{r}_0 - \revise{r}_e) = |S_e|(1-\revise{r}_e).\end{equation}
In addition, we have $e\in B_e$, and therefore 
\begin{equation}\label{third}\sum_{B_g\ni e} (\revise{r}_{g-1}-\revise{r}_g) \geq \revise{r}_{e-1} - \revise{r}_e> |S_e|\revise{r}_e.\end{equation}
Combining Equations \eqref{first}, \eqref{second}, and \eqref{third}, we find that
$$\revise{\delta}_{S_e\cup\{e\}}(v) > |S_e|(1-\revise{r}_e) + |S_e|\revise{r}_e = |S_e|,$$
and therefore $v\in H_{e,S}^+$.
\end{proof}

\subsection{Exactness of the summands}
Fix a matroid $M$ on the ground set $E$, a decomposition $\cN$ of $M$ with orientation $\Omega$, 
and a set $S\in\nbc(M)$.  We now use Lemma \ref{nbc conditions} to prove the following proposition.

\begin{proposition}\label{V exact}
The complex $V_\bullet^\Omega(\cN,S)$ is exact.
\end{proposition}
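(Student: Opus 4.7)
The plan is to give $V_\bullet^\Omega(\cN, S)$ a topological interpretation using Remark \ref{topological interpretation of chain complex} and then compute its homology via convexity of the open set $U := \bigcap_{e \in E} H_{e,S}^+$. By Lemma \ref{nbc conditions}(iv), the internal faces $N \in \cN$ contributing to $V_\bullet^\Omega(\cN, S)$ are exactly those with $P(N) \cap U \neq \varnothing$. Setting
\[
\tilde A := \bigcup_{N \in \cN,\; P(N) \cap U = \varnothing} P(N) \;\subseteq\; P(M),
\]
Remark \ref{topological interpretation of chain complex} (applied with $\cR = \cN$ and with $\cS$ the set of faces of $\cN$ that are either non-internal or disjoint from $U$) identifies $V_\bullet^\Omega(\cN, S)$ with the mapping cone of a chain map $\alpha' \co M[-d] \to T_\bullet$, where $T_\bullet$ computes the relative homology $H_*(P(M), \partial P(M) \cup \tilde A)$.

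Reasoning as in the proof of Proposition \ref{triv val}, exactness of $V_\bullet^\Omega(\cN, S)$ reduces to showing that this relative homology is one-dimensional in degree $d$ and zero elsewhere, with $\alpha'$ hitting the top class. Since $\alpha'$ is induced from $\alpha_\cN^\Omega$, the latter assertion follows from the same diagram chase used in Proposition \ref{triv val}. For the concentration of the homology, the long exact sequence of the triple $(P(M), \partial P(M) \cup \tilde A, \partial P(M))$ together with the excision isomorphism $H_*(\partial P(M) \cup \tilde A, \partial P(M)) \cong H_*(\tilde A, \tilde A \cap \partial P(M))$ reduces the claim to showing that the inclusion $\tilde A \cap \partial P(M) \hookrightarrow \tilde A$ is a homotopy equivalence.

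The main obstacle will be this last homotopy equivalence. My approach is to exploit convexity of $U$ by choosing a point $p \in U \cap \Int P(M)$ (which exists since $S \in \nbc(M)$) and pushing each $x \in \tilde A$ along the ray from $p$ through $x$ until it meets $\partial P(M)$. Convexity of $U$ guarantees that once the ray exits $U$ it cannot re-enter, so the push stays in $U^c$. The delicate point is that remaining in $U^c$ is strictly weaker than remaining in $\tilde A$, which is only the union of those faces of $\cN$ entirely disjoint from $U$ rather than all of $P(M) \setminus U$. To handle this, I expect to argue inductively: for each maximal internal face $N$ of $\cN$ disjoint from $U$, I would show that $P(N)$ deformation retracts within $\tilde A$ onto $P(N) \cap (\partial P(M) \cup \bigcup\{P(N') : N' \subsetneq N\})$, leveraging the connectivity of the graphs $\Gamma_R$ from Lemma \ref{lem:connected} to verify that the relevant portion of $\partial P(N)$ is contractible and thereby attaching $P(N)$ to the previously built subcomplex does not change the homotopy type.
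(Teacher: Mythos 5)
Your reduction is set up correctly: taking $\cR=\cN$ and $\cS$ the faces that are either non-internal or disjoint from $U$, Remark \ref{topological interpretation of chain complex} does identify $V^\Omega_{\leq d}(\cN,S)$ with the relative cellular chain complex of the pair $(P(M),\partial P(M)\cup\tilde A)$, and the long exact sequence of the triple plus excision correctly reduces the proposition to the vanishing of $H_*(\tilde A,\tilde A\cap\partial P(M))$. That vanishing is true, but your proposed proof of it is where the genuine gap lies, and the inductive scheme you sketch cannot work as stated. If $N$ is an internal face with $P(N)\cap U=\varnothing$, then every proper face of $P(N)$ is also disjoint from $U$ and hence also lies in $\tilde A$. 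So if the cells of $\tilde A$ are attached in order of increasing dimension, then at the moment $P(N)$ is attached the previously built subcomplex already contains \emph{all} of $\partial P(N)$: the intersection of $P(N)$ with the earlier stage is the whole boundary sphere, not a contractible portion of it, and the attachment genuinely changes the homotopy type by adding a cell of dimension $\dim N$. The cells of $\tilde A$ relative to $\tilde A\cap\partial P(M)$ only cancel globally, in pairs, not one attachment at a time; making that precise would require something like a discrete Morse matching or a collapsing order on $\tilde A$, which is a nontrivial combinatorial input that neither the radial projection (which, as you yourself observe, only preserves the strictly larger set $P(M)\setminus U$) nor Lemma \ref{lem:connected} supplies. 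Lemma \ref{lem:connected} is used in the paper for an unrelated purpose (rescaling differentials in Lemma \ref{there's an orientation}) and says nothing about the topology of $\tilde A$.

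The paper sidesteps the complement entirely and works with $U$ itself. It sets $U:=\becircled P(M)\cap\bigcap_{e\in E}H^+_{e,S}$, a nonempty bounded convex open set, so that $(\bar U,\partial U)\cong(B^d,S^{d-1})$. The pieces $U_N:=U\cap\becircled P(N)$ are convex open sets of dimension $\dim N$, nonempty exactly when $N$ is internal and $S\in\nbc(N)$ by Lemma \ref{nbc conditions}, and they partition $U$, giving a cell structure on $\bar U/\partial U$ whose cellular chain complex is exactly $V^\Omega_{\leq d}(\cN,S)$ (plus one extra point). The homology is therefore $\Q$ concentrated in degree $d$, and the cone argument from Proposition \ref{triv val} finishes the proof. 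If you want to complete your write-up, the cleanest repair is to abandon the induction on the cells of $\tilde A$ and instead decompose $U$ as above: convexity of $U$ and of each $U_N$ does all the work that your retraction of $\tilde A$ was meant to do.
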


\begin{remark}
Let $f_S:\Mat(E)\to\Z$ be the homomorphism characterized by putting $f_S(M) = 1$ if $S\in\nbc(M)$ and $f_S(M)=0$ otherwise.
Lemma \ref{nbc conditions}, combined with \cite[Theorem 4.2]{AFR}, implies that $f_S$ is valuative.
The functor $V(-,S)$ categorifies the homomorphism $f_S$, and Proposition \ref{V exact} says precisely that
the functor $V(-,S)$ is valuative.
\end{remark}

\begin{proof}[Proof of Proposition \ref{V exact}]
We will proceed in the same manner as the proof of Proposition \ref{triv val}.\footnote{In the special case where 
$M$ is loopless and $S=\varnothing$, Proposition \ref{V exact} follows from Proposition \ref{triv val}.}
As in that argument, let $d = d(M)$, and let 
$V_{\leq d}^\Omega(\cN,S)$ be the complex obtained from $V_\bullet^\Omega(\cN,S)$ by removing the term in degree $d+1$.
We will give a topological interpretation of this complex that is slightly different from the interpretation in Remark \ref{topological interpretation of chain complex}.

Let$$U := \becircled P(M)\cap \bigcap_{e\in E} H_{e,S}^+.$$
Since $U$ is an intersection of convex open subsets of $P(M)$, it is itself a convex open subset of $P(M)$.
By Lemma \ref{nbc conditions}, $U$ is nonempty, therefore $(\bar U,\partial U)\cong (B^d,S^{d-1})$.

For all $N\in\cN$, let $U_N := U\cap \becircled P(N)$.
Lemma \ref{nbc conditions} implies that $U_N\neq \varnothing$ if and only if $N$ is an internal face and $S\in\nbc(N)$.
The set $U$ is the disjoint union of the convex open sets $U_N$, and adding a single 0-cell gives us a cell decomposition of the
quotient $\bar U/\partial U$.
The complex $V_{\leq d}^\Omega(\cN,S)$ is precisely the cell complex that computes the reduced homology
$\tilde H_*(\bar U,\partial U) \cong \Q[-d]$.

We have an exact sequence of chain complexes
$$0 \to V_{\leq d}^\Omega(\cN,S) \to V_{\bullet}^\Omega(\cN,S) \to \Q[-d-1]\to 0.$$
We have observed that $V_{\leq d}^\Omega(\cN,S)$ has 1-dimensional homology concentrated in degree $d$,
while $\Q[-d-1]$ has 1-dimensional homology concentrated in degree $d+1$.
Just as in the proof of Proposition \ref{triv val}, the boundary map in the long exact sequence in homology is an isomorphism, 
which implies that the homology of $V_{\bullet}^\Omega(\cN,S)$
vanishes.
\end{proof}

\begin{theorem}\label{thm:OS}
The categorical invariant $\OS$ is valuative.
\end{theorem}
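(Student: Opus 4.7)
The plan is to deduce the theorem from Proposition \ref{V exact} by a filtration/spectral sequence argument, as foreshadowed in the introduction. First I would establish that $\grOS$ is itself valuative: the decomposition \eqref{decomposegros} exhibits $\grOS$ as a finite direct sum of (shifts of) the functors $V(-,S)$, each of which is valuative by Proposition \ref{V exact}. As noted in Section \ref{sec:valfun}, a direct sum of valuative functors is valuative, and a shift of a valuative functor is clearly valuative as well. Hence, for any decomposition $(\cN,\Omega)$ of a matroid $M$ on $E$, the complex $\grOS(C_\bullet^\Omega(\cN))$ is contractible, and since $\cA$ is semisimple this is the same as being exact.

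Next I would lift this to a statement about $\OS$ itself using the filtration described in Section \ref{sec:degenerating}. The second $\N$-grading on $\bigwedge_E$ (with $\deg u_e = e$) induces a bounded increasing filtration on $\OS(N)$ for each matroid $N$ on $E$, and the morphisms $\OS(\iota_{N,N'})$ respect this filtration, because on representatives in $\bigwedge_E$ they act as the identity on each generator $u_e$ and therefore preserve the second grading. Consequently $\OS(C_\bullet^\Omega(\cN))$ acquires the structure of a filtered complex of graded vector spaces, whose associated graded is precisely $\grOS(C_\bullet^\Omega(\cN))$.

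To complete the proof I would invoke the standard convergence theorem for the spectral sequence associated to a bounded filtered complex of finite-dimensional vector spaces. The $E_1$-page is the homology of the associated graded complex, and the sequence converges strongly to the homology of the filtered complex. By the previous step, the associated graded complex is exact, so $E_1 = 0$, hence $E_\infty = 0$, and $\OS(C_\bullet^\Omega(\cN))$ has trivial homology. Since the target category is semisimple, this makes the complex contractible, establishing valuativity of $\OS$ as a functor from $\cMatid(E)$ to $\cA$. Because $\OS$ is already defined as a functor on all of $\cMat$, this immediately upgrades it to a categorical valuative invariant.

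The main conceptual work has been done in Proposition \ref{V exact}; the only obstacle here is the bookkeeping of verifying that the filtration is genuinely functorial on $\cMatid(E)$, which is routine given the explicit description of the morphisms, and ensuring that the spectral sequence converges, which is automatic since both the filtration (bounded by the rank of $M$) and the chain complex (bounded by $d(M)+1$) are finite.
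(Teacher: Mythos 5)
Your proof is correct and follows essentially the same route as the paper: the paper's proof likewise observes that $\OS(C_\bullet^\Omega(\cN))$ carries a filtration whose associated graded is $\grOS(C_\bullet^\Omega(\cN))\cong\bigoplus_S V_\bullet^\Omega(\cN,S)(-|S|)$, invokes Proposition \ref{V exact} for exactness of each summand, and concludes via the spectral sequence of the (bounded) filtered complex. The only difference is expository: you spell out the functoriality of the filtration on $\cMatid(E)$ and the convergence of the spectral sequence, which the paper takes for granted.
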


\begin{proof}
We need to show that, for any matroid $M$ on $E$ and any decomposition $\cN$ of $M$ with orientation $\Omega$,
$\OS(C_\bullet^\Omega(\cN))$ is exact.  By Equation \eqref{direct sum} and Proposition \ref{V exact}, $\OS(C_\bullet^\Omega(\cN))$
admits a filtration whose associated graded is exact.  
The spectral sequence of the filtered complex has $E_1$ page equal to the homology of the associated graded
and converges to the homology of the original complex.  In this case, the $E_1$ page is zero, so the original complex must be exact, as well.
\end{proof}

\revise{We end the section on the Orlik--Solomon functor with a couple of speculative remarks.}

\begin{remark}\label{shaw}
\revise{A decomposition $\cN$ of $M$ induced by a height function on $P(M)$ is called {\bf regular}.  In this case, $\cN_k$ is in canonical bijection with 
the set of codimension $k$ bounded faces of a tropical linear space $L$, and the reduced Orlik--Solomon algebra of $M$ is isomorphic to the 
degree $q$ tropical cohomology group of the Bergman fan of $M$ (see \cite{shaw-thesis} and \cite{zharkov} for precise definitions).
With this perspective, we expect that one can find a tropical proof of the exactness of $\OS(C_\bullet^\Omega(\cN))$ for regular decompositions.
We thank Kris Shaw for this observation.}
\excise{Given a tropical linear space $L$, there is an associated matroid decomposition $\cN$ of a matroid $M$; 
decompositions that arise this way are called {\bf regular}.
When $\cN$ is a regular decomposition, it is likely that there is an tropical proof of the fact that 
$\OS(C_\bullet^\Omega(\cN))$ is exact, as we outline below.

There is an inclusion reversing correspondence between $\cN_k$ and the set of codimension $k$ bounded faces of $L$, via which the complex 
$\OS(C_\bullet^\Omega(\cN))$ can be interpreted as a cellular sheaf on $L$.
If we remove the first term of this complex and take the $q^\text{th}$ homology of this complex in graded degree $q$, we obtain the {\bf tropical cohomology group}
$$H^{p,q}(L) = H_q\Big(\OS^p(C_{\leq d}^\Omega(\cN))\Big).$$
Using a deletion/contraction induction, it should be possible to prove that
this group vanishes unless $q=0$, and that $H^{p,0}(L) \cong \OS^p(M)$.  This will imply exactness of $\OS(C_\bullet^\Omega(\cN))$.
We thank Kris Shaw for explaining this approach to us, and we refer the reader to \cite{shaw-thesis} and \cite{zharkov} for precise definitions.}
\end{remark}

\begin{remark}
\revise{As was mentioned in the introduction, Speyer's proof of valuativity of the Poincar\'e polynomial $\pi_M(t)$
proceeds by first proving that the Tutte polynomial $T_M(x,y)$ is valuative, and then using the fact that $\pi_M(t)$ is a specialization of $T_M(x,y)$.
This suggests that it would be interesting to find an additive category $\cA$ with Grothendieck group $\Z[x,y]$, a categorical valuative
invariant $\mathcal{T}:\cMat\to\cA$ that categorifies the Tutte polynomial, and a functor $\Phi$ from $\cA$ to the category of graded vector spaces
over $\Q$ along with a natural isomorphism $\OS \simeq \Phi\circ\mathcal{T}$.}
\end{remark}

\section{Maximizing a linear functional}\label{sec:functional}
In this section, we state and prove Theorem \ref{everything}, which categorifies of a theorem of McMullen \cite[Theorem 4.6]{McMullen}.
Theorem \ref{everything} is the technical heart of the paper, and will be the key ingredient to the proof of Theorem \ref{convolution}.

\subsection{The statement}\label{sec:statement}
Let $\bV$ be a finite dimensional real vector space, and fix throughout this section a linear functional $\psi:\bV\to\R$.
If $P\subset\bV$ is a polyhedron with the property that the restriction of $\psi$ to $P$ is bounded above, then we define $P_\psi\subset P$
to be the face on which $\psi$ obtains its maximum value. 
If $\cQ$ is a decomposition of $P$, then $$\cQ_\psi := \{Q\in\cQ\mid Q\subset P_\psi\}$$
is a decomposition of $P_\psi$.

Consider the additive functor $$\Delta_\psi:\cPolid^+(\bV)\to\cPolid^+(\bV)$$ characterized by the following properties:
\begin{itemize}
\item If the restriction of $\psi$ to $P$ is not bounded above, then $\Delta_\psi(P)=0$.
\item If the restriction of $\psi$ to $P$ is bounded above, then $\Delta_\psi(P) = P_\psi$.
\item If $Q\subset P$ and $Q_\psi\subset P_\psi$, then $\Delta_\psi$ takes $\iota_{P,Q}\in\Hom(P,Q)$ to $\iota_{P_\psi,Q_\psi}\in\Hom(P_\psi,Q_\psi)$.
\end{itemize}
\begin{remark}
If $Q\subset P$ and the restriction of $\psi$ to $P$ is bounded above but the maximum value of $\psi$ on $P$ is strictly greater
than the maximum value of $\psi$ on $Q$, then $\Hom(P_\psi,Q_\psi) = 0$, so $\Delta_\psi$ necessarily takes $\iota_{P,Q}\in\Hom(P,Q)$ to zero.
\end{remark}

\excise{
For a polyhedron $P$ we have $\Delta_{\psi}(P) = P$ if and only if $\psi$ is constant on $P$. For any $P$, clearly $\psi$ is constant on the face $P_{\psi}$. Thus
the functor $\Delta_{\psi}$ is idempotent: $\Delta_{\psi} \circ \Delta_{\psi} = \Delta_{\psi}$ is an equality of functors.  Moreover, if $\psi$ is constant on $P$, then it is constant on any face of $P$.} 

\begin{remark}
We may think of the functor $\Delta_{\psi}$ as projection from $\cPolid^+(\bV)$ to the full subcategory $\cPolid^+(\bV)_{\psi}$, whose objects are formal sums of polyhedra on which $\psi$ is a constant function. This category $\cPolid^+(\bV)_{\psi}$ splits as a direct sum of categories of polyhedra on each level set of $\psi$.
\end{remark}

Let $\cI_{\psi}(\bV)\subset \cI(\bV)$ be the localizing subcategory of $\Ch_b(\cPolid^+(\bV)_{\psi})$ generated by the complexes $C_{\bullet}^{\Omega}(\cQ)$ for oriented decompositions $\cQ$ of polyhedra $P$ on which $\psi$ is constant.

\begin{theorem}\label{everything}
Suppose that $\cQ$ is a decomposition of $P\subset\bV$ and $\Omega$ is an orientation of $\cQ$.
\begin{itemize}
\item If the restriction of $\psi$ to $P$ is not bounded above, then the complex $\Delta_\psi(C_\bullet^\Omega(\cQ))$ is contractible.
\item If the restriction of $\psi$ to $P$ is bounded above, 
the complex $\Delta_\psi(C_\bullet^\Omega(\cQ))$ is homotopy equivalent to a shift of $C_\bullet^{\Omega_\psi}(\cQ_\psi)$ for some
(equivalently any) orientation $\Omega_\psi$ of $\cQ_\psi$.
\end{itemize}
Thus $\Delta_{\psi}$ sends $\cI(\bV)$ to $\cI_{\psi}(\bV) \subset \cI(\bV)$. 
\end{theorem}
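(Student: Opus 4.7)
The plan is to filter $\Delta_\psi(C_\bullet^\Omega(\cQ))$ by the dimension of $Q_\psi$ and analyze the associated graded. Since every nonzero component of the differential is a morphism $\iota_{Q_\psi, R_\psi}$ arising from a facet inclusion $R \subset Q$ with $R_\psi \subset Q_\psi$, the dimension of $Q_\psi$ is non-increasing along differentials, so this is indeed a filtration by subcomplexes. Grouping summands by the face $F = Q_\psi$, the associated graded piece for each $F$ takes the form $F \otimes_\Q V^F_\bullet$, where $V^F_\bullet$ is the complex of $\Q$-vector spaces whose degree-$k$ term is spanned by $\{Q \in \cQ_k \mid Q_\psi = F\}$ (plus an extra basis vector in degree $d+1$ when $P_\psi = F$), with signed incidence pairings as differentials. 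Because the functor $F \otimes_\Q (-)$ preserves contractibility, it will suffice to analyze each $V^F_\bullet$ separately and then reassemble via iterated application of the two-out-of-three rule (Lemma \ref{lem:convoinideal}).

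Next I would give $V^F_\bullet$ a topological interpretation analogous to Remark \ref{topological interpretation of chain complex}. Passing to the quotient by the linear subspace parallel to $\mathrm{aff}(F)$, the induced functional $\bar\psi$ lets us rephrase the condition $Q_\psi = F$ as the statement that the tangent cone $\sigma_Q$ of $Q$ at $F$ lies in the closed half-space $\{\bar\psi \leq 0\}$ and meets the bounding hyperplane only at the origin. This exhibits $V^F_\bullet$ as the cellular chain complex of a certain relative pair inside the star fan of $F$, intersected with the strict lower half-space.

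The heart of the argument, and the main obstacle, is the topological computation: $V^F_\bullet$ should be homotopy equivalent to the $F$-labeled piece of $C_\bullet^{\Omega_\psi}(\cQ_\psi)[\dim P_\psi - \dim P]$; equivalently, its minimal form should be a single copy of $\Q$ in degree $k_F := \dim F + (\dim P - \dim P_\psi)$ when $F$ is an internal face of $\cQ_\psi$ (with an extra copy of $\Q$ in degree $\dim P + 1$ joined by the identity when $F = P_\psi$ is also internal in $\cQ_\psi$), and should be contractible otherwise. The key case is when $F$ lies in the relative interior of $P_\psi$: the local structure of $P$ factors as $F \times T$, where $T$ is a transverse polyhedron with apex at the origin on which $\bar\psi$ is uniquely maximized at the apex, and the induced decomposition of $T$ contains a unique full-dimensional cell strictly below the hyperplane, producing the generator of $V^F_{k_F}$. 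For $F$ not meeting $P_\psi$ or lying on its boundary, contractibility will follow by deformation retracting the relevant portion of the star fan onto a lower-dimensional subcomplex, in the spirit of Lemmas \ref{lem:connected} and \ref{bounded complex}.

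Once this computation is in hand the assembly is immediate. If $\psi$ is unbounded on $P$ then $\cQ_\psi = \varnothing$ and every $V^F_\bullet$ is contractible, so Lemma \ref{lem:convoinideal} yields that $\Delta_\psi(C_\bullet^\Omega(\cQ))$ is contractible. If $\psi$ is bounded, then after stripping off contractible pieces the minimal form of $\Delta_\psi(C_\bullet^\Omega(\cQ))$ has the same underlying graded object as $C_\bullet^{\Omega_\psi}(\cQ_\psi)[\dim P_\psi - \dim P]$, and its differential has invertible $\iota_{F, F'}$-components for each facet inclusion $F' \subset F$ in $\cQ_\psi$; Lemma \ref{there's an orientation} then produces an isomorphism of complexes between the two for any chosen orientation $\Omega_\psi$ of $\cQ_\psi$. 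The final assertion that $\Delta_\psi$ sends $\cI(\bV)$ into $\cI_\psi(\bV) \subset \cI(\bV)$ follows at once, since the generators $C_\bullet^\Omega(\cQ)$ of $\cI(\bV)$ are sent either to contractible complexes or to shifts of generators of $\cI_\psi(\bV)$.
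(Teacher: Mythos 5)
Your strategy coincides with the paper's: break $\Delta_\psi(C_\bullet^\Omega(\cQ))$ into the subquotients indexed by the faces $F$ on which $\psi$ is constant (the paper's $C(F)_\bullet$, your $F\otimes_\Q V^F_\bullet$), interpret $\tau(C(F)_\bullet)$ as a relative cellular chain complex for the slice of the local fan at $F$ by a level set of $\tilde\psi$ strictly below $F$, show each piece is contractible unless $F$ is an internal face of $\cQ_\psi$ (in which case it contributes a single copy of $F$ in degree $\dim F+\dim P-\dim P_\psi$), and finish with Lemma \ref{there's an orientation}. These are exactly Lemmas \ref{topological interpretation of DF}, \ref{ignore the submaximal stuff and the boundary}, and \ref{here's an important term} and the final assembly in the paper.

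There is, however, one genuine gap at the assembly step. Computing the minimal complex of each piece $C(F)_\bullet$ determines only the underlying graded object of the minimal complex $D_\bullet$ of the whole convolution; it gives no information about the components of the differential of $D_\bullet$ between the surviving copies of $G$ and $F$ for $F$ a facet of $G$ in $\cQ_\psi$. You assert that these components are invertible multiples of $\iota_{G,F}$, but a priori they could vanish, in which case $D_\bullet$ would be a direct sum of shifted faces with zero differential, not homotopy equivalent to $C_\bullet^{\Omega_\psi}(\cQ_\psi)$, and Lemma \ref{there's an orientation} could not be invoked. The paper supplies the missing argument in Lemma \ref{what are the maps?}: the two-face subquotient consisting of all copies of $F$ and $G$ is shown to be contractible by identifying its image under $\tau$ with the chain complex of the part $\cL_{F,G,\psi}$ of the slice that is unbounded only in the direction of the ray $\tilde\sigma_G$, which deformation retracts to the point at infinity; contractibility of this two-part convolution then forces the connecting map between the two surviving generators to be an isomorphism. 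A smaller bookkeeping point: when $P_\psi\notin\cQ$, the piece indexed by $F=P_\psi$ consists only of the top generator in degree $\dim P+1$ and is not contractible --- it is what supplies the degree-$(\dim P_\psi+1)$ term of $C_\bullet^{\Omega_\psi}(\cQ_\psi)$ --- so your trichotomy of minimal forms needs to treat this case separately.
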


\begin{example}
Consider the triangle $P\subset \mathbb{R}^2$ along with the decomposition $\cQ$ shown in Figure \ref{triangle} whose maximal faces are four smaller triangles.
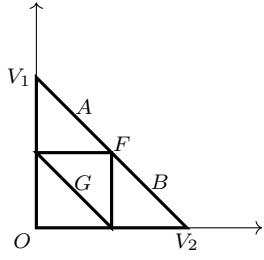
\begin{figure}[h] 
\begin{center}
\begin{tikzpicture}
	\draw[->] (0,0) -- (3,0);
	\draw[->] (0,0) -- (0,3);
	\draw[very thick] (0,0) -- (2,0) -- (0,2) -- cycle;
	\draw[very thick] (1,0) -- (1,1) -- (0,1) -- cycle;
	\node[label=above right:{\scriptsize $F$}] at (0.75,0.75) {};
	\node[label=above right:{\scriptsize $A$}] at (0.25,1.25) {};
	\node[label=above right:{\scriptsize $B$}] at (1.25,0.25) {};
	\node[label=above right:{\scriptsize $G$}] at (0.23,0.23) {};
	\node[label=below left:{\scriptsize $O$}] at (0.2,0.2) {};
	\node[label=left:{\scriptsize $V_1$}] at (0.2,2) {};
	\node[label=below:{\scriptsize $V_2$}] at (2,0.2) {};
\end{tikzpicture}
\caption{A polytope in $\mathbb{R}^2$ and its decomposition.}\label{triangle}
\end{center}
\end{figure}
Consider the linear functional $\psi = x_1+x_2$, which is maximized on $P$ by the hypotenuse $P_\psi$. The family $Q_\psi = \{V_1, A, F, B, V_2\}$ is a decomposition of $P_\psi$. 
The complex $\Delta_\psi(C_\bullet^\Omega(\cQ))$ decomposes as a direct sum of two pieces.  The first piece is the contractible complex $\Nul(G,2)$, coming from the lower-left triangle and the edge $G$.
The second piece has the following shape (ignoring signs):
\[
\begin{tikzcd}
& & A \arrow[r] & F \arrow[rd] &   \\
 & & F \arrow[ru] \arrow[r] & F \arrow[r]  &   0. \\
0 \arrow[r] & P_\psi \arrow[ruu] \arrow[ru] \arrow[r] & B \arrow[ru] 
\end{tikzcd}
\]
Here the three copies of $F$ come from applying $\Delta_\psi$ to the middle triangle, its northern edge, and its eastern edge, all of which are internal faces of $\cQ$.
This complex is not minimal, as the $(F,F)$ component of the differential is nontrivial.  However, it is homotopy equivalent to the complex $C_\bullet^{\Omega_\psi}(\cQ_\psi)$.

In general, the complex $\Delta_\psi(C_\bullet^\Omega(\cQ))$ will decompose as a direct sum, with summands indexed by the maximum values achieved by $\psi$ on various internal faces of $\cQ$.
Theorem \ref{everything} says that all but one of those summands will be contractible, and the one corresponding to the maximum of $\psi$ on $P$ (assuming that $\psi$ is bounded on $P$) 
will be homotopy equivalent to $C_\bullet^{\Omega_\psi}(\cQ_\psi)$.
\end{example}

\excise{

\begin{example}
Consider our running example of the decomposition $\cQ$ of the octahedron $P(U_{2,4})$, depicted in
Figures \ref{octahedron figure} and \ref{complex}. If $\psi = x_1+x_2+x_3$, then $P_\psi$ is the triangular face with vertices labelled $12$, $13$, and $23$. This face is the product of two matroid polytopes: the $2$-dimensional polytope corresponding to the uniform matorid of rank $2$ over the ground set $\{1,2,3\}$ and the $0$-dimensional polytope corresponding to the uniform matroid of rank $0$ over the ground set $\{4\}$. The complex $\Delta_\psi(C_\bullet^\Omega(\cQ))$ thus looks like this:

\begin{figure}[h]
    \centering
	\begin{tikzpicture}  
	[scale=0.4,auto=center] 
	\tikzstyle{edges} = [thick];

    \node[label=$0$] at (-7,-0.75,0) {};

    \draw[->] (-5.5,0,0) -- (-4.5,0,0);

    \node[label=right:{\scriptsize $23$}, style={circle,scale=0.8, fill=black, inner sep=0pt},] (23) at (1,0,2) {};
    \node[label=left:{\scriptsize $13$}, style={circle,scale=0.8, fill=black, inner sep=0pt},] (13) at (-3,0,2) {};
    \node[label=above:{\scriptsize $12$}, style={circle,scale=0.8, fill=black, inner sep=0pt},] (12) at (-1,2.5,0) {};
    
    \draw[->] (2.5,0,0) -- (3.5,0,0);
    
    \node[label=right:{\scriptsize $23$}, style={circle,scale=0.8, fill=black, inner sep=0pt}] (23N1) at (9,2,2) {};
    \node[label=left:{\scriptsize $13$}, style={circle,scale=0.8, fill=black, inner sep=0pt}] (13N1) at (5,2,2) {};
    \node[label=above:{\scriptsize $12$}, style={circle,scale=0.8, fill=black, inner sep=0pt}] (12N1) at (7,4.5,0) {};

    \node[label=$\oplus$] at (7,-0.75,0) {};

    \node[label=right:{\scriptsize $23$}, style={circle,scale=0.8, fill=black, inner sep=0pt}] (23N2) at (9,-2,2) {};
    \node[label=left:{\scriptsize $13$}, style={circle,scale=0.8, fill=black, inner sep=0pt}] (13N2) at (5,-2,2) {};
    
    \draw[->] (10.5,0,0) -- (11.5,0,0);

    \node[label=right:{\scriptsize $23$}, style={circle,scale=0.8, fill=black, inner sep=0pt}] (23N12) at (17,0,2) {};
    \node[label=left:{\scriptsize $13$}, style={circle,scale=0.8, fill=black, inner sep=0pt}] (13N12) at (13,0,2) {};

    \draw[->] (18.5,0,0) -- (19.5,0,0);

    \node[label=$0$] at (21,-0.75,0) {};

    \draw (23) -- (13); 
    \draw (12) -- (13);
    \draw (12) -- (23);
   
    \draw (23N1) -- (13N1);
    \draw (12N1) -- (13N1);
    \draw (12N1) -- (23N1);

    \draw (23N2) -- (13N2);
   
    \draw (23N12) -- (13N12);

    \end{tikzpicture}
\end{figure}

Using Gaussian elimination (not defined) to remove the two terms which are line segments, we obtain the homotopy equivalent complex $C_{\bullet}^{\Omega'}(\cQ')$. Note that $C_{\bullet}^{\Omega'}(\cQ')$ is itself contractible, since $\cQ'$ is the trivial decomposition.
\end{example}

}

\subsection{Geometry}\label{sec:geometry}
In this section, we give the geometric constructions that we will need for the proof of Theorem \ref{everything}.
Let $\cQ$ be a decomposition of a polyhedron in $\bV$, and let $F\in\cQ$ be any face.
Informally, we define the {\bf local fan} $\Sigma_F(\cQ)$ to be the fan that one sees when one looks at $\cQ$ in a small neighborhood
of a point in the relative interior of $F$.  More precisely, for any $G\in\cQ$ with $F\subset G$, we define the cone
$$\sigma_G := \{\lambda (v-v')\mid v\in G, v'\in F, \lambda \in \revise{\R_{\geq 0}}\},$$
and we put $\Sigma_F(\cQ) := \{\sigma_G\mid F\subset G\in\cQ\}$.  Let $\bV_F := \sigma_F$, which is a linear subspace of $\bV$.
The vector space $\bV_F$ acts freely by translation on every cone in $\Sigma_F(\cQ)$, and we may therefore define
the cone $\tilde\sigma_G := \sigma_G/\bV_F$ for every $F\subset G\in \cQ$ and 
the {\bf reduced local fan}
$$\tilde\Sigma_F(\cQ) := \{\tilde \sigma_G\mid F\subset G\in\cQ\},$$
which is a pointed fan in the vector space $\bV/\bV_F$.

Let $\psi$ be a nonzero linear functional on $\bV$.  
Let $\cQ$ be a decomposition of a polyhedron $P\subset \bV$ on which $\psi$ is bounded above,
and let $F\in\cQ$ be any face on which $\psi$ is constant.
Then $\psi$ descends to a linear functional $\tilde\psi$ on $\bV/\bV_F$.  Let $\mathbb{H}_{F,\psi}\subset \bV/\bV_F$ \revise{be the preimage of $-1$ under the map $\tilde\psi$},
let $R_{F,\psi} := \mathbb{H}_{F,\psi}\cap \operatorname{Supp}\tilde\Sigma_F(\cQ_\psi)$, and let $$\cR_{F,\psi} := \{\tilde\sigma\cap \mathbb{H}\mid \tilde\sigma\in \tilde\Sigma_F(\cQ)\}$$
be the induced decomposition of $R_{F,\psi}$.  

Let $\cB_{F,\psi}\subset\cR_{F,\psi}$ be the collection of bounded faces.
Let $$\cQ_{F,\psi} := \{Q\in\cQ\mid Q_\psi = F\text{ and } Q \neq F\}.$$  We have a bijection
\begin{equation}\label{B}
\begin{aligned}
\cQ_{F,\psi} &\to \cB_{F,\psi}\\
Q &\mapsto \tilde\sigma_Q\cap\bH_{F,\psi}.
\end{aligned}
\end{equation}
This bijection takes faces of dimension $k+1+\dim F$ to faces of dimension $k$, and it restricts to a bijection
between elements of $\cQ_{F,\psi}$ that lie on the boundary of $P$ and faces of $\cB_{F,\psi}$ that lie on the boundary of $R_{F,\psi}$.

\begin{example}\label{slice}
Consider the decomposition shown in Figure \ref{diamond}, with $\psi$ equal to the height function.
The fan $\tilde\Sigma_F(\cQ) = \Sigma_F(\cQ)$ is complete, with one vertex, four rays, and four cones of dimension 2.
The polyhedron $\cR_{F,\psi}$ is equal to the line $\bH_{F,\psi}$, and the decomposition $\cR_{F,\psi}$ of $R_{F,\psi}$
has two rays, two vertices, and one interval.  The rays are equal to $\sigma_{H_1}\cap \bH_{F,\psi}$ and $\sigma_{H_3}\cap \bH_{F,\psi}$,
the two vertices are equal to $\sigma_{G_2}\cap \bH_{F,\psi}$ and $\sigma_{G_3}\cap \bH_{F,\psi}$,
and the interval is equal to $\sigma_{H_2}\cap \bH_{F,\psi}$.  Thus the set $\cQ_{F,\psi} = \{G_2,G_3,H_2\}$ is in 
canonical bijection with the bounded complex $\cB_{F,\psi}$.

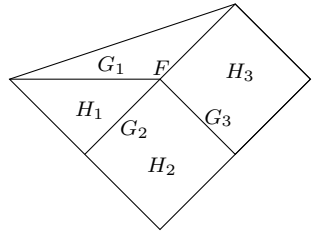
\begin{figure}[h]
\begin{center}
	\begin{tikzpicture}  
	\draw (1,1) -- (2,0) -- (0,-2) -- (-2,0) -- cycle;
	\draw (0,0) -- (1,1) -- (2,0) -- (1,-1) -- cycle;
	\draw (0,0) -- (-1,-1);
	\draw (0,0) -- (-2,0);
	\node[label=above right:{\scriptsize $G_1$}] at (-1.1,-0.2) {};
	\node[label=above right:{\scriptsize $H_3$}] at (0.6,-0.3) {};
	\node[label=above right:{\scriptsize $H_1$}] at (-1.4,-0.8) {};
	\node[label=above right:{\scriptsize $H_2$}] at (-0.44,-1.55) {};
	\node[label=above right:{\scriptsize $G_2$}] at (-0.8,-1.05) {};
	\node[label=above right:{\scriptsize $G_3$}] at (0.32,-0.9) {};
	\node[label=below left:{\scriptsize $F$}] at (0.4,0.52) {};
	\end{tikzpicture} 
\caption{A decomposition $\cQ$ of a quadrilateral, with some of the internal faces labeled.}
\label{diamond}
\end{center}
\end{figure}
\end{example}

We will also need a relative version of this construction.
Suppose we are given $F\subset G\in\cQ$, 
with $F$ a facet of $G$ and $\psi$ constant on $G$.
Let $\cL_{F,G,\psi}\subset\cR_{F,\psi}$ be the collection of faces 
that are either bounded or have recession cone equal to the ray $\tilde\sigma_G$.  Informally, these are the polyhedra
that are unbounded in at most one direction, namely that of the inward normal vector to $F$ in $G$.
We then have a bijection 
\begin{equation}\label{L}
\begin{aligned}
\cQ_{G,\psi} &\to \cL_{F,G,\psi}\setminus \cB_{F,\psi}\\
Q &\mapsto \tilde\sigma_Q\cap\bH_{F,\psi}.
\end{aligned}
\end{equation}

\begin{example}
Continuing with the picture in Example \ref{slice}, we have $$\cQ_{G_1,\psi} = \{H_1\} \and 
\cL_{F,G_1,\psi}\setminus \cB_{F,\psi} = \sigma_{H_1}\cap \bH_{F,\psi}.$$
\end{example}

\subsection{Algebra}\label{sec:algebra}
In this section, we prove Theorem \ref{everything}.  Let $P$ be a polyhedron in $\bV$, let $\cQ$ be a decomposition of $P$,
and let $\psi$ be a linear functional on $\bV$.  We may assume that $\psi$ is nonconstant on $P$, as Theorem \ref{everything}
is trivial in this case.

Let $F\in\cQ$ be any face on which $\psi$ is constant, and let $C(F)_\bullet := \Delta_\psi(C_\bullet^\Omega(\cQ))^F$ be the subquotient of $\Delta_\psi(C_\bullet^\Omega(\cQ))$ consisting of all copies of the object $F$
(see Section \ref{thin}).
Recall that $\tau:\cP\to\Vec_\Q$ is the functor that takes every polyhedron to the vector space $\Q$ and every linear 
automorphism of $\bV$ to the identity morphism.

\begin{lemma}\label{topological interpretation of DF}
If $F$ lies on the boundary of $P$, then $\tau\big(C(F)_\bullet\big)[1+\dim F]$ is homotopy equivalent to the singular chain complex for the pair $(R_{F,\psi},\partial R_{F,\psi})$.  If $F$ is an internal face of $\cQ$, then $\tau\big(C(F)_\bullet\big)$ is contractible.
\end{lemma}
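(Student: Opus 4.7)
The plan is to identify $\tau(C(F)_\bullet)[1+\dim F]$ with the cellular chain complex of an appropriate CW pair built from the slice decomposition $\cR_{F,\psi}$, and then invoke Remark \ref{topological interpretation of chain complex} together with Lemma \ref{bounded complex}. First I would enumerate the copies of $F$ in $\Delta_\psi(C_\bullet^\Omega(\cQ))$: they arise from three disjoint sources, namely (i) internal $Q \in \cQ_{F,\psi}$, contributing in degree $\dim Q$; (ii) the face $F$ itself, but only when $F$ is internal to $\cQ$, contributing in degree $\dim F$; and (iii) the polyhedron $P$ in degree $d+1$, but only when $P_\psi = F$. The bijection \eqref{B} identifies family (i) with the interior bounded faces of $\cR_{F,\psi}$, that is, with those elements of $\cB_{F,\psi}$ not contained in $\partial R_{F,\psi}$, and does so with dimension shift equal to $1+\dim F$. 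A direct sign check, modelled on the proof of Lemma \ref{there's an orientation}, shows that after this relabeling the differential matches the usual cellular boundary.

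In Case 1 ($F$ on the boundary of $P$), family (ii) is empty, so Remark \ref{topological interpretation of chain complex} identifies the shifted complex with the cellular chain complex of the pair $(B, B \cap \partial R_{F,\psi})$, where $B = \bigcup \cB_{F,\psi}$ is the union of bounded faces, with the generator from family (iii) (when $P_\psi = F$) playing the role of the top cell corresponding to $P$ itself in the compactified cellular structure on $R_{F,\psi}$. Lemma \ref{bounded complex} exhibits a homotopy equivalence between this pair and $(R_{F,\psi}, \partial R_{F,\psi})$, so the cellular chain complex is homotopy equivalent to the singular chain complex of $(R_{F,\psi}, \partial R_{F,\psi})$, as claimed.

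In Case 2 ($F$ internal to $\cQ$), the local fan $\tilde\Sigma_F(\cQ)$ is complete, which forces $R_{F,\psi} = \bH_{F,\psi}$ and $\partial R_{F,\psi} = \varnothing$; moreover, $F$ being internal together with the assumption that $\psi$ is nonconstant on $P$ rules out $F \subset P_\psi$, so family (iii) is also empty. Family (ii) contributes a single generator in degree $-1$ that plays the role of the augmentation in the reduced cellular chain complex of $B = \bigcup \cB_{F,\psi}$. By Lemma \ref{bounded complex}, $B$ is homotopy equivalent to the contractible hyperplane $\bH_{F,\psi}$, so the reduced chain complex is acyclic, which over $\Q$ is equivalent to contractibility. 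The main obstacle is the careful bookkeeping needed to confirm that the generators from families (ii) and (iii) play exactly the roles of the augmentation and of the top cell, with boundary maps matching those in the cellular structure; once this identification is secured, the remainder of the argument reduces to standard cellular-to-singular homology comparisons together with Lemma \ref{bounded complex}.
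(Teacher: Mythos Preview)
Your argument is essentially the paper's: match the copies of $F$ coming from internal $Q\in\cQ_{F,\psi}$ with the interior bounded cells of $\cR_{F,\psi}$ via the bijection \eqref{B}, invoke Remark \ref{topological interpretation of chain complex} to recognise the result as the cellular chain complex of the pair $(B_{F,\psi},\,B_{F,\psi}\cap\partial R_{F,\psi})$, and finish with Lemma \ref{bounded complex}. In Case 2 the paper does exactly what you propose, presenting $\tau(C(F)_\bullet)$ as the cone of the augmentation map and using completeness of the reduced local fan to conclude that $R_{F,\psi}=\bH_{F,\psi}$ has trivial reduced homology; your observation that family (iii) is empty there (because an internal face of $\cQ$ cannot lie in $P_\psi\subset\partial P$) is correct.

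The one place your bookkeeping goes astray is the treatment of family (iii) in Case 1. When $P_\psi=F$, every ray of the reduced local fan lies in the half-space $\{\tilde\psi\le 0\}$, so $R_{F,\psi}$ is already a compact polytope of dimension $d-1-\dim F$; its one-point compactification adds only an isolated $0$-cell, not a ``top cell corresponding to $P$''. The generator coming from $P$ sits in degree $d-\dim F$ after the shift, one above the top of the cellular chain complex of $(R_{F,\psi},\partial R_{F,\psi})$, and so cannot be absorbed into that cellular model the way you suggest. (For a sanity check, take $P=[0,1]$ with the trivial decomposition and $\psi(x)=x$: then $C(F)_\bullet$ for $F=\{1\}$ is $\Q\xrightarrow{\sim}\Q$, which is contractible, whereas $(R_{F,\psi},\partial R_{F,\psi})=(\{\text{pt}\},\varnothing)$ has homology $\Q$.) The paper's proof simply does not discuss this contribution either, so your attention to it is well placed even if the proposed resolution does not work.
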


\begin{proof}
First suppose that $F$ lies on the boundary of $P$, which means that $F$ itself does not appear in $C_\bullet^\Omega(\cQ)$.
Let $B_{F,\psi}\subset R_{F,\psi}$ be the union of all of the elements of $\cB_{F,\psi}$.
Combining Remark \ref{topological interpretation of chain complex} with
the bijection \eqref{B} from Section \ref{sec:geometry}, we may identify the complex
$\tau\big(C(F)_\bullet\big)[1+\dim F]$ with the cellular chain complex for the pair
$(B_{F,\psi}, B_{F,\psi}\cap \partial R_{F,\psi})$,
which is homotopy equivalent to $(R_{F,\psi},\partial R_{F,\psi})$ by Lemma \ref{bounded complex}.

Now suppose that $F$ is an internal face of $\cQ$.  In this case, $F$ does appear in $C_\bullet^\Omega(\cQ)$, which leads to a small modification of the argument above.  We now have a termwise-split short exact sequence
\begin{equation}\label{augmentation}0 \to \Q[1] \to \tau\big(C(F)_\bullet\big)[1+\dim F]\to \tau\big(C(F)_{> \dim F}\big)[1+\dim F]\to 0.\end{equation}
The complex $\tau\big(C(F)_\bullet\big)[\dim F]$ is isomorphic to the cone of the augmentation map 
$$f:\tau\big(C(F)_{> \dim F}\big)[1+\dim F]\to\Q,$$ so that Equation \eqref{augmentation} may be identified with a shift of Equation \eqref{coneses}.

As in the first paragraph of this proof, the quotient complex
$\tau\big(C(F)_{> \dim F}\big)[1+\dim F]$ may be identified with the cellular chain complex for the pair $(B_{F,\psi},B_{F,\psi}\cap \partial R_{F,\psi})$,
which is homotopy equivalent to $(R_{F,\psi},\partial R_{F,\psi})$.  
The complex $\tau\big(C(F)_\bullet\big)[\dim F]$, being the cone of the augmentation map, may be identified with the {\em reduced}
cellular chain complex.
Since $F$ is internal, the reduced local fan $\Sigma_F(\cQ)$ is complete, 
which implies that $R_{F,\psi} = \bH_{F,\psi}$ and $\partial R_{F,\psi} = \varnothing$.  The reduced homology of the pair $(R_{F,\psi},\partial R_{F,\psi})$ is trivial, which implies that $\tau\big(C(F)_\bullet\big)[\dim F]$ is contractible.
\end{proof}

\begin{lemma}\label{ignore the submaximal stuff and the boundary}
If any of the following three conditions hold, then $C(F)_\bullet$ is contractible:
\begin{enumerate}
\item The restriction of $\psi$ to $P$ is not bounded above.
\item The restriction of $\psi$ to $P$ is bounded above but $F$ is not contained in $P_\psi$.
\item The restriction of $\psi$ to $P$ is bounded above and $F$ is contained in $\partial P_\psi$.
\end{enumerate}
\end{lemma}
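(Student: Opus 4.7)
The strategy is to apply Lemma~\ref{topological interpretation of DF}. If $F$ is an internal face of $\cQ$, that lemma gives that $\tau\bigl(C(F)_\bullet\bigr)$ is contractible, and since $C(F)_\bullet$ consists entirely of copies of the single object $F$ with $\Q$-scalar differentials, this contractibility immediately lifts to $C(F)_\bullet$ itself. So we may assume $F \subset \partial P$; by the first clause of Lemma~\ref{topological interpretation of DF}, it then suffices to show that the pair $(R_{F,\psi}, \partial R_{F,\psi})$ has trivial relative singular homology. We may further assume $\cQ_{F,\psi} \ne \varnothing$, since otherwise $C(F)_\bullet = 0$ by inspection.

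Set $\mathcal{T} := \supp \tilde\Sigma_F(\cQ) \subset \bV/\bV_F$. Under our assumptions, $R_{F,\psi} = \mathcal{T} \cap \bH_{F,\psi}$ is a non-empty convex subset of $\bH_{F,\psi}$, hence contractible. The plan is to produce a non-zero vector $v \in \mathcal{T}$ satisfying $\tilde\psi(v) = 0$ and $-v \notin \mathcal{T}$. Given such a $v$, define $t(x) := \sup\{\,t \ge 0 : x - tv \in \mathcal{T}\,\}$; this is finite because $-v \notin \mathcal{T}$. The homotopy $h_s(x) := x - s\,t(x)\,v$ preserves $\tilde\psi$ (so the flow stays in $\bH_{F,\psi}$) and at $s=1$ exits $\mathcal{T}$ through its boundary, landing in $\partial \mathcal{T} \cap \bH_{F,\psi} = \partial R_{F,\psi}$; this yields a deformation retraction of $R_{F,\psi}$ onto $\partial R_{F,\psi}$, forcing $H_*(R_{F,\psi}, \partial R_{F,\psi}) = 0$.

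To produce $v$: in Cases 1 and 2, the cone $\mathcal{T}$ contains a direction $w$ with $\tilde\psi(w) > 0$---in Case 1 as the image in $\bV/\bV_F$ of a vector in $\operatorname{rec}(P)$ on which $\psi$ is positive, which exists because $\psi$ is unbounded on $P$; in Case 2 as a direction along which $\psi$ strictly increases, which exists because $F \not\subset P_\psi$ forces $\psi$ to take values above $\psi|_F$ on $P$ near $F$. The hypothesis $\cQ_{F,\psi} \ne \varnothing$ simultaneously gives a non-zero $v' \in \tilde\sigma_Q$ for some $Q \in \cQ_{F,\psi}$, necessarily satisfying $\tilde\psi(v') < 0$. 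A suitable convex combination of $w$ and $v'$ lies in $\mathcal{T} \cap \{\tilde\psi = 0\}$ and is non-zero; a small perturbation---made possible by the fact that $\mathcal{T}$ is a proper cone, thanks to $F \subset \partial P$---places $v$ outside the lineality space of $\mathcal{T}$, so that $-v \notin \mathcal{T}$. In Case 3, $\tilde\psi \le 0$ throughout $\mathcal{T}$, so no ascending direction $w$ exists; instead, the recession cone of $R_{F,\psi}$ coincides with $\supp \tilde\Sigma_F(\cQ_\psi)$, and the hypothesis $F \subset \partial P_\psi$ ensures this is a proper convex cone rather than a linear subspace, whose relative interior supplies the desired $v$.

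The main obstacle I anticipate is the Case 3 construction, which genuinely differs from Cases 1 and 2 because no direction in $\mathcal{T}$ has $\tilde\psi > 0$: one must instead extract $v$ purely from within the local fan of $P_\psi$ at $F$, and here the precise geometric condition $F \subset \partial P_\psi$ (rather than $F$ lying in the relative interior of $P_\psi$, which is exactly the ``good'' case excluded from this lemma) is what guarantees that $\supp \tilde\Sigma_F(\cQ_\psi)$ is a proper cone rather than a linear subspace.
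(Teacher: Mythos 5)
Your proposal has the same architecture as the paper's proof: handle the internal-face case by the second clause of Lemma \ref{topological interpretation of DF}, and in the boundary case use the first clause to reduce to showing that $\partial R_{F,\psi}$ is a deformation retract of the convex polyhedron $R_{F,\psi}$, which you achieve by exhibiting a recession direction $v$ of $R_{F,\psi}$ with $-v\notin\mathcal{T}$ and flowing along $-v$. The only divergence is how $v$ is produced. The paper does this uniformly in all three cases: after discarding the case where $\psi$ attains its minimum on $F$ (so that $R_{F,\psi}=\varnothing$), there exists $v\in P\setminus F$ with $\psi(v)=\psi(F)$ --- in your case 3 such a point lies in $P_\psi\setminus F$, which is nonempty precisely because $F\subset\partial P_\psi$ --- and the image of $v-v'$ for $v'\in F$ is the desired direction. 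Your case-by-case construction is a legitimate alternative, and your case 3 is the cleanest part: identifying $\operatorname{rec}(R_{F,\psi})$ with $\supp\tilde\Sigma_F(\cQ_\psi)$ makes visible exactly where the hypothesis $F\subset\partial P_\psi$ (as opposed to $F$ internal to $\cQ_\psi$, the case of Lemma \ref{here's an important term}) is used.

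One step in cases 1--2 needs more care. ``A small perturbation, made possible by the fact that $\mathcal{T}$ is a proper cone, places $v$ outside the lineality space'' is not a consequence of properness alone: the proper cone $\mathcal{T}=\{(x,y,z):z\le 0,\ 0\le x\le -z\}$ with $\tilde\psi=z$ has $\mathcal{T}\cap\ker\tilde\psi$ equal to its lineality space, its slice at $\tilde\psi=-1$ is $[0,1]\times\R$, and the conclusion genuinely fails there (this is exactly the configuration excluded by the hypotheses of the lemma). What rescues your argument is data you already have on the table: $\mathcal{T}$ contains vectors with $\tilde\psi>0$ and with $\tilde\psi<0$, so the relative interior of $\mathcal{T}$ --- convex and dense in $\mathcal{T}$ --- meets both open half-spaces and hence meets $\ker\tilde\psi$; and a relative-interior point $v$ of a closed convex cone that is not a linear subspace (guaranteed because $F\subset\partial P$ confines $\mathcal{T}$ to a proper half-space of its span) automatically satisfies $-v\notin\mathcal{T}$. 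Replacing ``convex combination plus perturbation'' by ``any point of $\operatorname{relint}(\mathcal{T})\cap\ker\tilde\psi$'' closes the step, and also disposes of the possibility that your convex combination of $w$ and $v'$ vanishes.
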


\begin{proof}
The complex $C(F)_\bullet$ lives in the full subcategory of $\cPol_{\id}(\bV)$ consisting of direct sums of copies of $F$.
This subcategory is equivalent via the trivial functor $\tau$ to the category of finite dimensional $\Q$-vector spaces, thus
it is sufficient to prove that $\tau(C(F)_\bullet)$ is contractible.

The case where $F$ is an internal face of $\cQ$ is treated in Lemma \ref{topological interpretation of DF}, so we may assume that $F$ is contained in the boundary of $P$.
In this case, Lemma \ref{topological interpretation of DF} tells us that it is sufficient to prove that $\partial R_{F,\psi}$ is a deformation retract of $R_{F,\psi}$.
If $\psi$ is bounded below on $P$ and achieves its minimum on $F$, then $R_{F,\psi} = \varnothing$, and we are done.
Thus we may assume that $\psi(F)$ is neither the minimum nor the maximum of $\psi$ on $P$.  

Since we know that $F$
is contained in the boundary of $P$, this implies that there exists a point $v\in P\setminus F$ with $\psi(v) = \psi(F)$.
Choose a point $v'\in F$, and let $w$ be the image of $v-v'$ in $\bV/\bV_F$.  Then $w$ is contained in the recession cone
of $R_{F,\psi}$, and $-w$ is not.  This implies that $R_{F,\psi}$ is unbounded and $\partial R_{F,\psi}$ is nonempty, which in turn
implies that $\partial R_{F,\psi}$ is a deformation retract of $R_{F,\psi}$.
\end{proof}

Our next lemma addresses the one case not covered by Lemma \ref{ignore the submaximal stuff and the boundary}.

\begin{lemma}\label{here's an important term}
Suppose that the restriction of $\psi$ to $P$ is bounded above and
$F$ is an internal face of $\cQ_\psi$.
Then $C(F)_\bullet$ is homotopy equivalent to $F[-\dim P + \dim P_\psi - \dim F]$.
\end{lemma}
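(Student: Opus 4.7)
The plan is to apply Lemma~\ref{topological interpretation of DF} to reduce the lemma to a computation of relative singular homology, and then to exploit a product decomposition of $R_{F,\psi}$ that is available precisely when $F$ is internal to $\cQ_\psi$. Since $\psi$ is nonconstant on $P$, the face $P_\psi$ will be a proper face of $P$ and hence $F\subset P_\psi\subset \partial P$, so the first part of Lemma~\ref{topological interpretation of DF} will give
\[ \tau\bigl(C(F)_\bullet\bigr)[1+\dim F] \;\simeq\; C_*(R_{F,\psi},\partial R_{F,\psi}). \]
Because $C(F)_\bullet$ lives in the full subcategory of $\cPolid^+(\bV)$ whose objects are direct sums of copies of $F$, which is equivalent via $\tau$ to $\Vec_\Q$, a bounded complex there is determined up to homotopy equivalence by its homology. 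Thus, to establish $C(F)_\bullet\simeq F[-\dim P+\dim P_\psi-\dim F]$ it will suffice to verify that $H_*(R_{F,\psi},\partial R_{F,\psi})$ is $\Q$ concentrated in the single degree $n:=\dim P-\dim P_\psi-1$.

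The geometric heart of the argument will be the following. Because $F$ is internal to $\cQ_\psi$, its relative interior lies inside the relative interior of $P_\psi$, so the tangent cone of $P$ at any point in the relative interior of $F$ coincides with the tangent cone of $P$ at $P_\psi$. Writing $\bV_{P_\psi}$ for the linear span of $P_\psi-P_\psi$, this tangent cone will split as $\bV_{P_\psi}\oplus T^\perp$, where $T^\perp\subset \bV/\bV_{P_\psi}$ is a pointed cone of dimension $\dim P-\dim P_\psi$ on which the descended functional $\tilde\psi$ is strictly negative away from the origin---this is exactly the condition that $P_\psi$ is the unique face of $P$ on which $\psi$ attains its maximum. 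Passing to the quotient by $\bV_F\subset \bV_{P_\psi}$ and intersecting with $\mathbb{H}_{F,\psi}$ then produces
\[ R_{F,\psi} \;\cong\; (\bV_{P_\psi}/\bV_F)\times K, \qquad K\;:=\;T^\perp\cap\{\tilde\psi=-1\}, \]
with $K$ a compact convex polytope of dimension $n$ homeomorphic to a closed disk $D^n$, and $\partial R_{F,\psi}\cong (\bV_{P_\psi}/\bV_F)\times S^{n-1}$.

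From this product structure, the K\"unneth formula (or simply the contractibility of the vector-space factor) will force $H_*(R_{F,\psi},\partial R_{F,\psi})\cong H_*(D^n,S^{n-1})$, giving $\Q$ in degree $n$ and zero elsewhere. Undoing the shift by $[1+\dim F]$ then places the homology of $\tau(C(F)_\bullet)$ in degree $n+1+\dim F=\dim P-\dim P_\psi+\dim F$, and the homology-determines-homotopy observation from the first paragraph yields the desired equivalence. The most delicate step will be justifying the product decomposition of the tangent cone---confirming that $F$ being internal to $\cQ_\psi$ really forces the splitting $\bV_{P_\psi}\oplus T^\perp$ and that the slice $K$ is a genuine disk rather than a more degenerate polytope---after which everything else is routine.
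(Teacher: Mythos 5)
Your proposal is correct and follows essentially the same route as the paper: reduce via $\tau$ and Lemma~\ref{topological interpretation of DF} to computing $H_*(R_{F,\psi},\partial R_{F,\psi})$, then identify $R_{F,\psi}$ as the product of the vector space $\bV_{P_\psi}/\bV_F$ with a closed ball of dimension $\dim P-\dim P_\psi-1$ (your slice $K$ is the paper's quotient polytope $P/P_\psi$). Your extra care in deriving the product decomposition from the splitting of the tangent cone along $P_\psi$ is a welcome elaboration of a step the paper states more tersely, but it is not a different argument.
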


\begin{proof}
As in the proof of Lemma \ref{ignore the submaximal stuff and the boundary}, 
it is sufficient to prove that $\tau(C(F)_\bullet)$ has one dimensional homology concentrated in degree $\dim P - \dim P_\psi + \dim F$.
By Lemma \ref{topological interpretation of DF}, this is equivalent to proving that the pair $(R_{F,\psi},\partial R_{F,\psi})$
has one dimensional homology, concentrated in degree $\dim P - \dim P_\psi - 1$.
The polyhedron $R_{F,\psi}$ is isomorphic to the product of the vector space $\bV_P/\bV_F$ with the quotient polytope
$P/P_\psi$.
The result then follows from the fact that $P/P_\psi$ is homeomorphic to a closed ball of dimension $\dim P - \dim P_\psi - 1$.
\end{proof}

Lemmas \ref{ignore the submaximal stuff and the boundary} and \ref{here's an important term} together allow us to
identify the minimal complex of $C(F)_\bullet$ for any $F\in\cQ$ on which $\psi$ is constant.  The next lemma
tells us how two of these minimal complexes interact.
Suppose that the restriction of $\psi$ to $P$ is bounded above, $F$ and $G$ are both internal faces of $\cQ_\psi$, and
$F$ is a facet of $G$.
Let $C(F,G)_\bullet$ be the subquotient of $\Delta_\psi(C_\bullet^\Omega(\cQ))$ consisting of all copies of $F$ and $G$.
Note that this complex has $C(F)_\bullet$ as a termwise-split subcomplex, and the termwise-split quotient is isomorphic to $C(G)_\bullet$.

\begin{lemma}\label{what are the maps?}
The complex $\tau\big(D(F,G)_\bullet\big)$ is contractible.
\end{lemma}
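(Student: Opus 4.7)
The plan is to interpret the complex in the lemma as the subquotient $C(F,G)_\bullet$ defined in the paragraph immediately preceding the statement; the letter $D$ appears to be a typo, since no object named $D(F,G)_\bullet$ has been introduced. With that reading, the goal becomes showing that $\tau(C(F,G)_\bullet)$ has trivial homology, which in $\Vec_\Q$ is equivalent to contractibility.

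The starting point is the termwise-split short exact sequence
$$0 \to C(F)_\bullet \to C(F,G)_\bullet \to C(G)_\bullet \to 0$$
arising from the filtration by $F$-type versus $G$-type summands. Applying $\tau$ yields a short exact sequence of complexes of $\Q$-vector spaces, and the induced long exact sequence in homology reduces the problem, via Lemma \ref{here's an important term}, to verifying that the connecting map
$$\Q \cong H_{\dim G + r}\bigl(\tau C(G)_\bullet\bigr) \longrightarrow H_{\dim F + r}\bigl(\tau C(F)_\bullet\bigr) \cong \Q$$
is an isomorphism, where $r := \dim P - \dim P_\psi$ and $\dim G = \dim F + 1$.

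I would establish this by the same topological strategy used in the proofs of Lemmas \ref{topological interpretation of DF} and \ref{here's an important term}, now applied to $C(F,G)_\bullet$ as a whole. Via the bijections \eqref{B} and \eqref{L}, the copies of $F$ and $G$ in $\Delta_\psi(C_\bullet^\Omega(\cQ))$ are indexed by the faces of $\cL_{F,G,\psi} \subset \cR_{F,\psi}$: bounded faces encode copies of $F$, and unbounded faces, necessarily with recession cone $\tilde\sigma_G$, encode copies of $G$. Crucially, $\cL_{F,G,\psi}$ is closed under taking faces in $\cR_{F,\psi}$, because a face of a bounded polyhedron is bounded and a face of a polyhedron with recession cone $\tilde\sigma_G$ either is bounded or still has recession cone $\tilde\sigma_G$. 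Invoking Remark \ref{topological interpretation of chain complex}, this lets me identify $\tau(C(F,G)_\bullet)$ (up to an overall shift, and up to augmentation terms contributed by whichever of $F$ and $G$ is itself an internal face of $\cQ$) with the reduced cellular chain complex of the pair built from $L_{F,G,\psi} := \bigcup_{\sigma \in \cL_{F,G,\psi}} \sigma$ and its intersection with $\partial R_{F,\psi}$. Translation along the recession direction $\tilde\sigma_G$ then provides a deformation retraction of $L_{F,G,\psi}$ onto $L_{F,G,\psi} \cap \partial R_{F,\psi}$, in the spirit of Lemma \ref{bounded complex}, and the resulting vanishing of relative homology forces the homology of $\tau(C(F,G)_\bullet)$ to vanish.

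The main obstacle I expect is the bookkeeping in the second step: matching the homological degrees on the two sides of the sequence (the $F$-part is shifted by $1 + \dim F$ while the $G$-part is shifted by $1 + \dim G$), verifying that the differentials of $C(F,G)_\bullet$ between copies of $F$ and copies of $G$ indeed correspond to the cellular boundary maps in $\cL_{F,G,\psi}$, and carefully handling the augmentation correction when $F$ or $G$ is itself an internal face of $\cQ$ (the subtlety already encountered in the proof of Lemma \ref{topological interpretation of DF}).
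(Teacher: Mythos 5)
Your reading of $D(F,G)_\bullet$ as the subquotient $C(F,G)_\bullet$ is correct, and your overall route is the same as the paper's: via Remark \ref{topological interpretation of chain complex} and the bijections \eqref{B} and \eqref{L}, identify $\tau\big(C(F,G)_\bullet\big)$ with a shift of a relative cellular chain complex attached to $\cL_{F,G,\psi}$, and then kill it by a homotopy that translates in the direction of the ray $\tilde\sigma_G$. (Your preliminary reduction through the long exact sequence of $0\to C(F)_\bullet\to C(F,G)_\bullet\to C(G)_\bullet\to 0$ is harmless but redundant, since your topological step, if it worked, would prove exactness of the whole complex directly. Also, since $\psi$ is assumed nonconstant on $P$, both $F$ and $G$ lie in $P_\psi\subset\partial P$, so neither is an internal face of $\cQ$ and no augmentation corrections arise.)

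However, the final topological claim as you state it is false: in general there is no deformation retraction of $L_{F,G,\psi}$ onto $L_{F,G,\psi}\cap\partial R_{F,\psi}$. For instance, take $P=[0,1]^2$, $\psi=-x_2$, and let $\cQ$ be the subdivision of $P$ into the two rectangles $[0,\tfrac12]\times[0,1]$ and $[\tfrac12,1]\times[0,1]$ together with their faces; put $F=(\tfrac12,0)$ and $G=[0,\tfrac12]\times\{0\}$. Then $R_{F,\psi}$ is an entire affine line, so $\partial R_{F,\psi}=\varnothing$, while $L_{F,G,\psi}$ is a nonempty closed ray; a nonempty space cannot retract onto the empty set, and in fact $H_*\big(L_{F,G,\psi},\,L_{F,G,\psi}\cap\partial R_{F,\psi}\big)\neq 0$ here. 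The underlying issue is that $L_{F,G,\psi}$ is unbounded, so the chain complex furnished by Remark \ref{topological interpretation of chain complex} computes the homology of the one-point compactified pair $\big(\hat L_{F,G,\psi},\,(L_{F,G,\psi}\cap\partial R_{F,\psi})\cup\{\star\}\big)$, not of $(L_{F,G,\psi},\,L_{F,G,\psi}\cap\partial R_{F,\psi})$. This is exactly where the paper's proof differs: translation along $\tilde\sigma_G$ pushes every point (boundary points included) off to the point at infinity, giving a deformation retraction of the compactified pair onto $(\{\star\},\{\star\})$, whence the relative homology, and hence $\tau\big(C(F,G)_\bullet\big)$, vanishes. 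In the example above this matches the answer: the compactified ray relative to its point at infinity has trivial homology, and $\tau\big(C(F,G)_\bullet\big)$ is an isomorphism $\Q\to\Q$. So your identification and your choice of homotopy are the right ones, but the retraction must target the point at infinity of the compactification rather than $L_{F,G,\psi}\cap\partial R_{F,\psi}$.
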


\begin{proof}
Let $L_{F,G,\psi}$ be the union of the elements of $\cL_{F,G,\psi}$.
Combining Remark \ref{topological interpretation of chain complex} with the two bijections \eqref{B} and \eqref{L} in Section \ref{sec:geometry} allows us to identify $\tau\big(D(F,G)_\bullet\big)$ 
with a shift of the cellular chain complex for the pair $(\hat L_{F,G,\psi}, L_{F,G,\psi} \cap \partial R_{F,\psi} \cup \{\star\})$,
where $\star\in\hat L_{F,G,\psi}$ is the point at infinity.
Translation in the direction of the ray $\tilde\sigma_G\subset \bH_{F,\psi}$ defines a deformation retraction
 from this pair to the pair $(\{\star\}, \{\star\})$, therefore this chain complex is contractible.
\end{proof}

We are now ready to prove Theorem \ref{everything}.

\begin{proof}[Proof of Theorem \ref{everything}.]
Let $D_\bullet$ be a minimal complex for $\Delta_\psi(C_\bullet^\Omega(\cQ))$.
By Lemmas \ref{ignore the submaximal stuff and the boundary} and \ref{here's an important term} and the discussion in Section \ref{thin}, $D_\bullet$ has either one or zero copies of each face $F \in \cQ$, depending on whether or not $F$ is an internal face of $\cQ_\psi$, in which case that copy appears 
in degree $\dim F + \dim P - \dim P_\psi$. Thus $D_\bullet = 0$ if the restriction of $\psi$ to $P$ is not bounded above, and otherwise
\begin{equation*} D_\bullet[\dim P - \dim P_\psi] \cong C_\bullet^{\Omega_\psi}(\cQ_\psi)\end{equation*}
as graded objects of $\cPolid(\bV)$ for some (equivalently any) orientation $\Omega_\psi$ of $\cQ_\psi$. Given a pair $F\subset G$ of internal faces of $\cQ_\psi$ with $F$ a facet of $G$, 
Lemma \ref{what are the maps?} implies that
the corresponding component of the differential in $D_\bullet$ is equal to an invertible multiple of $\iota_{G,F}$.
By Proposition \ref{there's an orientation}, this implies that $D_\bullet[\dim P - \dim P_\psi]$ is isomorphic to 
$C_\bullet^{\Omega_\psi}(\cQ_\psi)$ as a complex.
\end{proof}

\section{Convolution}\label{sec:convolution}
The main results of this section are Theorem \ref{hopf} and Corollary \ref{convolution}, which categorify \cite[Theorems A and C]{ArdilaSanchez}.
In particular, Theorem \ref{convolution} will provide a valuable tool for constructing new valuative functors from old ones.

\subsection{A product and coproduct for matroids}\label{sec:product}
Consider a finite set $E = E_1 \sqcup E_2$. Let $M_1$ be a matroid $E_1$ and $M_2$ a matroid on $E_2$.  As in Section \ref{sec:relaxation}, we write $M_1 \sqcup M_2$ to denote the direct sum of $M_1$ and $M_2$, which is a matroid on $E_1 \sqcup E_2$.  We write $\cMatid(E)_{\sqcup}$ to denote the full subcategory of $\cMatid(E)$ whose objects are matroids of the form $M_1 \sqcup M_2$.

In this section, we will define functors
\[ m:\cMatid^+(E_1)\boxtimes\cMatid^+(E_2)\to\cMatid^+(E)\and \Delta:\cMatid^+(E)\to \cMatid^+(E_1)\boxtimes\cMatid^+(E_2). \]
The {\bf product functor} $m$ is characterized by putting $m(M_1\boxtimes M_2) := M_1\sqcup M_2$ and $$m(\iota_{M_1,M_1'}\boxtimes\iota_{M_2,M_2'}) = \iota_{M_1\sqcup M_2,M_1'\sqcup M_2'}.$$ We observe that $m$ is a fully faithful embedding,
with essential image $\cMatid^+(E)_{\sqcup}$. In particular, we may write $m^{-1}$ for the inverse functor from the essential image. For example, we have $$m^{-1}(M_1 \sqcup M_2) = M_1 \boxtimes M_2 \in \cMatid^+(E_1)\boxtimes\cMatid^+(E_2).$$


Given a matroid $M$ on $E$, the \revise{{\bf restriction}} $M^{E_1}$ is defined to be the matroid on $E_1$ obtained by deleting $E_2$,
and the {\bf contraction} $M_{E_1}$ is defined to be the matroid on $E_2$ obtained by contracting a basis for $M^{E_1}$ and deleting the remaining elements of $E_1$.
We also define 
\begin{equation} M(E_1,E_2) := M^{E_1}\sqcup M_{E_1},\end{equation}
which is again a matroid on $E$. 
If $M = M_1 \sqcup M_2$, then $M(E_1, E_2) = M$ , thus the operation $M \mapsto M(E_1, E_2)$ is idempotent.  

Of crucial importance is the following connection between the operation $M \mapsto M(E_1, E_2)$ and the maximization of a linear functional on base polytopes.
Recall that we have a linear functional $\revise{\delta}_{E_1}:\R^E\to\R$ that takes the sum of the $E_1$ coordinates.

\begin{lemma}\label{break}
For any matroid $M$ on the set $E$, we have
\begin{equation} P(M)_{\revise{\delta}_{E_1}}= P(M(E_1,E_2)) = P(M^{E_1}) \times P(M_{E_1}).\end{equation}
\end{lemma}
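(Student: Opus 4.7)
The plan is to reduce both equalities to a single combinatorial statement about the bases of $M$ that achieve the maximum value of $\delta_{E_1}$. Since $\delta_{E_1}(v_B) = |B \cap E_1|$ for any basis $B$, the maximum of $\delta_{E_1}$ on the vertices $\{v_B\}$ of $P(M)$ is $\rk_M(E_1)$, the rank of $E_1$ in $M$. Thus $P(M)_{\delta_{E_1}}$ is the convex hull of $\{v_B \mid B \text{ a basis of } M,\ |B\cap E_1| = \rk_M(E_1)\}$.

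Next I would invoke the standard matroid fact that a subset $B \subset E$ is a basis of $M$ with $|B\cap E_1|=\rk_M(E_1)$ if and only if $B = B_1 \sqcup B_2$ where $B_1$ is a basis of the restriction $M^{E_1}$ and $B_2$ is a basis of the contraction $M_{E_1}$. (One direction is the observation that such a $B_1$ is a maximal independent subset of $E_1$, hence a basis of $M^{E_1}$, and that $B_2 = B\setminus B_1$ is then a basis of $M/B_1 = M_{E_1}$ after deleting $E_1\setminus B_1$; the other direction is that any such disjoint union lifts to a basis of $M$ by independence-preserving properties of contraction.) This set of bases is, by definition, precisely the set of bases of $M^{E_1} \sqcup M_{E_1} = M(E_1,E_2)$, which proves the right-hand equality $P(M)_{\delta_{E_1}} = P(M(E_1,E_2))$, since both polytopes are convex hulls of the same vertex set.

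For the second equality, under the identification $\R^E = \R^{E_1} \oplus \R^{E_2}$, any such basis gives $v_B = v_{B_1} + v_{B_2}$ with $v_{B_1} \in \R^{E_1}$ and $v_{B_2}\in\R^{E_2}$. Hence
\[ P(M(E_1,E_2)) = \conv\{v_{B_1}+v_{B_2} \mid B_i \text{ basis of } M^{E_1}, M_{E_1}\}, \]
and this is exactly the product polytope $P(M^{E_1})\times P(M_{E_1})$ (viewed as a subset of $\R^{E_1}\times \R^{E_2}$), using that the convex hull of $\{x+y \mid x \in X, y\in Y\}$ for finite sets $X \subset \R^{E_1}$, $Y \subset \R^{E_2}$ equals $\conv(X)\times\conv(Y)$.

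I expect no substantive obstacle; the only technical point is the bijection between bases of $M$ achieving $|B\cap E_1|=\rk_M(E_1)$ and pairs of bases of $(M^{E_1}, M_{E_1})$, which is well-known but deserves a brief justification for completeness.
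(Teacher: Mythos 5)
Your proposal is correct and takes essentially the same approach as the paper: the paper's proof simply notes that $\delta_{E_1}(v_B)=|B\cap E_1|\leq \rk_M(E_1)$ for every basis $B$, and that the bases of $M(E_1,E_2)$ are exactly the bases of $M$ attaining this maximum, so the maximizing face is the convex hull of exactly those vertices. You spell out the basis bijection with pairs $(B_1,B_2)$ and the product-polytope identity in more detail, but the underlying argument is identical.
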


\begin{proof}
\revise{Given an arbitrary basis $B$ for $M$, we have $|B\cap E_1| \leq \rk_M(E_1)$. 
The bases for $M(E_1,E_2)$ are precisely those bases $B$ for $M$ such that $|B\cap E_1| = \rk_M(E_1)$.}
Then $\revise{\delta}_{E_1}(v_B) = |B\cap E_1|$, thus $v_B\in P(M)_{\revise{\delta_{E_1}}}$ if and only if $B$ is a basis for $M(E_1,E_2)$.
\end{proof}

Using Lemma \ref{break}, we may define a functor $$\Delta_{\sqcup} := \Delta_{\revise{\delta}_{E_1}}:\cMatid^+(E)\to\cMatid^+(E)_\sqcup.$$
We then use this to define the {\bf coproduct functor} $$\Delta := m^{-1}\circ\Delta_{\sqcup}:\cMatid^+(E)\to \cMatid^+(E_1)\boxtimes\cMatid^+(E_2).$$
In concrete terms, we have $\Delta(M) = M^{E_1}\boxtimes M_{E_1}$ and 
$$\Delta(\iota_{M,M'}) = \begin{cases} \iota_{M^{E_1}, (M')^{E_1}}\boxtimes \iota_{M_{E_1},M'_{E_1}} & \text{if $\rk_M(E_1) = \rk_{M'}(E_1)$}\\ 0 & \text{otherwise.}\end{cases}$$

\begin{remark}
The product functor $m:\cMatid^+(E_1)\boxtimes\cMatid^+(E_2)\to\cMatid^+(E)$ is induced by a functor $\cMatid(E_1)\times\cMatid(E_2)\to\cMatid(E)$, but the coproduct functor
$\Delta:\cMatid^+(E)\to \cMatid^+(E_1)\boxtimes\cMatid^+(E_2)$ is not induced by any functor $\cMatid(E)\to \cMatid(E_1)\times\cMatid(E_2)$.  This is because the coproduct functor sends some morphisms to zero.
\end{remark}

Let $M = M_1 \sqcup M_2$.  If $\cN_1$ is a decomposition of $M_1$ and $\cN_2$ is a decomposition of $M_2$, then
$$\cN_1\sqcup \cN_2 := \{N_1\sqcup N_2\mid N_1\in\cN_1 \text{ and } N_2\in\cN_2\}$$
is a decomposition of $M$.  Furthermore, every decomposition of $M$ is of this form \cite[Corollary 4.8]{LdMRS}.

\subsection{Categorical Hopf ideals}
The purpose of this section is to prove that the product and coproduct functors interact nicely with the localizing subcategories $\cI(E)\subset \Ch_b(\cMatid^+(E))$ defined in Section \ref{sec:valfun}.
The following theorem, which categorifies \cite[Theorem A]{ArdilaSanchez}, says that these subcategories satisfy a condition analogous to that of a Hopf ideal in a Hopf monoid.

\begin{theorem}\label{hopf}
We have the following inclusions of subcategories:
\begin{equation} \label{mpreservesI} m\big(\cI(E_1)\boxtimes\cMatid^+(E_2)\big)\subset \cI(E), \qquad m\big(\cMatid^+(E_1)\boxtimes\cI(E_2)\big)\subset \cI(E) \end{equation}
and
\begin{equation} \label{deltapreservesI} \Delta\big(\cI(E)\big)\subset \Big\langle \cI(E_1)\boxtimes\cMatid^+(E_2),\, \cMatid^+(E_1)\boxtimes\cI(E_2) \Big\rangle. \end{equation}
\end{theorem}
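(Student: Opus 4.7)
The plan is to handle the two assertions in turn, each time exploiting that $m$ and $\Delta$ are additive functors, so that the preimages of our localizing subcategories are themselves localizing; it will then suffice to send generators into the target.

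For the inclusions \eqref{mpreservesI}, by additivity the generators of $\cI(E_1)\boxtimes\cMatid^+(E_2)$ reduce to objects of the form $C_\bullet^{\Omega_1}(\cN_1)\boxtimes M_2$, where $\cN_1$ is a decomposition of some matroid $M_1$ on $E_1$. I would form the decomposition $\cN$ of $M_1\sqcup M_2$ whose faces are $\{N_1\sqcup N_2 \mid N_1\in\cN_1,\ P(N_2) \text{ a face of } P(M_2)\}$. Because $\partial(P(M_1)\times P(M_2)) = \partial P(M_1)\times P(M_2)\cup P(M_1)\times\partial P(M_2)$, the internal faces of $\cN$ are exactly $\{N_1\sqcup M_2\mid N_1\in\cN_1 \text{ internal}\}$, with dimensions shifted by $d_2 := d(M_2)$. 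After checking that the differentials of $m(C_\bullet^{\Omega_1}(\cN_1)\boxtimes M_2)$ are invertible multiples of the canonical inclusions expected from $\cN$, Lemma \ref{there's an orientation} yields an isomorphism $m(C_\bullet^{\Omega_1}(\cN_1)\boxtimes M_2)\cong C_\bullet^\Omega(\cN)[-d_2]\in\cI(E)$. The second inclusion in \eqref{mpreservesI} follows by symmetry.

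For \eqref{deltapreservesI}, write $\cJ$ for the right-hand side and $\psi := \revise{\delta}_{E_1}$. Since $\Delta = m^{-1}\circ\Delta_\psi$, it suffices to check $\Delta(C_\bullet^\Omega(\cN))\in\cJ$ for each generator. By Theorem \ref{everything}, $\Delta_\psi(C_\bullet^\Omega(\cN))$ is homotopy equivalent to a shift of $C_\bullet^{\Omega'}(\cN_\psi)$, where $\cN_\psi\subset\cN$ consists of the faces lying in $P(M)_\psi$. By Lemma \ref{break} these are the matroids of the form $N_1\sqcup N_2$, and by \cite[Corollary 4.8]{LdMRS} we have $\cN_\psi = \cN_1\sqcup\cN_2$ for decompositions $\cN_1$ of $M^{E_1}$ and $\cN_2$ of $M_{E_1}$. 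Thus the task reduces to showing $m^{-1}(C_\bullet^\Omega(\cN_1\sqcup\cN_2))\in\cJ$. Running the recipe from the previous paragraph in reverse, Lemma \ref{there's an orientation} identifies $X := m^{-1}(C_\bullet^\Omega(\cN_1\sqcup\cN_2))$ with $\Cone(f)$ for a chain map $f\colon M^{E_1}\boxtimes M_{E_1}[-d]\to C_{\le d_1}^{\Omega_1}(\cN_1)\boxtimes C_{\le d_2}^{\Omega_2}(\cN_2)$ which, up to signs, factors as $(\alpha_1\boxtimes\id)\circ(\id\boxtimes\alpha_2)$, where $\alpha_i$ is the structure map satisfying $\Cone(\alpha_i)=C_\bullet^{\Omega_i}(\cN_i)$.

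By the octahedral axiom, $\Cone(f)$ then fits into a distinguished triangle with $\Cone(\id\boxtimes\alpha_2)[-d_1]\cong M^{E_1}\boxtimes C_\bullet^{\Omega_2}(\cN_2)[-d_1]$ on one side and $\Cone(\alpha_1\boxtimes\id)\cong C_\bullet^{\Omega_1}(\cN_1)\boxtimes C_{\le d_2}^{\Omega_2}(\cN_2)$ on the other. The first object lies in $\cMatid^+(E_1)\boxtimes\cI(E_2)\subset\cJ$ by definition. The second is a finite convolution whose parts are shifts of $C_\bullet^{\Omega_1}(\cN_1)\boxtimes N_2\in\cI(E_1)\boxtimes\cMatid^+(E_2)\subset\cJ$, so Lemma \ref{lem:convoinideal} places it in $\cJ$; two-out-of-three then yields $X\in\cJ$. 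The main technical hurdle throughout will be the sign bookkeeping: verifying that, after absorbing both the orientation signs of the decompositions and the Koszul signs of the Deligne tensor product, the diagonal map $f$ really does factor as claimed. The flexibility granted by Lemma \ref{there's an orientation} should make this manageable but will require care.
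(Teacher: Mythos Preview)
Your proposal is correct and closely parallels the paper's argument; let me briefly note where the two diverge.

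For \eqref{mpreservesI} you and the paper do the same thing: form the product decomposition $\cN_1\sqcup\{\text{trivial}\}$, observe that its internal faces are exactly $N_1\sqcup M_2$ for $N_1$ internal in $\cN_1$, and identify $m(C_\bullet^{\Omega_1}(\cN_1)\boxtimes M_2)$ with the resulting complex (up to shift). The paper is terser here, simply declaring the inclusion $\cI(E_1,E_2)\subset\cI(E)_\sqcup\subset\cI(E)$ without spelling out the identification, but the content is the same.

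For \eqref{deltapreservesI} both arguments reduce (via Theorem~\ref{everything}, Lemma~\ref{break}, and \cite[Corollary 4.8]{LdMRS}) to proving that $C_\bullet^\Omega(\cN_1\sqcup\cN_2)$ lies in the target subcategory. The paper's Lemma~\ref{compare1} does this by forming an auxiliary complex $B_\bullet=\Cone(\beta)$, where $\beta$ is a map out of a contractible complex $\Nul(M_1\sqcup M_2,d_1+d_2+1)$ into $m\big(C_\bullet^{\Omega_1}(\cN_1)\boxtimes C_\bullet^{\Omega_2}(\cN_2)\big)$, and then regrouping $B_\bullet$ as a three-part convolution whose parts are (shifts of) $C_\bullet(\cN)$, $m(C_\bullet(\cN_1)\boxtimes M_2)$, and $m(M_1\boxtimes C_\bullet(\cN_2))$. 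Your route is to note that $\alpha_\cN$ factors as $(\alpha_1\boxtimes\id)\circ(\id\boxtimes\alpha_2)$ and apply the octahedral axiom directly, obtaining a triangle whose outer terms are $M_1\boxtimes C_\bullet(\cN_2)$ and $C_\bullet(\cN_1)\boxtimes C_{\le d_2}(\cN_2)$. These are the same idea seen from different angles: the paper's explicit $B_\bullet$ is a concrete realization of the octahedron, yielding a symmetric three-term convolution, while your version is more streamlined but asymmetric (one of your two outer terms is itself a convolution indexed by $(\cN_2)_k$). Your concern about signs is well-placed but not serious: the paper handles it by choosing $\Omega$ induced from $\Omega_1$ and $\Omega_2$, after which $C_{\le d}^\Omega(\cN)=m(C_{\le d_1}^{\Omega_1}(\cN_1)\boxtimes C_{\le d_2}^{\Omega_2}(\cN_2))$ on the nose and $m^{-1}(\alpha_\cN)=\alpha_1\boxtimes\alpha_2$; you can do the same and avoid invoking Lemma~\ref{there's an orientation} here.
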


There are three localizing subcategories of $\Ch_b(\cMatid^+(E))$ at play here.  The first is $\cI(E)$.
The second, which we will call $\cI(E)_\sqcup$, is generated by those complexes of the form $C_\bullet^\Omega(\cN_1\sqcup\cN_2)$, where $\cN_1$ is a decomposition of a matroid on $E_1$
and $\cN_2$ is a decomposition of a matroid on $E_2$.  Finally, we will need to consider the localizing subcategory
$$\cI(E_1,E_2) := \left\langle m(\cI(E_1)\boxtimes\cMatid^+(E_2)), m(\cMatid^+(E_1)\boxtimes\cI(E_2)) \right\rangle.$$
This is the localizing subcategory generated by those complexes of the form $C_\bullet^\Omega(\cN_1\sqcup\cN_2)$, where either $\cN_1$ is the trivial decomposition of a matroid on $E_1$
or $\cN_2$ is the trivial decomposition of a matroid on $E_2$.  We clearly 
have $$\cI(E_1,E_2)\subset \cI(E)_{\sqcup}\subset\cI(E),$$
which in particular implies Equation \eqref{mpreservesI}.

\begin{lemma} \label{lem:summarizeeverything} We have $\Delta_\sqcup(\cI(E))\subset \cI(E)_\sqcup$. \end{lemma}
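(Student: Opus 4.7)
The plan is to show that $\Delta_\sqcup$ sends each generating complex of $\cI(E)$ into $\cI(E)_\sqcup$. Since $\Delta_\sqcup$ is an additive functor between additive categories, its induced functor on bounded complexes preserves homotopy equivalences, shifts, cones, and direct summands; hence the preimage $\Delta_\sqcup^{-1}(\cI(E)_\sqcup)$ is a localizing subcategory of $\Ch_b(\cMatid^+(E))$, and containing the generators of $\cI(E)$ would imply containing all of $\cI(E)$.

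To verify this on generators, fix a decomposition $\cN$ of a matroid $M$ on $E$ with orientation $\Omega$, and apply Theorem \ref{everything} to the linear functional $\psi = \delta_{E_1}$ on $\R^E$. Because $P(M)$ is a polytope, $\psi$ is bounded above on $P(M)$, so the second bullet of Theorem \ref{everything} applies: $\Delta_\sqcup(C_\bullet^\Omega(\cN)) = \Delta_\psi(C_\bullet^\Omega(\cN))$ is homotopy equivalent to a shift of $C_\bullet^{\Omega_\psi}(\cN_\psi)$, where $\cN_\psi = \{N \in \cN \mid P(N) \subset P(M)_\psi\}$ is the induced decomposition of $P(M)_\psi$.

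The key step is to identify $\cN_\psi$ as a decomposition of the form $\cN_1 \sqcup \cN_2$. By Lemma \ref{break}, for every $N \in \cN_\psi$ we have $P(N) = P(N)_\psi = P(N^{E_1}) \times P(N_{E_1}) = P(N^{E_1} \sqcup N_{E_1})$, so $N = N^{E_1} \sqcup N_{E_1}$ is a direct sum over the partition $E = E_1 \sqcup E_2$. In particular $\cN_\psi$ is a decomposition of $M(E_1,E_2) = M^{E_1} \sqcup M_{E_1}$ all of whose faces split as direct sums; by \cite[Corollary 4.8]{LdMRS} such a decomposition necessarily has the form $\cN_1 \sqcup \cN_2$ where $\cN_1$ is a decomposition of $M^{E_1}$ and $\cN_2$ is a decomposition of $M_{E_1}$. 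Therefore $C_\bullet^{\Omega_\psi}(\cN_\psi)$ is a generating complex of $\cI(E)_\sqcup$, and so is its shift, completing the verification.

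The main potential obstacle is the bookkeeping around Theorem \ref{everything}: one must check that every face of $\cN_\psi$ (including the boundary faces and the top-dimensional face $P(M)_\psi$ itself) is the maximum face of some face of $\cN$, so that no summands of $C_\bullet^{\Omega_\psi}(\cN_\psi)$ are inadvertently missed. This is however built into the statement of Theorem \ref{everything}, which identifies the full complex $C_\bullet^{\Omega_\psi}(\cN_\psi)$ up to homotopy and shift, so once Lemma \ref{break} is applied face-by-face the argument goes through without further difficulty.
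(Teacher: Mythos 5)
Your proof is correct and follows essentially the same route as the paper's: apply Theorem \ref{everything} with $\psi=\delta_{E_1}$ to each generating complex $C_\bullet^\Omega(\cN)$, use Lemma \ref{break} to identify the resulting decomposition of $P(M)_\psi$ as one of $M^{E_1}\sqcup M_{E_1}$, and invoke \cite[Corollary 4.8]{LdMRS} to see it has the form $\cN_1\sqcup\cN_2$. Your added observations --- that additivity of $\Delta_\sqcup$ reduces the claim to generators, and that boundedness of base polytopes rules out the contractible case of Theorem \ref{everything} --- are correct refinements of details the paper leaves implicit.
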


\begin{proof} Theorem \ref{everything} and Lemma \ref{break} together imply that, for any matroid $M$ on $E$ with decomposition $\cN$ and orientation $\Omega$, the complex $\Delta_{\sqcup}(C^{\Omega}_{\bullet}(\cN))$ is either homotopy equivalent to zero or homotopy equivalent to a shift of the complex $C^{\Omega'}_{\bullet}(\cN')$, where $\cN'$ is a decomposition of $M(E_1,E_2)$.
As we noted at the end of Section \ref{sec:product}, $\cN'$ is necessarily equal to $\cN_1\sqcup\cN_2$ for some decompositions $\cN_1$ of $M^{E_1}$ and $\cN_2$ of $M_{E_1}$ \cite[Corollary 4.8]{LdMRS}.
\end{proof}

\begin{lemma}\label{compare1}
We have $\cI(E_1,E_2)= \cI(E)_{\sqcup}$.
\end{lemma}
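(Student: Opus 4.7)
The inclusion $\cI(E_1,E_2) \subset \cI(E)_\sqcup$ is already in hand, so it remains to show that every generator $C_\bullet^\Omega(\cN_1 \sqcup \cN_2)$ of $\cI(E)_\sqcup$ lies in $\cI(E_1,E_2)$. Abbreviate $A := C_\bullet^{\Omega_1}(\cN_1)$ and $A' := C_{\leq d_1}^{\Omega_1}(\cN_1)$, so that $A \cong \Cone(\alpha_1)$ for the augmentation $\alpha_1 \colon M_1[-d_1] \to A'$; define $B$, $B'$, and $\alpha_2 \colon M_2[-d_2] \to B'$ analogously. Because each face of $\cN_1 \sqcup \cN_2$ is a product of faces, the underlying graded object of $m(A' \boxtimes B')$ matches $C_{\leq d}^\Omega(\cN_1 \sqcup \cN_2)$, and its Koszul differential has invertible components $\pm \iota_{Q,R}$ on each facet inclusion. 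Lemma \ref{there's an orientation} then promotes this to an isomorphism of complexes; applying the same reasoning to the augmented complex yields the identification
$$ C_\bullet^\Omega(\cN_1 \sqcup \cN_2) \;\cong\; \Cone\bigl(m(\alpha_1 \boxtimes \alpha_2) \colon (M_1 \sqcup M_2)[-d] \to m(A' \boxtimes B')\bigr). $$

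The key step is to factor $m(\alpha_1 \boxtimes \alpha_2) = g \circ f$, where $f := m(\alpha_1 \boxtimes \id_{M_2[-d_2]})$ and $g := m(\id_{A'} \boxtimes \alpha_2)$, and to show that both $\Cone(f)$ and $\Cone(g)$ lie in $\cI(E_1,E_2)$. Since the external tensor product commutes with cones,
$$\Cone(f) \cong m(A \boxtimes M_2[-d_2]) = m\bigl(C_\bullet^{\Omega_1}(\cN_1) \boxtimes M_2\bigr)[-d_2],$$
which is a shift of a generator of $\cI(E_1,E_2)$. Likewise $\Cone(g) \cong m(A' \boxtimes B)$; this term requires more work.

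To handle $m(A' \boxtimes B)$, first observe that $m(A \boxtimes B) \in \cI(E_1,E_2)$: writing $B$ as a convolution of its graded pieces $B_k[-k]$ (each a direct sum of single matroid objects $N_2$ on $E_2$) turns $m(A \boxtimes B)$ into a convolution whose parts are shifts of $m\bigl(C_\bullet^{\Omega_1}(\cN_1) \boxtimes N_2\bigr)$, each a (shift of a) generator of $\cI(E_1,E_2)$, so Lemma \ref{lem:convoinideal} applies. Now tensor the termwise-split sequence $0 \to A' \to A \to M_1[-d_1-1] \to 0$ with $B$ and apply $m$ to get the termwise-split short exact sequence
$$ 0 \to m(A' \boxtimes B) \to m(A \boxtimes B) \to m(M_1[-d_1-1] \boxtimes B) \to 0; $$
the right-hand term is $C_\bullet^\Omega(\cN_1^{\mathrm{triv}} \sqcup \cN_2)[-1]$ (where $\cN_1^{\mathrm{triv}}$ is the trivial decomposition of $M_1$), another shift of a generator. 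The two-out-of-three rule delivers $m(A' \boxtimes B) \in \cI(E_1,E_2)$.

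Finally, the octahedral axiom applied to the composable pair $(f,g)$ produces a distinguished triangle $\Cone(f) \to \Cone(g \circ f) \to \Cone(g) \to \Cone(f)[1]$ in the homotopy category, realized in $\Ch_b$ by a termwise-split short exact sequence whose middle term is homotopy equivalent to $\Cone(g \circ f) \cong C_\bullet^\Omega(\cN_1 \sqcup \cN_2)$. Since both outer terms lie in $\cI(E_1,E_2)$, the two-out-of-three rule concludes $C_\bullet^\Omega(\cN_1 \sqcup \cN_2) \in \cI(E_1,E_2)$, completing the proof. The main technical obstacle is the initial Künneth identification: one needs the Koszul differential on $m(A' \boxtimes B')$ to match the orientation-derived differential on $C_{\leq d}^\Omega(\cN_1 \sqcup \cN_2)$ on the nose, not merely up to homotopy, and Lemma \ref{there's an orientation} does this heavy lifting by letting us bypass explicit sign-tracking of product orientations.
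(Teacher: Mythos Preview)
Your proof is correct, and it reaches the same endpoint as the paper's by a closely related but genuinely different route.

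Both arguments ultimately rely on the fact that $m(A\boxtimes B)\in\cI(E_1,E_2)$ (the paper simply asserts ``$m(D_\bullet)$ clearly lives in $\cI(E_1,E_2)$'', which is exactly your convolution-of-graded-pieces argument), and both relate $C_\bullet^\Omega(\cN)$ to the basic generators $m(A\boxtimes M_2)$ and $m(M_1\boxtimes B)$. The packaging differs. The paper builds an explicit chain map $\beta:\Nul(M_1\sqcup M_2,d_1+d_2+1)\to m(A\boxtimes B)$ and then regroups the six summands of $\Cone(\beta)$ into a three-part convolution whose parts are $C_\bullet^\Omega(\cN)$, a shift of $m(A\boxtimes M_2)$, and a shift of $m(M_1\boxtimes B)$; one application of Lemma~\ref{lem:convoinideal} finishes. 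You instead factor $m(\alpha_1\boxtimes\alpha_2)=g\circ f$ through $m(A'\boxtimes M_2[-d_2])$, identify $\Cone(f)$ and $\Cone(g)$ separately, and invoke the octahedral axiom; handling $\Cone(g)=m(A'\boxtimes B)$ then costs you a second two-out-of-three step.

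What each buys: the paper's regrouping is self-contained and avoids both the octahedral axiom and Lemma~\ref{there's an orientation} (it simply chooses $\Omega$ to be the product orientation, so that \eqref{bigpartthesame} and $\alpha_\cN=m(\alpha_1\boxtimes\alpha_2)$ hold on the nose). Your version is more transparently categorical---the factorization $(\id\boxtimes\alpha_2)\circ(\alpha_1\boxtimes\id)$ is visibly the heart of the matter---at the cost of one extra lemma citation. A small optimization: you can shorten the treatment of $\Cone(g)=m(A'\boxtimes B)$ by writing $A'$ itself as a convolution of its chain objects $A'_k[-k]$, so that $m(A'\boxtimes B)$ is directly a convolution of shifts of $m(N_1\boxtimes B)\in m(\cMatid^+(E_1)\boxtimes\cI(E_2))$; this bypasses the detour through $m(A\boxtimes B)$ and the exact sequence entirely.
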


\begin{proof} 
Let $\cN = \cN_1\sqcup\cN_2$ be a decomposition of $M = M_1 \sqcup M_2$, and let $\Omega$ be an orientation of $\cN$. We need to show that $C_\bullet^\Omega(\cN)\in\cI(E_1,E_2)$. Because the isomorphism class of $C_{\bullet}^{\Omega}(\cN)$ is independent of the choice of orientation, we may assume that $\Omega$ is induced from an orientation $\Omega_1$ of $\cN_1$ and an orientation $\Omega_2$ of $\cN_2$. 

Let $d_i = d(M_i)$ and $C_i := C_{\le d_i}^{\Omega_i}(\cN_i)$.
Then 
\begin{equation} \label{bigpartthesame} C_{\le d_1 + d_2}^\Omega(\cN) = m(C_1 \boxtimes C_2). \end{equation}
The complex $C_{\bullet}^\Omega(\cN)= \Cone(\alpha_{\cN})$ is obtained from the complex of \eqref{bigpartthesame} by adding one new summand $M_1 \sqcup M_2$ in degree $d_1+d_2+1$. 

Let $D_\bullet := C_\bullet^{\Omega_1}(\cN_1)\boxtimes C_\bullet^{\Omega_2}(\cN_2)\in \Ch_b(\cMatid^+(E_1)\boxtimes \cMatid^+(E_2))$.  Explicitly, we have
\begin{equation*} \label{FOIL} D_\bullet =
\left(
\begin{array}{c} M_1[-d_1-1] \boxtimes M_2[-d_2-1] \\ \oplus \\ M_1[-d_1-1] \boxtimes C_2 \\ \oplus \\ C_1 \boxtimes M_2[-d_2-1] \\ \oplus \\ C_1 \boxtimes C_2 \end{array},
\left( \begin{array}{cccc}
0 & 0 & 0 & 0 \\
\pm \id_{M_1} \boxtimes \alpha_{\cN_2} & \pm \id \boxtimes \pa  & 0 & 0 \\
\alpha_{\cN_1} \boxtimes \id_{M_2} & 0 & \pa \boxtimes \id & 0 \\
0 & \alpha_{\cN_1} \boxtimes \id & \mp \id \boxtimes \alpha_{\cN_2} & \pa \boxtimes \pa \end{array} \right)
\right). \end{equation*} 
We represent $D_\bullet$ by the following schematic diagram (with signs suppressed):
\begin{equation*} D_\bullet = \left( \begin{tikzpicture}
\node (a) at (0,2) {$M_1 \sqcup M_2$};
\node (d) at (4,2) {$M_1 \sqcup C_2$};
\node (e) at (4,0) {$C_1 \sqcup M_2$};
\node (f) at (8,0) {$C_1 \sqcup C_2$};
\path
	(a) edge[->] node[descr] {\tiny $\id \sqcup \alpha_{\cN_2}$} (d)
	(a) edge[->] node[descr] {\tiny $\alpha_{\cN_1} \sqcup \id$} (e)
	(d) edge[->] node[descr] {\tiny $\alpha_{\cN_1} \sqcup \id$} (f)
	(e) edge[->] node[descr] {\tiny $\id \sqcup \alpha_{\cN_2}$} (f);
\end{tikzpicture} \right).
\end{equation*}

We next construct a chain map $$\beta\co\Nul(M_1 \sqcup M_2,d_1+d_2+1)\to m(D_\bullet).$$ 
As objects, we have
$$\Nul(M_1 \sqcup M_2,d_1+d_2+1) = (M_1\sqcup M_2)[-d_1-d_2-1] \oplus (M_1 \sqcup M_2)[-d_1-d_2]$$
and
$$\begin{aligned}
m(D_\bullet) &= (M_1\sqcup M_2)[-d_1-d_2-2] \oplus m(M_1\boxtimes C_2)[-d_1-1]  \oplus m(C_1 \boxtimes M_2)[-d_2-1] \oplus 
m(C_1 \boxtimes C_2)\\
&= (M_1\sqcup M_2)[-d_1-d_2-2] \oplus m(M_1\boxtimes C_2)[-d_1-1]  \oplus m(C_1 \boxtimes M_2)[-d_2-1] \oplus 
C_{\le d_1+d_2}^\Omega(\cN).
\end{aligned}$$
With respect to these decompositions, we encode $\beta$ as the following matrix:

\begin{equation*} \beta = \left( \begin{array}{cc} 0 & 0 \\ 0 & 0 \\ m(\alpha_{\cN_1} \boxtimes \id_{M_2}) & 0 \\ 0 & \alpha_{\cN} \end{array} \right). \end{equation*}
Here the two nontrivial pieces of $\beta$ go from the first summand of $\Nul(M_1 \sqcup M_2,d_1+d_2+1)$
to the third summand of $m(D_\bullet)$ and from the second summand of $\Nul(M_1 \sqcup M_2,d_1+d_2+1)$
to the fourth summand of $m(D_\bullet)$.
We define $B_\bullet := \Cone(\beta)$, which we represent by the following schematic diagram (again with signs suppressed):
\begin{equation*} \label{conebeta} B_\bullet = \left( \begin{tikzpicture}
\node (a) at (0,2) {$M_1 \sqcup M_2$};
\node (b) at (0,-2) {{\color{red}$M_1 \sqcup M_2$}};
\node (c) at (4,-2) {{\color{red}$M_1 \sqcup M_2$}};
\node (d) at (4,2) {$M_1 \sqcup C_2$};
\node (e) at (4,0) {$C_1 \sqcup M_2$};
\node (f) at (8,0) {$C_1 \sqcup C_2$};
\path
	(a) edge[->] node[descr] {\tiny $\id \sqcup \alpha_{\cN_2}$} (d)
	(a) edge[->] node[descr] {\tiny $\alpha_{\cN_1} \sqcup \id$} (e)
	(d) edge[->] node[descr] {\tiny $\alpha_{\cN_1} \sqcup \id$} (f)
	(e) edge[->] node[descr] {\tiny $\id \sqcup \alpha_{\cN_2}$} (f)
	(b) edge[->] node[descr] {\tiny {\color{red} $\id$}} (c)
	(b) edge[->] node[descr] {\tiny $\alpha_{\cN_1} \sqcup \id$} (e)
	(c) edge[->] node[descr] {\tiny $\alpha_{\cN}$} (f);
\end{tikzpicture} \right).
\end{equation*}
\excise{
To verify that $\beta$ is a chain map, we verify that $\pa \circ \beta - \beta \circ \pa = 0$ when restricted to each summand of $\Nul(M_1 \sqcup M_2,d_1+d_2+1)$. On $M_1 \sqcup M_2[-d_1-d_2]$, the result is zero since $\alpha_{\cN}$ is a chain map to $C_{\le d_1+d_2}^{\Omega}(\cN)$. On $M_1 \sqcup M_2[-d_1-d_2-1]$, the result is zero because 
\begin{equation} \label{eq:alphas} m(\alpha_{\cN_1} \boxtimes \alpha_{\cN_2}) = \alpha_{\cN}\end{equation} via the isomorphism \eqref{bigpartthesame}. Both sides of \eqref{eq:alphas} send $M_1 \sqcup M_2$ to the (signed) sum over top dimensional internal faces of $\cN$, which are all obtained as $N_1 \sqcup N_2$ for top dimensional internal faces of $\cN_1$ and $\cN_2$ respectively.
}
We observe that $B_\bullet$ is a convolution with six parts, three of which are shifts of $M_1 \sqcup M_2$.
The leftmost terms live in homological degree $d_1 +d_2 + 2$, and the other copy of $M_1 \sqcup M_2$ is in homological degree $d_1 + d_2 + 1$.  The remaining terms are complexes that begin in the homological degree indicated in the diagram. 
The terms appearing in red make up the termwise-split quotient complex $\Nul(M_1 \sqcup M_2,d_1+d_2+1)[1]$.

Let us draw $B_\bullet$ again, regrouping the six parts into three groups of two which we emphasize with color:
\begin{equation} \label{conebeta2} B_\bullet = \left( \begin{tikzpicture}
\node (a) at (0,1) {{\color{blue}$M_1 \sqcup M_2$}};
\node (b) at (0,-1) {{\color{olive}$M_1 \sqcup M_2$}};
\node (c) at (4,-1) {{\color{purple}$M_1 \sqcup M_2$}};
\node (d) at (4,1) {{\color{blue}$M_1 \sqcup C_2$}};
\node (e) at (4,0) {{\color{olive}$C_1 \sqcup M_2$}};
\node (f) at (8,0) {{\color{purple}$C_1 \sqcup C_2$}};
\path
	(a) edge[->] (d)
	(a) edge[->] (e)
	(d) edge[->] (f)
	(e) edge[->] (f)
	(b) edge[->] (c)
	(b) edge[->] (e)
	(c) edge[->] (f);
\end{tikzpicture} \right).
\end{equation}
The red terms $M_1 \sqcup M_2 \to C_1 \sqcup C_2$ form a termwise-split subcomplex isomorphic to $C_{\bullet}(\cN)$. 
The green terms $M_1 \sqcup M_2 \to C_1 \sqcup M_2$ form a termwise-split
subquotient complex isomorphic to a shift of $m(C_{\bullet}(\cN_1) \boxtimes M_2)$. The blue terms $M_1 \sqcup M_2 \to M_1 \sqcup C_2$ form a termwise-split quotient complex isomorphic to a shift of $m(M_1 \boxtimes C_{\bullet}(\cN_2))$. 
This demonstrates that the complex $B_\bullet$ may be constructed as a convolution with three parts as just described.

Now we conclude.  The complex $m(D_\bullet)$ clearly lives in $\cI(E_1,E_2)$.  So does 
the contractible complex $\Nul(M_1 \sqcup M_2,d_1+d_2+1)$. By Lemma \ref{lem:convoinideal}, $B_\bullet = \Cone(\beta)$ is 
also in $\cI(E_1,E_2)$. Two of the three parts in a convolution describing $B_\bullet$ are $m(C_{\bullet}(\cN_1) \boxtimes M_2)$ and $m(M_1 \boxtimes C_{\bullet}(\cN_2))$, both of which live in $\cI(E_1,E_2)$. Applying Lemma \ref{lem:convoinideal} again, 
the remaining part of this convolution also lies in $\cI(E_1,E_2)$. This remaining part is $C_{\bullet}^\Omega(\cN)$, 
which completes the proof.
\end{proof}

\begin{proof}[Proof of Theorem \ref{hopf}]
We have already established Equation \eqref{mpreservesI}.
Since $m$ is an equivalence from $\cMatid^+(E_1)\boxtimes\cMatid^+(E_2)$ to $\cMatid^+(E)_\sqcup$ and $\Delta = m^{-1} \circ \Delta_\sqcup$, Equation \eqref{deltapreservesI} is equivalent to the statement
that we have $\Delta_\sqcup(\cI(E))\subset \cI(E_1,E_2)$.
This follows from Lemmas \ref{lem:summarizeeverything} and \ref{compare1}.
\end{proof}

\subsection{Convolution of valuative functors} \label{ssec:convolve}
Suppose that $\cA$ is a monoidal additive category, and let $\otimes$ denote the tensor product in $\cA$.  The example to have in mind is the category of finite dimensional graded vector spaces.
Let $\Phi \co \cMatid^+(E_1)\to\cA$ and $\Psi \co \cMatid^+(E_2)\to\cA$ be additive functors. We define the additive functor $$\Phi \boxtimes \Psi \co \cMatid^+(E_1) \boxtimes \cMatid^+(E_2)\to \cA$$ 
by putting $$\Phi \boxtimes \Psi(M_1\boxtimes M_2) =  \Phi(M_1) \otimes \Phi(M_2) \and  \Phi \boxtimes \Psi(f \boxtimes g) = \Phi(f) \otimes \Psi(g).$$
We then define the {\bf convolution} $$\Phi * \Psi := (\Phi \boxtimes \Psi)\circ\Delta \co \cMatid^+(E) \to \cA.$$
In particular, we have $\Phi * \Psi(M) = \Phi(M^{E_1}) \otimes \Psi(M_{E_1})$ for any matroid $M$ on $E$.

\begin{remark}
If $\Phi$ and $\Psi$ categorify the homomorphisms $f$ and $g$, then the convolution $\Phi * \Psi$ categorifies the convolution $f*g$ as defined in \cite[Definition 6.2]{ArdilaSanchez}.
There is no relationship between this use of the word convolution and the notion of convolution of complexes discussed in Section \ref{sec:cones}.
\end{remark}

Theorem \ref{hopf} has the following corollary.

\begin{corollary} \label{convolution}
If $\Phi$ and $\Psi$ are valuative, then so is $\Phi * \Psi$. \end{corollary}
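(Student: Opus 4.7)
The plan is to treat this as essentially a direct consequence of Theorem \ref{hopf}, unpacked through the characterization of valuative functors in terms of localizing subcategories from Section \ref{sec:valfun}. The key observation is that a functor is valuative precisely when it sends the generators of $\cI(E)$ to contractible complexes, and since both the source ideals $\cI(E_i)$ and the target (contractible complexes) are localizing subcategories, valuativity propagates automatically through additive constructions.

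To show $\Phi * \Psi = (\Phi \boxtimes \Psi) \circ \Delta$ is valuative, I need to verify that for any oriented decomposition $(\cN,\Omega)$ of a matroid on $E$, the complex $(\Phi \boxtimes \Psi)(\Delta(C_\bullet^\Omega(\cN)))$ is contractible in $\cA$. The first step is to invoke Equation \eqref{deltapreservesI} of Theorem \ref{hopf}, which places $\Delta(C_\bullet^\Omega(\cN))$ inside the localizing subcategory $\langle \cI(E_1)\boxtimes\cMatid^+(E_2),\, \cMatid^+(E_1)\boxtimes\cI(E_2)\rangle$.

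The second step is to check that $\Phi \boxtimes \Psi$ sends the generators of this localizing subcategory to contractible complexes in $\cA$. For a generator of the form $X_\bullet \boxtimes Y$ with $X_\bullet \in \cI(E_1)$, we have $(\Phi \boxtimes \Psi)(X_\bullet \boxtimes Y) \cong \Phi(X_\bullet) \otimes \Psi(Y)$. Since $\Phi$ is valuative, $\Phi(X_\bullet)$ is contractible (one applies $\Phi$ to the generators of $\cI(E_1)$ and uses that contractible complexes form a localizing subcategory to extend the conclusion to all of $\cI(E_1)$), and tensoring a contractible complex with any fixed object of the monoidal category $\cA$ preserves contractibility. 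The case of generators in $\cMatid^+(E_1)\boxtimes\cI(E_2)$ is handled symmetrically.

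The third and final step is to conclude via another application of the localizing-subcategory principle: since $\Phi \boxtimes \Psi$ is additive and contractible complexes form a localizing subcategory in $\Ch_b(\cA)$, the image of the entire subcategory $\langle \cI(E_1)\boxtimes\cMatid^+(E_2),\, \cMatid^+(E_1)\boxtimes\cI(E_2)\rangle$ lies in the contractible complexes; in particular $(\Phi * \Psi)(C_\bullet^\Omega(\cN))$ is contractible. There is essentially no obstacle here: all the substance has been absorbed by Theorem \ref{hopf}, whose proof in turn rests on Theorem \ref{everything}. The only care needed is formal, in tracking that ``valuative'' can be checked on generators because both the source and target ideals are localizing subcategories.
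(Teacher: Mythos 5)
Your proposal is correct and follows exactly the paper's route: invoke Equation \eqref{deltapreservesI} of Theorem \ref{hopf} to place $\Delta(\cI(E))$ inside $\langle \cI(E_1)\boxtimes\cMatid^+(E_2),\, \cMatid^+(E_1)\boxtimes\cI(E_2)\rangle$, then observe that $\Phi\boxtimes\Psi$ kills the generators of that localizing subcategory because $\Phi$ kills $\cI(E_1)$ and $\Psi$ kills $\cI(E_2)$. The paper's proof is a three-sentence version of the same argument; your extra care about checking contractibility on generators and about tensoring a contractible complex with a fixed object is a correct unpacking of what the paper leaves implicit.
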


\begin{proof} Theorem \ref{hopf} tells us that $\Delta$ takes $\cI(E)$ to $\big\langle \cI(E_1)\boxtimes\cMatid^+(E_2),
\cMatid^+(E_1)\boxtimes\cI(E_2) \big\rangle$. Since $\Phi$ and $\Psi$ are both valuative, $\Phi$ kills $\cI(E_1)$ and $\Psi$ kills $\cI(E_2)$. Therefore $\Phi * \Psi$ kills $\cI(E)$.
\end{proof}

\section{Examples of valuative categorical invariants}\label{sec:examples}
In this section, we use Corollary \ref{convolution} to derive new examples of valuative categorical invariants of matroids.

\subsection{Whitney functors}\label{sec:Whitneys}
We begin with a simple lemma.
Let $\Phi:\cMatid(E)\to\cA$ be a valuative functor, and let $k$ be a natural number.  Define a new functor $[\Phi]_k:\cMatid(E)\to\cA$
by putting $[\Phi]_k(M) = \Phi(M)$ if $\rk M = k$ and 0 otherwise.  Since all morphisms in $\cMatid(E)$ relate matroids of the same rank,
$\Phi$ is naturally isomorphic to the direct sum over all $k$ of $[\Phi]_k$.  This immediately implies the following result.

\begin{lemma}\label{summand}
If $\Phi$ is valuative, then so is $[\Phi]_k$.
\end{lemma}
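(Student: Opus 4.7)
The plan is to observe that the category $\cMatid(E)$ splits as a disjoint union of full subcategories indexed by rank, and to use this to identify $[\Phi]_k$ as a direct summand of $\Phi$.

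First I would verify the key combinatorial fact: if $\iota_{M,M'}$ is a morphism in $\cMatid(E)$, then $\rk M = \rk M'$. This follows from the defining condition that every basis of $M'$ is a basis of $M$ (equivalently, that $P(M')\subset P(M)$), together with the fact that all bases of a matroid have the same cardinality. Consequently, $\cMatid(E)$ decomposes as $\coprod_k \cMatid(E)_k$, where $\cMatid(E)_k$ is the full subcategory on matroids of rank $k$, and similarly $\cMatid^+(E) = \bigoplus_k \cMatid^+(E)_k$.

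With this decomposition in hand, the restriction $[\Phi]_k := \Phi|_{\cMatid^+(E)_k}$ (extended by zero on the other rank strata) is a well-defined additive functor, and there is a natural isomorphism $\Phi \cong \bigoplus_k [\Phi]_k$. In particular, $[\Phi]_k$ is a direct summand of $\Phi$, so by the remark in Section \ref{sec:valfun} that direct summands of valuative functors are valuative, $[\Phi]_k$ is valuative.

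Alternatively, one can argue directly: for any matroid decomposition $\cN$ of $M$, every face $N \in \cN$ satisfies $P(N) \subset P(M)$ and hence $\rk N = \rk M$, so the complex $C_\bullet^\Omega(\cN)$ is built entirely from objects of rank $\rk M$. Thus $[\Phi]_k(C_\bullet^\Omega(\cN))$ equals $\Phi(C_\bullet^\Omega(\cN))$ when $\rk M = k$ and equals $0$ otherwise; in either case it is contractible. There is no real obstacle here, as the lemma is essentially a bookkeeping consequence of the fact that morphisms and matroid decompositions preserve rank.
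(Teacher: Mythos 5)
Your proof is correct and follows essentially the same route as the paper: the paper also observes that all morphisms in $\cMatid(E)$ relate matroids of the same rank, concludes that $\Phi$ is naturally isomorphic to $\bigoplus_k [\Phi]_k$, and invokes the fact that a direct summand of a valuative functor is valuative. Your alternative direct argument via the rank-homogeneity of the complexes $C_\bullet^\Omega(\cN)$ is a fine (equivalent) way to see the same thing.
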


Fix a natural number $r$ and an increasing $r$-tuple of natural numbers $\bk = (k_1,\ldots,k_r)$.
For any matroid $M$, let 
$$\cL_{\bk}(M) := \{(F_1,\ldots,F_r)\mid \text{$F_i$ is a flat of rank $k_i$ and $F_1\subset\cdots\subset F_r$}\}.$$
We define the {\bf Whitney functor} $$\Phi_{\bk}:\cMat\to\Vec_\Q$$ on objects by taking $\Phi_{\bk}(M)$ to be a vector space with basis $\cL_{\bk}(M)$.
If $\varphi:(E,M)\to (E',M')$ is a morphism and $(F_1,\ldots,F_r)\in \cL_{\bk}(M)$, then we define
$$\Phi_{\bk}(\varphi)(F_1,\ldots,F_r) = \begin{cases} \left(\overline{\varphi(F_1)},\ldots,\overline{\varphi(F_r)}\right) & \text{if $\rk_{M'}(\varphi(F_i)) = k_i$ for all $i$} \\ 0 & \text{otherwise.}\end{cases}$$
The main result of this section is that the functor $\Phi_{\bk}$ is valuative.

\begin{proposition}\label{Whitney}
For any $r$ and $\mathbf{k}$, 
the functor $\Phi_{\mathbf{k}}$ is valuative.
\end{proposition}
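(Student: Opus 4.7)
The plan is to realize the Whitney functor, after restriction to $\cMatid(E)$, as a direct sum of iterated convolutions of valuative functors, then invoke Corollary \ref{convolution}. As in Example \ref{ex:allflagofflat}, $\Phi_{\bk}|_{\cMatid(E)} \cong \bigoplus_{\bS} \Psi_{E,\bk,\bS}$, where $\bS = (S_1 \subset \cdots \subset S_r)$ ranges over flags of subsets of $E$. Since direct summands of valuative functors are valuative, it is enough to prove that each $\Psi_{E,\bk,\bS}$ is valuative for a fixed $\bS$.

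Fix such a flag $\bS$, and set $S_0 := \varnothing$, $S_{r+1} := E$, and $E_i := S_{i+1} \setminus S_i$, yielding a partition $E = E_0 \sqcup \cdots \sqcup E_r$. Iterating the coproduct $\Delta$ of Section \ref{sec:product} sends a matroid $M$ on $E$ to $M^{S_1} \boxtimes M^{S_2}_{S_1} \boxtimes \cdots \boxtimes M_{S_r}$. I would define valuative functors $\Phi^{(i)} \colon \cMatid^+(E_i) \to \Vec_\Q$, each valued in $\{\Q, 0\}$, so that the iterated convolution $\Phi^{(0)} * \cdots * \Phi^{(r)}$ sends $M$ to $\Q$ precisely when $\bS$ is a flag of flats of $M$ of ranks $\bk$. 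Specifically, $\Phi^{(0)}$ detects $\rk = k_1$; for $1 \leq i \leq r-1$, $\Phi^{(i)}$ detects $\rk = k_{i+1} - k_i$ together with looplessness; and $\Phi^{(r)}$ detects looplessness.

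Each $\Phi^{(i)}$ is valuative. Rank-equals-$k$ selection is obtained from the trivial valuative functor $\tau$ of Proposition \ref{triv val} via Lemma \ref{summand}. Looplessness selection is produced by the functor $L$ given by $N \mapsto \OS^0(N)$: this equals $\Q$ when $N$ is loopless and $0$ when $N$ contains a loop $e$ (the circuit $\{e\}$ yields the relation $w_{\{e\}} = 1 \in \OS(N)$, forcing $\OS(N)$ to vanish), and $L$ is valuative as the composition of $\OS$ (Theorem \ref{thm:OS}) with extraction of the degree-$0$ component. Combining the two selections on each $E_i$, via a further application of Lemma \ref{summand} to $L$, produces the desired $\Phi^{(i)}$.

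The main routine verification is that these per-piece conditions together characterize a flag of flats of ranks $\bk$. Rank additivity along the filtration handles the rank side. For flatness, the pivotal observation is that if $S_i \subset S_{i+1}$, then $S_i$ is a flat of $M$ iff $S_{i+1}$ is a flat of $M$ and $S_i$ is a flat of $M^{S_{i+1}}$, and in turn $S_i$ is a flat of $M^{S_{i+1}}$ iff $M^{S_{i+1}}_{S_i}$ is loopless. Iterating from $i = r$ downward identifies the conjunction of the per-piece looplessness conditions with the assertion that every $S_i$ is a flat of $M$. With this identification in hand, iterated application of Corollary \ref{convolution} yields that $\Psi_{E,\bk,\bS}$ is valuative, completing the argument.
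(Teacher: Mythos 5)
Your argument for the valuativity of each $\Psi_{E,\bk,\bS}$ is sound and is essentially the paper's Lemma \ref{g val}, packaged as one iterated convolution over the pieces $E_i = S_{i+1}\setminus S_i$ instead of an induction that peels off one piece at a time; the characterization of ``flag of flats of ranks $\bk$'' via rank additivity and looplessness of the minors $M^{S_{i+1}}_{S_i}$ is correct.

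The gap is in your very first step: the claimed isomorphism $\Phi_{\bk}|_{\cMatid(E)} \cong \bigoplus_{\bS}\Psi_{E,\bk,\bS} = \Psi_{E,\bk}$ is false. The two functors agree on objects but not on morphisms. For a weak map $\iota_{M,M'}$, the functor $\Phi_{\bk}$ sends a flag $(F_1,\ldots,F_r)$ of flats of $M$ to the flag of closures $(\overline{\varphi(F_1)},\ldots,\overline{\varphi(F_r)})$ in $M'$ whenever the ranks are preserved; if some $F_i$ is a flat of $M$ but not of $M'$ while its rank is unchanged, this is a \emph{nonzero} map to a \emph{different} basis element. By contrast, $\Psi_{E,\bk,\bS}(\iota_{M,M'})$ is zero unless every $S_i$ remains a flat of $M'$, so it kills exactly those terms. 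The paper makes this distinction explicitly (Example \ref{ex:allflagofflat} and Remark \ref{rmk:monotonicbecauseyoudiditright}: $\Phi_{\bk}$ and $\Psi_{E,\bk}$ categorify the same homomorphism but are not isomorphic functors, and monotonicity fails for $\Psi_{E,\bk}$). Consequently, valuativity of $\Psi_{E,\bk}$ does not directly transfer to $\Phi_{\bk}$. What is missing is the bridge the paper uses: filter $\Phi_{\bk}(N)$ by $|F_1|+\cdots+|F_r|$, observe that the induced maps respect the filtration and that the associated graded of the complex $\Phi_{\bk}(C_\bullet^\Omega(\cN))$ is exactly $\Psi_{E,\bk}(C_\bullet^\Omega(\cN))$ (the closure maps that strictly increase cardinality die in the associated graded), and then run the spectral sequence of the filtered complex to deduce exactness of $\Phi_{\bk}(C_\bullet^\Omega(\cN))$ from exactness of its associated graded. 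Without this step your proof establishes Lemma \ref{g val} but not Proposition \ref{Whitney}.
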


To prove Proposition \ref{Whitney}, we first consider the related functors introduced in Example \ref{ex:specificflagofflat}.

\begin{lemma}\label{g val}
The functor $\Psi_{E,\mathbf{k},\bS}$ from Example \ref{ex:specificflagofflat} is valuative.
\end{lemma}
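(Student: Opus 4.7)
The plan is to express $\Psi_{E,\bk,\bS}$ as an iterated convolution of valuative functors and conclude via Corollary \ref{convolution}. Partition $E = E_0 \sqcup E_1 \sqcup \cdots \sqcup E_r$ with $E_0 := S_1$, $E_i := S_{i+1} \setminus S_i$ for $1 \le i < r$, and $E_r := E \setminus S_r$. On $\cMatid^+(E_i)$, define $\Phi_i$ as follows: $\Phi_0(N) := \Q$ if $\rk(N) = k_1$; for $1 \le i < r$, $\Phi_i(N) := \Q$ if $\rk(N) = k_{i+1}-k_i$ and $N$ is loopless; and $\Phi_r(N) := \Q$ if $N$ is loopless. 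In each case $\Phi_i$ vanishes otherwise, and sends $\iota_{N,N'}$ to the identity whenever both $N$ and $N'$ satisfy the defining condition.

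The first substantive step will be to identify $\Psi_{E,\bk,\bS}$ with the iterated convolution $\Phi_0 * \Phi_1 * \cdots * \Phi_r$, built right-associatively via the two-factor convolutions along the splittings $E = S_1 \sqcup (E \setminus S_1)$, $E \setminus S_1 = (S_2 \setminus S_1) \sqcup (E \setminus S_2)$, and so on. Iterating Lemma \ref{break} shows that the iterated coproduct extracts the minors $N_i := (M^{S_{i+1}})_{S_i}$ on $E_i$. The claim then follows from a standard matroid identity: $\bS$ is a flag of flats of ranks $\bk$ in $M$ if and only if $\rk(N_i) = k_{i+1}-k_i$ for $0 \le i \le r-1$ (with $k_0 := 0$) and each of $N_1, \ldots, N_r$ is loopless. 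The nontrivial content here is that ``$S_i$ is closed in $M$ for every $i \ge 1$'' is equivalent to ``$N_j$ is loopless for every $j \ge 1$'', which follows from the observation that an element $e \in E \setminus S_i$ is a loop of $M_{S_i}$ precisely when $e \in \mathrm{cl}_M(S_i)$, combined with monotonicity of closure along the chain $S_1 \subset \cdots \subset S_r$.

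Next, I would verify that each $\Phi_i$ is valuative. Since every face of a matroid polytope is itself the base polytope of a matroid of the same rank, the rank function is constant along any decomposition; hence $\Phi_0$ either agrees with the trivial functor $\tau$ or vanishes on a given complex $C_\bullet^\Omega(\cN)$, and valuativity follows from Proposition \ref{triv val}. The same rank-constancy argument reduces the rank-and-loopless conditions in $\Phi_i$ for $1 \le i < r$ to the loopless condition alone. Thus the crux is to prove that the \emph{loopless indicator} functor $\Phi_{\mathrm{loopless}}\co \cMatid^+(E')\to\Vec_\Q$ is valuative for any finite set $E'$.

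This final step is the main technical obstacle, and it closely mirrors the proof of Proposition \ref{V exact}. If $M$ on $E'$ has a loop $e$, then $v_e = 0$ throughout $P(M)$, so every face of any decomposition also has $e$ as a loop; the resulting complex is identically zero. If $M$ is loopless, set
\[
U := \becircled{P(M)} \cap \{v \in \R^{E'} : v_e > 0 \text{ for all } e \in E'\},
\]
which is a nonempty convex open set (the centroid of the basis-vertices witnesses nonemptiness). For each internal face $N \in \cN$, the set $U_N := U \cap \becircled{P(N)}$ is nonempty if and only if $N$ is loopless: any positive-coordinate point in $P(N)$ can be nudged into the relative interior by convex combination with any point of $\becircled{P(N)}$, while internality ensures $\becircled{P(N)} \subset \becircled{P(M)}$. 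Exactly as in the proof of Proposition \ref{V exact}, the cells $U_N$ give $\bar U / \partial U$ the structure of a closed-ball-modulo-boundary-sphere whose cellular chain complex is identified with $\Phi_{\mathrm{loopless}}(C^\Omega_{\le d}(\cN))$; the reduced homology is one-dimensional in top degree $d$, and the long-exact-sequence argument of Proposition \ref{triv val} promotes this to contractibility of the full complex $\Phi_{\mathrm{loopless}}(C^\Omega_\bullet(\cN))$. Assembling all of the above through iterated application of Corollary \ref{convolution} yields the lemma.
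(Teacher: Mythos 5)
Your proof is correct, and it runs on the same engine as the paper's: Corollary \ref{convolution} combined with Lemma \ref{summand}, Proposition \ref{triv val}, and valuativity of the loopless-indicator functor. The differences are organizational. The paper argues by induction on $r$, peeling off only the top layer via the splitting $E = S_r\sqcup(E\setminus S_r)$ and writing $\Psi_{E,\bk,\bS} = \bigl[\Psi_{S_r,(k_1,\ldots,k_{r-1}),(S_1,\ldots,S_{r-1})}\bigr]_{k_r}*\OS^0$; you instead unfold the whole thing at once into an $(r+1)$-fold convolution over the difference sets $S_{i+1}\setminus S_i$, pushing the rank truncations into the individual factors, which is equivalent by rank additivity of the minors $(M^{S_{i+1}})_{S_i}$. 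The other difference is your treatment of the loopless indicator: the paper gets it for free as $\OS^0$, valuative by Theorem \ref{thm:OS}, whereas you re-prove it by hand. Note that your half-space argument is precisely the $S=\varnothing$ case of Proposition \ref{V exact} (one has $\varnothing\in\nbc(N)$ if and only if $N$ is loopless, and $H^+_{e,\varnothing}=\{v \mid v_e>0\}$), so you could simply cite that result; on the other hand, writing it out keeps this lemma independent of the Orlik--Solomon machinery, at the cost of duplicating an argument the paper needs anyway. Neither route is more general than the other.
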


\begin{proof}
We proceed by induction on $r$.  The base case $r=0$ is Proposition \ref{triv val}.
For the inductive step, let $r\geq 1$ be given and
assume that the lemma holds for $r-1$.  Fix a set $E$, an increasing $r$-tuple $\bk = (k_1,\ldots,k_r)$ of natural numbers, and an increasing $r$-tuple $\bS = (S_1,\ldots,S_r)$ of subsets of $E$.
By our inductive hypothesis, the functor
$$\Psi_{S_r,(k_1,\ldots,k_{r-1}),(S_1,\ldots,S_{r-1})}:\cMatid(S_r)\to \Vec_\Q$$
is valuative.  By Lemma \ref{summand}, so is the functor
$$\left[\Psi_{S_r,(k_1,\ldots,k_{r-1}),(S_1,\ldots,S_{r-1})}\right]_{k_r}:\cMatid(S_r)\to \Vec_\Q.$$
By Theorem \ref{thm:OS}, the functor 
$\OS^0:\cMatid(E\setminus S_r)\to\Vec_\Q$ is valuative.
Note that the degree zero part of the Orlik--Solomon algebra of a matroid is equal to $\Q$ if that matroid is loopless and to 0 otherwise, thus for any matroid $M$ on $E$, $\OS^0(M_{S_r})$ is equal to $\Q$ if $S_r$ is a flat and 0 otherwise.
It follows that $$\Psi_{E,\bk,\bS} = \left[\Psi_{S_r,(k_1,\ldots,k_{r-1}),(S_1,\ldots,S_{r-1})}\right]_{k_r} * \OS^0,$$
and therefore $\Psi_{E,\bk,\bS}$ is valuative by Corollary \ref{convolution}.\end{proof}

\begin{proof}[Proof of Proposition \ref{Whitney}.]
We need to show that, for any decomposition $\cN$ of a matroid $M$ on the ground
set $E$, and any orientation $\Omega$ of $\cN$, the complex $\Phi_{\bk}(C_\bullet^\Omega(\cN))$ is exact.

For any matroid $N$, define a filtration on $\Phi_{\bk}(N)$ by taking the $i^\text{th}$ filtered piece to be spanned
by those tuples $(F_1,\ldots,F_r)\in \cL_{\bk}(N)$ such that $|F_1|+\cdots+|F_r| \geq i$.  The linear map of vector
spaces induced by a weak map $\iota_{N,N'}:N\to N'$ takes the $i^\text{th}$ filtered piece of $\Phi_{\bk}(N)$ to 
the $i^\text{th}$ filtered piece of $\Phi_{\bk}(N')$, hence we obtain a filtered complex $\Phi_{\bk}(C_\bullet^\Omega(\cN))$.
The associated graded of this complex is isomorphic to the complex
$$\Psi_{E,\bk}(C_\bullet^\Omega(\cN)) = \bigoplus_{\bS} \Psi_{E,\bk,\bS}(C_\bullet^\Omega(\cN)),$$
where the sum is over all increasing $r$-tuples $\bS = (S_1,\ldots,S_r)$ of subsets of $E$.
By Lemma \ref{g val}, this associated graded complex is exact.  
By considering the spectral sequence associated with the filtered complex (as in the proof of Theorem \ref{thm:OS}),
we may conclude that the filtered complex $\Phi_{\bk}(C_\bullet^\Omega(\cN))$ is exact, as well.
\end{proof}

\subsection{Chow functors}\label{sec:chow functors}
Let $M$ be a matroid on the ground set $E$.  The {\bf augmented Chow ring} $\CH(M)$ is defined as the quotient of the polynomial ring $$\Q[x_F\mid \text{$F$ a flat}] \otimes \Q[y_e\mid e\in E]$$
by the ideal $$\left\langle \sum_F x_F\right\rangle + \left\langle y_e - \sum_{e\notin F} x_F\;\Big{|}\; e\in E\right\rangle + \left\langle y_e x_F\;\Big{|}\;  e\notin F\right\rangle + \left\langle x_F x_G\;\Big{|}\;  \text{$F,G$ incomparable}\right\rangle.$$
If $M$ has no loops, the {\bf Chow ring} $\uCH(M)$ is defined as the quotient of $\CH(M)$ by the ideal generated by $\{y_e\mid e\in E\}$.  
If $M$ has loops, then $\uCH(M)$ is by definition 0.

We would like to promote these rings to categorical invariants.  Our first tool will be the following theorem of Feichtner and Yusvinsky \cite[Corollary 1]{FY}.

\begin{theorem}\label{FY basis}
If $M$ has no loops, then $\uCH(M)$ has a basis consisting of monomials of the form $x_{F_1}^{m_1}\cdots x_{F_r}^{m_r}$, where
$r\in\N$, $\varnothing = F_0 \subsetneq F_1\subsetneq \cdots\subsetneq F_r$, and $0<m_i<\rk F_i - \rk F_{i-1}$ for all $i\in\{1,\ldots,r\}$.
\end{theorem}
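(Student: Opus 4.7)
The plan is to prove this classical structure theorem in two steps: first showing that the FY monomials span $\uCH(M)$, and then establishing their linear independence.

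For spanning, I would proceed by a straightening procedure using the defining relations. The incomparability relations $x_F x_G = 0$ immediately restrict attention to monomials $x_{F_1}^{m_1} \cdots x_{F_r}^{m_r}$ whose support is a chain $F_1 \subsetneq \cdots \subsetneq F_r$ of flats. Eliminating the $y$-variables from the presentation of $\CH(M)$ yields, for each $e \in E$, the linear relation $\sum_{F \ni e} x_F = 0$. Manipulating these and multiplying by appropriate monomials gives, for each $i$, an identity expressing $x_{F_i}^{\rk F_i - \rk F_{i-1}}$ as a linear combination of monomials in which either the exponent of $x_{F_i}$ has been decreased (at the cost of inserting an intermediate flat strictly between $F_{i-1}$ and $F_i$), or a strictly larger flat has entered the support. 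A well-founded induction on the pair (top flat in the support, total degree) then reduces every chain monomial to a $\Q$-linear combination of FY monomials.

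For linear independence, I would proceed by induction on the rank of $M$. The inductive step rests on a $\Q$-vector space decomposition of $\uCH(M)$ indexed by the bottom flat $F_1$ of the chain supporting each FY monomial: the summand corresponding to a nonempty flat $F$ consists of $\Q$-linear combinations of FY monomials whose bottom flat is $F$, and this summand is visibly the direct sum over $m = 1, \ldots, \rk F - 1$ of copies of $\uCH(M_F)$, via multiplication by $x_F^m$ and identification of the rest of the chain with a chain of flats in the contraction $M_F$. The inductive hypothesis then provides linear independence within each factor, and one must additionally verify that these summands sum directly in $\uCH(M)$.

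The main obstacle is precisely this verification of directness. One route, following Feichtner--Yuzvinsky, is to identify the FY monomials as the standard monomials under a Gr\"obner basis for the ideal of relations with respect to a monomial order refining the chain-support structure, so that directness becomes an automatic consequence of the normal form property. Alternatively, one may compare the Hilbert series of $\uCH(M)$, which has been computed independently (for instance, via the semi-small decomposition of Braden--Huh--Matherne--Proudfoot--Wang), with the count of FY monomials in each degree. Either route completes the proof.
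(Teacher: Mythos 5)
This statement is not proved in the paper at all: it is quoted verbatim from Feichtner--Yuzvinsky \cite[Corollary 1]{FY}, so there is no in-paper argument to compare against. Your first route (straightening for spanning, Gr\"obner normal forms for independence) is essentially the Feichtner--Yuzvinsky proof, and as an outline it is the correct one.

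Two points in your sketch need repair. First, the well-founded order you propose for the straightening step, ``(top flat in the support, total degree),'' cannot work as stated: every defining relation of $\uCH(M)$ is homogeneous, so total degree is preserved by each rewriting and never decreases anything. Termination has to come from a genuine term order on monomials (this is exactly what the Gr\"obner basis of Feichtner--Yuzvinsky supplies: the leading terms are $x_{F'}^{\rk F'}$ and $x_F x_{F'}^{\rk F' - \rk F}$ for $F\subsetneq F'$, and reduction strictly decreases the chosen order). Second, your inductive route to linear independence via the decomposition by bottom flat is not self-contained: the summand attached to $(F,m)$ is the image of a multiplication map $x_F^m\cdot(-)\colon \uCH(M_F)\to\uCH(M)$, and both the injectivity of that map and the directness of the sum are essentially equivalent to the statement being proved, so the induction gives nothing without external input. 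Of the two patches you offer, the Gr\"obner one closes the gap; the Hilbert-series comparison via the semi-small decomposition of \cite{BHMPW1} is circular, since that decomposition is itself established using the Feichtner--Yuzvinsky presentation and basis. With the Gr\"obner input made explicit, your argument is the standard proof.
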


For any positive integer $k$, consider the graded vector space
\revise{
\[
Q_k := \bigoplus_{i=1}^{k-1}\mathbb{Q}(-i),
\]
}
of total dimension $k-1$, with a piece in every positive degree less than $k$.
Theorem \ref{FY basis} has the following corollary.

\begin{corollary}\label{uCH functor}
If $M$ has no loops, then
there is a canonical isomorphism of graded vector spaces $$\uCH(M)\;\cong\; \bigoplus_{r\geq 0}\;\bigoplus_{\substack{\mathbf{k} \,= (k_1,\ldots,k_r)\\ 0<k_1<\cdots<k_r}} \Phi_{\mathbf{k}}(M) \otimes Q_{k_1}\otimes Q_{k_2-k_1}\otimes\cdots\otimes Q_{k_r-k_{r-1}}.$$
\end{corollary}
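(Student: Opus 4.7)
The plan is to read off the isomorphism directly from the Feichtner--Yuzvinsky basis of Theorem \ref{FY basis}. By that theorem, $\uCH(M)$ has a canonical basis indexed by pairs $\big((F_1,\ldots,F_r),(m_1,\ldots,m_r)\big)$, where $(F_1,\ldots,F_r)$ is a strictly increasing chain of nonempty flats (with the convention $F_0 = \varnothing$, so $\rk F_0 = 0$) and $(m_1,\ldots,m_r)$ is a tuple of positive integers with $m_i < \rk F_i - \rk F_{i-1}$. The corresponding basis element of $\uCH(M)$ is the monomial $x_{F_1}^{m_1}\cdots x_{F_r}^{m_r}$, which has degree $m_1 + \cdots + m_r$.

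First, I would partition this basis according to the rank sequence $\mathbf{k} = (k_1,\ldots,k_r)$ with $k_i := \rk F_i$, so that $0 < k_1 < \cdots < k_r$. For a fixed $\mathbf{k}$, the chains $(F_1,\ldots,F_r)$ are exactly the basis elements of $\Phi_{\mathbf{k}}(M)$, while the admissible exponent tuples are those with $m_i \in \{1,\ldots,k_i-k_{i-1}-1\}$. By the definition of $Q_k$, choosing such an $m_i$ is the same as choosing a basis element of $Q_{k_i-k_{i-1}}$, namely the generator of the summand $\Q(-m_i)$, which sits in degree $m_i$. Therefore the tuples $(m_1,\ldots,m_r)$ with $0 < m_i < k_i - k_{i-1}$ are in bijection with the standard monomial basis of $Q_{k_1}\otimes Q_{k_2-k_1}\otimes\cdots\otimes Q_{k_r-k_{r-1}}$, in such a way that the total degree $m_1 + \cdots + m_r$ of the exponent tuple matches the internal grading of the tensor.

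Combining these two bookkeeping identifications, the map sending
\[
(F_1,\ldots,F_r)\,\otimes\, e_{m_1}\otimes\cdots\otimes e_{m_r} \;\longmapsto\; x_{F_1}^{m_1}\cdots x_{F_r}^{m_r}
\]
(where $e_{m_i}$ denotes the degree-$m_i$ basis element of $Q_{k_i-k_{i-1}}$) is a bijection between a basis of the right-hand side and the Feichtner--Yuzvinsky basis of $\uCH(M)$, and it respects the grading since both sides assign the monomial total degree $\sum m_i$. This gives a canonical isomorphism of graded vector spaces, summed over all $r\ge 0$ and all strictly increasing rank sequences $\mathbf{k}$, including the empty chain $r=0$ which contributes $\Q$ in degree $0$ on both sides (the unit $1 \in \uCH(M)$ and the empty tensor product). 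Since the only input is the basis statement of Theorem \ref{FY basis}, there is no genuine obstacle; the only thing to be careful about is the convention $k_0 = 0$ and the matching of the range of exponents with the range of grading shifts in $Q_k$.
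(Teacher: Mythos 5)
Your proposal is correct and is exactly the argument the paper intends: the corollary is stated as an immediate consequence of Theorem \ref{FY basis}, and your bookkeeping (partitioning the Feichtner--Yuzvinsky basis by rank sequences, matching chains with $\Phi_{\mathbf{k}}(M)$ and exponent tuples with the graded bases of the $Q_{k_i-k_{i-1}}$) is the intended, essentially tautological identification. Your care with the conventions $k_0=0$ and the degenerate cases (e.g.\ $Q_1=0$ killing chains with $\rk F_1=1$, and the empty chain contributing $\Q$ in degree $0$) is exactly right.
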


Based on Corollary \ref{uCH functor}, we define a functor $\uCH$ from $\cMat$ to the category of finite dimensional graded vector spaces over $\Q$ by setting
$$\uCH := \bigoplus_{r\geq 0}\;\bigoplus_{\substack{\bk \,= (k_1,\ldots,k_r)\\ 0<k_1<\cdots<k_r}} \Phi_{\bk} \otimes Q_{k_1}\otimes Q_{k_2-k_1}\otimes\cdots\otimes Q_{k_r-k_{r-1}}$$
on the full subcategory spanned by matroids without loops, and $\uCH = 0$ on the full subcategory spanned by matroids with loops.  We observe that there are no morphisms in $\cMat$ from a matroid with loops to a matroid without loops,
so these conditions uniquely characterize a well-defined functor $\uCH$.  

\begin{corollary}\label{uCH val}
The categorical invariant $\uCH$ is valuative.
\end{corollary}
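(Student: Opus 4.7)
The plan is to reduce the valuativity of $\uCH$ to that of the Whitney functors $\Phi_{\bk}$ (Proposition \ref{Whitney}) by exploiting the rigid behavior of loops under decompositions. Fix a decomposition $\cN$ of a matroid $M$ on $E$ together with an orientation $\Omega$; the goal is to show that the complex $\uCH(C_\bullet^\Omega(\cN))$ is contractible.

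The first step is a geometric dichotomy concerning loops. If $e \in E$ is a loop of $M$ then $P(M) \subset \{\delta_e = 0\}$, so every matroid $N$ with $P(N) \subset P(M)$ also has $e$ as a loop. Conversely, if $M$ is loopless and $N$ is an \emph{internal} face of $\cN$, then $N$ must also be loopless: if $e$ were a loop of $N$, then $P(N) \subset P(M) \cap \{\delta_e = 0\}$, but $P(M) \cap \{\delta_e = 0\}$ is a proper face of $P(M)$ (since $M$ is loopless some basis of $M$ contains $e$, and so $\delta_e$ is not constant on $P(M)$), contradicting internality.

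This dichotomy handles the argument in two cases. If $M$ has at least one loop, then every term appearing in $C_\bullet^\Omega(\cN)$ (the top term $M$ together with every internal face of $\cN$) carries that same loop, so $\uCH$ annihilates every term and the complex is identically zero, hence trivially contractible. If instead $M$ is loopless, then every term appearing in $C_\bullet^\Omega(\cN)$ is loopless, so the entire complex lies in the full subcategory of $\cMat(E)$ on loopless matroids. On this subcategory, by Corollary \ref{uCH functor} the functor $\uCH$ is naturally isomorphic to
$$\bigoplus_{r \ge 0} \; \bigoplus_{\substack{\bk \,= (k_1,\ldots,k_r) \\ 0 < k_1 < \cdots < k_r}} \Phi_{\bk} \otimes Q_{k_1} \otimes Q_{k_2 - k_1} \otimes \cdots \otimes Q_{k_r - k_{r-1}},$$
which is a finite (for each fixed $M$) direct sum of shifts of the functors $\Phi_{\bk}$. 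Since each $\Phi_{\bk}(C_\bullet^\Omega(\cN))$ is contractible by Proposition \ref{Whitney}, and a direct sum of contractible complexes is contractible, the result follows.

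I expect the main subtlety to be the second half of the geometric dichotomy --- the observation that internal faces in a decomposition of a loopless matroid remain loopless --- which uses the internality hypothesis in a nontrivial way. Once that is in hand, everything else reduces to Proposition \ref{Whitney} together with the standard closure properties of valuative functors under direct sums and tensor products with fixed finite-dimensional graded vector spaces.
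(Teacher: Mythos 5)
Your proof is correct and follows the same route as the paper: reduce to the valuativity of the Whitney functors $\Phi_{\bk}$ via Corollary \ref{uCH functor} and Proposition \ref{Whitney}. The paper's proof is a one-liner that elides the loop issue you address; your dichotomy (a loop of $M$ propagates to every face of a decomposition, while internal faces of a loopless matroid remain loopless) is precisely the justification needed for applying the identification of $\uCH$ with a direct sum of shifts of the $\Phi_{\bk}$ term-by-term to the whole complex $C_\bullet^\Omega(\cN)$, since $\uCH$ is defined to vanish on matroids with loops whereas the $\Phi_{\bk}$ do not --- so your write-up is, if anything, more complete.
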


\begin{proof}
Corollary \ref{uCH functor} tells us that $\uCH$ is a direct sum of shifts of functors of the form $\Phi_{\mathbf{k}}$, which is valuative by Proposition \ref{Whitney}.
\end{proof}

Our next task is to construct an analogous basis for the augmented Chow ring $\CH(M)$.
Given a flat $F$ of $M$, choose any maximal independent set $I\subset F$, and let $y_F := \prod_{e\in I} y_e\in\CH(M)$.
The element $y_F$ does not depend on the choice of $I$ \cite[Lemma 2.11(2)]{BHMPW1}.

\begin{proposition}\label{CH basis}
For any matroid $M$, the augmented Chow ring $\CH(M)$ has a basis consisting of monomials
of the form $y_{F_0}x_{F_1}^{m_1}\cdots x_{F_r}^{m_r}$, where
$r\in\N$, $F_0 \subsetneq F_1\subsetneq \cdots\subsetneq F_r$, and $0<m_i<\rk F_i - \rk F_{i-1}$ for all $i\in\{1,\ldots,r\}$.
\end{proposition}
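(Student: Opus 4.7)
The strategy mirrors the proof of Theorem \ref{FY basis} due to Feichtner--Yuzvinsky, with additional bookkeeping to accommodate the generators $y_e$. The central observation is that a monomial of the form $y_{F_0} x_{F_1}^{m_1}\cdots x_{F_r}^{m_r}$ with $F_0 \subsetneq F_1 \subsetneq \cdots \subsetneq F_r$ should correspond, via the map sending $x_{G}\in\uCH(M_{F_0})$ (for $G$ a flat of $M_{F_0}$, i.e., a flat of $M$ containing $F_0$) to $x_G\in\CH(M)$, to a Feichtner--Yuzvinsky basis element of $\uCH(M_{F_0})$. Thus the plan is to establish a direct sum decomposition of graded vector spaces
\begin{equation*} \CH(M) \;\cong\; \bigoplus_{F \text{ flat of } M} y_F\cdot \uCH(M_F)(-\rk F), \end{equation*}
and then apply Theorem \ref{FY basis} to each summand.

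The spanning claim is handled by manipulating the defining relations of $\CH(M)$. First, the relation $y_e x_G = 0$ for $e\notin G$ forces any nonzero monomial to satisfy the property that every ground set element $e$ appearing in a $y$-factor lies in every flat appearing in an $x$-factor. Since $y_I = \prod_{e\in I}y_e$ depends only on the closure of the independent set $I$ (by \cite[Lemma~2.11(2)]{BHMPW1}), the $y$-factors can be collected into a single $y_{F_0}$ for a flat $F_0$ contained in each flat $F_i$ appearing in an $x$-factor. Next, the relation $x_F x_G = 0$ for incomparable $F,G$ forces the remaining $x$-factors to involve a chain $F_0 \subseteq F_1 \subsetneq \cdots \subsetneq F_r$; enlarging $F_0$ if necessary, we may assume $F_0 \subsetneq F_1$. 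Finally, adapting the FY reduction, the linear relation $y_e = \sum_{e\notin G} x_G$ (applied to any $e \in F_1\setminus F_0$) and its consequences within the subring generated by $\{x_G \mid G\supseteq F_0\}$ let us bound each exponent by $m_i < \rk F_i - \rk F_{i-1}$.

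For linear independence, I would identify the subspace $y_F \cdot \widetilde{\Sigma}_F\subset\CH(M)$ (spanned by the monomials with prescribed $F_0 = F$) with $\uCH(M_F)$ via the map $x_{G/F}\mapsto x_G$ for $G$ a flat of $M$ containing $F$, showing this map is a well-defined isomorphism of graded vector spaces onto its image by checking that the defining relations of $\uCH(M_F)$ are satisfied modulo the kernel. A Hilbert series computation then shows that the sum $\sum_F t^{\rk F}\,\uH_{\uCH(M_F)}(t)$ matches the Hilbert series $\H_M(t)$ of $\CH(M)$ (for instance, via the known recursion for $\H_M$ in terms of $\uH_{M_F}$), so the spanning set computed above has exactly the right size to be a basis.

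The main obstacle is establishing the direct sum decomposition rigorously; the spanning argument is a routine, if intricate, manipulation of relations in the spirit of Feichtner--Yuzvinsky, but verifying that the natural map $\uCH(M_F)\to y_F\cdot\CH(M)$ is injective, and that the different subspaces $y_F\cdot\uCH(M_F)$ sum directly, is where the real work lies. This can be accomplished either by a direct Hilbert series match as above, or more conceptually by invoking the semi-small decomposition of $\CH(M)$ established in \cite{BHMPW1}, which produces precisely such a vector space splitting indexed by flats.
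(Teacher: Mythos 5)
Your proposal is correct in outline and rests on the same core idea as the paper's proof: decompose $\CH(M)$ into pieces indexed by flats $F$, identify each piece with $\uCH(M_F)(-\rk F)$, and apply Theorem \ref{FY basis} to each. The difference is in how the decomposition is established. The paper filters $\CH(M)$ by powers of the ideal $\mathfrak{m}=(y_e\mid e\in E)$ and cites the argument of \cite[Proposition 1.8]{BHMPW2} for the isomorphism $\gr\CH(M)\cong\bigoplus_F\uCH(M_F)(-\rk F)$; the FY bases then give a basis of the associated graded, which lifts automatically to $\CH(M)$. You instead work directly in the ring: a spanning argument by relation manipulation, then a cardinality count against $\H_M(t)=\sum_F t^{\rk F}\uH_{M_F}(t)$ (the identity the paper records in Remark \ref{decat FMSV}, citing \cite{FMSV}). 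Your counting step is logically sound — a spanning set whose size equals the dimension is a basis — and it correctly sidesteps the need to prove injectivity of $\eta\mapsto y_F\eta$ or directness of the sum of subspaces, which you rightly flag as the delicate point; the paper avoids the same issue by passing to the associated graded, where injectivity is part of the cited BHMPW2 computation. Two small cautions: your spanning sketch should address what happens to a product $y_{e_1}\cdots y_{e_k}$ when $\{e_1,\ldots,e_k\}$ is dependent (the well-definedness statement \cite[Lemma 2.11(2)]{BHMPW1} only covers independent sets), and your fallback reference to ``the semi-small decomposition of \cite{BHMPW1}'' is misattributed — that decomposition is relative to deletion of a single element and is indexed by flats avoiding it, not by all flats; the flat-indexed splitting you want is the one in \cite{BHMPW2}.
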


\begin{proof}
Let $\mathfrak{m}\subset\CH(M)$ be the ideal generated by $\{y_e\mid e\in E\}$.
The argument in the proof of \cite[Proposition 1.8]{BHMPW2} shows that we have an isomorphism
\begin{equation}\label{gr CH}\gr\CH(M) :=  \bigoplus_{p\geq 0}\frac{\mathfrak{m}^p\CH(M)}{\mathfrak{m}^{p+1}\CH(M)}
\cong \bigoplus_F \uCH(M_F)(-\rk F),\end{equation}
where $\uCH(M_F)(-\rk F)$ embeds into $\gr\CH(M)$ by sending a polynomial $\eta$ in 
$\{x_G \mid F\subset G \subset E\}$ to the polynomial $y_F\eta$.  We may therefore use the basis for each $\uCH(M_F)$
from Theorem \ref{FY basis} to construct a basis for $\gr \CH(M)$, and this lifts to a basis for $\CH(M)$.
\end{proof}



\begin{remark}\label{decat FMSV}
The decategorified version of Equation \eqref{gr CH}, which says that 
$$H_M(t) = \sum_F t^{\rk F} \uH_{M_F}(t),$$
appears in \cite[Theorems 1.3, 1.4, and 1.5]{FMSV}.
\end{remark}

Proposition \ref{CH basis} has the following corollary.

\begin{corollary}\label{CH functor}
There is a canonical isomorphism of graded vector spaces $$\CH(M)\;\cong\; \bigoplus_{r\geq 0}\;\bigoplus_{\substack{\mathbf{k} \,= (k_0,k_1,\ldots,k_r)\\ k_0<k_1<\cdots<k_r}} \Phi_{\mathbf{k}}(M) \otimes Q_{k_1-k_0}\otimes Q_{k_2-k_1}\otimes\cdots\otimes Q_{k_r-k_{r-1}}(-k_0).$$
\end{corollary}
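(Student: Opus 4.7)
The plan is to read off this isomorphism directly from the basis provided by Proposition \ref{CH basis}, in direct parallel to how Corollary \ref{uCH functor} was deduced from Theorem \ref{FY basis}.

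First I would fix $r \geq 0$ and an increasing tuple $\mathbf{k} = (k_0, k_1, \ldots, k_r)$, and identify the subset of the basis in Proposition \ref{CH basis} consisting of monomials $y_{F_0} x_{F_1}^{m_1} \cdots x_{F_r}^{m_r}$ whose flag of flats $(F_0 \subsetneq F_1 \subsetneq \cdots \subsetneq F_r)$ has rank sequence exactly $\mathbf{k}$. The data of such a monomial factors as a pair: a flag of flats with the prescribed ranks (which is by definition a basis vector of $\Phi_{\mathbf{k}}(M)$), together with a tuple of exponents $(m_1, \ldots, m_r)$ satisfying $0 < m_i < k_i - k_{i-1}$. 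Under the identification $Q_{k_i - k_{i-1}} = \bigoplus_{j=1}^{k_i - k_{i-1} - 1} \Q(-j)$, the tuple $(m_1, \ldots, m_r)$ indexes the natural basis of $Q_{k_1 - k_0} \otimes \cdots \otimes Q_{k_r - k_{r-1}}$.

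Next I would check that the grading matches. The monomial $y_{F_0} x_{F_1}^{m_1} \cdots x_{F_r}^{m_r}$ has degree $k_0 + m_1 + \cdots + m_r$, since $y_{F_0}$ is a product of $\rk F_0 = k_0$ generators $y_e$ of degree one (using that $y_{F_0}$ is well-defined by \cite[Lemma 2.11(2)]{BHMPW1}), and each $x_{F_i}^{m_i}$ has degree $m_i$. On the right-hand side, the corresponding tensor, with $\Phi_{\mathbf{k}}(M)$ concentrated in degree $0$ and the $i$-th summand of $Q_{k_i - k_{i-1}}$ in degree $m_i$, lies in degree $m_1 + \cdots + m_r$ before the shift; the shift $(-k_0)$ then adds $k_0$, producing the same total degree.

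Summing over all $r \geq 0$ and all valid $\mathbf{k}$, the bijection between basis elements becomes an isomorphism of graded vector spaces. The edge case $r = 0$ is handled correctly: the monomials $y_{F_0}$ (including $y_\varnothing = 1$, in which case $k_0 = 0$) lie in the summand $\Phi_{(k_0)}(M)(-k_0)$, whose basis is indexed by flats of rank $k_0$ and which is concentrated in degree $k_0$. There is essentially no obstacle once Proposition \ref{CH basis} is in hand; the only care required is in bookkeeping the grading conventions for $Q_k$ and the tensor shift $(-k_0)$.
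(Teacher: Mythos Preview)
Your proposal is correct and takes exactly the same approach as the paper: the paper simply states that Corollary \ref{CH functor} is an immediate consequence of Proposition \ref{CH basis}, and you have written out the evident bookkeeping (grouping basis monomials by the rank sequence of their flag, matching exponents with basis elements of the $Q_{k_i-k_{i-1}}$, and checking the grading shift). One tiny cosmetic point: when $M$ has loops the minimal flat is the set of loops rather than $\varnothing$, but since any maximal independent subset of it is empty one still has $y_{F_0}=1$ in the $k_0=0$ case, so your edge-case discussion goes through unchanged.
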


Motivated by Corollary \ref{CH functor}, we define a functor $\CH$ from $\cMat$ to the category of finite dimensional graded vector spaces over $\Q$ 
by putting $$\CH\; :=\; \bigoplus_{r\geq 0}\;\bigoplus_{\substack{\bk \,= (k_0,k_1,\ldots,k_r)\\ k_0<k_1<\cdots<k_r}} \Phi_{\bk}(M) \otimes Q_{k_1-k_0}\otimes Q_{k_2-k_1}\otimes\cdots\otimes Q_{k_r-k_{r-1}}(-k_0).$$
Corollary \ref{uCH val} has the following augmented analogue.

\begin{corollary}\label{CH val}
The categorical invariant $\CH$ is valuative.
\end{corollary}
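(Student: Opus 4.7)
The strategy follows the exact same template as the proof of Corollary \ref{uCH val}. First I would observe that, by definition, the functor $\CH$ is written as the direct sum
\[
\CH \;=\; \bigoplus_{r\geq 0}\;\bigoplus_{\substack{\bk \,= (k_0,k_1,\ldots,k_r)\\ k_0<k_1<\cdots<k_r}} \Phi_{\bk} \otimes Q_{k_1-k_0}\otimes Q_{k_2-k_1}\otimes\cdots\otimes Q_{k_r-k_{r-1}}(-k_0),
\]
so that proving valuativity of $\CH$ reduces to checking that each summand is valuative and then invoking the fact that direct sums of valuative functors are valuative.

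Next I would note that each individual summand is the composition of the Whitney functor $\Phi_{\bk}$ with the additive endofunctor of finite-dimensional graded $\Q$-vector spaces obtained by tensoring with the fixed graded vector space $Q_{k_1-k_0}\otimes\cdots\otimes Q_{k_r-k_{r-1}}$ and applying the grading shift $(-k_0)$. Proposition \ref{Whitney} says that $\Phi_{\bk}$ is valuative, and any additive functor preserves contractible complexes (since contractibility means being isomorphic to a direct sum of objects of the form $\Nul(X,k)$, whose defining identity maps are preserved). Thus composing a valuative functor with any additive functor produces another valuative functor, so each summand is valuative.

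Finally, as noted immediately after the definition of valuative functor in Section \ref{sec:valfun}, a direct sum of valuative functors is valuative (because a direct sum of contractible complexes is contractible). Summing over all $r$ and all increasing flags $\bk$ then yields that $\CH$ is valuative.

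There is no real obstacle to overcome: all of the substantive work was carried out in Proposition \ref{Whitney}, and here one is merely repackaging the additive operations (tensor with a fixed graded vector space, grading shift, direct sum) that trivially preserve valuativity. The only point worth double-checking is that the functor $\CH$ really is \emph{defined} as this direct sum (so that functoriality on weak maps is built in componentwise via the $\Phi_{\bk}$), rather than being defined intrinsically via the augmented Chow algebra and then compared with the decomposition of Corollary \ref{CH functor}; but this is how it is set up in the paragraph preceding the corollary.
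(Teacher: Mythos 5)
Your proof is correct and matches the paper's (implicit) argument exactly: the paper proves Corollary \ref{uCH val} by observing that $\uCH$ is a direct sum of shifts of the valuative functors $\Phi_{\bk}$, and Corollary \ref{CH val} is stated as its "augmented analogue" with the same one-line justification via the definition of $\CH$ and Proposition \ref{Whitney}. Your closing remark about $\CH$ being defined as the direct sum (rather than intrinsically via the ring and then identified via Corollary \ref{CH functor}) is exactly the right point to flag.
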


\begin{remark}
Corollaries \ref{uCH val} and \ref{CH val}  categorify \cite[Theorem 8.14]{FSVal} and \cite[Theorem 10]{fmsv-fpsac}, 
which say that the Chow polynomial $\uH_M(t)$ and the augmented Chow polynomial $\H_M(t)$ are valuative invariants.
\end{remark}

\vspace{-\baselineskip}
\revise{\begin{remark}
If $M\to M'$ is a weak map of matroids, then the associated map $\OS(M)\to\OS(M')$ is an algebra homomorphism.
In contrast, unless $M$ and $M'$ are isomorphic, the associated maps $\uCH(M)\to\uCH(M')$ and $\CH(M)\to\CH(M')$ are only maps of graded vector spaces, not of algebras.
This does not have any bearing on our results, but we mention it to avoid any possible confusion.
\end{remark}}

\subsection{Kazhdan--Lusztig functors}
Given positive integers $j$ and $r$ along with a subset $R\subset [r]$, let 
$$s_j(R):= \min\!\big(\Z_{\geq j}\setminus R\big)
\in \{1,\ldots,r+1\}.$$
The following theorem is proved in \cite[Theorems 6.1]{PXY}.

\begin{theorem}\label{thm KL}
Let $M$ be a loopless matroid of rank $k$ on the ground set $E$. 
The Kazhdan--Lusztig polynomial 
$P_{M}(t)$ is equal to
$$1 + \sum_{i\geq 0}\sum_{r=1}^i t^i \sum_{R\subset [r]} (-1)^{|R|} \sum_{\substack{a_0<a_1<\cdots < a_r<a_{r+1}\\ a_0 = 0 \\ a_r = i\\ a_{r+1} = k-i}} \dim \Phi_{\mathbf{k}}(M),$$
where $\mathbf{k} = (k_1, \ldots , k_r)$ and $k_j = k - a_{s_{r+1-j}(R)} - a_{r-j}$.
\end{theorem}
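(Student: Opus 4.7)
The plan is to obtain this formula as a translation of \cite[Theorem 6.1]{PXY} into the language of the Whitney functors $\Phi_{\bk}$ developed in Section \ref{sec:Whitneys}. The key observation is that $\dim \Phi_{\bk}(M) = |\cL_{\bk}(M)|$ counts chains of flats of $M$ with rank sequence $\bk$, so it agrees with the \emph{flag Whitney number} that is the central combinatorial invariant of \cite{PXY}. Once this identification is made, the statement becomes purely a matter of matching indexing conventions.

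First I would examine the formula degree by degree. The constant term $1$ on the right reproduces $P_M(0) = 1$; for $i \geq 1$, the coefficient of $t^i$ is an alternating sum indexed by triples $(r, R, (a_0, \ldots, a_{r+1}))$, where $r$ is the length of an auxiliary flag, $R \subset [r]$ records ``collapsing positions,'' and $0 = a_0 < \cdots < a_r = i < a_{r+1} = k-i$ is a strictly increasing sequence of ranks. The rank sequence $\bk$ is then recovered from this data via $k_j = k - a_{s_{r+1-j}(R)} - a_{r-j}$, with $s_j(R)$ selecting the next index at or after $j$ that is not in $R$. I would check that this is in bijection with the parametrization used in \cite{PXY} of pairs consisting of a chain of flats together with a subset of intermediate ranks to be ``contracted away'' before extracting the rank sequence of the residual subchain. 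The sign $(-1)^{|R|}$ is what encodes the M\"obius inversion inherent in the passage from the $Z$-polynomial to the Kazhdan--Lusztig polynomial.

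The main obstacle is purely bookkeeping: verifying that the assignment $j \mapsto s_{r+1-j}(R)$ agrees with the indexing conventions of \cite{PXY}, and that the signs line up once one reindexes. An alternative route that bypasses \cite{PXY} would be to prove directly, by induction on $\rk M$, that the right-hand side satisfies the defining properties of $P_M(t)$, namely the initial condition $P_M(0) = 1$, the degree bound $\deg P_M(t) < (\rk M)/2$, and compatibility with the palindromic identity $Z_M(t) = \sum_F t^{\rk F} P_{M_F}(t)$ under which $P_M$ is uniquely characterized; however, this would essentially amount to reconstructing the argument of \cite{PXY}. I expect the translation approach to be much shorter in practice.
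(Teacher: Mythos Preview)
Your proposal is correct and matches the paper's treatment exactly: the paper does not prove this theorem but simply attributes it to \cite[Theorem 6.1]{PXY}, and your observation that $\dim \Phi_{\bk}(M) = |\cL_{\bk}(M)|$ is precisely the dictionary needed to translate that result into the language of Whitney functors used here.
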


\begin{remark}\label{other stuff}
The polynomial $P_M(t)$ is equal to zero for any matroid with loops, thus
Theorem \ref{thm KL} provides a full description of the Kazhdan--Lusztig polynomials of all matroids.
\end{remark}

\excise{
The analogue of Theorem \ref{thm KL} for the $Z$-polynomial does not appear in the literature, so we include it here.

\begin{proposition}\label{thm Z}
Let $M$ be an arbitrary matroid of rank $k$ on the ground set $E$.
The $Z$-polynomial 
$Z_{M}(t)$ is equal to
$$1 + \sum_{1\leq r\leq j\leq i}^i t^i \sum_{R\subset [r]} (-1)^{|R|} \sum_{\substack{a_0<a_1<\cdots < a_r<a_{r+1}\\ a_0 = 0 \\ a_r = j\\ a_{r+1} = k-i}} \dim \Phi_{\textbf{k}}(M),$$
where $\textbf{k} = (i-j,k_1, \ldots , k_r)$ and $k_j = k - a_{s_{r+1-j}(R)} - a_{r-j} + i - j$.
\end{proposition}

\nicktodo{This is wrong, because it doesn't account for the fact that $c_{M_F}(0) = 1$ rather than 0.}

\begin{proof}
By definition, we have $Z_M(t) = \sum_F t^{\rk F}P_{M_F}(t)$.  This means that the constant term is always 1, and when $i>0$, the coefficient of $t^i$
is equal to the sum over flats $F$ of positive rank of the coefficient of $i-\rk F$ in $P_{M_F}(t)$.
By Theorem \ref{thm KL}, this is equal to
$$\sum_{j=1}^i \sum_{\rk F = j} \sum

 = 1 + \sum_{j=1}^k t^j \sum_{\rk F = j} P_{M_F}(t).$$

\end{proof}
}

Based on Theorem \ref{thm KL}, we define a functor $\uKL$ from $\cMat$ to the category of finite dimensional bigraded vector spaces over $\Q$ as follows.
On the full subcategory of $\cMat$ consisting of loopless matroids of rank $k$, we define the functor
$$\uKL := \tau \oplus \bigoplus_{1\leq r\leq i}\;\bigoplus_{R\subset [r]} \;\bigoplus_{\substack{a_0<a_1<\cdots < a_r<a_{r+1}\\ a_0 = 0 \\ a_r = i\\ a_{r+1} = k-i}} \Phi_{\textbf{k}}(-i,-|R|),$$
where $\textbf{k} = (k_1, \ldots , k_r)$ and $k_j = k - a_{s_{r+1-j}(R)} - a_{r-j}$.
Here the notation means that the summand $\Phi_{\bk}$ appears in bidegree $(i,|R|)$.
We define $\uKL = 0$ on the full subcategory of $\cMat$ consisting of matroids with loops.
By definition, the functor $\uKL$ categorifies the matroid invariant
$$\tilde{P}_M(t,u) := \sum_{i,j} \dim \uKL^{i,j}(M)\, t^iu^j,$$
and Theorem \ref{thm KL} and Remark \ref{other stuff} imply that $\tilde{P}_M(t,-1)$ is equal to the Kazhdan--Lusztig polynomial $P_M(t)$.
The following result follows immediately from Propositions \ref{triv val} and \ref{Whitney}. 

\begin{corollary}\label{KL val}
The categorical invariant $\uKL$ is valuative.
\end{corollary}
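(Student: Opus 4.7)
The plan is to write $\uKL$ as a direct sum of valuative functors and then invoke closure of valuativity under direct sums and shifts. Inspecting the defining formula, on the subcategory of loopless matroids of each rank $k$ the functor $\uKL$ decomposes as the trivial functor $\tau$ together with shifts of Whitney functors $\Phi_{\bk}$ indexed by the data $(r, R, a_0, \ldots, a_{r+1})$ from Theorem \ref{thm KL}. The first summand is valuative by Proposition \ref{triv val} and each of the latter by Proposition \ref{Whitney}; since shifts and finite direct sums of valuative functors are valuative, the restriction of $\uKL$ to loopless matroids is valuative.

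To upgrade this to valuativity on all of $\cMat$, I would verify directly that $\uKL(C_\bullet^\Omega(\cN))$ is contractible for every decomposition $\cN$ of every matroid $M$, splitting into cases on whether $M$ has loops. If $M$ has a loop $e$, then $P(N) \subset P(M) \subset \{v : \delta_e(v) = 0\}$ for every face $N$, so $e$ is also a loop of $N$, and $\uKL$ annihilates every term of the complex, making contractibility automatic. If $M$ is loopless, the key observation is that every internal face of $\cN$ is also loopless: were $e$ a loop of an internal face $N$, then $P(N)$ would sit inside $\{v : \delta_e(v) = 0\} \cap P(M)$, which is a proper face of $P(M)$ (proper because $M$ loopless forces some basis to contain $e$), and hence in the boundary of $P(M)$, contradicting internality. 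Thus the complex consists entirely of loopless matroids, on which $\uKL$ matches the direct-sum formula, and valuativity follows from the first paragraph.

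The only real obstacle is the bookkeeping needed to reconcile the two-case definition of $\uKL$ (given by the direct-sum formula on loopless matroids and set to zero on loopy matroids) with a uniform argument on $\cMat$. The geometric observation that internality is incompatible with a face acquiring new loops provides this bridge; once in place, all the valuative content is supplied by Propositions \ref{triv val} and \ref{Whitney}, exactly as in the proofs of Corollaries \ref{uCH val} and \ref{CH val}.
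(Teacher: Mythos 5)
Your proof is correct and takes essentially the same route as the paper, which simply observes that $\uKL$ is by definition a direct sum of $\tau$ and shifts of the Whitney functors $\Phi_{\bk}$ and invokes Propositions \ref{triv val} and \ref{Whitney} together with closure of valuativity under direct sums. Your second paragraph supplies a detail the paper leaves implicit --- that a loop of $M$ is a loop of every face of a decomposition, while internal faces of a decomposition of a loopless matroid remain loopless --- and your geometric argument for this is correct.
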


The $Z$-polynomial relates to the Kazhdan--Lusztig polynomial in the same way that the augmented Chow polynomial
relates to the Chow polynomial.  That is, we have
\begin{equation}\label{Z-recursion}Z_M(t) := \sum_{F} t^{\rk F} P_{M_F}(t) = \sum_{S\subset E} t^{\rk S} P_{M_S}(t),\end{equation}
where the second equality comes from the fact that, whenever $S$ is not a flat, $M_S$ has a loop, and therefore $P_{M_S}(t)=0$.
We therefore define the functor
$$\KL := \bigoplus_{k\geq 0}\bigoplus_{S\subset E} \Big([\tau]_k * \uKL\Big)(-k,0),$$
where the summand indexed by $S$ is understood to be the convolution of the functor $[\tau]_k$ on $\cMatid(S)$ with the functor $\uKL$ on $\cMatid(E\setminus S)$.
Each summand of this functor is defined only on the category $\cMatid(E)$, but the direct sum extends to the entire category $\cMat$.
By definition, the functor $\KL$ categorifies the polynomial
$$\tilde{Z}_M(t,u) := \sum_{i,j} \dim \KL^{i,j}(M)\, t^iu^j,$$
and Equation \eqref{Z-recursion} implies that $\tilde{Z}_M(t,-1)=Z_M(t)$.
We obtain the following corollary from Proposition \ref{triv val},
Corollary \ref{convolution}, Lemma \ref{summand}, and Corollary \ref{KL val}.

\begin{corollary}\label{Z val}
The categorical invariant $\KL$ is valuative.
\end{corollary}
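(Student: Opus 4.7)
The plan is to decompose $\KL$ into its constituent summands and verify that each summand is valuative, using the convolution machinery built up in Section \ref{sec:convolution}. Since valuativity is checked on $\cMatid^+(E)$ and direct sums of valuative functors are valuative (as noted after the definition of valuativity), it suffices to show that for each $k \geq 0$ and each $S \subset E$, the functor $([\tau]_k * \uKL)(-k,0)$ is valuative on $\cMatid^+(E)$, where the convolution is taken with respect to the decomposition $E = S \sqcup (E \setminus S)$.

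The building blocks are already in hand. By Proposition \ref{triv val}, the trivial functor $\tau$ on $\cMatid^+(S)$ is valuative, and then Lemma \ref{summand} yields that $[\tau]_k$ is valuative. By Corollary \ref{KL val}, the functor $\uKL$ on $\cMatid^+(E \setminus S)$ is valuative. Corollary \ref{convolution} then gives that the convolution $[\tau]_k * \uKL$ is a valuative functor on $\cMatid^+(E)$. The bigrading shift $(-k,0)$ is an isomorphism of additive categories (it merely relabels bidegrees), so it preserves contractibility of complexes and in particular preserves valuativity. Summing over the finitely many pairs $(k,S)$ that contribute, we conclude that $\KL$ restricted to $\cMatid^+(E)$ is valuative.

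The only remaining subtlety is that $\KL$ is defined as a functor on all of $\cMat$ rather than only on $\cMatid(E)$; however, as stated in the definition of valuativity for functors on $\cMat$, this simply means we check that the induced functor on $\cMatid^+(E)$ is valuative, which is precisely what the previous paragraph establishes. I do not anticipate any real obstacle here: the theorem is essentially a direct consequence of assembling the existing results, and the main work has already been absorbed into Corollaries \ref{convolution} and \ref{KL val}. If anything requires care, it is simply bookkeeping to confirm that the convolution is taken with respect to the correct ground set decomposition $E = S \sqcup (E \setminus S)$ for each summand, so that Corollary \ref{convolution} applies verbatim.
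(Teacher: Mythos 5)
Your proof is correct and follows exactly the route the paper takes: the paper's own justification is simply to cite Proposition \ref{triv val}, Lemma \ref{summand}, Corollary \ref{convolution}, and Corollary \ref{KL val}, which is precisely the assembly you spell out. Your additional remarks about the grading shift and the per-summand ground-set decomposition $E = S \sqcup (E\setminus S)$ are accurate bookkeeping, not deviations.
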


\begin{remark}
Corollaries \ref{KL val} and \ref{Z val} categorify \cite[Theorem 8.9]{ArdilaSanchez} and \cite[Theorem 9.3]{FSVal}, which say that the Kazhdan--Lusztig polynomial $P_M(t)$ and the $Z$-polynomial $Z_{M}(t)$ are valuative invariants.
\end{remark}

\begin{remark}\label{rmk:better categorification Z}
In light of our definition of the functor $\KL$, one might ask if we should have defined the functor $\CH$ in an analogous way.
More precisely, consider the functor $$\Theta := \bigoplus_{k\geq 0} \bigoplus_{S\subset E}\Big([\tau]_k * \uCH\Big)(-k).$$
As in the definition of $\KL$, each summand is defined only on the category $\cMatid(E)$, but the direct sum extends to the entire category $\cMat$.
By Remark \ref{decat FMSV} and the fact that $\uCH$ vanishes on matroids with loops, $\Theta$ categorifies the augmented Chow polynomial $H_M(t)$.
Moreover, a slight modification of Proposition \ref{CH basis} implies that $\Theta(M)$ is canonically isomorphic to $\CH(M)$ for any matroid $M$.
In addition, Proposition \ref{triv val},
Corollary \ref{convolution}, Lemma \ref{summand}, and Corollary \ref{uCH val} imply that $\Theta$ is valuative.

However, $\Theta$ is not naturally isomorphic to the functor $\CH$ because it behaves differently on weak maps that are not isomorphisms.
For example, the degree zero part of $\Theta$ is isomorphic to $\Psi_{(0)}$, whereas the degree zero part of $\CH$ is isomorphic to $\Phi_{(0)}$.
That is, if $\varphi:M\to M'$ is a morphism in $\cMat$ and $M'$ has strictly more loops than $M$, then $\CH(\varphi)$ will be an isomorphism but $\Theta(\varphi)$ will be zero.
In particular, Corollary \ref{CH mon} (which will be stated and proved in the next section) would fail for the functor $\Theta$.  This is why we regard the functor $\CH$ as a ``better'' categorification
of the augmented Chow polynomial than the functor $\Theta$.

Similarly, the functor $\KL$ is not the only valuative categorical invariant that categorifies the $Z$-polynomial.  However, since there is no analogue of Corollary \ref{CH mon} for the $Z$-polynomial,
we know of no reason to regard one categorification as better than another.  See Remark \ref{if only} for more on this topic.
\end{remark}

\subsection{Monotonicity}\label{sec:monotonicity}
If $\varphi:M\to M'$ is a morphism in $\cMat$, it is clear from the definition of the functor $\OS$ that the ring
homomorphism $\OS(\varphi):\OS(M)\to\OS(M')$ is surjective, and therefore that the polynomial
$\pi_M(t)-\pi_{M'}(t)$ has non-negative coefficients.  
We express this statement by saying that the Poincar\'e polynomial is {\bf monotonic} with respect to weak maps. 
\excise{\revise{If a valuative invariant takes non-negative values on the whole class of matroids, then it is easy to see that for fixed rank and size it takes its maximal value on uniform matroids. One can show that $f(U_{k,n}) - f(M)$ is non-negative by building a regular decomposition of the uniform matroid that contains $M$ as one of the maximal cells. It is therefore always appealing to extend this result to general weak maps.}}
The aim of this section is to prove a similar result for the Chow and augmented Chow polynomials, and to discuss
the possibility of extending it to the Kazhdan--Lusztig and $Z$-polynomials.

Fix a natural number $r$ and an increasing $r$-tuple of natural numbers $\bk = (k_1,\ldots,k_r)$.  The following result is equivalent to a statement conjectured in \cite[Conjecture 3.32]{FMSV}.

\begin{proposition}\label{monotonic}
For any morphism $\varphi:M\to M'$ in $\cMat$, the linear map $$\Phi_{\mathbf{k}}(\varphi):\Phi_{\mathbf{k}}(M)\to\Phi_{\mathbf{k}}(M')$$
is surjective.\footnote{We thank George Nasr for his help with this result.}
\end{proposition}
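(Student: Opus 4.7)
The plan is to reduce to the case where $\varphi$ is the identity bijection on $E$, so that $M$ and $M'$ are matroids on the same ground set with the property that every basis of $M'$ is a basis of $M$; equivalently, $\rk_{M'}(S) \leq \rk_M(S)$ for every subset $S \subset E$. With this reduction, the task becomes: given any chain $(F_1', \ldots, F_r') \in \cL_{\bk}(M')$, exhibit an explicit preimage in $\cL_{\bk}(M)$ under $\Phi_{\bk}(\id)$.

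To construct the preimage, I would first build an increasing chain of subsets $I_1 \subset I_2 \subset \cdots \subset I_r$ of $E$ with $|I_i| = k_i$ and $I_i$ a basis of $F_i'$ in $M'$, which exists by iterated basis extension in $M'$. The squeeze
\[
|I_i| = \rk_{M'}(I_i) \leq \rk_M(I_i) \leq |I_i|
\]
shows that each $I_i$ is also independent in $M$. I would then set $F_i := \overline{I_i}^{M}$, the closure of $I_i$ in $M$. This gives a chain of flats of $M$ with $\rk_M(F_i) = k_i$, and the inclusions $F_1 \subset \cdots \subset F_r$ are strict because the $M$-ranks strictly increase.

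The main step is verifying that $F_i \subset F_i'$, since a priori the $M$-closure of $I_i$ could extend past $F_i'$. For any $e \in F_i$ we have $\rk_M(I_i \cup \{e\}) = k_i$, and the squeeze
\[
k_i = \rk_{M'}(I_i) \leq \rk_{M'}(I_i \cup \{e\}) \leq \rk_M(I_i \cup \{e\}) = k_i
\]
forces $\rk_{M'}(I_i \cup \{e\}) = k_i$, which places $e$ in $\overline{I_i}^{M'} = F_i'$. Once this containment is established, $I_i \subset F_i \subset F_i'$ combined with $\rk_{M'}(I_i) = k_i = \rk_{M'}(F_i')$ yields $\rk_{M'}(F_i) = k_i$ and $\overline{F_i}^{M'} = F_i'$, so that $(F_1, \ldots, F_r) \in \cL_{\bk}(M)$ maps under $\Phi_{\bk}(\varphi)$ to $(F_1', \ldots, F_r')$.

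I do not anticipate a significant obstacle: the argument rests on a pair of rank-squeeze arguments applied to the single inequality defining weak maps, and the only mild subtlety is identifying the correct lift $I_i \mapsto \overline{I_i}^{M}$ (rather than, say, $F_i' \mapsto \overline{F_i'}^{M}$, which need not land inside $F_i'$ and could overshoot the required rank).
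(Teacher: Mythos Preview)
Your proof is correct and is in fact more self-contained than the paper's. The paper proceeds by induction on $r$, invoking \cite[Proposition 5.12]{Luc75} as a black box for the $r=1$ case, and then in the inductive step restricts to the flat $F_2$ already constructed and applies Lucas's result inside the restricted matroid $M^{F_2}$ to produce $F_1$. Your approach instead builds a nested chain of $M'$-independent sets $I_1\subset\cdots\subset I_r$ in one stroke and takes $M$-closures; the two rank-squeeze arguments you give simultaneously reprove the $r=1$ case and handle the chain condition, so no external citation is needed. The paper's route is slightly more conceptual in that it isolates the single-flat lifting lemma, but your argument is shorter, entirely elementary, and avoids the bookkeeping of passing to the restriction $M^{F_2}$ and then checking that closures computed there agree with closures in $M'$.
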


\begin{proof}
We may use $\varphi$ to identify the ground sets of $M$ and $M'$, and thus reduce to the case of a morphism $\iota_{M,M'}:M\to M'$
in $\cMatid(E)$.
Suppose we are given $(F'_1,\ldots,F'_r)\in\cL_{\bk}(M')$.
We need to find $(F_1,\ldots,F_r)\in\cL_{\bk}(M)$ such that $\overline{F_i} = F_i'$ for all $1\leq i\leq r$.

We proceed by induction on $r$.  When $r=0$, there is nothing to prove.  The $r=1$ case is proved in
\cite[Proposition 5.12]{Luc75}.  For the inductive step, we may assume that we have flats $F_2\subset\cdots\subset F_r$
with $\rk_M(F_i) = k_i$ and $\overline{F_i} = F_i'$ for all $2\leq i\leq r$.

Since $\overline{F_2} = F'_2$, 
we have $$\rk_{M'}(F_2) = \rk_{M'}(F'_2) = k_2 = \rk_M(F_2),$$ thus we have a morphism
$M^{F_2}\to (M')^{F_2}$ in $\cMatid(F_2)$, and $F'_1\cap F_2$ is a flat of rank $k_1$ in $(M')^{F_2}$.
We may therefore apply \cite[Proposition 5.12]{Luc75} again to find a flat $F_1$ of $M^{F_2}$ of rank $k_1$
whose closure in $(M')^{F_2}$ is equal to $F'_1\cap F_2$.

We claim that the closure of $F_1$ in $M'$ is equal to $F'_1$.  Indeed,
the rank of $F_1$ in $M'$ is the same as its rank in $(M')^{F_2}$, which is
equal to $k_1$, thus its closure is a flat of rank $k_1$.
Since $F_1\subset F_1'\cap F_2\subset F_1'$, the closure is contained in $F_1'$, 
and must be equal by comparison of the ranks.
\end{proof}

As a consequence, we obtain a strengthening of the numerical monotonicity result in \cite[Theorem 1.11]{FMSV}.

\begin{corollary}\label{CH mon}
For any morphism $\varphi:M\to M'$ in $\cMat$, the linear maps $$\uCH(\varphi):\uCH(M)\to\uCH(M')
\and \CH(\varphi):\CH(M)\to\CH(M')$$
are both surjective.  In particular, the polynomials
$$\uH_M(t) - \uH_{M'}(t) \and \H_M(t) - \H_{M'}(t)$$ have non-negative coefficients.
\end{corollary}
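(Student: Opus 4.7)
The plan is to reduce the corollary to Proposition \ref{monotonic} by exploiting the way the functors $\uCH$ and $\CH$ were constructed in Section \ref{sec:chow functors} as direct sums involving the Whitney functors $\Phi_{\bk}$. For $\CH$, I would appeal directly to Corollary \ref{CH functor}: by the very definition of $\CH$ on $\cMat$, for any morphism $\varphi$ the linear map $\CH(\varphi)$ is the direct sum
\[
\bigoplus_{r\geq 0} \bigoplus_{\bk} \Phi_{\bk}(\varphi) \otimes \id_{Q_{k_1-k_0}\otimes\cdots\otimes Q_{k_r-k_{r-1}}(-k_0)}.
\]
Since surjectivity is preserved under tensoring with the identity and under direct sums, Proposition \ref{monotonic} would immediately yield that $\CH(\varphi)$ is surjective.

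For $\uCH$, I would first dispose of the cases involving loops. Weak maps preserve the set of loops: a morphism $\iota_{M,M'}$ in $\cMatid(E)$ requires every basis of $M'$ to be a basis of $M$, so any loop of $M$ must remain a loop of $M'$; and a general morphism in $\cMat$ is given by a ground-set bijection, which carries loops to loops. Hence if $M$ has a loop then so does $M'$, and $\uCH(M)=0=\uCH(M')$; if $M$ is loopless but $M'$ has a loop, then $\uCH(M')=0$ and surjectivity is automatic. In the remaining case where both $M$ and $M'$ are loopless, the functor $\uCH$ is, by its definition in Section \ref{sec:chow functors}, a direct sum of Whitney functors tensored with graded vector spaces via Corollary \ref{uCH functor}, so exactly the same argument as for $\CH$ applies.

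The final numerical assertion follows from the standard observation that a surjection of finite-dimensional graded vector spaces $V \twoheadrightarrow W$ forces $\dim V^i \geq \dim W^i$ in each degree, so that the difference of Poincar\'e polynomials is coefficient-wise non-negative. The main obstacle has already been handled in Proposition \ref{monotonic}; what remains is essentially bookkeeping, and the only conceptual point worth noting is that this argument crucially uses the fact that $\uCH$ and $\CH$ act on morphisms as direct sums of the maps $\Phi_{\bk}(\varphi)$---which is precisely the feature highlighted in Remark \ref{rmk:better categorification Z} as the reason to prefer this categorification over the alternative functor $\Theta$, for which the analogous monotonicity statement would fail.
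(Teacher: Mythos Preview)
Your proof is correct and follows essentially the same approach as the paper, which simply invokes Proposition \ref{monotonic} together with the definitions of $\uCH$ and $\CH$ as direct sums of shifts of Whitney functors $\Phi_{\bk}$. Your explicit treatment of the loop cases for $\uCH$ fills in a detail the paper leaves implicit (relying on the observation, made when defining $\uCH$, that there are no morphisms from matroids with loops to loopless matroids).
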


\begin{proof}
The maps $\uCH(\varphi)$ and $\CH(\varphi)$ are surjective by Proposition \ref{monotonic} and the definitions of $\uCH$ 
and $\CH$ as direct sums of shifts
of functors of the form $\Phi_{\bk}$. 
\end{proof}

\vspace{-\baselineskip}
\revise{
\begin{remark} \label{rmk:monotonicbecauseyoudiditright} Monotonicity is the numerical shadow of a natural surjection at the level of the (underlying vector spaces of the) Chow rings.
Note that the proof of Proposition \ref{monotonic} does not use categorical valuativity. However, this result highlights the importance of choosing
the right categorification of a given invariant. The two functors $\Phi_{\bk}$ and $\Psi_{E,\bk}$ (Example \ref{ex:allflagofflat}) categorify the
same homomorphism, but Proposition \ref{monotonic} would fail for the functor $\Psi_{E,\bk}$. 
We regard the fact that the functor $\Phi_{\bk}$ is valuative {\em and} satisfies Proposition \ref{monotonic} is 
as strong evidence that it is a categorification worthy of study.
\end{remark}
}

The Kazhdan--Lusztig polynomial and $Z$-polynomial are conjectured to have the same monotonicity property as the Chow polynomial and augmented Chow polynomial.
The following conjecture was first formulated by Nasr, generalizing an unpublished conjecture of Gedeon in the case where $M$
is uniform.

\begin{conjecture}\label{P and Z mon}
For any morphism $\varphi:M\to M'$ in $\cMat$, the polynomials
$$P_M(t) - P_{M'}(t) \and Z_M(t) - Z_{M'}(t)$$ have non-negative coefficients.
\end{conjecture}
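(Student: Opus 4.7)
The strategy I would pursue mirrors the proof of monotonicity for the Chow polynomial (Corollary \ref{CH mon}): seek a valuative categorification $\Xi\co\cMat\to\Vec_\Q^{\N}$ of the univariate polynomial $P_M(t)$ such that every morphism $\varphi\co M\to M'$ in $\cMat$ induces a surjection $\Xi(\varphi)\co \Xi(M)\to \Xi(M')$. The bigraded functor $\uKL$ of Corollary \ref{KL val} is not directly usable: although each summand $\Phi_{\mathbf{k}}(\varphi)$ is surjective by Proposition \ref{monotonic}, so that $\tilde P_M(t,u) - \tilde P_{M'}(t,u)$ has non-negative coefficients in $\N[t,u]$, this positivity is generally destroyed after the specialization $u=-1$ that recovers $P_M(t)$.

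The first step is to organize Theorem \ref{thm KL} so that the coefficient of $t^i$ in $P_M(t)$ arises as the Euler characteristic of a natural bounded chain complex of Whitney functors $\Phi_{\mathbf{k}}$, with differentials that are natural in weak maps. If one can construct such differentials so that the homology $\Xi^i(M)$ is concentrated in a single degree and so that surjectivity of the individual $\Phi_{\mathbf{k}}(\varphi)$ propagates to surjectivity on homology, then the Kazhdan--Lusztig half of the conjecture is immediate. For the $Z$-polynomial half, one would use the recursion $Z_M(t) = \sum_F t^{\rk F} P_{M_F}(t)$ together with the observation that a weak map $\iota_{M,M'}$ sends every flat of $M'$ to a flat of $M$ of the same rank, thereby inducing weak maps on contractions; one could then build the corresponding functorial monotonicity from $\Xi$ much as $\KL$ is built from $\uKL$ by convolution with the trivial functor.

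The main obstacle is producing these differentials. In the Chow case $\uCH$ is a signless direct sum of shifts of $\Phi_{\mathbf{k}}$ and monotonicity is automatic, whereas for the Kazhdan--Lusztig polynomial the signs $(-1)^{|R|}$ of Theorem \ref{thm KL} are essential and reflect a genuinely homological cancellation that is not visible at the level of the individual Whitney functors. A complementary and likely more powerful approach would be geometric: for realizable matroids $P_M(t)$ is the Poincar\'e polynomial of the intersection cohomology of the reciprocal plane, and by \cite{BHMPW1} a similar geometric interpretation holds for arbitrary matroids via an augmented variety. If a weak map $\varphi$ could be realized as a morphism of such varieties inducing a surjection on intersection cohomology --- for instance by exhibiting both matroids as special fibers of a degeneration parameterized inside the stellahedral toric variety of Remark \ref{rmk:groth} --- then monotonicity would follow by standard specialization arguments. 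Making this precise in the non-realizable setting is, in my view, the genuinely difficult step and appears to require substantially new input beyond the valuative framework of this paper.
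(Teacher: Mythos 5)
You have not produced a proof, and there is none in the paper to compare against: the statement you were given is Conjecture \ref{P and Z mon}, which the authors explicitly leave open (attributing it to Nasr, generalizing an unpublished conjecture of Gedeon). So the only fair assessment is whether your proposed programme is sound and how it relates to what the authors themselves say about the obstruction. On that score your diagnosis is accurate and closely parallels the paper's own Remark \ref{if only}. You correctly identify that the bigraded functor $\uKL$ gives non-negativity of $\tilde P_M(t,u)-\tilde P_{M'}(t,u)$ via Proposition \ref{monotonic}, but that the specialization $u=-1$ destroys positivity, so one needs a single-graded object whose graded dimension is $P_M(t)$ and on which weak maps act surjectively. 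The authors' version of this is to ask for a functorial $\IH(M)_\varnothing\subset\uCH(M)$ and $\IH(M)\subset\CH(M)$ (with Poincar\'e polynomials $P_M(t)$ and $Z_M(t)$) such that weak maps induce surjections; they state plainly that they do not know how to define these functors on non-isomorphisms. Your alternative of building $\Xi^i(M)$ as the homology of a complex of Whitney functors realizing the signed sum in Theorem \ref{thm KL} is a reasonable reformulation of the same difficulty, and your geometric suggestion via intersection cohomology and degenerations is in the same spirit.

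The genuine gap, which you acknowledge, is that every step of the programme that would actually yield the conjecture is missing: you do not construct the differentials on the complex of $\Phi_{\mathbf k}$'s, you do not show the homology is concentrated in one degree, and you do not show that surjectivity of the $\Phi_{\mathbf k}(\varphi)$ descends to surjectivity on homology (surjectivity of chain maps does not in general imply surjectivity on homology, so even granting the complex this last step is nontrivial). Likewise the degeneration argument inside the stellahedral variety is only a heuristic in the non-realizable setting. One small correction: the intersection cohomology modules $\IH(M)$ and $\IH(M)_\varnothing$ for arbitrary matroids are constructed in \cite{BHMPW2} (Theorem 1.9 there), not \cite{BHMPW1}. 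In short, your text is an honest research plan consistent with the authors' own commentary, but it is not a proof, and the conjecture remains open.
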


\begin{remark}\label{if only}
There exist graded vector subspaces $\IH(M)\subset \CH(M)$ and $\IH(M)_\varnothing\subset \uCH(M)$ with Poincar\'e polynomials $Z_M(t)$ and $P_M(t)$, respectively \cite[Theorem 1.9]{BHMPW2}.
It would be interesting to define functors to the abelian category of finite dimensional graded vector spaces over $\Q$ taking a matroid $M$ to $\IH(M)$ and $\IH(M)_\varnothing$, and then to strengthen Conjecture
\ref{P and Z mon} along the same lines as Conjecture \ref{CH mon}
by conjecturing that the linear maps induced by a morphism in $\cMat$ are surjective.
Unfortunately, we do not currently know how to define these functors on morphisms that are not isomorphisms
(other than by defining all such maps to be zero, which would be neither valuative nor surjective).
This is why we work instead with the functors $\uKL$ and $\KL$ to the category of bigraded vector spaces. 
\end{remark}

\subsection{\revise{The \boldmath{$\mathcal{G}$}-invariant}}\label{sec:G}
Let $\cA$ be the free additive monoidal category on two generators $X$ and $Y$.  Let $A \cong \Z\langle x, y\rangle$ be its Grothendieck ring,
which is a noncommutative ring freely generated by the elements $x = [X]$ and $y = [Y]$.
For any single element set $\{e\}$, consider the functor $\Xi_e:\cMat(\{e\})\to\cA$ that takes the loop to $X$ and the coloop to $Y$.
There are no morphisms in $\cMat(\{e\})$ other than the two identity morphisms, so this fully determines the functor, which is trivially valuative.
For any ordered set $E = \{e_1,\ldots,e_n\}$, let 
$$\Xi_E := \bigoplus_{\pi\in S_n} \Xi_{e_{\pi(1)}} * \cdots * \Xi_{e_{\pi(n)}}.$$  Then $\Xi_E:\cMat(E)\to\cA$
is a categorical valuative invariant that
categorifies Derksen's $\mathcal{G}$-invariant \cite{Derksen}, 
and the proof of valuativity by Corollary \ref{convolution}
categorifies the proof of valuativity of the $\mathcal{G}$-invariant given in \cite[Theorem 8.15]{ArdilaSanchez}.

Derksen and Fink \cite[Theorem 1.4]{derksen_fink} prove that the $\mathcal{G}$-invariant is {\bf rationally universal}, in the sense that every $\Q$-valued valuative invariant of matroids on $E$
can be obtained by composing the $\mathcal{G}$-invariant with a homomorphism $A \to \Q$. However, the $\mathcal{G}$-invariant is not universal for valuative invariants valued in other
abelian groups, not even for the integers. For example, the $\mathcal{G}$-invariant of the the uniform matroid of rank 2 on a ground set of size $4$ (the octahedron in Example
\ref{octahedron 1}) is $24xxyy$. There is no homomorphism $A \to \Z$ sending $24xxyy$ to $1$, so the $\mathcal{G}$-invariant does not specialize to the trivial invariant. This
observation has categorical implications as well: there is no functor from $\cA$ to vector spaces sending $(X \otimes X \otimes Y \otimes Y)^{\oplus 24}$ to a one-dimensional vector
space, and thus $\Xi_E$ does not ``specialize'' to the trivial categorical invariant $\tau$.

\begin{remark} There are potentially weaker notions of universality that one might consider for categorical valuative invariants: rather than insisting that a given invariant is isomorphic (as functors) to the composition of the universal invariant with a specialization functor, one could merely ask for a non-invertible natural transformation between them. It would be interesting to search for universal categorical valuative invariants, but this problem is beyond the scope of this paper. \end{remark}

\subsection{\revise{The Grothendieck group of the valuative category}}\label{sec:groth}
We observed in Remark \ref{rmk:groth} that the Grothendieck group of the valuative category $\cV(E)$ is {\em a priori}
isomorphic to a quotient of the valuative group $\Val(E)$.  The subtlety here is that localizing subcategories are by definition
closed under passing to direct summands, which means that the Grothendieck classes of objects of $\cI(E)$
could in theory include elements of the group $M(E) \cong K(\Ch_b(\cMatid^+(E)))$ that do not lie in the subgroup $I(E)$.  
In this section, we prove that the Grothendieck
group of $\cV(E)$ is in fact isomorphic to $\Val(E)$; we thank Matt Larson for outlining this argument.

Consider 
an increasing $r$-tuple $\bk = (k_1,\ldots,k_r)$ of natural numbers and an increasing $r$-tuple $\bS = (S_1,\ldots,S_r)$ of subsets of $E$.
The functor $\Psi_{E,\bk,\bS}:\cMatid(E)\to\Vec_\Q$ was defined in Example \ref{ex:specificflagofflat} and proved to be valuative in Lemma \ref{g val}.  In the proof of Proposition \ref{Whitney}, we fixed $\bk$ and considered the direct sum of these functors over all possible $\bS$.
In this section, we do the opposite: we fix $\bS$, and define
$\Psi_{E,\bS}:\cMatid(E)\to\Vec_\Q$ by putting $$\Psi_{E,\bS}(M) := \bigoplus_{\bk}\Psi_{E,\bk,\bS}(M).$$
Concretely, $\Psi_{E,\bS}(M)$ is equal to $\Q$ if $S_j$ is a flat (of any rank) for all $j$, and zero otherwise.

Now fix a subset $I\subset S_1$ of cardinality $k$.  
Let $\bS\setminus I := (S_1\setminus I,\ldots, S_r\setminus I)$, which we regard as an increasing chain of subsets of $E\setminus I$.
We will be interested in the functor $$\Psi_{E\setminus I,\bS\setminus I}:\cMatid(E\setminus I)\to\Vec_\Q.$$
We will also consider the functor $[\tau]_k:\cMatid(I)\to\Vec_\Q$, which takes the Boolean matroid on $I$
to $\Q$, and all other matroids on $I$ to zero.  Convolving them, we obtain a new functor
$$\Psi_{I\leq\bS}:= [\tau_k] * \Psi_{E\setminus I,\bS\setminus I}:\cMatid(E)\to\Vec_\Q.$$ 
On the level of objects, $\Psi_{I\leq\bS}(M)$ is equal to $\Q$ if $I$ is independent in $M$ and 
$S_j\setminus I$ is a flat of the contraction $M_I$ for all $j$, or equivalently if 
$I$ is independent in $M$ and $S_j$ is a flat of $M$ for all $j$.  Otherwise, $\Psi_{I\leq\bS}(M) = 0$.

\begin{lemma}\label{Matt}
The functor $\Psi_{I\leq\bS}$ is valuative.
\end{lemma}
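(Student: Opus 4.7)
The plan is to identify $\Psi_{I\leq\bS}$ as a convolution of two functors already known to be valuative, and then invoke Corollary \ref{convolution}. This is a direct application of the machinery developed in Section \ref{sec:convolution}, and I expect no serious obstacles.

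More concretely, I will take $E_1 := I$ and $E_2 := E\setminus I$ in the setup of Section \ref{sec:product}, so that $\cMatid(E)$ receives a coproduct functor to $\cMatid^+(I)\boxtimes\cMatid^+(E\setminus I)$. By definition of the convolution, for a matroid $M$ on $E$ we have
\[ \Psi_{I\leq\bS}(M) \;=\; [\tau]_k(M^I)\otimes \Psi_{E\setminus I,\bS\setminus I}(M_I), \]
and this recovers the description given in the paragraph preceding the lemma: the tensor product is $\Q$ precisely when $\rk_M(I) = k$ (so that $M^I$ is Boolean and $I$ is independent in $M$) and each $S_j\setminus I$ is a flat of $M_I$, equivalently each $S_j$ is a flat of $M$.

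The two factors are valuative: the functor $\Psi_{E\setminus I,\bS\setminus I}$ is valuative by Lemma \ref{g val}, and the functor $[\tau]_k$ is valuative by Proposition \ref{triv val} together with Lemma \ref{summand}, since $[\tau]_k$ is the rank-$k$ summand of the trivial functor $\tau$ on $\cMatid(I)$. Applying Corollary \ref{convolution} then yields that $\Psi_{I\leq\bS} = [\tau]_k * \Psi_{E\setminus I,\bS\setminus I}$ is valuative, which completes the proof.
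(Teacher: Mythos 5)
Your proof is correct and is essentially identical to the paper's: both identify $\Psi_{I\leq\bS}$ as the convolution $[\tau]_k * \Psi_{E\setminus I,\bS\setminus I}$, cite Proposition \ref{triv val} and Lemma \ref{summand} for the first factor and Lemma \ref{g val} for the second, and conclude via Corollary \ref{convolution}. The extra verification that the convolution agrees with the stated description of $\Psi_{I\leq\bS}$ on objects is a harmless elaboration of what the paper records just before the lemma.
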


\begin{proof}
Valuativity of the first convolution factor follows from Proposition \ref{triv val} and Lemma \ref{summand}, while
valuativity of the second convolution factor follows from Lemma \ref{g val}.  The result then follows from Corollary \ref{convolution}.
\end{proof}

The {\bf stellahedral fan} \cite[Definition 3.2]{EHL} $\Sigma_E$ is a complete unimodular fan in $\R_E$ with cones $\sigma_{I\leq \bS}$
indexed by chains $$\varnothing \subseteq I\subseteq S_1 \subsetneq S_2\subsetneq \cdots \subsetneq S_r\subsetneq E.$$
Note that we are allowed to have $r=0$, in which case $I$ is allowed to coincide with $E$.
Explicitly, we have $$\sigma_{I\leq \bS} := \R_{\geq 0}\{v_e\mid e\in I\} + \R_{\geq 0}\{-v_{E\setminus S_j}\mid 1\leq j\leq r\}.$$
For any matroid $M$ on $E$, the {\bf augmented Bergman fan} \cite[Definition 2.4]{BHMPW1}
$\Sigma_M$ is defined to be the subfan of $\Sigma_E$
consisting of those $\sigma_{I\leq \bS}$ for which $I$ is independent and $S_j$ is a flat for all $j$.
Note that the stellahedral fan is equal to the augmented Bergman fan of the Boolean matroid.

Let $\Z^{\Sigma_E}$ be the vector space with basis $\{w_\sigma\mid \sigma\in\Sigma_E\}$.  The following
theorem follows immediately from \cite[Theorems 1.5 and 5.6]{EHL}.

\begin{theorem}\label{EHL-val}
The homomorphism $M(E) \to \Z^{\Sigma_E}$ taking $M$ to $\sum_{\sigma\in\Sigma_M} w_\sigma$ has kernel equal to $I(E)$,
and therefore descends to an embedding of $\Val(E)$ into $\Z^{\Sigma_E}$.
\end{theorem}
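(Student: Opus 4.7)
The plan is to deduce this statement directly from the two cited results of \cite{EHL}. The starting point is the observation that, since the stellahedral fan $\Sigma_E$ is complete and unimodular, the homology of the associated toric variety $X_{\Sigma_E}$ is a free abelian group with a canonical basis given by classes of torus-invariant cycles, one basis element $w_\sigma$ for each cone $\sigma \in \Sigma_E$. Under this identification we have a natural isomorphism of abelian groups $H_*(X_{\Sigma_E}) \cong \Z^{\Sigma_E}$, with the homological degree of $w_\sigma$ determined by the codimension of $\sigma$.

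Next, I would invoke \cite[Theorem 1.5]{EHL}, which provides a canonical isomorphism $\Val(E) \cong H_*(X_{\Sigma_E})$; composing with the identification above yields an embedding $\Val(E) \hookrightarrow \Z^{\Sigma_E}$. The remaining task is to check that the image of the class of a matroid $M$ under this composed embedding is precisely $\sum_{\sigma \in \Sigma_M} w_\sigma$. This is exactly the content of \cite[Theorem 5.6]{EHL}, which identifies the image of $M$ with the fundamental class of the augmented Bergman variety associated to the subfan $\Sigma_M \subseteq \Sigma_E$; expanded in the torus-invariant basis, this fundamental class is the characteristic function $\sum_{\sigma \in \Sigma_M} w_\sigma$.

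Combining these, the homomorphism $M(E) \to \Z^{\Sigma_E}$ defined in the theorem factors as the composition
\[
M(E) \twoheadrightarrow M(E)/I(E) = \Val(E) \hookrightarrow \Z^{\Sigma_E},
\]
so its kernel is precisely $I(E)$ and the induced map on $\Val(E)$ is the desired embedding. There is no serious obstacle here beyond verifying that the conventions of the two EHL theorems match up, in particular confirming that the class $[M] \in \Val(E)$ transported via Theorem 1.5 lands on the augmented Bergman class described in Theorem 5.6; once that bookkeeping is done the result is immediate.
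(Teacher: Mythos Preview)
The paper does not give its own proof of this theorem; it simply states that the result ``follows immediately from \cite[Theorems 1.5 and 5.6]{EHL}.'' Your proposal cites the same two results, so at that level your approach matches the paper's.

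However, your sketch of \emph{how} these two theorems combine contains a genuinely false step. You assert that $H_*(X_{\Sigma_E})$ is free with a canonical basis $\{w_\sigma : \sigma \in \Sigma_E\}$ given by torus-invariant cycle classes. This is not correct: for a smooth complete toric variety the classes $[V(\sigma)]$ generate homology but satisfy nontrivial linear relations (already for $\mathbb{P}^1$, whose fan has three cones, the two torus-fixed points represent the same class in $H_0$, and the total rank of homology is $2$, not $3$). In general the rank of $H_*(X_\Sigma)$ equals the number of \emph{maximal} cones, not the number of all cones. Consequently there is no natural identification $H_*(X_{\Sigma_E}) \cong \Z^{\Sigma_E}$, and your proposed factorization
\[
\Val(E) \;\cong\; H_*(X_{\Sigma_E}) \;\cong\; \Z^{\Sigma_E}
\]
does not exist.

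The theorem only asserts an \emph{embedding} of $\Val(E)$ into the free group $\Z^{\Sigma_E}$, not an isomorphism; whatever role homology plays in the EHL argument, it is not as a group identified with all of $\Z^{\Sigma_E}$. You should go back to the precise statements of \cite[Theorems 1.5 and 5.6]{EHL} and verify exactly what each provides, rather than interpolating via an incorrect description of toric homology.
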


We are now ready to prove our proposition.

\begin{proposition}\label{prop:groth}
The Grothendieck group of the valuative category $\cV(E)$ is isomorphic to the valuative group $\Val(E)$.
\end{proposition}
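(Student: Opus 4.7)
The plan is to produce an isomorphism by exhibiting a valuative functor whose decategorification is the embedding in Theorem \ref{EHL-val}. We begin by observing that there is a canonical surjection $\Val(E) \twoheadrightarrow K(\cV(E))$. Indeed, for any decomposition $\cN$ with orientation $\Omega$, the complex $C_\bullet^\Omega(\cN)$ lies in $\cI(E)$, so its class becomes zero in $K(\cV(E))$. That class, which equals $\sum_k (-1)^k \sum_{N\in\cN_k} [N]$ in $M(E) = K(\Ch_b(\cMatid^+(E)))$, is therefore in the kernel of $M(E) \to K(\cV(E))$. By Proposition \ref{mat-decomp}, these classes span $I(E)$, so $I(E)$ is contained in that kernel.

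To prove the reverse inclusion, I would construct a valuative functor that witnesses the EHL embedding at the categorical level. Let $\cA := \Vec_\Q^{\Sigma_E}$ denote the $\Q$-linear additive category of finite dimensional $\Q$-vector spaces graded by the finite set $\Sigma_E$; its split Grothendieck group is canonically $\Z^{\Sigma_E}$. Define
\[ \Psi \;:=\; \bigoplus_{\sigma_{I\le\bS}\,\in\,\Sigma_E} \Psi_{I\le\bS} \;\co\; \cMatid(E) \longrightarrow \cA, \]
where the summand indexed by $\sigma_{I\le\bS}$ is placed in the $\sigma_{I\le\bS}$-component of the target. Each summand $\Psi_{I\le\bS}$ is valuative by Lemma \ref{Matt}, so $\Psi$ is valuative. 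On a matroid $M$, we have $\Psi_{I\le\bS}(M) = \Q$ precisely when $\sigma_{I\le\bS} \in \Sigma_M$, so the class of $\Psi(M)$ in $\Z^{\Sigma_E}$ is $\sum_{\sigma\in\Sigma_M} w_\sigma$.

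Because $\Psi$ is valuative, it descends to a triangulated functor $\cV(E) \to \cK_b(\cA)$ and hence induces a homomorphism $K(\Psi) \co K(\cV(E)) \to \Z^{\Sigma_E}$ fitting into a commutative diagram
\[
\begin{tikzcd}
M(E) \ar[r] \ar[rd] & K(\cV(E)) \ar[d, "K(\Psi)"] \\
& \Z^{\Sigma_E}.
\end{tikzcd}
\]
The diagonal arrow is exactly the EHL homomorphism of Theorem \ref{EHL-val}, whose kernel is $I(E)$. Consequently the kernel of $M(E) \to K(\cV(E))$ is also contained in $I(E)$, which combined with the previous paragraph gives equality. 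Passing to the quotient, the surjection $\Val(E) \twoheadrightarrow K(\cV(E))$ is injective, hence an isomorphism.

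The main conceptual step is the construction of $\Psi$; all the technical difficulty is absorbed into Lemma \ref{Matt}, which in turn rests on the convolution machinery of Section \ref{sec:convolution}. The remaining step is routine, as EHL does the heavy lifting of identifying the kernel of the decategorified map.
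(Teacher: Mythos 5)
Your proposal is correct and follows essentially the same route as the paper: both construct the valuative functor $\Psi = \bigoplus \Psi_{I\le\bS}$ into $\Vec_\Q^{\Sigma_E}$ via Lemma \ref{Matt}, and use Theorem \ref{EHL-val} to see that the induced map on Grothendieck groups factors the EHL embedding through $K(\cV(E))$, forcing the canonical surjection $\Val(E)\to K(\cV(E))$ to be injective. The only difference is cosmetic (you spell out why the surjection exists and phrase the conclusion via a commutative diagram rather than an explicit inverse), so no further comment is needed.
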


\begin{proof}
We have already noted that there is a canonical surjection $\Val(E)\to K(\cV(E))$ taking the class of a matroid $M$
to the Grothendieck class of the object $M$ of $\cV(E)$.  We will now prove that this is an isomorphism by constructing its inverse.

Let $\Vec_\Q^{\Sigma_E}$ be the category of finite dimensional $\Q$-vector spaces equipped with a direct sum decomposition indexed by cones $\sigma\in\Sigma_E$, with morphisms given by linear maps that are compatible with these decompositions.
Consider the functor
$\Psi:\cMatid(E)\to\Vec_\Q^{\Sigma_E}$ obtained as the direct sum of the functors $\Psi_{I\leq\bS}$.  Lemma \ref{Matt} immediately
implies that $\Psi$ is valuative, thus $\Psi$ descends to the valuative category $\cV(E)$, and therefore the induced homomorphism $$M(E) \cong K(\cMatid(E)) \to K(\Vec_\Q^{\Sigma_E}) \cong \Z^{\Sigma_E}$$
descends to $K(\cV(E))$.
This is precisely the homomorphism from Theorem \ref{EHL-val}, whose image is isomorphic to $\Val(E)$.
The induced homomorphism from $K(\cV(E))$ to $\Val(E)$ takes the Grothendieck class of $M$ to the class of $M$ in $\Val(E)$, and is therefore
inverse to the homomorphism in the first paragraph.
\end{proof}

\section{Characters}\label{sec:characters}
In this section, we explain how to use valuative
categorical matroid invariants to obtain relations among isomorphism classes of graded representations of the symmetry group
of a matroid decomposition.  Everything that follows works equally well for decompositions of polyhedra, but since our examples are all matroid invariants, we will work entirely in that setting.

\subsection{The oriented case}
Let $\cN$ be a decomposition of a matroid $M$ on the ground set $E$, and let $\Omega$ be an orientation of $\cN$.  
Let $\Gamma$ be a finite group acting via permutations of $E$ that preserve
$\cN$ and $\Omega$.  In other words, for every $N\in\cN$ and $\gamma\in\Gamma$, $\gamma(N)\in\cN$,
and the map $P(N)\to P(\gamma(N))$ on base polytopes induced by $\gamma$ is orientation preserving.  This
implies that $\gamma$ acts on the complex $C_\bullet^\Omega(\cN)$.
Fix an abelian group, and let $\cA$ be the category of finite dimensional vector spaces over $\Q$ that are graded in that group.\footnote{In
practice, we will always take our gradings in $\Z$ or $\Z^2$.}
Suppose that $\Phi:\cMat(E)\to\cA$ is a categorical valuative invariant.
Then $\Phi(C_\bullet^\Omega(\cN))$ is exact, and we therefore have an equation \begin{eqnarray}\label{characters}\notag 0 = \chi^\Gamma\!\Big(\Phi(C_\bullet^\Omega(\cN))\Big) &:=&  \sum_k (-1)^k \,\Phi(C_k^\Omega(\cN))\\ &=& 
\sum_k(-1)^k \bigoplus_{N\in\cN_k}\Phi(N)\\ \notag &=& \sum_k (-1)^k \!\bigoplus_{N\in\cN_k/\Gamma} \Ind_{\revise{\Gamma_N}}^\Gamma\Phi(N)\end{eqnarray}
of virtual graded representations of $\Gamma$, where for the last sum one takes a single representative of each $\Gamma$ orbit in $\cN_k$.

Examples from previous sections of valuative categorical invariants
include $\OS$ (Theorem \ref{thm:OS}), $\uCH$ (Corollary \ref{uCH val}),
$\CH$ (Corollary \ref{CH val}), all of which take values in finite dimensional graded vector spaces.
In these cases, Equation \eqref{characters} gives a relationship between Orlik--Solomon algebras, Chow rings,
and augmented Chow rings of all of the matroids in $\cN$, regarded as isomorphism classes of representations of $\Gamma$.
We also have the examples $\uKL$ (Corollary \ref{KL val}) 
and $\KL$ (Corollary \ref{KL val}), both of which take values in bigraded vector spaces.
The virtual graded $\Gamma$-representations
$$\sum_j (-1)^j \uKL^{i,j}(M)\and \sum_j (-1)^j\, \KL^{i,j}(M)$$ are equal to the coefficients of $t^i$ in the $\Gamma$-equivariant Kazhdan--Lusztig polynomial $P_M^\Gamma(t)$ and the $\Gamma$-equivariant $Z$-polynomial $Z_{M}^\Gamma(t)$, respectively \cite[Theorem 6.1]{PXY}.
Thus Equation \eqref{characters} also allows us to relate these equivariant matroid invariants for $M$ to those of
the various $N\in\cN$.

There is, however, a serious limitation to the usefulness of Equation \eqref{characters}.  
When given a decomposition $\cN$ with an action of $\Gamma$, it is usually
impossible to choose an orientation $\Omega$ that is preserved by $\Gamma$.  For instance, consider the decomposition
in Examples \ref{octahedron 1} and \ref{octahedron 2}.  We have $E = \{1,2,3,4\}$, and the group $\fS_2$ acts by swapping 1 with 3 and 2 with 4.
This action preserves $\cN$, swapping $N$ with $N'$ and taking $N''$ to itself.  However, the action of $\fS_2$ on $P(N'')$ is orientation reversing, so there is no way to choose $\Omega$.
If there were, then Equation \eqref{characters} applied to the trivial categorical invariant $\tau$ would tell us that $\tau(M) \oplus \tau(N'')$ (two copies of the trivial representation of $\fS_2$) is isomorphic to $\tau(N) \oplus \tau(N')$ (the regular representation of $\fS_2$), which is of course false.  

What we would like to do is replace $\tau(N'')$ with the sign representation
of $\fS_2$, to reflect the fact that the action on $P(N'')$ is orientation reversing (see Example \ref{finally}).
The rest of this section is devoted to developing the machinery needed to make this precise, in the form of Corollary \ref{the point}, 
and then applying it to certain families of examples.

\subsection{The determinant category} \label{ssec:detcat}
If $M$ is a matroid on the ground set $E$, consider the vector space
$$\revise{V(M)} := \operatorname{Span}\{x-y\mid x,y\in P(M)\}\cap \Q^E.$$
If $P(M')\subset P(M)$, we define 
the 1-dimensional vector space $$L(M,M') := \wedge^{d(M) - d(M')} \left(\revise{V(M)}/\revise{V(M')}\right).$$
Given $M$, $M'$, and $M''$ with $P(M'')\subset P(M')\subset P(M)$, wedge product induces a canonical isomorphism
\begin{equation}\label{wedge}L(M,M')\otimes L(M',M'')\to L(M,M'').\end{equation}
We now define a new category $\cMatid^\wedge(E)$ by taking the objects to be formal direct sums of matroids on $E$,
and taking $$\Hom_{\cMatid^\wedge(E)}(M,M') := L(M,M').$$
Composition is given by Equation \eqref{wedge}, and
morphisms between formal direct sums are matrices whose entries consist of morphisms between the individual matroids.

\begin{remark}
The category $\cMatid^\wedge(E)$ should be regarded as a variant of $\cMatid^+(E)$.
In both categories, the space of homomorphisms from $M$ to $M'$ is a 1-dimensional $\Q$-vector space.  
The difference is that, in $\cMatid^+(E)$, this vector space is canonically identified with $\Q$, 
whereas the same is not true in $\cMatid^\wedge(E)$.
\end{remark}

Given a decomposition $\cN$ of a matroid $M$ on the ground set $E$, we defined a large collection of complexes 
$C_\bullet^\Omega(M)$ in $\cMatid^+(E)$, depending on an $\Omega$ of $\cN$.  
In contrast, we will define a single complex $C_\bullet^\wedge(\cN)$ in $\cMatid^\wedge(E)$
that does not depend on any choices.  The objects will be the same; that is, we put $$C_k^\wedge(\cN) := \bigoplus_{N\in\cN_k} N.$$
For each maximal face $N\in\cN_{d(M)}$, $L(M,N)$ is canonically isomorphic to $\Q$, and we define
$C_{d(M)+1}^\wedge(\cN) \to C_{d(M)}^\wedge(\cN)$ to be the diagonal map.  Given $k\leq d(M)$, $N\in\cN_k$, and $N'\in \cN_{k-1}$
with $P(N')\subset P(N)$, we define the $(N,N')$ component of the differential $C_{k}^\wedge(\cN)\to C_{k-1}^\wedge(\cN)$
to be the class of the outward unit normal vector to $P(N')$ inside of $P(N)$ in
$$\revise{V}(N)/\revise{V}(N') = Q(N,N') = \Hom_{\cMatid^\wedge(E)}(N,N').$$
It is straightforward to check that the differential squares to zero.

\subsection{The determinant functor}
In this section, we define an additive functor $\Det$ from $\cMatid^\wedge(E)$ to the category $\Vec_\Q$.
For any matroid $M$ on $E$, we put
$$\Det(M) := \wedge^{d(M)} \revise{V(M)}^*.$$
If $P(M')\subset P(M)$ and $$\sigma\in \wedge^{d(M)-d(M')}(\revise{V(M)}/\revise{V(M')}) = L(M,M') = \Hom_{\cMatid^\wedge(E)}(M,M'),$$ then wedge product with $\sigma$
gives an isomorphism from $\wedge^{d(M')} \revise{V(M')}$ to $\wedge^{d(M)}\revise{V(M)}$.  Dualizing, we obtain our map
$$\Det(\sigma):\Det(M)\to\Det(M').$$

Let $\Phi:\cMatid(E)\to\cA$ be a categorical matroid invariant valued in a $\Q$-linear category $\cA$. There is a monoidal action of
$\Vec_\Q$ on $\cA$, which we denote with $\otimes$.
We define a functor $\Phi^\wedge:\cMatid^\wedge(E)\to\cA$ by putting $\Phi^\wedge := \Phi \otimes \Det$.
More precisely, we define $\Phi^\wedge(M) := \Phi(M)\otimes\Det(M)$ for all $M$, and 
given an element $$\sigma\in L(M,M') = \Hom_{\cMatid^\wedge(E)}(M,M'),$$
we put $$\Phi^\wedge(\sigma) := \Phi(\iota_{M,M'})\otimes\Det(\sigma):\Phi^\wedge(M)\to \Phi^\wedge(M').$$

\begin{proposition}\label{val exact}
If $\Phi:\cMatid(E)\to\cA$ is valuative and $\cN$ is a decomposition of a matroid $M$ on the ground set $E$, then $\Phi^\wedge(C_\bullet^\wedge(\cN))$ is contractible.
\end{proposition}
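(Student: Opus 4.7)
The plan is to reduce this to the oriented version by choosing an orientation and verifying that the resulting isomorphism on underlying objects is in fact an isomorphism of complexes. Concretely, choose an orientation $\Omega$ of $\cN$. For each internal face $N\in\cN$ (and for $M$ itself), the choice of orientation $\Omega_N$ is exactly the data of a positive generator $\omega_N$ of $\Det(N)=\wedge^{d(N)}V(N)^*$, hence an isomorphism $\lambda_N\co \Q\xrightarrow{\sim}\Det(N)$, $1\mapsto\omega_N$. Together these give an isomorphism of graded objects in $\cA$
\[
\Lambda\co \Phi(C_\bullet^\Omega(\cN))\;\xrightarrow{\sim}\;\Phi^\wedge(C_\bullet^\wedge(\cN)),\qquad
\Phi(N)\;\xrightarrow{\id\otimes\lambda_N}\;\Phi(N)\otimes\Det(N)=\Phi^\wedge(N).
\]

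The crux will be to show that $\Lambda$ is a chain map. Fix $1\leq k\leq d(M)$, faces $N\in\cN_k$, $N'\in\cN_{k-1}$ with $P(N')\subset P(N)$, and let $\sigma_{N,N'}\in V(N)/V(N')=L(N,N')$ be the outward unit normal of $P(N')$ in $P(N)$, so that the $(N,N')$-component of $\partial^\wedge$ is $\sigma_{N,N'}$. Under $\Det$, this morphism is sent to the linear map $\Det(N)\to\Det(N')$ given by $\omega\mapsto \omega(\sigma_{N,N'}\wedge -)$. The key observation is that $\omega_N(\sigma_{N,N'}\wedge -)$ is precisely the top form on $V(N')$ obtained from $\Omega_N$ by contracting against the outward normal, which by definition of ``matching orientations'' equals $+\omega_{N'}$ if the orientation of $N'$ induced from $\Omega_N$ agrees with $\Omega_{N'}$ and $-\omega_{N'}$ otherwise. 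Thus $\Det(\sigma_{N,N'})\circ\lambda_N=\pm\lambda_{N'}$ with precisely the sign appearing in the $(N,N')$-component of $\partial^\Omega$. The same comparison handles the top differential $M\to N\in\cN_{d(M)}$, where $L(M,N)\cong\Q$ canonically and our sign convention compares $\Omega(M)$ to the orientation of $P(N)$ induced from its open embedding into the relative interior of $P(M)$. Hence $\Lambda\circ\partial^\Omega=\partial^\wedge\circ\Lambda$ after tensoring with $\Phi$, so $\Lambda$ is an isomorphism of complexes.

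Since $\Phi$ is valuative, $\Phi(C_\bullet^\Omega(\cN))$ is contractible, and transporting across $\Lambda$ yields that $\Phi^\wedge(C_\bullet^\wedge(\cN))$ is contractible as well. The proof will conclude by noting, as in Remark \ref{rmk:orientationirrelevant}, that although the comparison $\Lambda$ depends on $\Omega$, the conclusion does not.

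The main obstacle is purely bookkeeping: pinning down the precise duality/contraction conventions that identify ``outward normal as a class in $V(N)/V(N')$'' with ``the sign by which the induced orientation on $N'$ from $\Omega_N$ differs from $\Omega_{N'}$.'' Once this sign-matching is established on a single pair $(N,N')$, the rest of the argument is formal.
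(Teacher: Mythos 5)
Your proposal is correct and takes essentially the same approach as the paper: the paper's proof also chooses an orientation $\Omega$, uses it to trivialize each line $\Det(N)$, and observes that these trivializations assemble into an isomorphism of complexes $\Phi^\wedge(C_\bullet^\wedge(\cN))\cong \Phi(C_\bullet^\Omega(\cN))$, whence contractibility follows from valuativity of $\Phi$. The only difference is that you spell out the sign-matching between the outward-normal differential of $C_\bullet^\wedge$ and the $\pm$ signs of $\partial^\Omega$, which the paper leaves implicit (and any positive scalar discrepancies from normalizing the $\omega_N$ are harmless, as in the rescaling argument of Lemma \ref{there's an orientation}).
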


\begin{proof}
Let $\Omega$ be an orientation of $\cN$.  This induces an isomorphism $\Det(M)\cong\Q$, along with isomorphisms $\Det(N)\cong\Q$ for each $N\in\cN$.
These isomorphisms fit together to form an isomorphism of complexes 
$\Phi^\wedge(C_\bullet^\wedge(\cN))\cong \Phi(C_\bullet^\Omega(\cN))$, and the latter is contractible by definition of valuativity.
\end{proof}

Proposition \ref{val exact} has a corollary that allows us to generalize Equation \eqref{characters} to situations where it is not
possible to choose an orientation $\Omega$ in a way that is fixed by symmetries.  Fix an abelian group, and let $\cA$ be the category of finite dimensional vector spaces over $\Q$ that are graded by that group.

\begin{corollary}\label{the point}
Let $\cN$ be a decomposition of a matroid $M$ on the ground set $E$.  
Let $\Gamma$ be a finite group acting on $E$ preserving $\cN$.  
Let $\Phi:\cMat\to\cA$ be a valuative categorical invariant. 
Then we have an equation \begin{eqnarray*}\notag 0 = \chi^\Gamma\!\Big(\Phi(C_\bullet^\wedge(\cN))\Big) &:=&  \sum_k (-1)^k \,\Phi(C_k^\wedge(\cN))\\ &=& 
\sum_k(-1)^k \bigoplus_{N\in\cN_k}\Phi(N)\otimes\Det(N)\\ \notag &=& \sum_k (-1)^k \!\bigoplus_{N\in\cN_k/\Gamma} \Ind_{\revise{\Gamma_N}}^\Gamma\Phi(N)\otimes\Det(N)\end{eqnarray*}
of virtual graded representations of $\Gamma$.
\end{corollary}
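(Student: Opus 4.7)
The plan is to deduce this corollary from Proposition \ref{val exact} by promoting the contractibility statement to a $\Gamma$-equivariant one. The first step is to put a $\Gamma$-action on the complex $\Phi^\wedge(C_\bullet^\wedge(\cN))$. For each $\gamma\in\Gamma$ and each internal face $N\in\cN$, the permutation $\gamma$ of $E$ provides a weak map $N\to\gamma N$ in $\cMat(E)$, and hence a linear isomorphism $\Phi(N)\to\Phi(\gamma N)$. Moreover, $\gamma$ acts linearly on $\R^E$, carrying $V(N)$ to $V(\gamma N)$ and hence inducing an isomorphism $\Det(N)\to\Det(\gamma N)$. Tensoring gives a permutation-with-signs action of $\Gamma$ on each chain object $\Phi^\wedge(C_k^\wedge(\cN)) = \bigoplus_{N\in\cN_k}\Phi(N)\otimes\Det(N)$. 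A short check, using that $\gamma$ preserves the incidence relation between faces and that the outward normal vectors used to build the differential of $C_\bullet^\wedge(\cN)$ transform equivariantly under the $\Gamma$-action on $\R^E$, shows that these actions commute with the differential, so $\Phi^\wedge(C_\bullet^\wedge(\cN))$ is a bounded complex in the category of graded $\Gamma$-representations over $\Q$.

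The second step is to pass from contractibility to an Euler characteristic statement. By Proposition \ref{val exact}, $\Phi^\wedge(C_\bullet^\wedge(\cN))$ is contractible in $\cA$, hence exact as a complex of graded $\Q$-vector spaces, hence exact as a complex of graded $\Gamma$-representations (since exactness is detected on the underlying graded vector spaces). Because the category of graded $\Q\Gamma$-modules is semisimple and the complex is bounded, exactness forces the Euler characteristic
\[
\chi^\Gamma\!\Big(\Phi^\wedge(C_\bullet^\wedge(\cN))\Big) \;=\; \sum_k (-1)^k\,\Phi^\wedge(C_k^\wedge(\cN))
\]
to vanish in the Grothendieck group of graded $\Gamma$-representations.

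The third step is bookkeeping. Since $\Gamma$ permutes the summands of $\Phi^\wedge(C_k^\wedge(\cN))$ according to its action on $\cN_k$, decomposing into $\Gamma$-orbits and applying the standard orbit-stabilizer identification
\[
\bigoplus_{N'\in\Gamma\cdot N}\Phi(N')\otimes\Det(N') \;\cong\; \Ind_{\Gamma_N}^{\Gamma}\!\big(\Phi(N)\otimes\Det(N)\big)
\]
rewrites the vanishing Euler characteristic in exactly the form displayed in the corollary.

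The main conceptual content really sits in Proposition \ref{val exact}, whose proof was engineered so that $C_\bullet^\wedge(\cN)$ is defined without any auxiliary choice of orientation. Given that, the only subtlety here is to confirm that all the structural maps are visibly $\Gamma$-equivariant; this is where the twist by $\Det$ earns its keep, since it absorbs precisely the sign ambiguity that would otherwise obstruct a genuine $\Gamma$-action (as in the $\mathfrak{S}_2$-action on the square $P(N'')$ from Example \ref{octahedron 1}, where the unpromoted Equation \eqref{characters} would yield a false identity of representations).
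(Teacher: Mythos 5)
Your proof is correct and follows exactly the route the paper intends: Corollary \ref{the point} is stated as an immediate consequence of Proposition \ref{val exact}, with the $\Gamma$-equivariance of the canonically defined complex $C_\bullet^\wedge(\cN)$ (no orientation choice), semisimplicity of graded $\Q\Gamma$-modules, and the orbit-stabilizer identification supplying the remaining details you spell out. Nothing is missing; in particular you correctly note that $\Phi$ must be defined on all of $\cMat$ (not just $\cMatid(E)$) so that $\gamma$ induces the maps $\Phi(N)\to\Phi(\gamma N)$, and that the outward normals transform equivariantly because permutations act orthogonally on $\R^E$.
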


\begin{remark}
If there exists an orientation $\Omega$ of $\cN$ that is fixed by the action of $\Gamma$, then for all $N\in\cN$, $\Det(N)$
is isomorphic to the 1-dimensional trivial representation of the stabilizer of $N$.  In particular, Equation \eqref{characters}
is a special case of Corollary \ref{the point}.
\end{remark}

\begin{example}\label{finally}
Let $\cN$ be the decomposition of $M$ in Examples \ref{octahedron 1} and \ref{octahedron 2}, 
and let $\Gamma = \fS_2$ act by swapping 1 with 3 and 2 with 4.  Applying Corollary \ref{the point} to the trivial categorical invariant $\tau$, we obtain the equation
$$0 = \Det(M) -\Det(N) - \Det(N') + \Det(N'')$$
of virtual representations of $\fS_2$.  Since $\fS_2$ acts on $P(M)$ in an orientation preserving way, $\Det(M)$ is the trivial representation.  Since $\fS_2$ acts on $P(N'')$ in an orientation reversing way, $\Det(N'')$ is the sign representation.
Finally, since $\fS_2$ swaps $N$ with $N'$, $\Det(N) \oplus \Det(N')$ is the regular representation.
\end{example}

\subsection{Equivariant relaxation}\label{sec:eq relax}
Let $M$ be a matroid of rank $k$ on the ground set $E$, equipped with an action of the group $\Gamma$.
Let $F$ be a stressed flat of rank $r$, and let $\cF := \{\gamma F\mid \gamma\in\Gamma\}$.  Let $\tM$ be the matroid obtained by relaxing $M$ with respect to every $G\in\cF$, 
as described in Section \ref{sec:relaxation}.  
Let $\GF\subset\Gamma$ denote the stabilizer of $F$.  Note that $\GF$ acts on the sets $F$ and $E\setminus F$,
and therefore on the matroids $\Pi_{r,k,\revise{F,E}}$ and $\Lambda_{r,k,\revise{F,E}}$ from Section \ref{sec:relaxation}.  

Fix an abelian group, and let $\cA$ be the category of finite dimensional vector spaces over $\Q$ that are graded by that group.
Let $\Phi:\cMat\to\cA$ be a valuative categorical invariant.

\begin{proposition}\label{virtual}
We have the following equality of virtual graded $\Gamma$-representations:
$$\Phi(\tM) = \Phi(M) + \Ind_{\GF}^{\Gamma}\Phi(\Lambda_{r,k,\revise{F,E}})
- \Ind_{\GF}^{\Gamma}\Phi(\Pi_{r,k,\revise{F,E}}).$$
\end{proposition}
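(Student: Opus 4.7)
The plan is to apply Corollary \ref{the point} to the decomposition $\cN$ of $\tM$ produced by Theorem \ref{relax symmetrically}, then identify every determinant line in terms of $\Det(M)$ so that the common factor can be cancelled. We may assume $M \neq \Pi_{r,k,F,E}$, since otherwise $\tM = \Lambda_{r,k,F,E}$ and $\GF = \Gamma$, and the claimed identity degenerates to a tautology. Under this assumption, Theorem \ref{relax symmetrically} describes the internal faces of $\cN$: in dimension $d = d(M) = d(\tM)$ we have $M$ together with $\Lambda_{r,k,G,E}$ for $G \in \cF$, in dimension $d-1$ we have $\Pi_{r,k,G,E}$ for $G \in \cF$, and $\tM$ itself lives in $\cN_{d+1}$. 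Because $\cF$ is a single $\Gamma$-orbit with stabilizer $\GF$, the $\Gamma$-orbits on internal faces are the singletons $\{\tM\}$ and $\{M\}$, the orbit of $\Lambda := \Lambda_{r,k,F,E}$, and the orbit of $\Pi := \Pi_{r,k,F,E}$. Corollary \ref{the point} then produces
\[ \Phi(\tM) \otimes \Det(\tM) \;=\; \Phi(M) \otimes \Det(M) \;+\; \Ind_{\GF}^{\Gamma}\bigl(\Phi(\Lambda) \otimes \Det(\Lambda)\bigr) \;-\; \Ind_{\GF}^{\Gamma}\bigl(\Phi(\Pi) \otimes \Det(\Pi)\bigr) \]
as an identity of virtual graded $\Gamma$-representations.

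Next I will identify the four determinant representations. Since $P(\tM)$, $P(M)$, and $P(\Lambda)$ share a common affine hull, the ambient vector spaces in the sense of \S\ref{ssec:detcat} coincide: $V(\tM) = V(M) = V(\Lambda)$. This gives $\Det(\tM) \cong \Det(M)$ as $\Gamma$-representations and $\Det(\Lambda) \cong \Det(M)|_{\GF}$ as $\GF$-representations. For $\Pi$, the polytope $P(\Pi)$ is the facet of $P(M)$ on which $\delta_F$ attains its maximum, so the short exact sequence $0 \to V(\Pi) \to V(M) \to V(M)/V(\Pi) \to 0$ yields $\Det(\Pi) \cong \Det(M)|_{\GF} \otimes V(M)/V(\Pi)$ as $\GF$-representations. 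The crucial observation is that the one-dimensional normal direction $V(M)/V(\Pi)$ is spanned by the gradient of $\delta_F$, and $\gamma \cdot \delta_F = \delta_{\gamma F} = \delta_F$ for every $\gamma \in \GF$, so this quotient carries the trivial $\GF$-action. Therefore $\Det(\Pi) \cong \Det(M)|_{\GF}$.

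Finally, I substitute these identifications into the displayed equation and invoke the projection formula $\Ind_H^\Gamma(W \otimes V|_H) \cong (\Ind_H^\Gamma W) \otimes V$ to pull $\Det(M)$ out of each induced representation, yielding
\[ \bigl(\Phi(\tM) - \Phi(M) - \Ind_{\GF}^{\Gamma}\Phi(\Lambda) + \Ind_{\GF}^{\Gamma}\Phi(\Pi)\bigr) \otimes \Det(M) \;=\; 0. \]
Tensoring both sides with the dual line $\Det(M)^*$ cancels the common factor and delivers the claimed identity. The main obstacle is the determinant bookkeeping in the second paragraph; in particular, checking that $\GF$ acts trivially (rather than through a sign character) on the normal direction $V(M)/V(\Pi)$ is the only step where the argument requires more than a direct application of Corollary \ref{the point} and its formal consequences.
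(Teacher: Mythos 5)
Your proposal is correct and follows essentially the same route as the paper's proof: apply Corollary \ref{the point} to the decomposition of $\tM$ furnished by Theorem \ref{relax symmetrically}, identify $\Det(\tM)\cong\Det(M)$ and $\Det(\Lambda_{r,k,F,E})\cong\Det(\Pi_{r,k,F,E})\cong\Res^{\Gamma}_{\GF}\Det(M)$ (the key point in both arguments being that $\GF$ fixes the normal direction to $P(\Pi_{r,k,F,E})$ inside $V(M)$, since it fixes $F$ setwise), and then cancel the invertible factor $\Det(M)$ after applying the projection formula. The only cosmetic difference is that the paper carries the signs $(-1)^{d}$ explicitly through the computation, whereas you absorb them at the outset.
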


\begin{proof}
If $M = \Pi_{r,k,\revise{F,E}}$, then $\tM = \Lambda_{r,k,\revise{F,E}}$ and the statement is trivial.  Assume now that this is not the case.
Let $\cN$ be the decomposition of $\tM$ described in Theorem \ref{relax symmetrically}.
By Corollary \ref{the point}, 
we have 
\begin{eqnarray*}(-1)^{d(\tM)}\Phi(\tM)\otimes\Det(\tM) &=& (-1)^{d(M)}\Phi(M)\otimes\Det(M)\\
&& +\;\; (-1)^{d(\Lambda_{r,k,\revise{F,E}})}\Ind_{\GF}^{\Gamma}\Phi(\Lambda_{r,k,\revise{F,E}})\otimes\Det(\Lambda_{r,k,\revise{F,E}})\\
&& +\;\;(-1)^{d(\Pi_{r,k,\revise{F,E}})} \Ind_{\GF}^{\Gamma}\Phi(\Pi_{r,k,\revise{F,E}})\otimes\Det(\Pi_{r,k,\revise{F,E}}).\end{eqnarray*}
To simplify this, we first note that $$d(\tM) = d(M) = d(\Lambda_{r,k,\revise{F,E}}) = d(\Pi_{r,k,\revise{F,E}})+1.$$
Next, we observe that
$$\revise{V(\tM)} = \revise{V(M)} = \revise{V}(\Lambda_{r,k,\revise{F,E}}) = \revise{V}(\Pi_{r,k,\revise{F,E}}) \oplus \Q\cdot x_F,$$
which implies that $\Det(\tM) = \Det(M)$ as representations of $\Gamma$.  Since $\GF$ fixes $x_F$, it also implies that
$\Det(\Lambda_{r,k,\revise{F,E}}) = \Det(\Pi_{r,k,\revise{F,E}}) = \Res^\Gamma_{\GF}\Det(M)$ as representations of $\GF$.
Thus
\begin{eqnarray*}(-1)^{d(M)}\Phi(\tM)\otimes\Det(M) &=& (-1)^{d(M)}\Phi(M)\otimes\Det(M)\\
&& +\;\; (-1)^{d(M)}\Ind_{\GF}^{\Gamma}\Big(\Phi(\Lambda_{r,k,\revise{F,E}})\otimes\Res^{\Gamma}_{\GF}\Det(M)\Big)\\
&& -\;\;(-1)^{d(M)}\Ind_{\GF}^{\Gamma}\Big(\Phi(\Pi_{r,k,\revise{F,E}})\otimes\Res^{\Gamma}_{\GF}\Det(M)\Big)\\
&=& (-1)^{d(M)}\Phi(M)\otimes\Det(M)\\
&& +\;\; (-1)^{d(M)}\Big(\Ind_{\GF}^{\Gamma}\Phi(\Lambda_{r,k,\revise{F,E}})\Big)\otimes\Det(M)\\
&& -\;\;(-1)^{d(M)}\Big(\Ind_{\GF}^{\Gamma}\Phi(\Pi_{r,k,\revise{F,E}})\Big)\otimes\Det(M).
\end{eqnarray*}
Dividing both sides by $(-1)^{d(M)}\Det(M)$, which is an invertible element of the ring of virtual graded representations of $\Gamma$, 
yields the statement of the proposition.
\end{proof}

\subsection{Relaxing a stressed hyperplane}
We conclude by applying Proposition \ref{virtual} to the functors $\OS$ and $\uKL$ in the special case where the stressed flat $F=H$
is a hyperplane, meaning that $r=k-1$.  In this case, we have
$$\Pi_{k-1,k,\revise{H,E}} = U_{1,E\setminus H} \sqcup U_{k-1,H}.$$
The matroid $\Lambda_{k-1,k,\revise{H,E}}$ obtained by relaxing $\Pi_{k-1,k,\revise{H,E}}$ with respect to the stressed hyperplane $H$ has as its
bases those subsets $B\subset E$ of cardinality $k$ with $|B\cap H| \geq k-1$.  It has no loops, and its simplification is the uniform matroid of rank $k$ on
the ground set $\bar{H} := H\sqcup\{*\}$ obtained from $E$ by identifying all of the elements of $E\setminus H$.

The group $\Aut(E\setminus H)\times\Aut(H)$ acts on the matroids $\Pi_{k-1,k,\revise{H,E}}$ and $\Lambda_{k-1,k,\revise{H,E}}$, and therefore on their Orlik--Solomon algebras.
Since all of the elements of $E\setminus H$ are parallel in both $\Pi_{k-1,k,\revise{H,E}}$ and $\Lambda_{k-1,k,\revise{H,E}}$, the actions on the Orlik--Solomon algebra factor through the projection to $\Aut(H)$.

For the rest of this section, we will write $h=|H|$ and identify $H$ with $\{1,\ldots,h\}$.  If $\lambda$ is a partition of $h$, we write $V_\lambda$ to denote the
corresponding Specht module for $\mathfrak{S}_h = \Aut(H)$. Recall that $V_{[h]}$ is the trivial representation.

\begin{lemma}\label{OS correction}
We have
$$\OS(\Lambda_{k-1,k,\revise{H,E}})
- \OS(\Pi_{k-1,k,\revise{H,E}}) = 
\left(\wedge^{k-1}\,V_{[h-1,1]}\right)\otimes\Big(\Q(1-k) \oplus  \Q(-k)\Big)$$
as graded virtual representations of $\fS_h$.
\end{lemma}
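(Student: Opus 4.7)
The plan is to compute $\OS(\Lambda_{k-1,k,H,E})$ and $\OS(\Pi_{k-1,k,H,E})$ explicitly as graded $\fS_h$-representations and subtract. Abbreviate $\Lambda := \Lambda_{k-1,k,H,E}$, $\Pi := \Pi_{k-1,k,H,E}$, and let $W := V_{[h-1,1]}$ denote the standard $(h-1)$-dimensional $\fS_h$-representation.

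The first step will be to reduce both sides to Orlik--Solomon algebras of uniform matroids. Since $\Pi = U_{1,E\setminus H} \sqcup U_{k-1,H}$, the tensor product formula for $\OS$ of a direct sum, combined with the observation that all elements of $E \setminus H$ are parallel in $U_{1,E\setminus H}$, gives $\OS(\Pi) \cong (\Q \oplus \Q(-1)) \otimes \OS(U_{k-1,H})$ as graded $\fS_h$-representations, with the first factor carrying the trivial action. For $\Lambda$, there are no loops and all elements of $E\setminus H$ are parallel, so the invariance of $\OS$ under simplification (a consequence of the degree-one OS relation $u_e = u_{e'}$ whenever $\{e,e'\}$ is a circuit) gives $\OS(\Lambda) \cong \OS(U_{k,\bar H})$, where $\bar H := H\sqcup\{*\}$ and $\fS_h$ acts on $\bar H$ by fixing $*$ and permuting $H$.

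The second step will be to invoke the standard description of $\OS(U_{r,n})$ as an $\fS_n$-representation: in degrees $i < r$ there are no OS relations, so $\OS^i(U_{r,n}) \cong \wedge^i \Q^n$, while in the top degree $r$, a direct computation with the contraction derivation $\partial \colon \wedge^{r+1}\Q^n \to \wedge^r \Q^n$ dual to $\sum_i u_i$ (which vanishes on $V_{[n-1,1]} = \ker(\Q^n \to \Q)$ and takes the augmentation vector to a nonzero scalar) identifies its image with $\wedge^r V_{[n-1,1]}$, yielding $\OS^r(U_{r,n}) \cong \wedge^{r-1} V_{[n-1,1]}$. Restricting to $\fS_h$ uses the elementary decomposition $\Q^H \cong W \oplus \Q$ together with the branching rule $V_{[h,1]}|_{\fS_h} \cong W \oplus \Q$.

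Plugging $r=k-1, n=h$ (for $\Pi$) and $r=k, n=h+1$ (for $\Lambda$) into these formulas expresses $\OS^i(\Pi)$ and $\OS^i(\Lambda)$ as explicit direct sums of exterior powers of $W$. Term-by-term comparison will show that the two agree in degrees $\le k-2$, while the differences in degrees $k-1$ and $k$ are each $\wedge^{k-1} W$, yielding the claimed formula. The one conceptual step that requires care is the identification $\OS^r(U_{r,n}) \cong \wedge^{r-1} V_{[n-1,1]}$ via the contraction derivation; once that is in hand, the rest is straightforward bookkeeping with Pascal's identity and the branching rule.
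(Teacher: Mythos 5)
Your proposal is correct and follows essentially the same route as the paper: both reduce to uniform matroids via the tensor-product formula for $\OS$ of a direct sum and invariance of $\OS$ under simplification, and both use the identification $\OS^{r}(U_{r,n})\cong\wedge^{r-1}V_{[n-1,1]}$ together with the branching rule $V_{[h,1]}|_{\fS_h}\cong V_{[h-1,1]}\oplus V_{[h]}$. The only (minor) divergence is in degree $k-1$: you compute it directly from the explicit description of $\OS(U_{r,n})$ in all degrees, whereas the paper deduces it from the degree-$k$ computation via the vanishing of the equivariant Euler characteristic $\sum_i(-1)^i\OS^i(M)=0$; both are valid.
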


\begin{proof}
The matroids $\Lambda_{k-1,k,\revise{H,E}}$ and $\Pi_{k-1,k,\revise{H,E}}$ have the same independent sets of cardinality less than $k$, so their Orlik--Solomon algebras
differ only in degrees $k-1$ and $k$.  Let us consider the degree $k$ part first.  

The Orlik--Solomon algebra of a direct sum is isomorphic
to the tensor product of the Orlik--Solomon algebras, and the degree $k-1$ part of the Orlik--Solomon algebra of $U_{k-1,H}$ is isomorphic to $\wedge^{k-2} V_{[h-1,1]}$,
thus
$$\begin{aligned}\OS^k(\Pi_{k-1,k,\revise{H,E}}) &\cong \OS^1(U_{1,E\setminus H}) \otimes \OS^{k-1}(U_{k-1,H})\\
&\cong V_{[h]} \otimes \wedge^{k-2} \,V_{[h-1,1]}\\
&\cong \wedge^{k-2}\, V_{[h-1,1]}.\end{aligned}$$
The Orlik--Solomon algebra is unaffected by simplification, thus
$$\begin{aligned}\OS^k(\Lambda_{k-1,k,\revise{H,E}}) &\cong \Res^{\fS_{h+1}}_{\fS_h}\OS^k(U_{k,\bar H})\\
&\cong \Res^{\fS_{h+1}}_{\fS_h} \wedge^{k-1} \, V_{[h,1]}\\
&\cong \wedge^{k-1} \Res^{\fS_{h+1}}_{\fS_h} V_{[h,1]}\\
&\cong \wedge^{k-1} \left(V_{[h-1,1]}\oplus V_{[h]}\right)\\
&\cong \wedge^{k-1}\,V_{[h-1,1]} \oplus \wedge^{k-2}\, V_{[h-1,1]}.
\end{aligned}$$
This proves that the lemma is correct in degree $k$.
The statement in degree $k-1$ follows from the fact that, for any matroid $M$ on a nonempty ground set, 
$\sum (-1)^i \OS^i(M) = 0$ as a virtual representation of the automorphism group of $M$.
\end{proof}

Let $\Gamma_H := \Gamma \cap \Aut(E\setminus H)\times\Aut(H)$ be the stabilizer of $H$, which maps via projection to $\fS_h=\Aut(H)$. 
We write $\Res_{\GH}^{\fS_h}$ for the functor that pulls back a representation along the homomorphism from $\GH$ to $\fS_h$.
Proposition \ref{virtual} and Lemma \ref{OS correction} have the following immediate corollary.

\begin{corollary}\label{OS relax}
Let $M$ be a matroid equipped with an action of the group $\Gamma$.  
Let $H$ be a stressed hyperplane of $M$, and let $\tM$ be the matroid obtained from $M$ by relaxing $\gamma H$ for all $\gamma\in\Gamma$.
Then 
$$\OS(\tM) = \OS(M)
+
\left(\Ind_{\GH}^{\Gamma}\Res_{\GH}^{\fS_h}\wedge^{k}\,V_{[h-1,1]}\right)\otimes\Big(\Q(1-k) \oplus  \Q(-k)\Big)
$$
as virtual graded representations of $\Gamma$.
\end{corollary}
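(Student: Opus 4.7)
The proof is a direct application of the machinery already developed. I would proceed as follows.

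First I would invoke Proposition \ref{virtual} with the valuative categorical invariant $\Phi = \OS$ (valuative by Theorem \ref{thm:OS}) and the stressed flat $F = H$ of rank $r = k-1$. This immediately yields the equality of virtual graded $\Gamma$-representations
\[
\OS(\tM) = \OS(M) + \Ind_{\GH}^{\Gamma}\OS(\Lambda_{k-1,k,H,E}) - \Ind_{\GH}^{\Gamma}\OS(\Pi_{k-1,k,H,E}).
\]
Since induction from $\GH$ to $\Gamma$ is an additive operation on the Grothendieck group of virtual graded representations, it respects formal differences, so the right-hand side can be rewritten as
\[
\OS(M) + \Ind_{\GH}^{\Gamma}\!\left(\OS(\Lambda_{k-1,k,H,E}) - \OS(\Pi_{k-1,k,H,E})\right),
\]
where the difference inside the induction is interpreted as a virtual graded representation of $\GH$.

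Next I would use Lemma \ref{OS correction}, which computes this difference as a virtual graded representation of $\fS_h$. To convert this into a statement about $\GH$-representations, I would use the projection $\GH \to \fS_h$ arising from the fact that $\GH \subseteq \Aut(E\setminus H)\times\Aut(H)$ acts on $\Lambda_{k-1,k,H,E}$ and $\Pi_{k-1,k,H,E}$ through its image in $\fS_h = \Aut(H)$ (parallel elements collapse under the Orlik--Solomon functor, so the $\Aut(E\setminus H)$ factor acts trivially). Thus we pull back the identity of Lemma \ref{OS correction} along $\GH \to \fS_h$ via the functor $\Res_{\GH}^{\fS_h}$, obtaining
\[
\OS(\Lambda_{k-1,k,H,E}) - \OS(\Pi_{k-1,k,H,E}) = \Res_{\GH}^{\fS_h}\!\left(\wedge^{k-1} V_{[h-1,1]}\right)\otimes\bigl(\Q(1-k)\oplus\Q(-k)\bigr)
\]
as virtual graded $\GH$-representations.

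Substituting this into the expression from Proposition \ref{virtual} yields the claimed identity. The proof requires no additional ideas beyond combining the two inputs; the main technical work has already been done in proving that $\OS$ is a valuative categorical invariant (Theorem \ref{thm:OS}) and in the representation-theoretic calculation underlying Lemma \ref{OS correction}. There is no real obstacle here, but I would make sure to verify the remark that the $\Aut(E\setminus H)$ factor acts trivially on $\OS(\Lambda_{k-1,k,H,E})$ and $\OS(\Pi_{k-1,k,H,E})$, which justifies factoring the action of $\GH$ through $\fS_h$ when applying $\Res_{\GH}^{\fS_h}$.
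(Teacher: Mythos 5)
Your proposal is correct and is exactly the paper's argument: the paper presents Corollary \ref{OS relax} as an immediate consequence of Proposition \ref{virtual} (applied to $\Phi=\OS$) and Lemma \ref{OS correction}, with the induction commuting past the grading shift just as you describe. Note that your derivation (correctly) produces $\wedge^{k-1}\,V_{[h-1,1]}$, matching Lemma \ref{OS correction}, so the exponent $k$ in the printed statement of the corollary is a typo for $k-1$.
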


We now turn our attention to the functor $\uKL$.
Recall that, if $M$ is a matroid equipped with an action of a group $\Gamma$, then the 
coefficient of $t^i$ in the $\Gamma$-equivariant Kazhdan--Lusztig polynomial $P_M^\Gamma(t)$ is equal
to the virtual $\Gamma$-representation $\sum_j (-1)^j \KL^{i,j}(M)$.
Given a skew partition $\lambda/\mu$ of $h$, we write $V_{\lambda/\mu}$ to
denote the corresponding skew Specht module for $\mathfrak{S}_h$.

\begin{lemma}\label{KL correction}
If $k>1$, then we have
$$P^{\fS_h}_{\Lambda_{k-1,k,\revise{H,E}}}(t)
- P^{\fS_h}_{\Pi_{k-1,k,\revise{H,E}}}(t)
= \sum_{0<i<k/2} V_{[h-2i+1,(k-2i+1)^i]/[k-2i,(k-2i-1)^{i-1}]}\, t^i.
$$
\end{lemma}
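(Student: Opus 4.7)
The approach is to reduce both equivariant Kazhdan--Lusztig polynomials in the statement to those of uniform matroids, and then subtract using the known explicit formula (due to Gedeon--Proudfoot--Young) that expresses $P^{\fS_n}_{U_{k,n}}(t)$ as a signed sum of (skew) Specht modules.

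First, I would handle $\Pi_{k-1,k,H,E}=U_{1,E\setminus H}\sqcup U_{k-1,H}$. Because equivariant Kazhdan--Lusztig polynomials are multiplicative under direct sum of matroids, we get
$P^{\fS_h}_{\Pi_{k-1,k,H,E}}(t)=P^{\fS_h}_{U_{1,E\setminus H}}(t)\cdot P^{\fS_h}_{U_{k-1,H}}(t).$
The group $\fS_h=\Aut(H)$ acts trivially on $U_{1,E\setminus H}$, whose (nonequivariant) Kazhdan--Lusztig polynomial equals $1$ since the matroid has rank one. Hence $P^{\fS_h}_{\Pi_{k-1,k,H,E}}(t)=P^{\fS_h}_{U_{k-1,H}}(t)$. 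For $\Lambda_{k-1,k,H,E}$, Theorem \ref{thm KL} shows that $P^{\Gamma}_M(t)$ depends on $M$ only through its lattice of flats, hence only through its simplification (and this dependence is $\Gamma$-equivariant). Since $\Lambda_{k-1,k,H,E}$ simplifies to $U_{k,\bar H}$ with $\bar H=H\sqcup\{*\}$ of size $h+1$, and $\fS_h$ acts as the stabilizer of $*$ inside $\fS_{h+1}=\Aut(\bar H)$, we obtain
$P^{\fS_h}_{\Lambda_{k-1,k,H,E}}(t)=\Res^{\fS_{h+1}}_{\fS_h}P^{\fS_{h+1}}_{U_{k,h+1}}(t).$

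Next, I would substitute the explicit Gedeon--Proudfoot--Young formula for $P^{\fS_n}_{U_{k,n}}(t)$ into both $U_{k-1,h}$ and $U_{k,h+1}$, and then apply the standard branching rule for skew Specht modules,
$\Res^{\fS_{n+1}}_{\fS_n}V_{\lambda/\mu}=\bigoplus_{\lambda'} V_{\lambda'/\mu}\oplus\bigoplus_{\mu'} V_{\lambda/\mu'},$
where $\lambda'$ ranges over outer corners of $\lambda$ lying in $\lambda/\mu$ and $\mu'$ ranges over inner corners of $\mu$ lying in $\lambda/\mu$, to unpack $\Res^{\fS_{h+1}}_{\fS_h} P^{\fS_{h+1}}_{U_{k,h+1}}(t)$ as a sum of skew Specht modules.

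Finally, I would form the difference. The plan is that the branching summands obtained by removing an outer corner of $\lambda$ match, term by term and coefficient by coefficient in $t$, the skew shapes appearing in the formula for $P^{\fS_h}_{U_{k-1,h}}(t)$; after this cancellation the surviving terms are exactly those produced by adding an inner corner to $\mu$, and a direct comparison of shapes identifies these with $V_{[h-2i+1,(k-2i+1)^i]/[k-2i,(k-2i-1)^{i-1}]}$ for $0<i<k/2$.

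The main obstacle is this final combinatorial step. Matching the branching contributions against the $U_{k-1,h}$ expansion and isolating the residue requires careful bookkeeping of inner and outer corners of the relevant skew partitions, and the boundary cases (in particular when $k$ is odd with $i$ near $k/2$, and when parts of $\lambda$ or $\mu$ coincide so that certain corners are not genuinely addable/removable) are what ultimately force the range $0<i<k/2$. Apart from this Pieri-type combinatorial verification, every step is a direct application of results recalled earlier in the paper.
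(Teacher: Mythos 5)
Your first three steps coincide with the paper's own proof: multiplicativity under direct sums reduces $\Pi_{k-1,k,H,E}$ to $U_{k-1,H}$, invariance of equivariant Kazhdan--Lusztig polynomials under simplification reduces $\Lambda_{k-1,k,H,E}$ to $\Res^{\fS_{h+1}}_{\fS_h}$ applied to $U_{k,\bar H}$, and both polynomials are then expanded using the known skew Specht module formula for uniform matroids. The divergence is entirely in the final step, and that is where your proposal has a genuine gap rather than mere bookkeeping.

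First, the branching rule you invoke is false as stated. For a skew shape $\lambda/\mu$ one has $\Res^{\fS_{n+1}}_{\fS_n}V_{\lambda/\mu}=\bigoplus_{\lambda'}V_{\lambda'/\mu}$, summing over removable corners of $\lambda$ lying in $\lambda/\mu$, and \emph{alternatively} $\Res^{\fS_{n+1}}_{\fS_n}V_{\lambda/\mu}=\bigoplus_{\mu'}V_{\lambda/\mu'}$, summing over addable corners of $\mu$ lying in $\lambda$; taking both sums together, as you do, double counts. For example, for $\lambda/\mu=[h-1,2]/\varnothing$ your rule returns $V_{[h-2,2]}\oplus V_{[h-1,1]}\oplus V_{[h-1,2]/[1]}$, which has twice the dimension of $\Res^{\fS_{h+1}}_{\fS_h}V_{[h-1,2]}$. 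Second, and more importantly, the advertised cancellation does not occur ``term by term'': the shapes obtained by shrinking $\lambda$ in $\Res V_{[h-2i+1,(k-2i+1)^i]/[(k-2i-1)^i]}$ are not the shapes $[h-2i,(k-2i)^i]/[(k-2i-2)^i]$ appearing in the expansion of $P^{\fS_h}_{U_{k-1,H}}(t)$ (their row lengths already differ), and there are additional branching contributions from the corner in row $i+1$. Reconciling the two expansions requires genuine identities among skew Specht modules of different shapes, which is exactly what the paper imports from \cite[Lemma 4.1]{KNPV} together with \cite[Proposition 2.3.5, Lemma 2.3.12]{kleshchev}; that identity is the real content of the lemma. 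Since your proposal both misstates the branching rule and defers precisely this identification to unexecuted combinatorics, it does not yet constitute a proof.
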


\begin{proof}
Equivariant Kazhdan--Lusztig polynomials of matroids are multiplicative under direct sums and 
the equivariant Kazhdan--Lusztig polynomial of a rank 1 loopless matroid is the trivial representation in degree zero, hence
$$P^{\fS_h}_{\Pi_{k-1,k,\revise{H,E}}}(t) = P^{\fS_h}_{U_{k-1,H}}(t)
= \sum_{0\leq i<(k-1)/2} V_{[h-2i,(k-2i)^i]/[(k-2i-2)^i]}\, t^i,$$
where the second equality is proved in \cite[Theorem 3.7]{eq-inverse-kl}.  On the other hand, equivariant Kazhdan--Lusztig polynomials of loopless matroids are
unchanged by simplification, hence 
$$P^{\fS_h}_{\Lambda_{k-1,k,\revise{H,E}}}(t) = P^{\fS_h}_{U_{k,\bar{H}}}(t) = 
\sum_{0\leq i<k/2} \Res_{\fS_h}^{\fS_{h+1}}V_{[h-2i+1,(k-2i+1)^i]/[(k-2i-1)^i]}\, t^i,$$
where the last equality again follows from \cite[Theorem 3.7]{eq-inverse-kl}.
By \cite[Lemma 4.1]{KNPV} and \cite[Proposition 2.3.5, Lemma 2.3.12]{kleshchev}, we may rewrite this as
$$P^{\fS_h}_{\Lambda_{k-1,k,\revise{H,E}}}(t) =\sum_{0\leq i<(k-1)/2}V_{[h-2i,(k-2i+1)^i]/[(k-2i-1)^i]}\, t^i + \sum_{0<i<k/2} V_{[h-2i+1,(k-2i+1)^i]/[k-2i,(k-2i-1)^{i-1}]}\, t^i.$$
The lemma is obtained by taking the difference
between the our expressions for $P^{\fS_h}_{\Pi_{k-1,k,\revise{H,E}}}(t)$ and $P^{\fS_h}_{\Lambda_{k-1,k,\revise{H,E}}}(t)$.
\end{proof}

Proposition \ref{virtual} and Lemma \ref{KL correction} have the following immediate corollary, which was first proved in \cite[Theorems 1.3 and 1.4]{KNPV}.

\begin{corollary}\label{KL relax}
Let $M$ be a matroid of rank $k>1$ equipped with an action of the group $\Gamma$.  
Let $H$ be a stressed hyperplane of $M$, and let $\tM$ be the matroid obtained from $M$ by relaxing $\gamma H$ for all $\gamma\in\Gamma$.
Then
$$P_{\tM}^\Gamma(t) = P_M^\Gamma(t)  + \bigoplus_{0<i<k/2}
\Ind_{\GH}^{\Gamma}\Res_{\GH}^{\fS_h}V_{[h-2i+1,(k-2i+1)^i]/[k-2i,(k-2i-1)^{i-1}]}\, t^i.$$
\end{corollary}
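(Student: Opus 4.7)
The plan is to deduce the corollary as an immediate consequence of Proposition \ref{virtual} applied to the valuative categorical invariant $\uKL$ (Corollary \ref{KL val}), combined with the representation-theoretic calculation in Lemma \ref{KL correction}. Specifically, I would take $\Phi = \uKL$, $F = H$, and $r = k-1$ in Proposition \ref{virtual}, yielding an equality
\[
\uKL(\tM) \;=\; \uKL(M) \;+\; \Ind_{\GH}^{\Gamma}\uKL(\Lambda_{k-1,k,H,E}) \;-\; \Ind_{\GH}^{\Gamma}\uKL(\Pi_{k-1,k,H,E})
\]
of virtual bigraded representations of $\Gamma$.

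Next, I would pass from bigraded representations to the equivariant Kazhdan--Lusztig polynomial. By the definition of $\tilde{P}_M(t,u)$ and the remark in Section \ref{sec:characters} (recalling that the coefficient of $t^i$ in $P_M^\Gamma(t)$ equals the virtual representation $\sum_j (-1)^j \uKL^{i,j}(M)$), taking the alternating sum of the $u$-grading on both sides converts the identity above into an identity of coefficients in the equivariant Kazhdan--Lusztig polynomial:
\[
P_{\tM}^\Gamma(t) \;=\; P_M^\Gamma(t) \;+\; \Ind_{\GH}^{\Gamma}\!\Bigl(P^{\GH}_{\Lambda_{k-1,k,H,E}}(t) - P^{\GH}_{\Pi_{k-1,k,H,E}}(t)\Bigr).
\]

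It then remains to identify the bracketed term. Both $\Lambda_{k-1,k,H,E}$ and $\Pi_{k-1,k,H,E}$ have all elements of $E\setminus H$ parallel, so the $\GH$-action on their Kazhdan--Lusztig polynomials factors through the projection $\GH \to \fS_h = \Aut(H)$. Consequently the difference inside the induction equals $\Res_{\GH}^{\fS_h}\!\bigl(P^{\fS_h}_{\Lambda_{k-1,k,H,E}}(t) - P^{\fS_h}_{\Pi_{k-1,k,H,E}}(t)\bigr)$, and Lemma \ref{KL correction} evaluates this to
\[
\sum_{0<i<k/2} V_{[h-2i+1,(k-2i+1)^i]/[k-2i,(k-2i-1)^{i-1}]}\, t^i.
\]
Substituting yields the claimed formula. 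There is no real obstacle here: the whole corollary is a formal combination of the relaxation character formula (Proposition \ref{virtual}) with the already-established equivariant Kazhdan--Lusztig computation for the auxiliary matroids $\Lambda_{k-1,k,H,E}$ and $\Pi_{k-1,k,H,E}$.
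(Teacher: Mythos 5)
Your proposal is correct and follows exactly the paper's route: the corollary is stated there as an immediate consequence of Proposition \ref{virtual} (applied to the valuative functor $\uKL$) together with Lemma \ref{KL correction}, with the passage from bigraded virtual representations to $P^\Gamma_M(t)$ via the alternating sum over the $u$-grading and the observation that the $\GH$-action on the auxiliary matroids factors through $\fS_h$. Nothing is missing.
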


\bibliography{./symplectic}

\newcommand{\etalchar}[1]{$^{#1}$}
\def\cprime{$'$}
\providecommand{\bysame}{\leavevmode\hbox to3em{\hrulefill}\thinspace}
\providecommand{\MR}{\relax\ifhmode\unskip\space\fi MR }
\providecommand{\MRhref}[2]{%
  \href{http://www.ams.org/mathscinet-getitem?mr=#1}{#2}
}
\providecommand{\href}[2]{#2}
\begin{thebibliography}{MNWW11}

\bibitem[AFR10]{AFR}
Federico Ardila, Alex Fink, and Felipe Rinc\'{o}n, \emph{Valuations for matroid
  polytope subdivisions}, Canad. J. Math. \textbf{62} (2010), no.~6,
  1228--1245.

\bibitem[AS22]{ArdilaSanchez}
Federico Ardila and Mario Sanchez, \emph{{Valuations and the Hopf Monoid of
  Generalized Permutahedra}}, International Mathematics Research Notices
  (2022), rnab355.

\bibitem[BHM{\etalchar{+}}]{BHMPW2}
Tom Braden, June Huh, Jacob Matherne, Nicholas Proudfoot, and Botong Wang,
  \emph{{Singular Hodge theory for combinatorial geometries}},
  \textsf{arXiv:2010.06088}.

\bibitem[BHM{\etalchar{+}}22]{BHMPW1}
Tom Braden, June Huh, Jacob~P. Matherne, Nicholas Proudfoot, and Botong Wang,
  \emph{A semi-small decomposition of the {C}how ring of a matroid}, Adv. Math.
  \textbf{409} (2022), no.~part A, Paper No. 108646, 49.

\bibitem[Der09]{Derksen}
Harm Derksen, \emph{Symmetric and quasi-symmetric functions associated to
  polymatroids}, J. Algebraic Combin. \textbf{30} (2009), no.~1, 43--86.

\bibitem[DF10]{derksen_fink}
Harm Derksen and Alex Fink, \emph{Valuative invariants for polymatroids}, Adv.
  Math. \textbf{225} (2010), no.~4, 1840--1892.

\bibitem[EHL23]{EHL}
Christopher Eur, June Huh, and Matt Larson, \emph{Stellahedral geometry of
  matroids}, Forum Math. Pi \textbf{11} (2023), Paper No. e24, 48.

\bibitem[EMTW20]{Elias-book}
Ben Elias, Shotaro Makisumi, Ulrich Thiel, and Geordie Williamson,
  \emph{Introduction to {S}oergel bimodules}, RSME Springer Series, vol.~5,
  Springer, Cham, [2020] \copyright 2020.

\bibitem[EPW16]{EPW}
Ben Elias, Nicholas Proudfoot, and Max Wakefield, \emph{The {K}azhdan-{L}usztig
  polynomial of a matroid}, Adv. Math. \textbf{299} (2016), 36--70.

\bibitem[FMSV23]{fmsv-fpsac}
Luis Ferroni, Jacob~P. Matherne, Matthew Stevens, and Lorenzo Vecchi,
  \emph{Hilbert-{P}oincar\'e{} series of matroid {C}how rings and intersection
  cohomology}, S\'em. Lothar. Combin. \textbf{89B} (2023), Art. 36, 12.

\bibitem[FMSV24]{FMSV}
\bysame, \emph{Hilbert-{P}oincar\'{e} series of matroid {C}how rings and
  intersection cohomology}, Adv. Math. \textbf{449} (2024), Paper No. 109733.
  \MR{4749982}

\bibitem[FNV22]{FNV}
Luis Ferroni, George~D Nasr, and Lorenzo Vecchi, \emph{{Stressed Hyperplanes
  and Kazhdan–Lusztig Gamma-Positivity for Matroids}}, International
  Mathematics Research Notices \textbf{2023} (2022), no.~24, 20883--20942.

\bibitem[FS24]{FSVal}
Luis Ferroni and Benjamin Schr\"oter, \emph{Valuative invariants for large
  classes of matroids}, J. Lond. Math. Soc. (2) \textbf{110} (2024), no.~3,
  Paper No. e12984.

\bibitem[FY04]{FY}
Eva~Maria Feichtner and Sergey Yuzvinsky, \emph{Chow rings of toric varieties
  defined by atomic lattices}, Invent. Math. \textbf{155} (2004), no.~3,
  515--536.

\bibitem[GPY17]{GPY}
Katie Gedeon, Nicholas Proudfoot, and Benjamin Young, \emph{The equivariant
  {K}azhdan--{L}usztig polynomial of a matroid}, J. Combin. Theory Ser. A
  \textbf{150} (2017), 267--294.

\bibitem[GXY22]{eq-inverse-kl}
Alice L.~L. Gao, Matthew H.~Y. Xie, and Arthur L.~B. Yang, \emph{The
  equivariant inverse {K}azhdan-{L}usztig polynomials of uniform matroids},
  SIAM J. Discrete Math. \textbf{36} (2022), no.~4, 2553--2569.

\bibitem[JS17]{JS}
Michael Joswig and Benjamin Schr\"{o}ter, \emph{Matroids from hypersimplex
  splits}, J. Combin. Theory Ser. A \textbf{151} (2017), 254--284.

\bibitem[Kho16]{hopfological}
Mikhail Khovanov, \emph{Hopfological algebra and categorification at a root of
  unity: the first steps}, J. Knot Theory Ramifications \textbf{25} (2016),
  no.~3, 1640006, 26.

\bibitem[Kle05]{kleshchev}
Alexander Kleshchev, \emph{Linear and projective representations of symmetric
  groups}, Cambridge Tracts in Mathematics, vol. 163, Cambridge University
  Press, Cambridge, 2005. \MR{2165457}

\bibitem[KNPV23]{KNPV}
Trevor Karn, George~D. Nasr, Nicholas Proudfoot, and Lorenzo Vecchi,
  \emph{Equivariant {Kazhdan{\textendash}Lusztig} theory of paving matroids},
  Algebraic Combinatorics \textbf{6} (2023), no.~3, 677--688.

\bibitem[LdMRS20]{LdMRS}
Luc\'{\i}a L\'{o}pez~de Medrano, Felipe Rinc\'{o}n, and Kristin Shaw,
  \emph{Chern-{S}chwartz-{M}ac{P}herson cycles of matroids}, Proc. Lond. Math.
  Soc. (3) \textbf{120} (2020), no.~1, 1--27.

\bibitem[Luc75]{Luc75}
Dean Lucas, \emph{Weak maps of combinatorial geometries}, Trans. Amer. Math.
  Soc. \textbf{206} (1975), 247--279.

\bibitem[McM09]{McMullen}
Peter McMullen, \emph{Valuations on lattice polytopes}, Adv. Math. \textbf{220}
  (2009), no.~1, 303--323.

\bibitem[MNWW11]{MNWW}
Dillon Mayhew, Mike Newman, Dominic Welsh, and Geoff Whittle, \emph{On the
  asymptotic proportion of connected matroids}, European J. Combin. \textbf{32}
  (2011), no.~6, 882--890.

\bibitem[OT92]{OT}
Peter Orlik and Hiroaki Terao, \emph{Arrangements of hyperplanes}, Grundlehren
  der Mathematischen Wissenschaften [Fundamental Principles of Mathematical
  Sciences], vol. 300, Springer-Verlag, Berlin, 1992.

\bibitem[PXY18]{PXY}
Nicholas Proudfoot, Yuan Xu, and Ben Young, \emph{The {$Z$}-polynomial of a
  matroid}, Electron. J. Combin. \textbf{25} (2018), no.~1, Paper 1.26, 21.

\bibitem[Sha]{shaw-thesis}
Kris Shaw, \emph{Tropical intersection theory and surfaces}, Doctoral Thesis.

\bibitem[Spe08]{Speyer-trop}
David~E. Speyer, \emph{Tropical linear spaces}, SIAM J. Discrete Math.
  \textbf{22} (2008), no.~4, 1527--1558.

\bibitem[Wei94]{Weibel}
Charles~A. Weibel, \emph{An introduction to homological algebra}, Cambridge
  Studies in Advanced Mathematics, vol.~38, Cambridge University Press,
  Cambridge, 1994.

\bibitem[Zha13]{zharkov}
Ilia Zharkov, \emph{The {O}rlik-{S}olomon algebra and the {B}ergman fan of a
  matroid}, J. G\"okova Geom. Topol. GGT \textbf{7} (2013), 25--31.

\end{thebibliography}
\bibliographystyle{amsalpha}

\end{document}